\newtheorem{theorem}{Theorem}[section]
\newtheorem{prop}[theorem]{Proposition}
\newtheorem{lemma}[theorem]{Lemma}
\newtheorem{cor}[theorem]{Corollary}
\theoremstyle{definition}
\newtheorem{defi}[theorem]{Definition}
\theoremstyle{remark}
\newtheorem{remark}[theorem]{Remark}
\DeclareMathOperator*{\esssup}{ess\,sup}
\DeclareMathOperator*{\essinf}{ess\,inf}
\DeclareMathOperator*{\esssupp}{ess\,supp}
\numberwithin{equation}{section}
\title{Nonlocal BV and nonlocal Sobolev spaces\\ induced by nonfractional weight functions}
\author{Francesc Alcover\thanks{DMI \& IAC3, University of the Balearic Islands, Cra. de Valldemossa, km. 7.5, E-07122 Palma, Illes Balears, Spain. E-mail: \texttt{francesc.alcover@uib.cat}, \texttt{joan.duran@uib.es}, \texttt{catalina.sbert@uib.es}} \and Joan Duran\footnotemark[1] \and Ramon Oliver-Bonafoux\thanks{Dipartimento di Informatica, Università di Verona. Strada le Grazie 15, 37134 Verona, Italy. E-mail: \texttt{ramon.oliverbonafoux@univr.it}} \and Catalina Sbert\footnotemark[1]}
\date{\today}
\begin{document}

\maketitle

\begin{abstract}
In this paper, we expand upon the theory of the space of functions with nonlocal weighted bounded variation, first introduced by Kindermann et~al.~in 2005 and later generalized by Wang et~al.~in 2014. We consider nonfractional $\mathcal{C}^1$ weights and, using an analogous formulation to the aforementioned works, we also introduce a (to our knowledge) new class of nonlocal weighted Sobolev spaces. After establishing some fundamental properties and results regarding the structure of these spaces, we study their relationship with the classical $\mathrm{BV}$ and Sobolev spaces, as well as with the space of test functions. We handle both the case of domains with finite measure and that of domains of infinite measure, and show that these two situations lead to quite different scenarios. As an application, we also show that these function spaces are suitable for establishing existence and uniqueness results of global minimizers for several classes of functionals. Some of these functionals were introduced in the above-mentioned references for the study of image deblurring problems.
\end{abstract}

\textbf{2020 Mathematics Subject Classification}: 46E30, 46E35, 49J10.

\textbf{Key words and phrases}: Nonlocal total variation, BV spaces, Sobolev spaces, nonfractional weight functions, nonlocal regularization.

\section{Introduction} \label{IntroductionSection}
In recent years, fractional calculus has established itself at the forefront of nonlocal analysis, due in part to its effectiveness at modeling various phenomena in science and engineering. However, the study of nonlocal operators induced by another class of not necessarily radial nor singular weights has also seen a significant rise, specially in an image processing context $-$a historical summary of this rise will be given later in this section. Among this wave of new nonlocal operators, Kindermann, Osher and Jones introduced in \cite{Kindermann} a notion of \emph{nonlocal total variation} to be used in image deblurring problems, and studied its corresponding \emph{space of functions with nonlocal bounded variation}. A weighted variant was later proposed by Wang and Ng in \cite{Wang-Ng} with the scope of addressing problems in image decomposition.

As introduced in \cite{Wang-Ng}, we define the \emph{space of functions with nonlocal weighted bounded variation} as follows: Let $\Omega$ be a nonempty open subset of $\mathbb{R}^N$. We will say that $\omega:\Omega\times\Omega \to [0,+\infty)$ is an admissible weight function in $\Omega$ if $\omega$ is nonnegative, symmetric and belongs to $\mathcal{C}^1$ in $\Omega \times \Omega$\footnote{Positivity and symmetry were the only hypotheses in \cite{Wang-Ng}, with the regularity of $\omega$ unmentioned to our knowledge. In this article we impose that $\omega$ is $\mathcal{C}^1$.}. Notice, however, that no assumption is made on the behavior of $\omega$ near the boundary of $\Omega \times \Omega$, so that one could in principle have that $\omega(x,y) \to +\infty$ as $(x,y)$ approaches some point at the boundary of $\Omega \times \Omega$. Given a fixed admissible weight function $\omega$, we first consider the \textit{nonlocal divergence operator} (associated to $\omega$), defined for $\phi=(\phi_1,\ldots,\phi_N) \in \mathcal{C}^1(\Omega \times \Omega;\mathbb{R}^N)$ as
\begin{equation}
\mathrm{div}_\omega(\mathbf{\phi})(x):= \sum_{i=1}^N \partial^*_{\omega,i}(\phi_i)(x) \hspace{2mm} \mbox{ for all } x \in \Omega,
\end{equation}
where, for all $i = 1,\ldots,N$ and $\psi \in \mathcal{C}^1(\Omega \times \Omega)$, $\partial^*_{\omega,i}(\psi)$ stands for the \textit{nonlocal partial derivative}, defined as
\begin{equation} \label{NlStrongDerIntro}
\partial_{\omega, i}^\ast (\psi) (x) := \int_\Omega \frac{\partial}{\partial x_i} \left[ \omega(x,y) (\psi(y,x)-\psi(x,y)) \right] \mathrm{d}y \hspace{2mm} \mbox{ for all } x \in \Omega.
\end{equation}

This divergence operator then naturally leads to defining the \textit{nonlocal total variation} (induced by $\omega$) of an $L^1(\Omega)$ function $u$ as follows:
\begin{equation}\label{NLTVDefIntro}
\mathrm{NLTV}_\omega(u):= \sup\left\{ \int_\Omega u(x) \mathrm{div}_\omega(\phi)(x)\mathrm{d}x: \phi \in \mathcal{C}^1_c(\Omega \times \Omega;\mathbb{R}^N), \|\phi\|_{\infty} \le 1 \right\} \in [0,+\infty].
\end{equation}

In the definition above and throughout this paper, $\| \cdot\|_\infty$ denotes the uniform norm on $\mathcal{C}_0(\Omega\times\Omega; \mathbb{R}^N)$ given by

\begin{equation}
    \|\phi\|_\infty := \sup_{x\in \Omega} |\phi(x)| \quad \mbox{ for all } \phi \in \mathcal{C}_0(\Omega\times\Omega; \mathbb{R}^N),
\end{equation}
where $|\cdot|$ denotes the Euclidean norm in $\mathbb{R}^N$. An integration by parts shows that
\begin{equation}
\mathrm{NLTV}_\omega(u)=\int_{\Omega \times \Omega} \omega(x,y)\lvert \nabla u(y) -\nabla u(x) \rvert \mathrm{d}x \mathrm{d}y,
\end{equation}
whenever $u \in \mathcal{C}^1(\Omega)$. Subsequently, we define the set
\begin{equation}
\mathrm{NLBV}_\omega(\Omega):= \{ u \in L^1(\Omega):\mathrm{NLTV}_\omega(u)<+\infty\},
\end{equation}
which is a Banach space when endowed with the norm
\begin{equation}
\lVert u \rVert_{\mathrm{NLBV}_\omega(\Omega)}:= \lVert u \rVert_{L^1(\Omega)}+\mathrm{NLTV}_\omega(u).
\end{equation}

As in \cite{Kindermann}, we also denote by $\mathrm{NLBV}(\Omega)$ the space corresponding to the weight which is equal to $1$ everywhere in $\Omega \times \Omega$. While the authors of \cite{Kindermann,Wang-Ng} introduced the aforementioned space and established some related preliminary results, in this paper we conduct a more detailed study of $\mathrm{NLBV}_\omega(\Omega)$: The definitions, with further remarks and properties are given in Subsection \ref{section_NLBV}. Moreover, we consider the link between $\mathrm{NLBV}_\omega(\Omega)$ and $\mathrm{BV}(\Omega)$ according to the behavior of $\omega$. To start off, we prove that if $\Omega$ has finite measure, then $\mathrm{NLBV}(\Omega)=\mathrm{BV}(\Omega)$ with equivalence of norms, see Proposition \ref{NLBV=BVEquivNorm}. Furthermore, we prove that for $\Omega$ with finite measure one has $\mathrm{NLBV}_\omega(\Omega)=\mathrm{BV}(\Omega)$ with equivalence of norms provided that one makes additional assumptions on $\omega$ (in particular, one can assume $\omega$ to be bounded away from $0$ and $+\infty$ in $\Omega \times \Omega$). In fact, we give a sufficient condition for one of the continuous embeddings and a necessary and sufficient condition for the other:
\begin{theorem}[Necessary and sufficient conditions for the equality between $\mathrm{NLBV}_\omega(\Omega)$ and $\mathrm{BV}(\Omega)$ when $\Omega$ has finite measure]\label{Theorem_main_equality_NLBV}
Assume that $\Omega$ is a nonempty open subset of $\mathbb{R}^N$ with finite measure and let $\omega$ be an admissible weight function in $\Omega$. If there exists a constant $k>0$ such that for all $(x,y) \in \Omega \times \Omega$ one has
\begin{equation}\label{Muckenhoupt}
k\le \omega(x,y),
\end{equation}
then, $\mathrm{NLBV}_\omega(\Omega) \hookrightarrow \mathrm{BV}(\Omega)$. Moreover, the converse embedding $\mathrm{BV}(\Omega) \hookrightarrow \mathrm{NLBV}_\omega(\Omega)$ holds if and only if there exists a constant $K>0$ such that for all $x\in \Omega$
\begin{equation}\label{fwpboundedintro}
\int_\Omega \omega(x,y) \mathrm{d}y \le K.
\end{equation}
\end{theorem}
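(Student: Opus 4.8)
The plan is to treat the three implications separately, using as the main engine the integral representation $\mathrm{NLTV}_\omega(u)=\int_{\Omega\times\Omega}\omega(x,y)\lvert\nabla u(y)-\nabla u(x)\rvert\,\mathrm{d}x\,\mathrm{d}y$ valid for $u\in\mathcal{C}^1(\Omega)$, together with the already established identity $\mathrm{NLBV}(\Omega)=\mathrm{BV}(\Omega)$ of Proposition \ref{NLBV=BVEquivNorm}. The recurring technical tool is the lower semicontinuity of $u\mapsto\mathrm{NLTV}_\omega(u)$ with respect to $L^1(\Omega)$ convergence, which is immediate since $\mathrm{NLTV}_\omega$ is a supremum of the $L^1$-continuous linear functionals $u\mapsto\int_\Omega u\,\mathrm{div}_\omega(\phi)$.

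For the first embedding I would avoid comparing $\mathrm{NLTV}_\omega$ and the classical total variation $\mathrm{TV}(u)$ directly, and instead reduce to the unweighted case. Given an admissible $\phi\in\mathcal{C}^1_c(\Omega\times\Omega;\mathbb{R}^N)$ for the weight $1$, set $\tilde\phi:=\phi/\omega$. Since $\omega\in\mathcal{C}^1$ and $\omega\ge k>0$, the field $\tilde\phi$ again lies in $\mathcal{C}^1_c(\Omega\times\Omega;\mathbb{R}^N)$, and the symmetry $\omega(x,y)=\omega(y,x)$ gives $\omega(x,y)\big(\tilde\phi_i(y,x)-\tilde\phi_i(x,y)\big)=\phi_i(y,x)-\phi_i(x,y)$, whence $\mathrm{div}_\omega(\tilde\phi)=\mathrm{div}_1(\phi)$ pointwise, where $\mathrm{div}_1$ is the divergence associated to the weight $1$. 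As $\lVert k\tilde\phi\rVert_\infty\le 1$, using $k\tilde\phi$ as a competitor yields $\mathrm{NLTV}(u)\le k^{-1}\mathrm{NLTV}_\omega(u)$ for every $u\in L^1(\Omega)$, with no smoothness needed. Combined with Proposition \ref{NLBV=BVEquivNorm}, this gives $\mathrm{NLBV}_\omega(\Omega)\hookrightarrow\mathrm{NLBV}(\Omega)=\mathrm{BV}(\Omega)$ together with the required norm control.

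For the sufficiency in the second claim I would first work with $u\in\mathcal{C}^1(\Omega)$: the triangle inequality and the symmetry of $\omega$ give $\mathrm{NLTV}_\omega(u)\le\int_{\Omega\times\Omega}\omega(x,y)(\lvert\nabla u(x)\rvert+\lvert\nabla u(y)\rvert)\,\mathrm{d}x\,\mathrm{d}y=2\int_\Omega\lvert\nabla u(x)\rvert\big(\int_\Omega\omega(x,y)\,\mathrm{d}y\big)\mathrm{d}x\le 2K\,\mathrm{TV}(u)$. For general $u\in\mathrm{BV}(\Omega)$ I would take a strictly convergent smooth approximation $u_n\to u$ in $L^1(\Omega)$ with $\mathrm{TV}(u_n)\to\mathrm{TV}(u)$ and pass to the limit using lower semicontinuity, obtaining $\mathrm{NLTV}_\omega(u)\le 2K\,\mathrm{TV}(u)$, i.e.\ $\mathrm{BV}(\Omega)\hookrightarrow\mathrm{NLBV}_\omega(\Omega)$.

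The necessity is the delicate part, and I expect it to be the main obstacle. Assuming a continuous embedding $\mathrm{NLTV}_\omega(u)\le C\lVert u\rVert_{\mathrm{BV}(\Omega)}$, I would fix $x_0\in\Omega$ and a profile $\rho\in\mathcal{C}^\infty_c(B_1)$, then test against the concentrating family $u_r(x):=r\,\rho((x-x_0)/r)$ for $r$ small enough that $B_r(x_0)\subseteq\Omega$. A direct computation gives $\lVert u_r\rVert_{\mathrm{BV}(\Omega)}\sim r^N\lVert\nabla\rho\rVert_{L^1}$, while splitting the double integral over $B_r(x_0)$ and its complement isolates the leading term $2\int_{B_r(x_0)}\lvert\nabla u_r(x)\rvert\big(\int_{\Omega\setminus B_r(x_0)}\omega(x,y)\,\mathrm{d}y\big)\mathrm{d}x$, the diagonal block $B_r(x_0)\times B_r(x_0)$ being $O(r^{2N})$ because $\omega$ is continuous, hence bounded, near the interior point $(x_0,x_0)$. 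After rescaling and invoking Fatou's lemma to handle the possible blow-up of $\omega$ towards $\partial(\Omega\times\Omega)$, I would obtain $\liminf_{r\to0}r^{-N}\mathrm{NLTV}_\omega(u_r)\ge 2\big(\int_\Omega\omega(x_0,y)\,\mathrm{d}y\big)\lVert\nabla\rho\rVert_{L^1}$. Dividing the embedding inequality by $r^N$ and letting $r\to0$ then forces $\int_\Omega\omega(x_0,y)\,\mathrm{d}y\le C/2$; as $x_0$ is arbitrary, this is exactly \eqref{fwpboundedintro} with $K=C/2$, and the same estimate rules out the value $+\infty$. The crux is the uniform control of this leading term: one must ensure that the only potentially unbounded contribution, namely the $y$-integral of $\omega$ near the boundary, is captured from below by $\int_\Omega\omega(x_0,y)\,\mathrm{d}y$ through lower semicontinuity, and that the diagonal and $L^1$ remainders are genuinely negligible at order $r^N$.
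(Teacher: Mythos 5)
Your proposal is correct, and two of its three parts track the paper's own proof closely, while the sufficiency step takes a genuinely different route. For $\mathrm{NLBV}_\omega(\Omega)\hookrightarrow\mathrm{BV}(\Omega)$, your substitution $\phi\mapsto k\phi/\omega$ is the same one-line reduction to the constant weight that the paper performs via Lemma \ref{NLTV^omegaLemma} (there one rescales $\phi\mapsto k\phi$ and uses the characterization $\mathrm{NLTV}_\omega=\mathrm{NLTV}^\omega$), followed in both cases by Proposition \ref{NLBV=BVEquivNorm}. For the necessity of \eqref{fwpboundedintro}, your concentration argument is also essentially the paper's: the family $u_r(x)=r\,\rho((x-x_0)/r)$, normalized by $r^{-N}$, plays exactly the role of the functions $h^{x_0}_{1,\epsilon}$ of Lemma \ref{P1TechLemmaNecCond}, whose gradient modulus is the mollifier $J_\epsilon$ so that the normalization is built in; both proofs split the double integral into a diagonal block plus twice an off-diagonal term and recover $\int_\Omega\omega(x_0,y)\,\mathrm{d}y$ by Fatou's lemma, the only cosmetic differences being that the paper first converts the hypothesis into $\mathrm{NLBV}(\Omega)\hookrightarrow\mathrm{NLBV}_\omega(\Omega)$ and compares against $\|h\|_{L^1}+\mathrm{NLTV}_1(h)$, while you compare directly against $\|u_r\|_{\mathrm{BV}(\Omega)}$ (note also that for your lower bound the $O(r^{2N})$ estimate on the diagonal block is superfluous: that term is nonnegative and can simply be dropped). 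Where you genuinely diverge is the sufficiency of \eqref{fwpboundedintro}: you prove $\mathrm{NLTV}_\omega(u)\le 2K\,\mathrm{TV}(u)$ first for smooth $u$ via the pointwise formula (Proposition \ref{C1NLTV}) and the triangle inequality, then extend to all of $\mathrm{BV}(\Omega)$ by strict (Anzellotti--Giaquinta) approximation combined with the lower semicontinuity of Lemma \ref{lemma:lsc_nlbv}; the paper instead argues by duality for every $u\in\mathrm{BV}(\Omega)$ at once, building from each competitor $\phi$ with $|\phi|\le\omega$ the admissible fields $\varphi^1_\phi(x)=K^{-1}\int_\Omega\phi(y,x)\,\mathrm{d}y$ and $\varphi^2_\phi(x)=-K^{-1}\int_\Omega\phi(x,y)\,\mathrm{d}y$. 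Both routes yield the same constant $2K$; yours is shorter and conceptually transparent but leans on the classical $\mathrm{BV}$ approximation theorem (valid on any open set, so this is legitimate), whereas the paper's duality construction is self-contained and consistent with the authors' deliberate avoidance of approximation machinery in this nonlocal setting.
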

Condition \eqref{Muckenhoupt} implies that $\omega$ belongs to the well-known global Muckenhoupt class in $\Omega \times \Omega$, usually termed as $A_1(\Omega \times \Omega)$ in the literature \cite{Muckenhoupt,GCRB}. That is, there exists a positive constant $\tilde{c}$ such that
\begin{equation}
\frac{\tilde c}{\lvert B((x,y),r) \rvert}\int_{B((x,y),r)} \omega (\tilde x, \tilde y) \mathrm{d}\tilde x \mathrm{d} \tilde y \le \omega(x,y),
\end{equation}
for all $(x,y) \in \Omega \times \Omega$ and $r>0$ such that $B((x,y),r) \subset \overline{\Omega\times \Omega}$. If, in addition, $\Omega$ is bounded and $\omega$ is assumed to be positive everywhere in $\Omega \times \Omega$, then the aforementioned implication becomes an equivalence. At this point, it is worth pointing out that the \textit{local} counterparts of $\mathrm{NLBV}_\omega(\Omega)$ were studied by Baldi\footnote{Indeed, an equivalent definition to \eqref{NLTVDefIntro} (see Lemma \ref{NLTV^omegaLemma}) can be given which closely resembles Baldi's weighted $\mathrm{TV}$ definition.} in \cite{Baldi01}. In particular, \cite[Theorem 4.1]{Baldi01} is somewhat of an analogous result to Theorem \ref{Theorem_main_equality_NLBV}. See also \cite{ChenRao2003,davoli2024} for applications to image denoising, as well as \cite{AthavaleJerrardNovagaOrlandi,CarlierComte,IonescuLachandRobert} for other applications.

The proof of Theorem \ref{Theorem_main_equality_NLBV} is given in our Subsection \ref{GeneralWeightsSubsection}, where we also address other questions. Proving that \eqref{Muckenhoupt} is a sufficient condition for the corresponding embedding will readily follow using a different characterization of $\mathrm{NLTV}_\omega$, see Lemma \ref{NLTV^omegaLemma}. Moreover, proving that \eqref{fwpboundedintro} is a sufficient condition for the converse embedding will also be straightforward and will follow from some simple computations. Instead, proving that \eqref{fwpboundedintro} is also a necessary condition for the embedding $\mathrm{BV}(\Omega) \hookrightarrow \mathrm{NLBV}_\omega(\Omega)$ will be more involved. For a brief description of the argument used to prove this last part, we point the reader to the text below the forthcoming Theorem \ref{Theorem_equality_Sobolev}.

Notice that the embedding $\mathrm{BV} \hookrightarrow \mathrm{NLBV}_\omega$ is ``solved", in the sense that condition \eqref{fwpboundedintro} is a sufficient and necessary condition for the embedding to hold. However, a condition which is only sufficient (see Remark \ref{NotNecRemark}) is given for the converse embedding $\mathrm{NLBV}_\omega \hookrightarrow \mathrm{BV}$. In Subsection \ref{GeneralWeightsSubsection} a necessary condition analogous to \eqref{fwpboundedintro} will be derived, see Proposition \ref{NecCondCompletenessBV}. At the time of writing, we have not been able to check whether it is also sufficient or not, and thus we leave the matter of finding a necessary and sufficient condition for the aforementioned embedding as an open question.

After having focused on the bounded variation setting, a natural question is that of deriving a Sobolev counterpart which mimics the classical relation between $W^{1,1}$ and $\mathrm{BV}$. As it turns out, we can establish such spaces in a natural way using the nonlocal partial derivatives as in \eqref{NlStrongDerIntro}: Given $u \in L^1_{\mathrm{loc}}(\Omega)$ and $i = 1,\ldots, N$, we say that $v_i \in L^1_{\mathrm{loc}}(\Omega \times \Omega)$ is a \textit{weak $i$-th nonlocal partial derivative} of $u$ if for all $\psi \in \mathcal{C}^1_c(\Omega \times \Omega)$ one has
\begin{equation}
\int_\Omega u(x) \partial^*_{\omega,i}(\psi)(x) \mathrm{d}x=-\int_{\Omega \times \Omega} v_i(x,y)\psi(x,y) \mathrm{d}x \mathrm{d}y.
\end{equation}

Similarly to classical weak derivatives, one readily checks that if such a weak nonlocal derivative exists, it is unique in the a.e.~sense, so we systematically set $\partial_{\omega,i}u:= v_i$. Moreover, an integration by parts shows that if $u \in C^1(\Omega)$, then 
\begin{equation}
    \partial_{\omega,i}u(x,y)=\omega(x,y)(\partial_i u(y)-\partial_i u(x)) \quad \mbox{for all } (x,y) \in \Omega \times \Omega.
\end{equation}
When the weak nonlocal derivatives exist for each direction, we define the \textit{weak nonlocal gradient} as
\begin{equation}
    \nabla_\omega u:= (\partial_{\omega,1} u,\ldots,\partial_{\omega,N} u) \in L^1_{\mathrm{loc}}(\Omega \times \Omega;\mathbb{R}^N).
\end{equation}

For all $p \in [1,+\infty]$, we then define the set
\begin{equation}
W^{1,p}_\omega(\Omega):= \{ u \in L^p(\Omega): \nabla_\omega u \in L^p(\Omega \times \Omega;\mathbb{R}^N)\},
\end{equation}
where we fix the norm in $L^p(\Omega\times\Omega;\mathbb{R}^N)$ to be
\begin{equation}
    \|v\|_{L^p(\Omega\times\Omega;\mathbb{R}^N)} := \left( \int_\Omega\int_\Omega |v|^p \mathrm{d}x \mathrm{d}y \right)^{\frac{1}{p}} \quad \mbox{ for all } v \in L^p(\Omega \times \Omega;\mathbb{R}^N),
\end{equation}
with $|\cdot|$ denoting the Euclidean norm in $\mathbb{R}^N$. One then shows that this space is a Banach space when endowed with the norm

\begin{equation}
    \|u\|_{W^{1,p}_\omega(\Omega)}:=\|u\|_{L^p(\Omega)}+\|\nabla_\omega u\|_{L^p(\Omega\times\Omega;\mathbb{R}^N)},
\end{equation}
see Proposition \ref{Proposition_Banach_Sob}.

The introduction and study of these nonlocal Sobolev spaces is one of the main developments of this article. To our knowledge, such function spaces have not been considered before in the literature. Indeed, the closest match (to our knowledge) to our weak formulation corresponds to the work recently developed by Comi and Stefani et~al.~\cite{Comi-Stefani19,ComSte19,BrCaCoSt20}, which establishes a distributional approach to fractional Sobolev and $\mathrm{BV}$ spaces. Also of interest is Šilhavý’s paper \cite{vsilhavy2020fractional}, where fractional operators are defined weakly in a similar manner to our setting. A comparison between our spaces and the fractional ones is given below the forthcoming Theorem \ref{Theorem_Trivial}. For more general literature on weighted nonlocal Sobolev spaces we point the reader to \cite{d2021towards, hepp2024divergence, pedregal2024special} and the works cited therein.

Similarly to the bounded variation case, a detailed study of $W^{1,p}_\omega(\Omega)$ is carried out in Subsection \ref{section_NL_Sobolev}, where we also extend some of the results in \cite{Kindermann} to the Sobolev setting. Moreover, we also study the relation between the classical Sobolev space $W^{1,p}$ and our nonlocal Sobolev spaces $W^{1,p}_\omega$ according to the behavior of $\omega$. To start off, let $W^{1,p}_1(\Omega)$ be the space associated to the weight which is equal to $1$ in $\Omega\times \Omega$. We prove the following:
\begin{theorem}[Relation between $W^{1,p}_1(\Omega)$ and $W^{1,p}(\Omega)$ when $\Omega$ has finite measure]\label{Theorem_equality_constant_weight}
Let $p\in[1,+\infty]$ and $\Omega$ be a nonempty open subset of $\mathbb{R}^N$ with finite measure if $p \in [1,+\infty)$ or general if $p=+\infty$. Then, we have $W^{1,p}_1(\Omega)=W^{1,p}(\Omega)$, with equivalence of norms.
\end{theorem}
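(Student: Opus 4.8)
The plan is to establish the two continuous embeddings $W^{1,p}(\Omega) \hookrightarrow W^{1,p}_1(\Omega)$ and $W^{1,p}_1(\Omega) \hookrightarrow W^{1,p}(\Omega)$ separately, each with an explicit norm bound, which together yield the asserted equality with equivalence of norms.

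For the embedding $W^{1,p}(\Omega) \hookrightarrow W^{1,p}_1(\Omega)$, I would take $u \in W^{1,p}(\Omega)$ and show that $v_i(x,y) := \partial_i u(y) - \partial_i u(x)$ is the weak nonlocal $i$-th derivative of $u$. With $\omega \equiv 1$ one computes $\partial_{1,i}^*(\psi)(x) = \int_\Omega[(\partial_{(2),i}\psi)(y,x) - (\partial_{(1),i}\psi)(x,y)]\,\mathrm{d}y$, where $\partial_{(j),i}$ denotes differentiation in the $i$-th slot of the $j$-th argument. Applying the definition of the classical weak derivative $\partial_i u$ to the test functions $x\mapsto \psi(x,y)$ and $x\mapsto\psi(y,x)$ for each fixed $y$, then integrating in $y$ (Fubini) and relabeling the variables in the second integral, one recovers precisely $\int_\Omega u\,\partial_{1,i}^*(\psi)\,\mathrm{d}x = -\int_{\Omega\times\Omega} v_i\,\psi\,\mathrm{d}x\,\mathrm{d}y$. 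It then remains to check that $v_i \in L^p(\Omega\times\Omega)$: for $p<\infty$ the convexity bound $|\partial_i u(y)-\partial_i u(x)|^p \le 2^{p-1}(|\partial_i u(y)|^p+|\partial_i u(x)|^p)$ together with $|\Omega|<\infty$ gives $\|v_i\|_{L^p(\Omega\times\Omega)} \le 2|\Omega|^{1/p}\|\partial_i u\|_{L^p(\Omega)}$, while for $p=\infty$ the triangle inequality gives $\|v_i\|_{L^\infty}\le 2\|\partial_i u\|_{L^\infty}$ with no hypothesis on $|\Omega|$. Summing over $i$ yields $\|u\|_{W^{1,p}_1(\Omega)} \le C\|u\|_{W^{1,p}(\Omega)}$.

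For the reverse embedding $W^{1,p}_1(\Omega) \hookrightarrow W^{1,p}(\Omega)$, the idea is to recover a classical weak derivative from the nonlocal one by testing against product functions. Fixing $\eta \in \mathcal{C}^1_c(\Omega)$ with $\int_\Omega \eta = 1$, taking $u \in W^{1,p}_1(\Omega)$ with weak nonlocal derivatives $v_i$, and applying the defining identity to $\psi(x,y) = \varphi(x)\eta(y)$ for arbitrary $\varphi \in \mathcal{C}^1_c(\Omega)$, a direct computation gives $\partial_{1,i}^*(\psi)(x) = \big(\int_\Omega\varphi\big)\partial_i\eta(x) - \partial_i\varphi(x)$, so that the defining identity rearranges into
\[
\int_\Omega u\,\partial_i\varphi\,\mathrm{d}x = \Big(\int_\Omega\varphi\Big)\Big(\int_\Omega u\,\partial_i\eta\Big) + \int_\Omega \varphi(x)\Big(\int_\Omega v_i(x,y)\eta(y)\,\mathrm{d}y\Big)\mathrm{d}x = \int_\Omega \varphi\,(c_i + g_i)\,\mathrm{d}x,
\]
where $c_i := \int_\Omega u\,\partial_i\eta$ is a constant and $g_i(x) := \int_\Omega v_i(x,y)\eta(y)\,\mathrm{d}y$. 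This identifies the classical weak derivative $\partial_i u = -(c_i + g_i)$. One checks $g_i \in L^p(\Omega)$ by Minkowski's integral inequality and Fubini (using $v_i \in L^p(\Omega\times\Omega)$ and $\eta \in \mathcal{C}^1_c(\Omega)$), whereas the constant $c_i$ lies in $L^p(\Omega)$ precisely because $|\Omega|<\infty$ when $p<\infty$ (and trivially when $p=\infty$). Hence $u \in W^{1,p}(\Omega)$ with $\|\partial_i u\|_{L^p} \le C(\eta)(\|v_i\|_{L^p(\Omega\times\Omega)} + \|u\|_{L^p})$, giving $\|u\|_{W^{1,p}(\Omega)} \le C\|u\|_{W^{1,p}_1(\Omega)}$.

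The main obstacle is this reverse direction. Unlike the forward one, where the candidate derivative is dictated by the smooth formula $\partial_{1,i}u(x,y)=\partial_i u(y)-\partial_i u(x)$, here one must extract a genuinely local object out of the purely nonlocal information carried by $v_i$. The device of testing against $\varphi\otimes\eta$ with $\eta$ normalized is what decouples the two arguments and exposes the local derivative, at the price of an additive constant $c_i$; it is exactly the integrability of this constant that forces the finite-measure hypothesis when $p<\infty$, in agreement with the statement that the case $p=\infty$ needs no restriction on $\Omega$.
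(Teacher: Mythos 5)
Your proof is correct and is essentially the paper's own argument: the forward embedding uses the formula $\partial_{1,i}u(x,y)=\partial_i u(y)-\partial_i u(x)$ together with the convexity bound that requires $|\Omega|<+\infty$ when $p<+\infty$ (the paper's Proposition \ref{WeakDerImplyNlWeakDer} and Lemma \ref{integrallemma}), while the reverse embedding recovers the local derivative by testing against products $\varphi(x)\eta(y)$, which is exactly the paper's Proposition \ref{NlWeakDerImplyWeakDer} with $\theta=\eta$, your identity $\partial_i u=-(c_i+g_i)$ being formula \eqref{WeakDerInTermsOfNlWeakDer}, followed by the same $L^p$ estimates. The only superficial differences are your normalization $\int_\Omega\eta=1$ and your use of Minkowski's integral inequality where the paper carries out the Hölder estimates with explicit constants $C^p(\theta)$.
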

The proof of Theorem \ref{Theorem_equality_constant_weight} is postponed until Subsection \ref{ConstWeightSubsection}, and relies on the properties of the nonlocal Sobolev space studied in Subsection \ref{section_NL_Sobolev}, namely that a function having weak ``local" derivatives is equivalent to it having weak nonlocal derivatives induced by $\omega \equiv 1$, see Propositions \ref{WeakDerImplyNlWeakDer} and \ref{NlWeakDerImplyWeakDer}. This fact, coupled with the integral/supremum Lemma \ref{integrallemma} will show that the spaces are equal as sets. Afterwards, to prove equivalence of norms, we will bind the local/nonlocal Sobolev seminorms by the appropriate norms using the formulas derived from the aforementioned propositions.

Furthermore, we also prove an analogous result to Theorem \ref{Theorem_main_equality_NLBV}, namely:
\begin{theorem}[Necessary and sufficient conditions for the equality between $W^{1,p}_\omega(\Omega)$ and $W^{1,p}(\Omega)$ when $\Omega$ has finite measure]\label{Theorem_equality_Sobolev}
    Let $p\in[1,+\infty]$, $\Omega$ be a nonempty open subset of $\mathbb{R}^N$ with finite measure if $p\in [1, +\infty)$ or general if $p=+\infty$, and $\omega$ an admissible weight function in $\Omega$. If there exists a constant $k>0$ such that for all $(x,y) \in \Omega\times\Omega$ one has

    \begin{equation} \label{omegaBoundedBelow}
        k \le \omega(x,y),
    \end{equation}
    then $W^{1,p}_\omega(\Omega) \hookrightarrow W^{1,p}(\Omega)$. Moreover, the embedding $W^{1,p}(\Omega) \hookrightarrow W^{1,p}_\omega(\Omega)$ holds if and only if there exists a constant $K>0$ such that for all $x\in \Omega$

    \begin{equation} \label{fwpBounded}
        f_\omega^p(x)\le K,
    \end{equation}
    where, for all $x\in \Omega$,

    \begin{equation}
        f_\omega^p(x) := \begin{cases}
            \int_\Omega \omega(x,y)^p \mathrm{d}y & \mbox{ if } p\in[1,+\infty),\\
            \esssup_{y\in\Omega} \omega(x,y) & \mbox{ if } p=+\infty.
        \end{cases}
    \end{equation}
\end{theorem}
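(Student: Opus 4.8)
The plan is to establish the theorem in three independent pieces: first the embedding $W^{1,p}_\omega(\Omega) \hookrightarrow W^{1,p}(\Omega)$ under \eqref{omegaBoundedBelow}; then the sufficiency of \eqref{fwpBounded} for $W^{1,p}(\Omega) \hookrightarrow W^{1,p}_\omega(\Omega)$; and finally its necessity, which I expect to be the crux. Throughout I would lean on Proposition \ref{WeakDerImplyNlWeakDer} (a function with weak local derivatives has weak nonlocal derivatives, given pointwise by $\partial_{\omega,i}u(x,y)=\omega(x,y)(\partial_i u(y)-\partial_i u(x))$), on its converse Proposition \ref{NlWeakDerImplyWeakDer} for the constant weight, and on Theorem \ref{Theorem_equality_constant_weight} identifying $W^{1,p}_1(\Omega)$ with $W^{1,p}(\Omega)$. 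The first two pieces are essentially computational; the third requires a concentration argument.

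For the embedding under \eqref{omegaBoundedBelow}, the plan is to reduce to the constant weight via a change of test function. Given $u\in W^{1,p}_\omega(\Omega)$ and $\psi\in\mathcal{C}^1_c(\Omega\times\Omega)$, I would set $\phi:=\omega^{-1}\psi$, which lies in $\mathcal{C}^1_c(\Omega\times\Omega)$ precisely because $\omega$ is $\mathcal{C}^1$ and bounded below by $k>0$. Using the symmetry $\omega(x,y)=\omega(y,x)$ one checks the pointwise identity $\omega(x,y)\bigl(\phi(y,x)-\phi(x,y)\bigr)=\psi(y,x)-\psi(x,y)$, so that $\partial^\ast_{\omega,i}(\phi)=\partial^\ast_{1,i}(\psi)$. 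Substituting into the defining duality for $\partial_{\omega,i}u$ then yields $\partial_{1,i}u=\omega^{-1}\partial_{\omega,i}u$ in the weak sense. Since $\omega\ge k$ forces $|\partial_{1,i}u|\le k^{-1}|\partial_{\omega,i}u|$ pointwise, I get $\nabla_1 u\in L^p(\Omega\times\Omega;\mathbb{R}^N)$ with $\|\nabla_1 u\|_{L^p}\le k^{-1}\|\nabla_\omega u\|_{L^p}$, whence $u\in W^{1,p}_1(\Omega)=W^{1,p}(\Omega)$ by Theorem \ref{Theorem_equality_constant_weight}, with the corresponding norm control. This step works uniformly for all $p\in[1,+\infty]$.

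For the sufficiency of \eqref{fwpBounded}, I would take $u\in W^{1,p}(\Omega)$, invoke Proposition \ref{WeakDerImplyNlWeakDer} to write $\partial_{\omega,i}u(x,y)=\omega(x,y)(\partial_i u(y)-\partial_i u(x))$, and estimate directly. For $p<+\infty$, the convexity bound $|\nabla u(y)-\nabla u(x)|^p\le 2^{p-1}(|\nabla u(y)|^p+|\nabla u(x)|^p)$, combined with Fubini and the symmetry identity $\int_\Omega\omega(x,y)^p\,\mathrm{d}x=f_\omega^p(y)$, gives $\|\nabla_\omega u\|_{L^p}^p\le 2^{p-1}\cdot 2K\,\|\nabla u\|_{L^p}^p$, hence the continuous embedding. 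The case $p=+\infty$ is the analogous pointwise estimate $|\nabla_\omega u(x,y)|\le 2\,\omega(x,y)\,\|\nabla u\|_{L^\infty}\le 2K\|\nabla u\|_{L^\infty}$ using $\esssup_{(x,y)}\omega\le K$.

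The necessity of \eqref{fwpBounded} is where the work lies. Assuming $W^{1,p}(\Omega)\hookrightarrow W^{1,p}_\omega(\Omega)$ with constant $C$, I would fix an interior point $x_0$, a unit bump $\eta\in\mathcal{C}^\infty_c(B(0,R))$, and test against the concentrating family $u_\epsilon(x):=\eta((x-x_0)/\epsilon)$, which is smooth so that the pointwise formula for $\nabla_\omega u_\epsilon$ applies. Discarding all but the region $\{x\notin B(x_0,R\epsilon),\,y\in B(x_0,R\epsilon)\}$, where $\nabla u_\epsilon(x)=0$, and rescaling $y=x_0+\epsilon z$, one gets $\|\nabla_\omega u_\epsilon\|_{L^p}^p\ge \epsilon^{N-p} I_\epsilon$ with $I_\epsilon=\int_{\Omega\setminus B(x_0,R\epsilon)}\int_{|z|\le R}\omega(x,x_0+\epsilon z)^p|\nabla\eta(z)|^p\,\mathrm{d}z\,\mathrm{d}x$, while $\|u_\epsilon\|_{W^{1,p}}^p=\epsilon^{N-p}(\|\nabla\eta\|_{L^p}+\epsilon\|\eta\|_{L^p})^p$. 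The embedding inequality then reads $I_\epsilon\le C^p(\|\nabla\eta\|_{L^p}+\epsilon\|\eta\|_{L^p})^p$, and the delicate point is to extract a pointwise lower bound as $\epsilon\to0$: by Fatou's lemma and the continuity of $\omega$, $\liminf_{\epsilon\to0}I_\epsilon\ge f_\omega^p(x_0)\,\|\nabla\eta\|_{L^p}^p$ (which, incidentally, also rules out $f_\omega^p(x_0)=+\infty$). Letting $\epsilon\to0$ yields $f_\omega^p(x_0)\le C^p$ uniformly in $x_0$, i.e.\ \eqref{fwpBounded} with $K=C^p$. I expect the main obstacles to be justifying the Fatou step uniformly in $x_0$ despite the possible blow-up of $\omega$ near $\partial(\Omega\times\Omega)$, and treating $p=+\infty$ separately: there the correct family is $\epsilon\,\eta((\cdot-x_0)/\epsilon)$, whose gradient has $\epsilon$-independent $L^\infty$ norm, and one converts the resulting essential supremum of $\omega$ over a shrinking $y$-neighborhood of $x_0$ into the pointwise value $f_\omega^\infty(x_0)=\sup_x\omega(x,x_0)$ using continuity.
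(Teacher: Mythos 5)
Your proposal is correct and takes essentially the same route as the paper: the first embedding via the identity $\partial_{1,i}u=\omega^{-1}\partial_{\omega,i}u$ (the content of Remark \ref{omegaimplies1Remark}) combined with Theorem \ref{Theorem_equality_constant_weight}, sufficiency via the pointwise formula of Proposition \ref{WeakDerImplyNlWeakDer} and the $2^{p-1}$-convexity bound, and necessity via test functions concentrating at $x_0$, restriction to the cross region where one gradient vanishes, Fatou's lemma and continuity of $\omega$, with a separate small-amplitude family for $p=+\infty$ — which is exactly the mechanism of the paper's Lemmas \ref{Ham_JacLemma} and \ref{P1TechLemmaNecCond} and the functions $\rho_\epsilon^{x_0}$. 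The only differences are cosmetic (you use a generic rescaled bump and let the homogeneity of the embedding inequality absorb the normalization, where the paper builds profiles with $|\nabla h_{p,\epsilon}^{x_0}|^p$ equal to a normalized mollifier), and your worry about justifying the Fatou step ``uniformly in $x_0$'' is moot: Fatou needs no domination, and the embedding constant $C$ is already independent of $x_0$.
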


Notice that condition \eqref{fwpboundedintro} of Theorem \ref{Theorem_main_equality_NLBV} coincides with \eqref{fwpBounded} with $p=1$. As in Theorem \ref{Theorem_main_equality_NLBV}, Theorem \ref{Theorem_equality_Sobolev} ``solves" one of the embeddings but only gives a sufficient condition for the other (again, see Remark \ref{NotNecRemark}). We point the reader towards Proposition \ref{NecCondCompletenessSob} for a necessary condition, and leave as an open question the matter of establishing a necessary and sufficient condition for the ``unsolved" embedding. 

Let us give some comments on the proof of Theorem \ref{Theorem_equality_Sobolev}, which will be carried out in Subsection \ref{GeneralWeightsSubsection}: Throughout the proof we will make heavy use of Theorem \ref{Theorem_equality_constant_weight} above. Proving that \eqref{omegaBoundedBelow} is a sufficient condition for the embedding $W^{1,p}_\omega(\Omega) \hookrightarrow W^{1,p}(\Omega)$ (for any $p\in[1,+\infty]$) follows immediately from the fact that the weak nonlocal gradient induced by $\omega$ of any function in $W^{1,p}(\Omega)$ is $\omega$ times the weak nonlocal gradient induced by $1$ of the function, see Remark \ref{1impliesomegaRemark}. Showing that condition \eqref{fwpBounded} implies $W^{1,p}(\Omega) \hookrightarrow W^{1,p}_\omega(\Omega)$ is also straightforward and follows from an integral/supremum inequality.

Instead, the bulk of the proof of Theorem \ref{Theorem_equality_Sobolev} will be dedicated to proving that \eqref{fwpBounded} is also a necessary condition for the aforementioned embedding. To do this in the $p\in[1,+\infty)$ case, for a given $x_0\in\Omega$ we will construct a family of test functions whose nonlocal Sobolev seminorm tends to $f_\omega^p(x_0)$, see Lemma \ref{P1TechLemmaNecCond}. This will only give us the embeddings ``seminorm-wise" (see Proposition \ref{NecCondProp}), and additionally we will have to check that the $L^p$ norms of the test functions tend to zero. To prove the $p=+\infty$ case, a different argument involving the same test functions will be used. Since this procedure essentially relies on test functions, the argument can be adapted to prove the corresponding part of Theorem \ref{Theorem_main_equality_NLBV} with $p=1$ using the fact that $\mathrm{NLTV}_\omega$ coincides with the nonlocal Sobolev seminorm for weakly derivable functions, see Proposition \ref{C1NLTV}.

On another note, it may come as a surprise that, despite our weight functions being $\mathcal{C}^1$, the space of smooth functions with compact support may not be included in the nonlocal Sobolev/$\mathrm{BV}$ space for some choices of $\omega$. What is more, certain weight functions induce nonlocal spaces for which the null function is the only $\mathcal{C}_c^\infty$ function contained in them. The weight functions which ensure that this is not the case are characterized as follows:
\begin{theorem}[Relation with the space of test functions]\label{Theorem_TestFunctions}
Let $\Omega$ be a nonempty open subset of $\mathbb{R}^N$, $p \in [1,+\infty)$ and $\omega$ be an admissible weight function in $\Omega$. Then, we have that $W^{1,p}_\omega(\Omega) \cap \mathcal{C}^\infty_c(\Omega) \not = \{0\}$ if and only if there exists a compact set $K \subset \Omega$ with nonempty interior such that
\begin{equation}\label{locally_bounded_omega}
\int_K\int_\Omega\omega(x,y)^p\mathrm{d}y\mathrm{d}x<+\infty.
\end{equation}

Moreover, $W^{1,\infty}_\omega(\Omega) \cap \mathcal{C}^\infty_c(\Omega) \not = \{0\}$ if and only if there exists a compact set $K \subset \Omega$ with nonempty interior such that
\begin{equation}\label{locally_bounded_omega_infty}
\sup_{(x,y) \in K\times \Omega} \omega(x,y) <+\infty.
\end{equation}

Similarly, for $p\in[1,+\infty)$, $\mathcal{C}^\infty_c(\Omega) \subset W^{1,p}_\omega(\Omega)$ is true if and only if \eqref{locally_bounded_omega} holds for any compact set $K \subset \Omega$, and $\mathcal{C}^\infty_c(\Omega) \subset W^{1,\infty}_\omega(\Omega)$ is true if and only if \eqref{locally_bounded_omega_infty} holds for any compact set $K \subset \Omega$. If $p=1$, the same statements hold when $W^{1,1}_\omega(\Omega)$ is replaced by $\mathrm{NLBV}_\omega(\Omega)$.
\end{theorem}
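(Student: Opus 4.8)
The plan is to reduce everything to the explicit formula for the weak nonlocal gradient of a smooth function. Recall that for $u \in \mathcal{C}^1(\Omega)$ one has $\nabla_\omega u(x,y) = \omega(x,y)\bigl(\nabla u(y)-\nabla u(x)\bigr)$, so that $|\nabla_\omega u(x,y)| = \omega(x,y)\,|\nabla u(y)-\nabla u(x)|$ pointwise. For $u \in \mathcal{C}^\infty_c(\Omega)$ with $S := \mathrm{supp}(u)$, the factor $|\nabla u(y)-\nabla u(x)|$ vanishes whenever both $x,y \notin S$, so for $p \in [1,+\infty)$,
\begin{equation*}
\|\nabla_\omega u\|_{L^p(\Omega\times\Omega;\mathbb{R}^N)}^p = \int_{(S\times\Omega)\cup(\Omega\times S)} \omega(x,y)^p\,|\nabla u(y)-\nabla u(x)|^p\,\mathrm{d}x\,\mathrm{d}y,
\end{equation*}
and by the symmetry of $\omega$ this is comparable, through the bounds $0\le|\nabla u(y)-\nabla u(x)|\le 2\|\nabla u\|_\infty$, to $\int_S\int_\Omega \omega^p$. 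Since $u \in L^p(\Omega)$ automatically, membership in $W^{1,p}_\omega(\Omega)$ is governed entirely by this last quantity; the $p=+\infty$ case is identical with the essential supremum replacing the integral. For the $\mathrm{NLBV}_\omega$ statements I would simply invoke that $\mathrm{NLTV}_\omega(u) = \int_{\Omega\times\Omega}\omega\,|\nabla u(y)-\nabla u(x)| = \|\nabla_\omega u\|_{L^1}$ for $u \in \mathcal{C}^1$ (the integration-by-parts identity of the introduction, cf.\ Proposition \ref{C1NLTV}), so that on $\mathcal{C}^\infty_c(\Omega)$ the spaces $W^{1,1}_\omega(\Omega)$ and $\mathrm{NLBV}_\omega(\Omega)$ coincide and the $p=1$ assertions transfer verbatim.

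For the ``if'' implications I would argue by construction. For nontriviality, given a compact $K$ with nonempty interior satisfying \eqref{locally_bounded_omega} (resp.\ \eqref{locally_bounded_omega_infty}), I would pick any nonzero bump function $u \in \mathcal{C}^\infty_c(\Omega)$ with $\mathrm{supp}(u) \subset K$; then $\int_S\int_\Omega\omega^p \le \int_K\int_\Omega\omega^p < +\infty$ (resp.\ the analogous supremum bound), and the displayed identity gives $u \in W^{1,p}_\omega(\Omega)\setminus\{0\}$. For the full inclusion, assuming the condition for \emph{every} compact $K$, I would take an arbitrary $u \in \mathcal{C}^\infty_c(\Omega)$ and apply the hypothesis with $K = \mathrm{supp}(u)$.

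The substance lies in the ``only if'' directions, where the key is to isolate the far region in which $y$ leaves the support. For nontriviality, from a nonzero $u \in W^{1,p}_\omega(\Omega)\cap\mathcal{C}^\infty_c(\Omega)$ I would first note $\nabla u \not\equiv 0$ (extend by zero to $\mathbb{R}^N$ and use that a compactly supported locally constant function vanishes), fix $x_0$ with $\nabla u(x_0)\neq 0$, and choose a closed ball $\overline{B}\subset\Omega$ with $|\nabla u|\ge c>0$ on $\overline{B}$. Restricting the finite integral to $x \in \overline B$, $y \in \Omega\setminus S$, where $|\nabla u(y)-\nabla u(x)| = |\nabla u(x)|\ge c$, yields $\int_{\overline B}\int_{\Omega\setminus S}\omega^p < +\infty$, while $\int_{\overline B}\int_S \omega^p<+\infty$ because $\overline B\times S$ is a compact subset of $\Omega\times\Omega$ on which the continuous $\omega$ is bounded; together these give \eqref{locally_bounded_omega} with $K=\overline B$ (whose interior $B$ is nonempty). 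For the full inclusion, given an arbitrary compact $K$ I would manufacture a test function whose gradient is bounded below \emph{throughout} $K$: take $\chi\in\mathcal{C}^\infty_c(\Omega)$ with $\chi\equiv 1$ on an open neighborhood $V\supset K$ and set $u(x):=\chi(x)\,x_1$, so that $\nabla u \equiv e_1$ and $|\nabla u|\equiv 1$ on $V$; the same splitting then produces \eqref{locally_bounded_omega} for this $K$.

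The main obstacle — and the one point requiring genuine care — is the $p=+\infty$ passage from an almost-everywhere bound to a pointwise supremum bound. After restricting $\|\nabla_\omega u\|_{L^\infty}$ to the open set $B\times(\Omega\setminus S)$ (resp.\ $V\times(\Omega\setminus S)$) one obtains $\omega \le C$ only almost everywhere there; to upgrade this to $\sup_{K\times\Omega}\omega<+\infty$ I would exploit the continuity of $\omega$, since a continuous function bounded a.e.\ on an open set is bounded everywhere on it by the same constant, and the bound extends to the closure by continuity, after which the compact region $K\times S$ is absorbed as before. One should also verify throughout that $\Omega\setminus S$ is a nonempty open set, which is immediate because a compactly supported function on an open subset of $\mathbb{R}^N$ cannot have $S=\Omega$, so that all far-region integrals and essential suprema are taken over sets of positive measure.
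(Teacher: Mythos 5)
Your proposal is correct and follows essentially the same route as the paper's proof: the splitting of $\|\nabla_\omega u\|_{L^p(\Omega\times\Omega;\mathbb{R}^N)}^p$ into the always-finite $\mathrm{supp}(u)\times\mathrm{supp}(u)$ part and the decisive $\mathrm{supp}(u)\times\left(\Omega\setminus\mathrm{supp}(u)\right)$ part is exactly the content of Remark \ref{InteractionsRemark} and Lemma \ref{TestFuncLemma}, your ball and $\chi(x)\,x_1$ constructions for the ``only if'' directions mirror the paper's choices of $K_\varphi$ and $\varphi_K$ (including the continuity upgrade from a.e.\ to pointwise bounds when $p=+\infty$), and the transfer of the $p=1$ statements to $\mathrm{NLBV}_\omega(\Omega)$ via Proposition \ref{C1NLTV} is identical. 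One caution: your opening assertion that membership in $W^{1,p}_\omega(\Omega)$ is ``governed entirely'' by $\int_S\int_\Omega \omega^p$ is an overstatement, since only the sufficiency direction of that comparison holds ($\nabla u$ may vanish on parts of $S$ where $\omega(x,\cdot)^p$ fails to be integrable), but your actual arguments never invoke the false direction, as each ``only if'' step restricts to a region where $|\nabla u|$ is bounded below.
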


The proof of Theorem \ref{Theorem_TestFunctions} will be given in Section \ref{RelationTestFunctionsSubsection}. The argument is based on the fact that the nonlocal Sobolev seminorm induced by $\omega$ of a (locally) weakly derivable function $u$ splits into two integrals (or essential supremums if $p=+\infty$) depending on $\nabla u$ and $\omega$; one dealing with the interactions within $\esssupp(u)\times \esssupp(u)$, and the other dealing with the interactions in $\esssupp(u)\times \left( \Omega \setminus \esssupp(u) \right)$, see Remark \ref{InteractionsRemark}. We will see that for test functions the first integral/supremum is always finite, and this will imply, after some arguments, that any test function belongs to the nonlocal Sobolev space if and only if the second integral/supremum is finite, see Lemma \ref{TestFuncLemma}. After seeing that this second term is finite if and only if \eqref{locally_bounded_omega}/\eqref{locally_bounded_omega_infty} holds, the proof will follow. Once the Sobolev case is proven, the $\mathrm{NLBV}_\omega$ statements will follow using Proposition \ref{C1NLTV}.

Notice that, in contrast to Theorems \ref{Theorem_main_equality_NLBV}, \ref{Theorem_equality_constant_weight} and \ref{Theorem_equality_Sobolev}, Theorem \ref{Theorem_TestFunctions} above does not require that $\Omega$ have finite measure, even for $p\neq+\infty$. It is here that we point out why we ask that the domain (i.e. $\Omega$) be of finite measure instead of being bounded, which is a more standard (yet restrictive) condition. The reason is that, for $p\in[1,+\infty)$, in the aforementioned results we are essentially only working with integrals, and to bound some of these integrals it is sufficient to ask that $\Omega$ has finite measure. This being said, on some results we do ask that $\Omega$ be bounded, and this is due to the fact that their proofs pass through classical Sobolev and $\mathrm{BV}$ results which require it.

With this in mind, let us investigate the case in which $\Omega$ has infinite measure. It turns out that, under such an assumption, the landscape changes dramatically, and for some choices of weights one is lead to (possibly) trivial spaces. Indeed, as an immediate corollary of Theorem \ref{Theorem_TestFunctions} we have:

\begin{cor} \label{cor_trivial}
    Assume that $\Omega$ is a nonempty open subset of $\mathbb{R}^N$ that has infinite measure and let $p \in [1,+\infty)$ and $\omega$ be an admissible weight function in $\Omega$. If there exists a constant $c>0$ such that for all $(x,y) \in \Omega \times \Omega$ one has
    \begin{equation*}
    c \le \omega(x,y),
    \end{equation*}
    then $\mathcal{C}^\infty_c(\Omega) \cap W^{1,p}_\omega(\Omega)=\mathcal{C}^\infty_c(\Omega) \cap \mathrm{NLBV}_\omega(\Omega)=\{0\}$.
\end{cor}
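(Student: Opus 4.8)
The plan is to derive Corollary \ref{cor_trivial} directly from Theorem \ref{Theorem_TestFunctions} by verifying that, under the stated hypotheses, the relevant integrability condition \eqref{locally_bounded_omega} fails for \emph{every} admissible choice of compact set. Indeed, Theorem \ref{Theorem_TestFunctions} asserts (for $p \in [1,+\infty)$) that $W^{1,p}_\omega(\Omega) \cap \mathcal{C}^\infty_c(\Omega) \neq \{0\}$ holds if and only if there exists \emph{some} compact set $K \subset \Omega$ with nonempty interior such that $\int_K \int_\Omega \omega(x,y)^p \, \mathrm{d}y \, \mathrm{d}x < +\infty$. Thus to prove the corollary it suffices to show that the existence of a uniform lower bound $c \le \omega(x,y)$ on $\Omega \times \Omega$, combined with $|\Omega| = +\infty$, forces this double integral to be infinite for any admissible $K$.

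The key computation is immediate: fix any compact set $K \subset \Omega$ with nonempty interior, so that $|K| > 0$. Using the lower bound $\omega(x,y) \ge c > 0$ we estimate
\begin{equation}
\int_K \int_\Omega \omega(x,y)^p \, \mathrm{d}y \, \mathrm{d}x \ge c^p \int_K \int_\Omega \mathrm{d}y \, \mathrm{d}x = c^p \, |K| \, |\Omega| = +\infty,
\end{equation}
since $|K| > 0$ and $|\Omega| = +\infty$. As $K$ was an arbitrary compact set with nonempty interior, condition \eqref{locally_bounded_omega} fails for every such $K$, and hence the ``if and only if'' of Theorem \ref{Theorem_TestFunctions} yields $W^{1,p}_\omega(\Omega) \cap \mathcal{C}^\infty_c(\Omega) = \{0\}$. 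The assertion for $\mathrm{NLBV}_\omega(\Omega)$ in the case $p = 1$ follows in exactly the same manner, invoking the final sentence of Theorem \ref{Theorem_TestFunctions}, which states that the same characterization holds with $W^{1,1}_\omega(\Omega)$ replaced by $\mathrm{NLBV}_\omega(\Omega)$.

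Since this is purely a specialization of the already-proven Theorem \ref{Theorem_TestFunctions}, I do not anticipate any genuine obstacle. The only point requiring a moment's care is the logical structure of the quotient: Theorem \ref{Theorem_TestFunctions} gives nontriviality iff \emph{some} good $K$ exists, so to conclude triviality one must rule out \emph{all} candidate $K$'s — which is precisely what the uniform lower bound accomplishes, as the bound $\omega \ge c$ holds globally and is therefore inherited by every compact subset. I would present the argument as a short direct proof, noting that the result is stated as a corollary precisely because it requires nothing beyond the characterization already established.
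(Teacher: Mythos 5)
Your proof is correct and matches the paper's own argument: the paper likewise deduces the corollary from Theorem \ref{Theorem_TestFunctions} by observing that $\omega \ge c$ and $|\Omega| = +\infty$ force $\int_K\int_\Omega \omega(x,y)^p\,\mathrm{d}y\,\mathrm{d}x \ge c^p|K||\Omega| = +\infty$ for every compact $K$ with nonempty interior, so condition \eqref{locally_bounded_omega} fails for all admissible $K$ (and similarly for the $\mathrm{NLBV}_\omega$ case via the $p=1$ characterization). No gaps; the argument is exactly the intended one.
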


The proof of the corollary above readily follows from noticing that, if $\omega$ is bounded away from zero and $\Omega$ is of infinite measure, the quantity \eqref{locally_bounded_omega} is infinite regardless of $K$ (assuming it has nonempty interior). 

It is natural to conjecture that in the case in which $W^{1,p}_\omega(\Omega) \cap \mathcal{C}^\infty_c(\Omega)=\{0\}$ one actually has $W^{1,p}_\omega(\Omega)=\{0\}$, and analogously for $\mathrm{NLBV}_\omega(\Omega)$. This would easily follow from Corollary \ref{cor_trivial} combined with density and approximation results for $W^{1,p}_\omega(\Omega)$ or $\mathrm{NLBV}_\omega(\Omega
)$ with respect to compactly supported smooth functions. We have not been able to prove such results, essentially due to the fact that we do not have an appropriate product rule for the nonlocal operators $\mathrm{div}_\omega$ and $\nabla_\omega$ (for more comments on approximation results in our setting, we direct the reader to the start of Subsection \ref{section_NLBV}). Nevertheless, we have been able to give a positive answer to the previous conjecture for the Sobolev statement in all cases, and for the $\mathrm{NLBV}_\omega$ statement provided $\Omega$ satisfies a certain condition:
\begin{theorem}[The case in which $\Omega$ has infinite measure and $\omega$ is bounded away from zero]\label{Theorem_Trivial}
Assume that $\Omega$ is a nonempty open subset of $\mathbb{R}^N$ that has infinite measure and let $p \in [1,+\infty)$ and $\omega$ be an admissible weight function in $\Omega$. If there exists a constant $c>0$ such that for all $(x,y) \in \Omega \times \Omega$ one has
\begin{equation*}
c \le \omega(x,y),
\end{equation*}
then we have that $W^{1,p}_\omega(\Omega) = \{0\}$. Moreover, if we further assume that there exists $(\Omega_{n})_{n \in \mathbb{N}}$, a sequence of open subsets of $\Omega$ such that for each $n \in \mathbb{N}$ the set $\Omega_n$ is of infinite measure and $\mathrm{dist}(\Omega_n,\partial \Omega)$ is positive, then $\mathrm{NLBV}_\omega(\Omega)=\{0\}$.
\end{theorem}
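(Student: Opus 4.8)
The plan is to reduce both assertions to the constant-weight case $\omega\equiv 1$ and then to exploit the infinite measure of $\Omega$ to force the nonlocal gradient to vanish identically. The reduction rests on a single substitution. Since $\omega$ is $\mathcal{C}^1$ and bounded below by $c>0$, the map $\psi\mapsto \psi/\omega$ takes $\mathcal{C}^1_c(\Omega\times\Omega)$ into itself, and using the symmetry $\omega(x,y)=\omega(y,x)$ one computes directly that $\omega(x,y)\bigl((\psi/\omega)(y,x)-(\psi/\omega)(x,y)\bigr)=\psi(y,x)-\psi(x,y)$, whence $\partial^\ast_{\omega,i}(\psi/\omega)=\partial^\ast_{1,i}(\psi)$ for every $i$ (and, identically, $\mathrm{div}_\omega(\phi/\omega)=\mathrm{div}_1(\phi)$). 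Testing the defining identity of $\partial_{\omega,i}u$ against $\psi/\omega$ then shows that any $u\in W^{1,p}_\omega(\Omega)$ admits weak nonlocal derivatives induced by $\omega\equiv 1$, with $\partial_{1,i}u=\partial_{\omega,i}u/\omega$; since $0<1/\omega\le 1/c$, these belong to $L^p(\Omega\times\Omega)$, so that $u\in W^{1,p}_1(\Omega)$. I stress that Theorem \ref{Theorem_equality_constant_weight} is not available here, as $\Omega$ has infinite measure, so this first reduction must be carried out by hand.

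For the Sobolev statement I would then apply Proposition \ref{NlWeakDerImplyWeakDer} to obtain classical weak partial derivatives $\partial_i u$ with $\partial_{1,i}u(x,y)=\partial_i u(y)-\partial_i u(x)$ a.e. Together with $\nabla_\omega u\in L^p$ and the lower bound on $\omega$, this yields $\int_\Omega\int_\Omega |\partial_i u(y)-\partial_i u(x)|^p\,\mathrm{d}x\,\mathrm{d}y<+\infty$ for each $i$. By Fubini the inner integral is finite for a.e.\ $x$, so $\partial_i u(\cdot)-\partial_i u(x)\in L^p(\Omega)$ for a.e.\ $x$; subtracting two such memberships shows that the constant $\partial_i u(x_2)-\partial_i u(x_1)$ lies in $L^p(\Omega)$, and since $|\Omega|=+\infty$ this constant must be $0$. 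Hence each $\partial_i u$ equals a constant $a_i$ a.e., i.e.\ $\nabla u\equiv a$, so $u$ agrees a.e.\ with an affine function on each connected component of $\Omega$.

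It then remains to deduce $u=0$ from the facts that $u$ is (component-wise) affine and that $u\in L^p(\Omega)$ with $|\Omega|=+\infty$; I expect this to be the main obstacle, and it is the place where the infinite-measure hypothesis is used in an essential way. The intended argument is a slicing/Fubini computation in the direction of the slope $a$: a nonzero slope should force $|u|$ to stay bounded below by a positive quantity over a set of infinite measure, contradicting $u\in L^p$, so that $a=0$; once $u$ is component-wise constant, the infinite measure rules out any nonzero value. Care is needed here, since this step is sensitive to the geometry of $\Omega$ (thin or flaring domains can make an affine function integrable), so it is exactly the step I would scrutinise most and where the full strength of $|\Omega|=+\infty$ has to be extracted.

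For the $\mathrm{NLBV}_\omega$ statement the same substitution gives $\mathrm{NLTV}_1(u)\le c^{-1}\,\mathrm{NLTV}_\omega(u)$ (inserting $\phi/\omega$ as competitor in the supremum), so $u\in\mathrm{NLBV}_1(\Omega)$. However, the Sobolev argument now breaks down: a function in $\mathrm{NLBV}_\omega(\Omega)$ need not possess an $L^1$ gradient, and we have no product rule for $\mathrm{div}_\omega$ with which to localise. This is precisely where the hypothesis on the sets $\Omega_n$ enters. As $\mathrm{dist}(\Omega_n,\partial\Omega)>0$, every $\phi\in\mathcal{C}^1_c(\Omega_n\times\Omega_n)$ is an admissible competitor in the definition of $\mathrm{NLTV}_\omega$, while the infinite measure of each $\Omega_n$ plays the role that $|\Omega|=+\infty$ played above. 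The plan is to run the reduction and the constancy argument using test fields supported in $\Omega_n$, invoking Proposition \ref{C1NLTV} to identify $\mathrm{NLTV}_\omega$ with the nonlocal seminorm on the relevant functions, and thereby force $u$ to vanish on each $\Omega_n$ and hence on $\Omega$. The absence of a product rule, compensated here by the $\Omega_n$ hypothesis, is the reason the $\mathrm{NLBV}_\omega$ case is genuinely harder than the Sobolev one.
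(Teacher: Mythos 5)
Your reduction to the constant weight (testing the defining identity against $\psi/\omega$, which gives $\partial_{1,i}u=\partial_{\omega,i}u/\omega\in L^p$) and your Fubini argument for the constancy of $\nabla u$ are both correct. Up to that point you mirror the paper, which obtains the reduction from Remark \ref{TechRemarkFiniteMeasureNotNec} together with Propositions \ref{NlWeakDerImplyWeakDer} and \ref{WeakDerImplyNlWeakDer}, and the constancy of $\nabla u$ from Lemma \ref{NonConstantInfSeminorm}; your subtraction-of-two-slices argument is a perfectly good elementary substitute for that lemma.

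The step you leave open is a genuine gap, and your instinct to scrutinise it is exactly right: the implication ``$\nabla u$ constant a.e., $u\in L^p(\Omega)$, $|\Omega|=+\infty$ $\Rightarrow$ $u=0$ a.e.'' is false for general open sets, so no slicing argument can close it. Two counterexamples with $\omega\equiv1$: (a) $\Omega=\{(x,y)\in\mathbb{R}^2:\ x>1,\ |y|<1/x\}$ and $u(x,y)=y$; here $\Omega$ is connected with $|\Omega|=+\infty$, $u$ is affine and nonzero, $\int_\Omega|u|^p=2/(p(p+1))<+\infty$, and $\nabla_1u\equiv0$ by Proposition \ref{WeakDerImplyNlWeakDer}, so $u\in W^{1,p}_1(\Omega)\setminus\{0\}$; (b) $\Omega=\bigcup_{n\ge1}(2n,2n+1)\subset\mathbb{R}$ with $u\equiv2^{-n}$ on the $n$-th interval; here $u$ is locally constant, nonzero, in $L^p(\Omega)$, and again $\nabla_1u\equiv0$. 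Example (b) also refutes the assertion in your sketch that ``once $u$ is component-wise constant, the infinite measure rules out any nonzero value'': distinct constants on infinitely many finite-measure components are compatible with $u\in L^p$. More importantly, this is precisely the point at which the paper's own proof is defective: after obtaining constancy of $\nabla u$, it concludes in a single unjustified sentence that infinite measure and $u\in L^p(\Omega)$ force $u=0$. The counterexamples show that the theorem as stated is false; example (b) falsifies the $\mathrm{NLBV}_\omega$ statement as well, since $u\in\mathrm{NLBV}_1(\Omega)\setminus\{0\}$ by Proposition \ref{C1NLTV} and the sets $\Omega_n=\bigcup_k(2k+1/n,2k+1-1/n)$ satisfy the extra hypothesis. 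The conclusion does hold under stronger geometric assumptions, e.g.\ $\Omega=\mathbb{R}^N$, where no nonzero function with a.e.\ constant gradient is $p$-integrable, but at the stated level of generality the step you (rightly) refused to take on faith cannot be repaired.

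Independently of this, your plan for the $\mathrm{NLBV}_\omega$ part is missing the paper's key device. Proposition \ref{C1NLTV} cannot simply be ``invoked'' for $u\in\mathrm{NLBV}_\omega(\Omega)$, because such a $u$ need not possess weak nonlocal derivatives at all. What the paper does is mollify: after reducing to $\omega\equiv1$ (as you do), it sets $u_{n,\epsilon}=J_\epsilon*u$ on $\Omega_n$ for $\epsilon<\mathrm{dist}(\Omega_n,\partial\Omega)$, proves $\mathrm{NLTV}(u_{n,\epsilon})\le\mathrm{NLTV}(u)$ by testing against the translated fields $\phi_\epsilon(x,y)=\int_{\mathbb{R}^N}J_\epsilon(z)\phi(x-z,y-z)\,\mathrm{d}z$ (this is where the distance hypothesis and the translation invariance of the constant weight enter), and only then applies the Sobolev argument to the weakly differentiable functions $u_{n,\epsilon}$, letting $\epsilon\to0$ at the end. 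That convolution argument is the idea your sketch lacks --- although, as explained above, even with it the chain of reasoning breaks at the same final step.
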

The proof of Theorem \ref{Theorem_Trivial} will be given in Section \ref{Section_trivial}. A straightforward consequence of Theorem \ref{Theorem_Trivial} is that $W^{1,p}_\omega(\mathbb{R}^N)=\mathrm{NLBV}_\omega(\mathbb{R}^N)=\{0\}$ whenever $\omega$ is bounded below by a positive constant on $\mathbb{R}^N \times \mathbb{R}^N$. The proof of Theorem \ref{Theorem_Trivial} is based on the fact that, for $p\neq+\infty$, the nonlocal Sobolev seminorm induced by a bounded away from zero weight function of a non-affine function is always infinite whenever $\Omega$ has infinite measure, see Lemma \ref{NonConstantInfSeminorm}. With this result, the proof of Theorem \ref{Theorem_Trivial} is obtained directly in the Sobolev case, and is based on a convolution argument in the bounded variation case. In particular, the proof of Theorem \ref{Theorem_Trivial} does not rely on Theorem \ref{Theorem_TestFunctions} or Corollary \ref{cor_trivial}.

At this point, one might wonder about the link between the function spaces considered in this paper and the \textit{fractional} spaces. The latter type of spaces has been widely studied in Sobolev-type frameworks; see Di Nezza, Palatucci and Valdinoci \cite{DiNezza-Palatucci-Valdinoci} and the references therein. More recently, bounded variation counterparts have been also studied, we refer to Comi and Stefani \cite{Comi-Stefani19} and Zust \cite{Zust19}, see also Antil, Diaz, Jing and Schikorra \cite{antil2024}, as well as Bessas et~al. \cite{bessas2022fractional,bessas2025non} for applications to image processing. In fact, as already emphasized, our spaces are rather different from the fractional ones. This is essentially due to the fact that, although fractional spaces can also be seen as weighted spaces, such weights are singular on the diagonal of $\Omega \times \Omega$, in contrast with those we consider in this article which do not have singularities at the interior of $\Omega \times \Omega$. On a related note, let us mention that we are aware that coordinate-based definitions of fractional operators are known to be flawed, in the sense that they depend on the choice of orthogonal basis (see \cite{vsilhavy2020fractional}). However, one can readily prove that this is not the case for the nonlocal operators introduced in this paper.

To motivate our next results, let us give a brief overview of the image processing literature that inspired \cite{Kindermann,Wang-Ng}, of which our spaces are based on: The seminal paper \cite{Rudin-Osher-Fatemi} by  Rudin, Osher and Fatemi (see also Chambolle and P.-L. Lions \cite{Chambolle-Lions}), established \emph{total variation} as a strong regularization term in image denoising which greatly improved the state of the art at the time, and sparked a significant rise of other total variation-based models. During the last decades, several alternatives to the total variation have been proposed as regularization terms, see, e.g., \cite{Bergounioux, Bredies, Jalalzai, Chan, Hu, Pang,  Baifractional}. In the highly influential work \cite{Buades-Coll-Morel,Buades-Coll-Morel-bis}, Buades, Coll and Morel introduced a nonlocal filter for image denoising, generalizing previous approaches by Yaroslavsky and Eden \cite{Yaroslavsky,YaroslavskyEden}. Several variational interpretations of such a filter were given later in \cite{Kindermann} as well as by Gilboa and Osher \cite{Gilboa-Osher,Gilboa-Osher-bis}. In particular, in the latter reference the total variation term in \cite{Rudin-Osher-Fatemi} is replaced by a \textit{nonlocal} term of the type:
\begin{equation}\label{nl_mean}
\int_{\Omega \times \Omega} \omega(x,y)\lvert u(y)-u(x) \rvert \mathrm{d}x\mathrm{d}y,
\end{equation}
 where $\omega$ is positive and symmetric. In general, nonlocal terms average every pixel with each of the other pixels in the image, rather than simply its neighbors. This is advantageous because, as argued in \cite{Kindermann,Buades-Coll-Morel,Buades-Coll-Morel-bis}, natural images are self-similar, in the sense that every small patch has many similar patches in the same image. It is then clear that nonlocal regularization terms promote smoothness among similar pixels and thus better preserve the image's characteristics.

The approach in \cite{Kindermann} consisted on considering a higher order nonlocal term similar to \eqref{nl_mean}, namely
\begin{equation}\label{nl-Kindermann}
\int_{\Omega \times \Omega}\lvert \nabla u(y)-\nabla u(x) \rvert \mathrm{d}x\mathrm{d}y,
\end{equation}
which coincides with $\mathrm{NLTV}_1(u)$ for $u \in \mathcal{C}^1(\Omega)$. This regularization term was claimed to not suffer from the so-called \textit{staircase effect} and was showed through numerical experiments to be more advantageous than \eqref{nl_mean} when applied to image deblurring problems. A more general variant was proposed in \cite{Wang-Ng} with the scope of addressing problems in image decomposition. More precisely, they considered the following weighted version of \eqref{nl-Kindermann}:
\begin{equation}\label{nl_weighted}
\int_{\Omega \times \Omega} \omega(x,y)\lvert \nabla u(y)-\nabla u(x) \rvert \mathrm{d}x\mathrm{d}y,
\end{equation}
which, again, corresponds to $\mathrm{NLTV}_\omega(u)$ as long as $u\in \mathcal{C}^1(\Omega)$.

In this paper we show the existence and uniqueness to the solution of two minimizing problems which involve the $\mathrm{NLBV}_\omega$ and $W^{1,p}_\omega$ spaces and their respective seminorms. In contrast to \cite{Kindermann,Wang-Ng}, our \emph{fidelity terms} will be general and not fixed. Additionally, our results do not rely on any sort of compact Sobolev-type embeddings, as $F$ is assumed to be convex. See Remark \ref{remark_existence_NLBV} for additional comments.

First, for a fixed $1<q<+\infty$, we consider the functional
\begin{equation}\label{functional_NLBV_def}
I_\omega(u):= \mathrm{NLTV}_\omega(u)+F(u),
\end{equation}
defined for $u\in\mathrm{NLBV}_\omega(\Omega) \cap L^q(\Omega)$, where $F: L^q(\Omega) \to [-\infty,+\infty]$ is a fidelity term which we assume to be proper, bounded from below, convex, lower semicontinuous (with respect to the $L^q$ norm) and to satisfy the coercivity condition
\begin{equation}\label{coercivity_F_NLBV}
F(u) \to +\infty \quad \mbox{ as } \quad \|u\|_{L^q(\Omega)} \to +\infty.
\end{equation}

We prove the following:
\begin{theorem}[Existence and uniqueness of minimizers for $I_\omega$]\label{Theorem_main_functional}
Let $\Omega$ be a nonempty open subset of $\mathbb{R}^N$ of finite measure, $\omega$ be an admissible weight function in $\Omega$, $1<q<+\infty$, $F: L^q(\Omega) \to [-\infty,+\infty]$ be proper, bounded from below, convex, lower semicontinuous and satisfying \eqref{coercivity_F_NLBV}, and $I_\omega$ be as in \eqref{functional_NLBV_def}. Then, there exists $u_* \in \mathrm{NLBV}_\omega(u) \cap L^q(\Omega)$ such that
\begin{equation}
I_\omega(u_*)= m_*:= \inf_{u \in \mathrm{NLBV}_\omega(\Omega)\cap L^q(\Omega)} I_\omega(u).
\end{equation}

Moreover, if $F$ is strictly convex then the minimizer is unique.
\end{theorem}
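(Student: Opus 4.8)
The plan is to apply the direct method of the calculus of variations, using the reflexivity of $L^q(\Omega)$ for $1<q<+\infty$ to extract a weakly convergent minimizing sequence, and the weak lower semicontinuity of both terms of $I_\omega$ to pass to the limit. I would first record that $m_*>-\infty$: since $\mathrm{NLTV}_\omega\geq 0$ and $F$ is bounded from below, $I_\omega(u)\geq \inf F>-\infty$ for every admissible $u$, so $m_*\geq\inf F$. If $m_*=+\infty$ the statement is trivially true (every element of the admissible class, e.g.\ any constant, realizes the infimum), so I may assume $m_*<+\infty$ and fix a minimizing sequence $(u_n)\subset \mathrm{NLBV}_\omega(\Omega)\cap L^q(\Omega)$ with $I_\omega(u_n)\to m_*$ and $I_\omega(u_n)\leq m_*+1$ for all $n$.

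Next I would establish the required boundedness. Because $\mathrm{NLTV}_\omega(u_n)\geq 0$, we have $F(u_n)\leq I_\omega(u_n)\leq m_*+1$, and the coercivity hypothesis \eqref{coercivity_F_NLBV} then forces $(\|u_n\|_{L^q(\Omega)})_n$ to be bounded. Symmetrically, $\mathrm{NLTV}_\omega(u_n)=I_\omega(u_n)-F(u_n)\leq (m_*+1)-\inf F$ is bounded as well. Since $L^q(\Omega)$ is reflexive for $1<q<+\infty$, after passing to a subsequence I obtain $u_{n_k}\rightharpoonup u_*$ weakly in $L^q(\Omega)$ for some $u_*\in L^q(\Omega)$.

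The crux is the weak lower semicontinuity of $\mathrm{NLTV}_\omega$ along this sequence. I would exploit the definition \eqref{NLTVDefIntro} as a supremum over $\phi\in\mathcal{C}^1_c(\Omega\times\Omega;\mathbb{R}^N)$ with $\|\phi\|_\infty\leq 1$ of the linear functionals $u\mapsto \int_\Omega u\,\mathrm{div}_\omega(\phi)\,\mathrm{d}x$. For each fixed such $\phi$, the function $\mathrm{div}_\omega(\phi)$ is bounded and compactly supported in $\Omega$ — this uses that $\phi$ has compact support and $\omega\in\mathcal{C}^1$, so $\omega$ and its first derivatives are bounded on the support of $\phi$ — whence $\mathrm{div}_\omega(\phi)\in L^{q'}(\Omega)$ with $q'$ the conjugate exponent. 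Consequently $\int_\Omega u_{n_k}\,\mathrm{div}_\omega(\phi)\to\int_\Omega u_*\,\mathrm{div}_\omega(\phi)$ by weak convergence, and since $\int_\Omega u_{n_k}\,\mathrm{div}_\omega(\phi)\leq \mathrm{NLTV}_\omega(u_{n_k})$, taking first the limit in $k$ and then the supremum over $\phi$ yields $\mathrm{NLTV}_\omega(u_*)\leq\liminf_k \mathrm{NLTV}_\omega(u_{n_k})$. In particular $\mathrm{NLTV}_\omega(u_*)<+\infty$, so $u_*\in\mathrm{NLBV}_\omega(\Omega)\cap L^q(\Omega)$. For the fidelity term, $F$ is convex and lower semicontinuous for the strong $L^q$ topology, hence sequentially weakly lower semicontinuous, so $F(u_*)\leq\liminf_k F(u_{n_k})$. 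Adding the two inequalities and using $\liminf a_k+\liminf b_k\leq\liminf(a_k+b_k)$ gives $I_\omega(u_*)\leq\liminf_k I_\omega(u_{n_k})=m_*$; since $u_*$ lies in the admissible class, $I_\omega(u_*)=m_*$, proving existence.

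For uniqueness under strict convexity of $F$, I would note that $\mathrm{NLTV}_\omega$ is convex (a supremum of linear functionals) and that $\mathrm{NLBV}_\omega(\Omega)\cap L^q(\Omega)$ is a convex set. If $u_1\neq u_2$ were two minimizers, then evaluating at the midpoint $(u_1+u_2)/2$ and combining the convexity of $\mathrm{NLTV}_\omega$ with the strict convexity of $F$ gives $I_\omega((u_1+u_2)/2)<\tfrac12 I_\omega(u_1)+\tfrac12 I_\omega(u_2)=m_*$, contradicting the definition of $m_*$. I expect the only genuinely delicate step to be the weak lower semicontinuity of $\mathrm{NLTV}_\omega$, specifically the verification that $\mathrm{div}_\omega(\phi)\in L^{q'}(\Omega)$ so that each linear functional in the supremum is weakly continuous; the remaining ingredients (reflexivity, coercivity, convex weak lower semicontinuity of $F$) are standard. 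In line with the philosophy announced in the paper, no compact embedding is used: weak compactness from reflexivity together with weak lower semicontinuity suffices precisely because $F$ is convex.
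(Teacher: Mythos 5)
Your proof is correct and follows essentially the same direct-method argument as the paper: coercivity plus reflexivity of $L^q(\Omega)$ gives a weakly convergent minimizing subsequence, $\mathrm{NLTV}_\omega$ is weakly lower semicontinuous as a supremum of weakly continuous linear functionals, $F$ is weakly lower semicontinuous by convexity and strong lower semicontinuity, and strict convexity yields uniqueness. The only cosmetic difference is that you verify the lower semicontinuity of $\mathrm{NLTV}_\omega$ directly along weak $L^q$ convergence (via $\mathrm{div}_\omega(\phi)\in L^{q'}(\Omega)$, bounded and compactly supported), whereas the paper passes to weak $L^1(\Omega)$ convergence using the finite measure of $\Omega$ and invokes Lemma \ref{lemma:lsc_nlbv}; the underlying mechanism is identical, and the finite measure of $\Omega$ still enters your argument implicitly when concluding $u_*\in L^1(\Omega)$, which is required for membership in $\mathrm{NLBV}_\omega(\Omega)$.
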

The proof of Theorem \ref{Theorem_main_functional} follows by an application of the Direct Method and will be given in Section \ref{section_applications}. Such a procedure is only possible when one has established the necessary properties for the underlying space $\mathrm{NLBV}_\omega(\Omega)$. The $q=1$ case is problematic, see Remark \ref{remark_existence_NLBV}. In the $\mathrm{BV}$ case, one might relax the convexity assumption on $F$ and use the compact embeddings into $L^q$ spaces. This is still possible to some extent for $\mathrm{NLBV}_\omega(\Omega)$, again, see Remark \ref{remark_existence_NLBV}. Additional assumptions on $\Omega$ and $\omega$ are needed for such a result, as otherwise the compact embeddings are lost. Indeed, one might think about the limit case in which $\omega=0$ in $\Omega \times \Omega$, so that $\mathrm{NLBV}_\omega(\Omega)=L^1(\Omega)$.

The second functional we consider is
\begin{equation}\label{functional_nonlocal_Sobolev}
I_\omega^p(u):= \int_{\Omega \times \Omega} \lvert \nabla_\omega u\rvert^p+F(u),
\end{equation}
defined for $u \in W^{1,p}_\omega(\Omega)\cap L^q(\Omega)$, where $p\in(1,+\infty)$, $p\le q <+\infty$ and $F$ satisfies the properties described above Theorem \ref{Theorem_main_functional}.

We prove the following:
\begin{theorem}[Existence and uniqueness of minimizers for $I_\omega^p$]\label{Theorem_main_existence_Sobolev}
Let $\Omega$ be a nonempty open subset of $\mathbb{R}^N$, $p \in (1,+\infty)$, $\omega$ be an admissible weight function in $\Omega$, $p\le q <+\infty$, $F: L^q(\Omega) \to [-\infty,+\infty]$ be proper, bounded from below, convex, lower semicontinuous and satisfying \eqref{coercivity_F_NLBV}, and $I_\omega^p$ be as in \eqref{functional_nonlocal_Sobolev}. If $q > p$, assume furthermore that $\Omega$ is of finite measure. Then, there exists $u_{*,p} \in W^{1,p}_\omega(\Omega) \cap L^q(\Omega)$ such that
\begin{equation}
I_\omega^p(u_{*,p})=m_{*,p}:= \inf_{u \in W^{1,p}_\omega(\Omega)\cap L^q(\Omega)} I_\omega^p(u).
\end{equation}

Moreover, if $F$ is strictly convex then the minimizer is unique.
\end{theorem}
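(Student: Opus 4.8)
The plan is to apply the Direct Method of the Calculus of Variations, following the usual scheme: (i) extract a bounded minimizing sequence via coercivity, (ii) pass to a weakly convergent subsequence using reflexivity, (iii) check that the weak limit remains admissible, and (iv) conclude by lower semicontinuity. Since $F$ is bounded from below and the seminorm term is nonnegative, we have $m_{*,p} \ge \inf F > -\infty$, and we may assume $m_{*,p} < +\infty$ (otherwise every admissible function minimizes). First I would fix a minimizing sequence $(u_n) \subset W^{1,p}_\omega(\Omega)\cap L^q(\Omega)$ with $I_\omega^p(u_n) \to m_{*,p}$, so that $I_\omega^p(u_n) \le m_{*,p}+1$ for $n$ large. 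Because $\int_{\Omega\times\Omega}|\nabla_\omega u_n|^p \ge 0$, this bounds $F(u_n)$ from above, and the coercivity condition \eqref{coercivity_F_NLBV} then forces $\sup_n \|u_n\|_{L^q(\Omega)} < +\infty$. In turn, writing $\int_{\Omega\times\Omega}|\nabla_\omega u_n|^p = I_\omega^p(u_n) - F(u_n)$ and using that $F$ is bounded below yields $\sup_n \|\nabla_\omega u_n\|_{L^p(\Omega\times\Omega;\mathbb{R}^N)} < +\infty$.

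Next, since $1 < q < +\infty$ and $1 < p < +\infty$, both $L^q(\Omega)$ and $L^p(\Omega\times\Omega;\mathbb{R}^N)$ are reflexive. After passing to a subsequence (not relabeled), I would obtain $u_n \rightharpoonup u_*$ weakly in $L^q(\Omega)$ and $\nabla_\omega u_n \rightharpoonup G$ weakly in $L^p(\Omega\times\Omega;\mathbb{R}^N)$ for some $u_* \in L^q(\Omega)$ and $G \in L^p(\Omega\times\Omega;\mathbb{R}^N)$. The crucial step is to identify $u_*$ as an element of $W^{1,p}_\omega(\Omega)$ with $\nabla_\omega u_* = G$. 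For this I would pass to the limit in the defining identity of the weak nonlocal derivative: for every $\psi \in \mathcal{C}^1_c(\Omega\times\Omega)$ and each $i$, $\int_\Omega u_n \,\partial^*_{\omega,i}(\psi)\,\mathrm{d}x = -\int_{\Omega\times\Omega} (\partial_{\omega,i}u_n)\,\psi\,\mathrm{d}x\mathrm{d}y$. Here one checks that $\partial^*_{\omega,i}(\psi)$ is continuous with compact support in $\Omega$ (because $\psi$ is compactly supported in $\Omega\times\Omega$ and $\omega \in \mathcal{C}^1$), hence lies in $L^{q'}(\Omega)$, while $\psi \in L^{p'}(\Omega\times\Omega)$; the two weak convergences then allow passage to the limit on each side, giving $\int_\Omega u_* \,\partial^*_{\omega,i}(\psi)\,\mathrm{d}x = -\int_{\Omega\times\Omega} G_i\,\psi\,\mathrm{d}x\mathrm{d}y$. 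By the a.e.~uniqueness of the weak nonlocal derivative, this yields $\partial_{\omega,i}u_* = G_i$ for each $i$, so $\nabla_\omega u_* = G \in L^p$. It remains to verify $u_* \in L^p(\Omega)$: when $q = p$ this is immediate, and when $q > p$ the finite-measure hypothesis gives $L^q(\Omega) \hookrightarrow L^p(\Omega)$, so $u_* \in L^p(\Omega)$ in either case. Thus $u_* \in W^{1,p}_\omega(\Omega) \cap L^q(\Omega)$ is admissible; this is precisely where the extra assumption for $q>p$ is genuinely used.

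For lower semicontinuity, I would use that $F$, being convex and lower semicontinuous for the $L^q$-norm, is sequentially weakly lower semicontinuous on $L^q(\Omega)$, so $F(u_*) \le \liminf_n F(u_n)$; and that $v \mapsto \|v\|_{L^p}^p$ is convex and strongly continuous, hence weakly lower semicontinuous on $L^p$, so $\int_{\Omega\times\Omega}|\nabla_\omega u_*|^p \le \liminf_n \int_{\Omega\times\Omega}|\nabla_\omega u_n|^p$. Adding these (using superadditivity of $\liminf$) gives $I_\omega^p(u_*) \le \liminf_n I_\omega^p(u_n) = m_{*,p}$, and since $u_*$ is admissible the reverse inequality holds trivially; hence $I_\omega^p(u_*) = m_{*,p}$, proving existence.

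Finally, for uniqueness under strict convexity of $F$: the map $u \mapsto \nabla_\omega u$ is linear and $v \mapsto \int|v|^p$ is convex, so $u \mapsto \int_{\Omega\times\Omega}|\nabla_\omega u|^p$ is convex; adding the strictly convex $F$ makes $I_\omega^p$ strictly convex on the convex set $W^{1,p}_\omega(\Omega)\cap L^q(\Omega)$, and a strictly convex functional admits at most one minimizer. I expect the main obstacle to be step (iii)—the weak closedness of the nonlocal gradient—where care is needed to ensure the test-function pairings are well defined in the appropriate dual spaces and that the limit function lands in $L^p$; everything else is routine once the structural properties of $W^{1,p}_\omega(\Omega)$ from Subsection~\ref{section_NL_Sobolev} are in hand.
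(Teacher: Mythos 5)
Your proof is correct, and it follows the same Direct Method skeleton as the paper (coercivity bounds, weak compactness, convexity-based lower semicontinuity, strict convexity for uniqueness), but it differs in the key step of identifying the weak limit. The paper first bounds the minimizing sequence in the full $W^{1,p}_\omega(\Omega)$ norm and then invokes Proposition \ref{Proposition_reflexives} (reflexivity of the nonlocal Sobolev space, itself proven via the isometry $u \mapsto (u,\nabla_\omega u)$ onto a closed subspace of a product of $L^p$ spaces) to extract a subsequence converging weakly in $W^{1,p}_\omega(\Omega)$; weak convergence in that space automatically yields $\nabla_\omega u_n \rightharpoonup \nabla_\omega u_{*,p}$ in $L^p(\Omega\times\Omega;\mathbb{R}^N)$, so membership of the limit in the space and identification of its gradient come for free. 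You instead extract separate weak limits $u_*$ in $L^q(\Omega)$ and $G$ in $L^p(\Omega\times\Omega;\mathbb{R}^N)$ using only the reflexivity of those Lebesgue spaces, and then prove $G = \nabla_\omega u_*$ by passing to the limit in the defining identity \eqref{WeakDerEq}, using that $\partial^*_{\omega,i}(\psi) \in \mathcal{C}_c(\Omega) \subset L^{q'}(\Omega)$ and $\psi \in L^{p'}(\Omega\times\Omega)$. This is exactly the argument the paper uses to prove completeness in Proposition \ref{Proposition_Banach_Sob} (there with strong convergence), transplanted to the weak topology; it makes your proof more self-contained, since it bypasses Proposition \ref{Proposition_reflexives} entirely, at the cost of redoing by hand the weak closedness that the paper's abstract functional-analytic setup encapsulates. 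One further small difference in your favor: the paper deduces $-\infty < m_{*,p} < +\infty$ from properness of $F$ alone, which is slightly loose (properness on $L^q(\Omega)$ does not by itself produce an admissible competitor in $W^{1,p}_\omega(\Omega)\cap L^q(\Omega)$ with finite energy), whereas you dispose of the case $m_{*,p}=+\infty$ explicitly by noting the problem is then trivial on the nonempty admissible set.
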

The proof of Theorem \ref{Theorem_main_existence_Sobolev} is given in Section \ref{section_applications}. It too follows from an application of the Direct Method, and it relies on a previous study of the properties of $W^{1,p}_\omega(\Omega)$. The $1<q<p$ case can be considered if one modifies the underlying space, and the $p=1$ case can be treated as in the bounded variation setting, see Remark \ref{Remark_qlessthanp}. Moreover, the $p=+\infty$ case can also be considered but requires some restrictive conditions on $F$, see Remark \ref{remark_p_infinity}.

As for future work, we hypothesize that the weighted $\mathrm{NLBV}$ and nonlocal Sobolev spaces can be advantageous in an image restoration context, and we look forward to derive and implement variational models  based on the theory developed in this article. In this direction, Theorems \ref{Theorem_main_functional} and \ref{Theorem_main_existence_Sobolev} serve as a preliminary theoretical basis. Moreover, the nonlocal Sobolev space can potentially lead to some new PDE problems, some of which we hope can model real life phenomena governed by nonlocal interactions.

\textbf{Acknowledgements.} F.A., J.D. and C.S. are supported by the MaLiSat project TED2021-132644B-I00 funded by MICIU/AEI
/10.13039/501100011033 and the European Union NextGenerationEU/PRTR, and also by the MoMaLIP project PID2021-125711OB-I00 funded by MICIU/AEI/10.13039/501100011033 and the European Union NextGeneration EU/PRTR. R.O.-B. is supported by the Program Horizon Europe Marie Sklodowska-Curie Post-Doctoral Fellowship (HORIZON-MSCA-2023-PF-01). Grant agreement: 101149877. Project acronym: NFROGS.

\tableofcontents

\section{Nonlocal Sobolev and BV spaces}

In this section we propose a systematic and coherent framework for two function spaces induced by nonfractional weights, one of them being brand new, to our knowledge. The definitions appearing in the introduction will be restated and expanded upon, with further observations and remarks. Additionally, we will derive some preliminary results. We point out the notable ones: Relation between ``local" and weak nonlocal derivatives, which will be key when proving Theorem \ref{Theorem_equality_constant_weight} (Propositions \ref{WeakDerImplyNlWeakDer} and \ref{NlWeakDerImplyWeakDer}), completeness of the spaces (Propositions \ref{Proposition_Banach_Sob} and \ref{Proposition_Banach_NLBV}), reflexiveness and separability of the nonlocal Sobolev space for certain exponents (Proposition \ref{Proposition_reflexives}), and some initial trace theory results (Propositions \ref{Proposition_trace_Sob} and \ref{Proposition_trace_NLBV}).

Throughout this whole manuscript $\Omega$ will denote a nonempty open subset of $\mathbb{R}^N$, which we will often refer to as \emph{domain}\footnote{To make it clear, no prior assumptions on the boundedness or the connectedness of $\Omega$ are imposed.}. We restate the conditions we impose on our weight functions in a more formal manner in the definition below.

\begin{defi}[Admissible weight function]
    We say $\omega:\Omega\times\Omega\to [0,+\infty)$ is an \emph{admissible weight function in $\Omega$} if it belongs to $\mathcal{C}^1(\Omega\times\Omega)$ and it is symmetric, i.e.,
    $$\omega(x,y)=\omega(y,x) \quad \mbox{ for all }(x,y)\in \Omega\times\Omega.$$
\end{defi}

When the domain is clear from context, the ``in $\Omega$" part will often be omitted. As stated in the introduction, our weight functions are nonsingular at the interior, in contrast to the weights associated to the fractional calculus theory developed in the past few years. This being said, our definition still allows for the weight functions to tend to infinity at the boundary of $\Omega\times\Omega$ and to vanish anywhere in $\Omega\times\Omega$ and its boundary. In particular $\omega \equiv 0$ in $\Omega\times\Omega$ is allowed, which will act as a simple yet useful counterexample to certain statements.

\subsection{Nonlocal weighted Sobolev spaces}\label{section_NL_Sobolev}

We start by defining ``strong" nonlocal first order derivatives, which will be necessary to derive the integration by parts formula which weak nonlocal derivatives will need to satisfy.

\begin{defi}[Nonlocal partial derivatives]
    Let $\omega$ be an admissible weight function. For $i=1,\ldots,N$ we define the \emph{$i$-th nonlocal partial derivative induced by} $\omega$ of $\psi \in \mathcal{C}^1_0(\Omega \times\Omega)$ as

    $$\partial_{\omega, i}^\ast (\psi) (x) := \int_\Omega \frac{\partial}{\partial x_i} \left[ \omega(x,y) (\psi(y,x)-\psi(x,y)) \right] \mathrm{d}y.$$
\end{defi}

With the ``strong" nonlocal first order derivatives, we define weak nonlocal first order derivatives.

\begin{defi}[Weak nonlocal partial derivatives and nonlocal gradient] \label{NlWeakDerDef}
    Let $\omega$ be an admissible weight function. We say $u \in L^1_{\mathrm{loc}}(\Omega)$ has \emph{weak $i$-th nonlocal partial derivative induced by $\omega$} if there exists ${\partial_{\omega,i}u\in L^1_{\mathrm{loc}}(\Omega\times\Omega)}$ such that

    \begin{equation} \label{WeakDerEq}
        \int_\Omega u(x) \partial_{\omega, i}^\ast (\psi) (x) \mathrm{d}x = - \int_{\Omega\times\Omega} (\partial_{\omega, i} u)(x,y) \psi(x,y) \mathrm{d}x \mathrm{d}y
    \end{equation}
    for all $\psi \in \mathcal{C}_c^1(\Omega \times\Omega)$. In such a case, we say $\partial_{\omega, i} u$ is the \emph{weak $i$-th nonlocal partial derivative} of $u$.

    When $u$ has weak $i$-th nonlocal partial derivative induced by $\omega$ for any $i=1,\ldots,N$, we say

    $$\nabla_\omega u (x,y) := \left( \partial_{\omega, 1} u(x,y),\ldots,\partial_{\omega, N} u(x,y) \right)$$
    is the \emph{weak nonlocal gradient induced by $\omega$} of $u$. 
\end{defi}

When there is no doubt as to which weight function we are referring to, the ``induced by $\omega$" parts will often be omitted. Weak nonlocal derivatives are uniquely determined a.e., which justifies our notation. Some other notable properties which follow from their definition are that they vanish a.e.~in the region where the weight function itself vanishes and that they are anti-symmetric, i.e., are such that
\begin{equation}
    \partial_{\omega,i} u(x,y) = -\partial_{\omega,i} u(y,x) \quad \mbox{ for almost every } (x,y)\in\Omega\times\Omega.
\end{equation}

Note that the $i$-th nonlocal partial derivative operator $\partial_{\omega, i}^\ast$ acts on $\mathcal{C}^1(\Omega\times\Omega)$ functions and gives functions from $\Omega$ to $\mathbb{R}$, while the weak $i$-th nonlocal partial derivative operator $\partial_{\omega, i}$ acts on $L^1_{\mathrm{loc}}(\Omega)$ functions and gives functions from $\Omega\times\Omega$ to $\mathbb{R}$. In fact, we may think of $\partial_{\omega, i}^\ast$ as minus the adjoint operator of $\partial_{\omega, i}$ (and vice versa), which justifies our notation.

\begin{remark} \label{NlWeakDerDefExtension}
    Similarly to the local case, given $u \in L^1_{\mathrm{loc}}(\Omega)$ and $i = 1,...,N$, $u$ has weak $i$-th nonlocal partial derivative if and only if there exists $v\in L^1_{\mathrm{loc}}(\Omega\times\Omega)$ such that

    \begin{equation}
        \int_\Omega u(x) \partial_{\omega, i}^\ast (\hat\psi) (x) \mathrm{d}x = - \int_{\Omega\times\Omega} v(x,y) \hat\psi(x,y) \mathrm{d}x \mathrm{d}y
    \end{equation}
    for all $\hat \psi \in \mathcal{C}_0^1(\Omega \times\Omega)$. In such a case, $v=\partial_{\omega,i}u$ a.e.~in $\Omega\times\Omega$.
\end{remark}

\begin{remark} \label{1impliesomegaRemark}
    Let $\omega$ be an admissible weight function and $i=1,...,N$. Then, if $u \in L^1(\Omega)$ has weak $i$-th nonlocal partial derivative induced by the constant weight function $1$, it also has weak $i$-th nonlocal partial derivative induced by $\omega$, and

    $$\partial_{\omega,i}u(x,y)=\omega(x,y) \partial_{1,i}u(x,y) \quad \mbox{ for almost every } (x,y) \in \Omega\times \Omega.$$
\end{remark}

As for the converse of the remark above, we have the following:

\begin{remark} \label{omegaimplies1Remark}
    Let $\omega$ be an admissible weight function in $\Omega$, $i=1,...,N$ and $A \subset \Omega$ be a nonempty open set such that

    $$A\times A \subset \omega^{-1}\left((0,+\infty)\right).$$
    
    If $u \in L^1(\Omega)$ has weak $i$-th nonlocal partial derivative induced by $\omega$ (in $\Omega$), then $u$ has $i$-th nonlocal derivative induced by $1$ in $A$ given by

    $$\partial_{1,i}^A u (x,y) = \frac{\partial_{\omega,i} u(x,y)}{\omega(x,y)} \; \mbox{ for almost every } (x,y) \in A\times A.$$

    In particular, whenever $\omega$ does not vanish in $\Omega\times\Omega$, if $u \in L^1(\Omega)$ has weak $i$-th nonlocal partial derivative induced by $\omega$, then $u$ has weak $i$-th nonlocal partial derivative induced by $1$ and $\partial_{1,i}u=\partial_{\omega,i} u/ \omega$ a.e. in $\Omega\times\Omega$.
\end{remark}

A straightforward computation shows that if a function has standard weak derivatives it also has weak nonlocal derivatives induced by any weight function, as shown next.

\begin{prop}[Weak derivatives imply weak nonlocal derivatives]
\label{WeakDerImplyNlWeakDer}

    If $\omega$ is an admissible weight function and $u \in L^1_{\mathrm{loc}}(\Omega)$ has weak $i$-th (standard) partial derivative for some $i=1,\ldots,N$, then $u$ has weak $i$-th nonlocal partial derivative induced by $\omega$ and

    $$(\partial_{\omega,i} u)(x,y) = \omega(x,y) \left(\partial_i u (y) - \partial_i u (x) \right) \quad \mbox{ for almost every } (x,y)\in\Omega\times\Omega.$$
\end{prop}

\begin{proof}
    Given $\psi \in \mathcal{C}_c^1(\Omega\times\Omega)$, by Leibniz rule we have

    \begin{align} \label{auxEx1}
        \int_\Omega u(x) \partial_{\omega, i}^\ast (\psi) (x) \mathrm{d}x &= \int_\Omega u(x) \int_\Omega \frac{\partial}{\partial x_i} \left[ \omega(x,y) (\psi(y,x)-\psi(x,y)) \right] \mathrm{d}y \mathrm{d}x\nonumber\\
        &=\int_\Omega u(x) \frac{\partial}{\partial x_i} \left[\int_\Omega \omega(x,y) \psi(y,x) \mathrm{d}y \right] \mathrm{d}x\nonumber\\
        &\qquad \qquad - \int_\Omega u(x) \frac{\partial}{\partial x_i} \left[ \int_\Omega \omega(x,y) \psi(x,y) \mathrm{d}y \right] \mathrm{d}x.
    \end{align}

    It is immediate to check that the functions of $x\in\Omega$ given by

    \begin{equation}
        \int_\Omega \omega(x,y) \psi(y,x) \mathrm{d}y, \qquad \int_\Omega \omega(x,y) \psi(x,y) \mathrm{d}y
    \end{equation}
    belong to $\mathcal{C}_c^1(\Omega)$. Thus, continuing the chain of equalities and by definition of standard weak derivatives we have

    \begin{align}
        \int_\Omega u(x) \partial_{\omega, i}^\ast (\psi) (x) \mathrm{d}x &= - \int_\Omega \partial_i u(x) \int_\Omega \omega(x,y) \psi(y,x) \mathrm{d}y \mathrm{d}x\\
        &\qquad \qquad + \int_\Omega \partial_i u(x) \int_\Omega \omega(x,y) \psi(x,y) \mathrm{d}y \mathrm{d}x\\
        &= - \int_\Omega \partial_i u(y) \int_\Omega \omega(y,x) \psi(x,y) \mathrm{d}x \mathrm{d}y\\
        &\qquad \qquad + \int_\Omega \partial_i u(x) \int_\Omega \omega(x,y) \psi(x,y) \mathrm{d}y \mathrm{d}x\\
        &= - \int_{\Omega\times\Omega} \left[\omega(x,y) \left( \partial_i u(y) - \partial_i u(x) \right) \right] \psi(x,y) \mathrm{d}x \mathrm{d}y,
    \end{align}
    where in the second equality we have interchanged the variables of integration (only notation-wise) and in the last we have used Fubini's theorem and the fact that $\omega$ is symmetric. From this computation the result follows.
\end{proof}

We now show that the converse implication holds for constant weight functions. Remarkably, each $\theta \in \mathcal{C}_c^1(\Omega)$ with non-vanishing integral over $\Omega$ yields a valid formula for the standard weak derivative of a function in terms of its weak nonlocal derivative. Note that in the proof below Fubini's theorem will be used extensively without mention.

\begin{prop}
    [Weak nonlocal derivatives imply weak derivatives for constant weight functions] \label{NlWeakDerImplyWeakDer}
    If $u \in L^1_{\mathrm{loc}}(\Omega)$ has weak $i$-th nonlocal partial derivative induced by $1$ for some $i=1,\ldots,N$, then $u$ has weak $i$-th (standard) partial derivative and

    \begin{equation} \label{WeakDerInTermsOfNlWeakDer}
        \partial_i u(x) = -\frac{1}{\int_\Omega \theta(z) \mathrm{d}z} \int_\Omega \left[ u(y) \partial_i \theta(y) + \partial_{1,i} u(x,y) \theta(y) \right]\mathrm{d}y \quad \mbox{ for almost every } x \in \Omega
    \end{equation}
    for any $\theta \in \mathcal{C}_c^1(\Omega)$ such that

    \begin{equation} \label{ThetaIntCond}
        \int_\Omega \theta (z)\mathrm{d}z \neq 0.
    \end{equation}
\end{prop}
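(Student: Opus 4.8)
The plan is to test the defining identity \eqref{WeakDerEq} (with $\omega\equiv 1$) against \emph{separable} functions $\psi(x,y)=\varphi(x)\theta(y)$, where $\theta$ is the fixed function satisfying \eqref{ThetaIntCond} and $\varphi\in\mathcal{C}_c^1(\Omega)$ is arbitrary. Since $\varphi,\theta\in\mathcal{C}_c^1(\Omega)$, the product $\psi$ lies in $\mathcal{C}_c^1(\Omega\times\Omega)$ and is an admissible test function. I would first define the candidate
$$g(x):=-\frac{1}{\int_\Omega\theta(z)\,\mathrm{d}z}\int_\Omega\bigl[u(y)\partial_i\theta(y)+\partial_{1,i}u(x,y)\theta(y)\bigr]\,\mathrm{d}y,$$
and check that $g\in L^1_{\mathrm{loc}}(\Omega)$: the first summand is a constant, while for the second, Fubini's theorem on the compact product $K\times\mathrm{supp}\,\theta$ shows that $x\mapsto\int_\Omega\partial_{1,i}u(x,y)\theta(y)\,\mathrm{d}y$ is integrable on every compact $K\subset\Omega$, using $\partial_{1,i}u\in L^1_{\mathrm{loc}}(\Omega\times\Omega)$. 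Thus $g$ is a legitimate candidate for $\partial_i u$.

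Next I would compute the nonlocal partial derivative of the separable test function. Since $\omega\equiv 1$, differentiating under the integral sign and using $\psi(y,x)=\varphi(y)\theta(x)$ gives
$$\partial_{1,i}^\ast(\psi)(x)=\partial_i\theta(x)\int_\Omega\varphi(y)\,\mathrm{d}y-\partial_i\varphi(x)\int_\Omega\theta(y)\,\mathrm{d}y.$$
Substituting this into the left-hand side of \eqref{WeakDerEq} and the separable $\psi$ into the right-hand side, then isolating the term containing $\int_\Omega u\,\partial_i\varphi$, yields
$$\left(\int_\Omega\theta\right)\int_\Omega u(x)\partial_i\varphi(x)\,\mathrm{d}x=\left(\int_\Omega\varphi\right)\int_\Omega u(y)\partial_i\theta(y)\,\mathrm{d}y+\int_\Omega\varphi(x)\!\left(\int_\Omega\partial_{1,i}u(x,y)\theta(y)\,\mathrm{d}y\right)\mathrm{d}x.$$
The crucial observation is that the constant $\int_\Omega u\,\partial_i\theta$ multiplying $\int_\Omega\varphi$ can be reabsorbed as $\int_\Omega\varphi(x)\bigl(\int_\Omega u(y)\partial_i\theta(y)\,\mathrm{d}y\bigr)\,\mathrm{d}x$, so that the whole right-hand side becomes the integral of $\varphi$ against a single function of $x$.

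Dividing by $\int_\Omega\theta\neq 0$ and comparing with the definition of the standard weak derivative, I would then read off
$$\int_\Omega u(x)\partial_i\varphi(x)\,\mathrm{d}x=-\int_\Omega g(x)\varphi(x)\,\mathrm{d}x\quad\text{for all }\varphi\in\mathcal{C}_c^1(\Omega),$$
which is exactly the statement that $g$ is the weak $i$-th (standard) partial derivative of $u$. This establishes simultaneously the existence of $\partial_i u$ and formula \eqref{WeakDerInTermsOfNlWeakDer}. Since the argument applies verbatim to every $\theta$ satisfying \eqref{ThetaIntCond}, and weak derivatives are unique a.e., the formula indeed holds for any such $\theta$.

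I do not expect a deep obstacle here: the content of the proof is the choice of the separable test function together with the bookkeeping of the (repeated, unmentioned) Fubini applications. The points requiring care are the differentiation under the integral sign when computing $\partial_{1,i}^\ast(\psi)$ (immediate since $\varphi,\theta$ are $\mathcal{C}_c^1$), the local integrability of $g$, and above all the reabsorption of the constant term, which is precisely what forces the right-hand side into the form $-\int_\Omega g\,\varphi$ and thereby identifies the weak derivative.
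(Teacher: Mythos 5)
Your proposal is correct and follows essentially the same route as the paper's proof: the same separable test functions $\psi(x,y)=\theta(y)\varphi(x)$, the same computation of $\partial_{1,i}^\ast(\psi)$, the same isolation of $\int_\Omega u\,\partial_i\varphi$, and the same reabsorption of the constant $\int_\Omega u\,\partial_i\theta$ into an integral against $\varphi$ so that the weak-derivative identity can be read off. The only (welcome) addition is your explicit verification that the candidate $g$ is in $L^1_{\mathrm{loc}}(\Omega)$, which the paper leaves implicit.
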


\begin{proof}
    Fix $\theta \in \mathcal{C}_c^1(\Omega)$ satisfying \eqref{ThetaIntCond}. For any $\varphi \in \mathcal{C}_c^1(\Omega)$ consider

    $$\phi_{\theta,\varphi}(x,y):=\theta(y) \varphi(x),$$
    which is a function of $\mathcal{C}_c^1(\Omega\times\Omega)$. Then

    \begin{align}
        \int_\Omega u(x) \partial_{1,i}^\ast (\phi_{\theta,\varphi})(x) \mathrm{d}x &= \int_\Omega u(x) \int_\Omega \frac{\partial}{\partial x_i} \left[ \theta(x) \varphi(y) - \theta(y) \varphi(x) \right] \mathrm{d}y \mathrm{d}x\\
        &= \int_\Omega u(x) \int_\Omega \left[ \partial_i \theta(x) \varphi(y) - \theta(y) \partial_i \varphi(x) \right] \mathrm{d}y \mathrm{d}x\\
        &= \int_\Omega u(x) \partial_i \theta(x) \left( \int_\Omega\varphi(y) \mathrm{d}y \right) \mathrm{d}x\\
        &\qquad \qquad - \left( \int_\Omega \theta(y) \mathrm{d}y \right) \int_\Omega u(x) \partial_i\varphi(x) \mathrm{d}x.
    \end{align}

    If we isolate the last integral of the last equality we get

    \begin{align}
        \int_\Omega u(x) \partial_i \varphi(x) \mathrm{d}x &= -\frac{1}{\int_\Omega \theta(y) \mathrm{d}y} \left[\int_\Omega u(x) \partial_{1,i}^\ast (\phi_{\theta,\varphi})(x) \mathrm{d}x\right. \left.- \int_\Omega u(x) \partial_i \theta(x) \left( \int_\Omega\varphi(y) \mathrm{d}y \right) \mathrm{d}x \right],
    \end{align}
    and by relabeling the variable of the $\theta$ integral and by definition of weak nonlocal derivatives we have

    \begin{align}
        \int_\Omega u(x) \partial_i \varphi(x) \mathrm{d}x &= -\frac{1}{\int_\Omega \theta(z) \mathrm{d}z} \left[ -\int_{\Omega\times\Omega} \partial_{1,i} u(x,y)\phi_{\theta,\varphi}(x,y) \mathrm{d}x \mathrm{d}y \right. \left.- \int_\Omega \left(\int_\Omega u(x) \partial_i \theta(x) \mathrm{d}x \right) \varphi(y) \mathrm{d}y \right]\\
        &= \frac{1}{\int_\Omega \theta(z) \mathrm{d}z} \left[\int_{\Omega} \left( \int_{\Omega} \partial_{1,i} u(x,y)\varphi(x) \mathrm{d}x \right) \theta(y) \mathrm{d}y \right. \left.+ \int_\Omega \left(\int_\Omega u(x) \partial_i \theta(x) \mathrm{d}x \right) \varphi(y) \mathrm{d}y \right]\\
        &= \frac{1}{\int_\Omega \theta(z) \mathrm{d}z} \left[\int_{\Omega} \left( \int_{\Omega} \partial_{1,i} u(x,y)\varphi(x) \mathrm{d}x \right) \theta(y) \mathrm{d}y \right. \left.+ \int_\Omega \left(\int_\Omega u(y) \partial_i \theta(y) \mathrm{d}y \right) \varphi(x) \mathrm{d}x \right]\\
        &= \int_\Omega \left[ \frac{1}{\int_\Omega \theta(z) \mathrm{d}z} \int_\Omega \left[u(y) \partial_i \theta(y) + \partial_{1,i} u(x,y) \theta(y)\right] \mathrm{d}y \right] \varphi(x) \mathrm{d}x\\
        &= -\int_\Omega \left[ \frac{-1}{\int_\Omega \theta(z) \mathrm{d}z} \int_\Omega \left[ u(y) \partial_i \theta(y) + \partial_{1,i} u(x,y) \theta(y) \right] \mathrm{d}y \right] \varphi(x) \mathrm{d}x,
    \end{align}
    where in the third to last equality we have relabeled the integration variables in the second term.

    Thus, $u$ has weak $i$-th (standard) partial derivative equal (a.e.) to

    $$\partial_i u(x) = -\frac{1}{\int_\Omega \theta(z) \mathrm{d}z} \int_\Omega \left[u(y) \partial_i \theta(y) + \partial_{1,i} u(x,y) \theta(y)\right] \mathrm{d}y.$$
\end{proof}

While expression \eqref{WeakDerInTermsOfNlWeakDer} seemingly depends on the $\theta \in \mathcal{C}_c^1(\Omega)$ chosen, it is the same a.e.~for any $\theta$ satisfying \eqref{ThetaIntCond}, as weak (standard) derivatives are unique (again, in the a.e.~sense).

\begin{remark}
    Suppose $u\in L^1(\Omega)$ has weak $i$-th nonlocal partial derivative for all $i=1,\ldots,N$, then by   \eqref{WeakDerInTermsOfNlWeakDer} we have that the weak (standard) gradient can be written as

    \begin{equation} \label{WeakGradInTermsOfNlWeakGrad}
        \nabla u(x) =  -\frac{1}{\int_\Omega \theta(z) \mathrm{d}z} \int_\Omega \left[u(y) \nabla \theta(y) + \nabla_1 u(x,y) \theta(y)\right] \mathrm{d}y,
    \end{equation}
    where $\theta \in\mathcal{C}_c^1(\Omega)$ satisfies \eqref{ThetaIntCond}.
\end{remark}

\begin{remark} \label{WeakDerInTermsOfNlWeakDerExtension}
    In view of Remark \ref{NlWeakDerDefExtension}, expression \eqref{WeakDerInTermsOfNlWeakDer} of Proposition \ref{NlWeakDerImplyWeakDer} also holds for $\theta \in \mathcal{C}_0^1(\Omega)$ satisfying \eqref{ThetaIntCond}. In particular, formula \eqref{WeakGradInTermsOfNlWeakGrad} of the remark above is also true for any $\theta \in \mathcal{C}_0^1(\Omega)$ satisfying \eqref{ThetaIntCond}.
\end{remark}

Having introduced a notion of weak derivatives, we define the nonlocal Sobolev space as follows:

\begin{defi}[Nonlocal Sobolev space] \label{NlSobDef}
    Let $\omega$ be an admissible weight function and $p\in[1,+\infty]$. We define the \emph{nonlocal Sobolev space induced by $\omega$ and exponent $p$} as

    $$W^{1,p}_\omega(\Omega) := \{ u \in L^p(\Omega) : \partial_{\omega, i} u \in L^p(\Omega\times\Omega), i =1,\ldots,N \},$$
    where $\partial_{\omega, i} u$ are the weak $i$-th nonlocal partial derivatives as in Definition \ref{NlWeakDerDef}.
\end{defi}

When referring to such spaces, the ``exponent $p$" part will often be omitted. Notice that, similarly to the local case, the nonlocal Sobolev induced by an admissible weight function $\omega$ and exponent $p\in[1,+\infty]$ can be written as

$$W^{1,p}_\omega(\Omega) = \left\{u\in L^p(\Omega) : \nabla_\omega u \in L^p(\Omega\times\Omega;\mathbb{R}^N)\right\}.$$

\begin{prop}[The nonlocal Sobolev space is a Banach space]\label{Proposition_Banach_Sob}
    Given an admissible weight function $\omega$ and $p\in[1,+\infty]$, the nonlocal Sobolev space $W^{1,p}_\omega(\Omega)$ is a Banach space endowed with the norm

    $$\|u\|_{W^{1,p}_\omega(\Omega)} := \|u\|_{L^p(\Omega)} + \| \nabla_\omega u \|_{L^p(\Omega\times\Omega;\mathbb{R}^N)}.$$

    Moreover, $H^1_\omega(\Omega):=W^{1,2}_\omega(\Omega)$ is a Hilbert space with the inner product

    \begin{equation}
        \langle u, v \rangle_{H^1_\omega(\Omega)} := \langle u, v \rangle_{L^2(\Omega)} + \langle \nabla_\omega u , \nabla_\omega v \rangle_{L^2(\Omega\times\Omega;\mathbb{R}^N)}.
    \end{equation}
\end{prop}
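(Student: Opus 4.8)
The plan is to proceed in three stages: first verify that $\|\cdot\|_{W^{1,p}_\omega(\Omega)}$ is genuinely a norm, then establish completeness via the standard Cauchy-sequence argument, and finally deduce the Hilbert structure for $p=2$ from an equivalence of norms. For the norm axioms, the only point requiring attention is that the weak nonlocal gradient $\nabla_\omega$ is a linear operator on its domain: if $u,v \in L^1_{\mathrm{loc}}(\Omega)$ admit weak nonlocal derivatives and $\lambda \in \mathbb{R}$, then so does $u + \lambda v$, with $\partial_{\omega,i}(u+\lambda v) = \partial_{\omega,i}u + \lambda \partial_{\omega,i}v$. This is immediate from the linearity of both sides of the defining identity \eqref{WeakDerEq} in $u$ and in the candidate derivative. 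Granted this, definiteness follows because $\|u\|_{L^p(\Omega)}$ already controls $u$, while homogeneity and the triangle inequality are inherited from the corresponding properties of the norms on $L^p(\Omega)$ and $L^p(\Omega\times\Omega;\mathbb{R}^N)$.

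For completeness, I would take a Cauchy sequence $(u_n)_n$ in $W^{1,p}_\omega(\Omega)$. By definition of the norm, $(u_n)_n$ is Cauchy in $L^p(\Omega)$ and $(\nabla_\omega u_n)_n$ is Cauchy in $L^p(\Omega\times\Omega;\mathbb{R}^N)$; by completeness of these Lebesgue spaces there exist $u \in L^p(\Omega)$ and $v=(v_1,\ldots,v_N) \in L^p(\Omega\times\Omega;\mathbb{R}^N)$ with $u_n \to u$ and $\nabla_\omega u_n \to v$. The crux is to show $u \in W^{1,p}_\omega(\Omega)$ with $\nabla_\omega u = v$, which amounts to passing to the limit in the identity
\[
\int_\Omega u_n(x)\,\partial^*_{\omega,i}(\psi)(x)\,\mathrm{d}x = -\int_{\Omega\times\Omega} \partial_{\omega,i}u_n(x,y)\,\psi(x,y)\,\mathrm{d}x\,\mathrm{d}y
\]
for each fixed $\psi \in \mathcal{C}^1_c(\Omega\times\Omega)$ and each $i$. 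Both limits follow from Hölder's inequality once one observes that the relevant test objects lie in the conjugate space: $\psi$ is bounded with compact support, hence $\psi \in L^{p'}(\Omega\times\Omega)$, and (as already seen in the proof of Proposition \ref{WeakDerImplyNlWeakDer}) $\partial^*_{\omega,i}(\psi)$ equals $\partial_{x_i}$ of a difference of two $\mathcal{C}^1_c(\Omega)$ functions, hence is continuous with compact $x$-support contained in the union of the two coordinate projections of $\mathrm{supp}(\psi)$; in particular $\partial^*_{\omega,i}(\psi) \in L^{p'}(\Omega)$. Thus $\int_\Omega u_n\,\partial^*_{\omega,i}(\psi) \to \int_\Omega u\,\partial^*_{\omega,i}(\psi)$ and $\int_{\Omega\times\Omega}\partial_{\omega,i}u_n\,\psi \to \int_{\Omega\times\Omega} v_i\,\psi$, uniformly across all $p\in[1,+\infty]$ (the endpoint cases pairing $L^\infty$ against $L^1$). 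Passing to the limit yields the defining identity for $u$ with candidate derivative $v_i$, so by uniqueness of weak nonlocal derivatives $v_i = \partial_{\omega,i}u$. Hence $\nabla_\omega u = v \in L^p$, giving $u \in W^{1,p}_\omega(\Omega)$, and finally $\|u_n - u\|_{W^{1,p}_\omega(\Omega)} = \|u_n-u\|_{L^p(\Omega)} + \|\nabla_\omega u_n - v\|_{L^p(\Omega\times\Omega;\mathbb{R}^N)} \to 0$.

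For the Hilbert case $p=2$, I would check that $\langle\cdot,\cdot\rangle_{H^1_\omega(\Omega)}$ is a well-defined symmetric bilinear form that is positive definite (positivity comes already from the $L^2(\Omega)$ summand, so $\langle u,u\rangle_{H^1_\omega(\Omega)} = 0$ forces $u=0$). Its induced norm $\sqrt{\|u\|_{L^2(\Omega)}^2 + \|\nabla_\omega u\|_{L^2(\Omega\times\Omega;\mathbb{R}^N)}^2}$ is equivalent to $\|\cdot\|_{W^{1,2}_\omega(\Omega)}$ through the elementary inequalities $\tfrac{1}{\sqrt 2}(a+b) \le \sqrt{a^2+b^2} \le a+b$ valid for $a,b\ge 0$; since equivalent norms share completeness, the already-established Banach property of $W^{1,2}_\omega(\Omega)$ transfers, and $H^1_\omega(\Omega)$ is a Hilbert space.

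The main obstacle is the closedness of the nonlocal gradient operator, that is, the step identifying $v$ with $\nabla_\omega u$; its resolution rests entirely on the regularity and support properties of $\partial^*_{\omega,i}(\psi)$. Verifying that this function is bounded with compact $x$-support, so that it pairs against $u_n \to u$ in \emph{every} $L^p$ simultaneously, is precisely where the $\mathcal{C}^1$ hypothesis on $\omega$ and the compact support of $\psi$ are genuinely used; everything else reduces to the completeness of the underlying Lebesgue spaces and routine norm manipulations.
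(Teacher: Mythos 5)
Your proposal is correct and follows essentially the same route as the paper: norm axioms via the seminorm property of $\|\nabla_\omega\cdot\|_{L^p}$, completeness by passing to the limit in the defining identity \eqref{WeakDerEq} using Hölder's inequality together with the observation that $\partial^*_{\omega,i}(\psi)\in\mathcal{C}_c(\Omega)\subset L^{p'}(\Omega)$ and $\psi\in L^{p'}(\Omega\times\Omega)$, and the Hilbert structure for $p=2$ from the fact that the inner product induces a norm equivalent to $\|\cdot\|_{W^{1,2}_\omega(\Omega)}$. Your write-up merely makes explicit a few points the paper leaves implicit (linearity of $\nabla_\omega$, the compact-support argument for $\partial^*_{\omega,i}(\psi)$, and the elementary norm-equivalence inequalities), so there is nothing further to add.
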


\begin{proof}
    The fact that $\|\cdot\|_{W^{1,p}_\omega(\Omega)}$ is a norm and $\langle \cdot, \cdot \rangle_{H^1_\omega(\Omega)}$ is an inner product follows from noticing that 
    $\| \nabla_\omega \cdot \|_{L^p(\Omega\times\Omega;\mathbb{R}^N)}$ is a seminorm and $\langle \nabla_\omega \cdot , \nabla_\omega \cdot \rangle_{L^2(\Omega\times\Omega;\mathbb{R}^N)}$ is positive, symmetric and linear in each of its components.

    Let us now show completeness of the space. Let $( u_n )_{n\in\mathbb{N}}$ be a Cauchy sequence in $W^{1,p}_\omega(\Omega)$, with respect to the $\|\cdot\|_{W^{1,p}_\omega(\Omega)}$ norm. Then $( u_n )_{n \in \mathbb{N}}$ and $( \nabla_\omega u_n )_{n \in \mathbb{N}}$ are Cauchy sequences in $L^p(\Omega)$ and $L^p(\Omega\times\Omega;\mathbb{R}^N)$, respectively. By completeness of these spaces we have that there exist $u \in L^p(\Omega)$ and $g = (g_1,\ldots,g_N) \in L^p(\Omega\times\Omega;\mathbb{R}^N)$ such that $u_n \to u$ in $L^p(\Omega)$ and $\nabla_\omega u_n \to g$ in $L^p(\Omega\times\Omega;\mathbb{R}^N)$.

    Now, given any $\psi \in \mathcal{C}_c^1(\Omega \times\Omega)$, notice that $\psi \in L^{p'}(\Omega\times\Omega)$ and $\partial_{\omega, i}^\ast (\psi) \in \mathcal{C}_c(\Omega)\subset L^{p'}(\Omega)$, where $p'\in[1,+\infty]$ is such that $1/p + 1/p'=1$. Thus, by Hölder's inequality, the following limits hold:

    \begin{gather}
        \lim_{n \to +\infty} \int_\Omega u_n(x) \partial_{\omega, i}^\ast (\psi) (x) \mathrm{d}x = \int_\Omega u(x) \partial_{\omega, i}^\ast (\psi) (x) \mathrm{d}x,\\
        \lim_{n \to +\infty} \int_{\Omega\times\Omega} (\partial_{\omega, i} u_n)(x,y) \psi(x,y) \mathrm{d}x \mathrm{d}y
        = \int_{\Omega\times\Omega} g_i (x,y) \psi(x,y) \mathrm{d}x \mathrm{d}y,
    \end{gather}
    and, by definition of weak nonlocal derivatives, we have

    \begin{align*}
        \int_\Omega u(x) \partial_{\omega, i}^\ast (\psi) (x) \mathrm{d}x &= \lim_{n \to +\infty} \int_\Omega u_n(x) \partial_{\omega, i}^\ast (\psi) (x) \mathrm{d}x\\
        &= - \lim_{n \to +\infty} \int_{\Omega\times\Omega} (\partial_{\omega, i} u_n)(x,y) \psi(x,y) \mathrm{d}x \mathrm{d}y\\
        &=  - \int_{\Omega\times\Omega} g_i (x,y) \psi(x,y) \mathrm{d}x \mathrm{d}y
    \end{align*}
    for any $i = 1,\ldots,N$. This computation yields that $g = \nabla_\omega u$ a.e.~in $\Omega\times\Omega$ and thus $\nabla_\omega u \in L^p(\Omega\times\Omega;\mathbb{R}^N)$, from which $(u_n)_{n \in \mathbb{N}}$ converges in $W^{1,p}_\omega(\Omega)$.
\end{proof}

Note that the inner product $\langle \cdot, \cdot \rangle_{H^1_\omega(\Omega)}$ does not generate the norm $\|\cdot\|_{W_{\omega}^{1,2}(\Omega)}$, but rather an equivalent one. The space $H^1_\omega(\Omega)$ is shown to be a Hilbert space for completeness sake. A deeper study of this space and its intrinsic properties is left as future work.

By adapting from the standard setting (see e.g.~Brezis \cite[Proposition VIII.1]{Brezis}), we can readily show the reflexive and separable character of the nonlocal Sobolev space for certain exponents.

\begin{prop}[Reflexiveness and separability of the nonlocal Sobolev space]\label{Proposition_reflexives}
    Let $\omega$ be an admissible weight function in $\Omega$. Then $W^{1,p}_\omega(\Omega)$ is reflexive for $p\in(1,+\infty)$ and separable for $p\in[1,+\infty)$.
\end{prop}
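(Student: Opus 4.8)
The plan is to mimic the classical argument for the reflexivity and separability of Sobolev spaces (as in Brezis, Proposition VIII.1), exploiting the fact that $W^{1,p}_\omega(\Omega)$ embeds isometrically into a product of $L^p$ spaces, which are themselves reflexive (for $p\in(1,+\infty)$) and separable (for $p\in[1,+\infty)$). The key structural observation is that the linear map
\begin{equation}
    T: W^{1,p}_\omega(\Omega) \longrightarrow L^p(\Omega) \times L^p(\Omega\times\Omega;\mathbb{R}^N), \qquad Tu := \left( u, \nabla_\omega u \right),
\end{equation}
is an isometry when the product space is equipped with the norm $\|(v,g)\| := \|v\|_{L^p(\Omega)} + \|g\|_{L^p(\Omega\times\Omega;\mathbb{R}^N)}$, precisely by the definition of the $W^{1,p}_\omega(\Omega)$ norm in Proposition \ref{Proposition_Banach_Sob}.

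First I would verify that $T(W^{1,p}_\omega(\Omega))$ is a \emph{closed} subspace of the product. This is immediate from the completeness of $W^{1,p}_\omega(\Omega)$ established in Proposition \ref{Proposition_Banach_Sob}: if $(Tu_n)_n$ converges in the product space, then $(u_n)_n$ is Cauchy in $W^{1,p}_\omega(\Omega)$, hence converges to some $u$, and by continuity of $T$ its image is $Tu$, which lies in the range. Next, I would invoke the standard facts that a finite product of reflexive (respectively separable) Banach spaces is reflexive (respectively separable), so the product $L^p(\Omega)\times L^p(\Omega\times\Omega;\mathbb{R}^N)$ inherits these properties from the factors for the indicated ranges of $p$. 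Since a closed subspace of a reflexive space is reflexive, and a subspace of a separable metric space is separable, the range $T(W^{1,p}_\omega(\Omega))$ enjoys both properties in the relevant regimes. Finally, as $T$ is an isometric isomorphism onto its closed range, these properties transfer back to $W^{1,p}_\omega(\Omega)$ itself, since reflexivity and separability are preserved under isometric (indeed, linear homeomorphic) isomorphism.

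I do not expect any genuine obstacle here, as the argument is entirely structural and parallels the classical local case verbatim; the only point requiring a modicum of care is confirming that the target of $T$ is the correct product space and that the $\nabla_\omega u$ component genuinely lands in $L^p(\Omega\times\Omega;\mathbb{R}^N)$ — but this is built into Definition \ref{NlSobDef} of the space. The main conceptual content was already discharged in proving completeness (Proposition \ref{Proposition_Banach_Sob}), which is what guarantees closedness of the range; everything downstream is a routine application of the stability of reflexivity and separability under closed subspaces and isometric isomorphisms.
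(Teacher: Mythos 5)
Your proposal is correct and follows essentially the same route as the paper: the paper likewise defines the isometry $T(u)=(u,\nabla_\omega u)$ into $E:=L^p(\Omega)\times L^p(\Omega\times\Omega;\mathbb{R}^N)$, uses closedness of the range (guaranteed by the completeness established in Proposition \ref{Proposition_Banach_Sob}) together with the Brezis results that closed subspaces of reflexive spaces are reflexive and subspaces of separable spaces are separable, and transfers these properties back through the isometric isomorphism. Your explicit verification of closedness via the Cauchy-sequence argument is slightly more detailed than the paper's, but it is the same proof.
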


\begin{proof}
    For $p\in(1,+\infty)$ we know that the product space $E:=L^p(\Omega)\times L^p(\Omega\times\Omega;\mathbb{R}^N)$ is reflexive, and the operator $T: W^{1,p}_\omega(\Omega) \to E$ given by $T(u)=(u,\nabla_\omega u)$ is clearly an isometry (and therefore continuous). Thus, as $T(W^{1,p}_\omega(\Omega))$ is a closed subspace of $E$, it is reflexive \cite[Proposition III.17]{Brezis}, from which $W^{1,p}_\omega(\Omega)$ is also reflexive.

    Similarly, $E$ is separable for $p\in[1,+\infty)$, and therefore, as $T(W^{1,p}_\omega(\Omega))$ is a subspace of $E$, it is separable \cite[Proposition III.22]{Brezis}, from which $W^{1,p}_\omega(\Omega)$ is also separable.
\end{proof}

To wrap this subsection up, we define the traces of the functions in $W^{1,p}_\omega(\Omega)$ under suitable assumptions on $\Omega$ and $\omega$. This can be of some interest in view of subsequent applications to variational and PDE problems. We assume that the weight $\omega$ is bounded below by a positive constant near $\partial \Omega$ and that $\Omega$ is bounded and of Lipschitz boundary. From Theorem \ref{Theorem_equality_Sobolev} (which is independent of Proposition \ref{Proposition_trace_Sob} and will be proven in Subsection \ref{GeneralWeightsSubsection}), it follows that the functions in $W^{1,p}_\omega(\Omega)$ are of $W^{1,p}$ in a neighborhood of $\partial \Omega$, which means that they have a trace in the standard sense. More precisely, we prove the following:

\begin{prop}[Trace operator of the nonlocal Sobolev space]\label{Proposition_trace_Sob}
Let $\Omega$ be bounded and of Lipschitz boundary. Assume that there exists an open set $O_T \subset \Omega$ with Lipschitz boundary such that $\partial \Omega \subset \overline{O_T}$ and $c>0$ such that for all $(x,y) \in O_T\times O_T$

\begin{equation} \label{traceswBoundedBelow}
c\le \omega(x,y).
\end{equation}

Then, for all $p \in [1,+\infty]$ there exists a bounded linear operator $T_p: W^{1,p}_\omega(\Omega) \to W^{1-1/p,p}(\partial \Omega)$ such that $T_p v=v|_{\partial \Omega}$ whenever $v \in W^{1,p}_\omega(\Omega)\cap \mathcal{C}(\overline\Omega)$. Moreover, if additionally there exists a constant $C>0$ such that for all $x \in O_T$

\begin{equation} \label{tracesfwpBoundedAbove}
    f_\omega^p(x) \le C,
\end{equation}
then the operator $T_p$ is surjective.
\end{prop}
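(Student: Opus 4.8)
The plan is to reduce everything to the classical trace theory on the auxiliary Lipschitz domain $O_T$, exploiting that $\omega$ is bounded below there. First I would show that restriction to $O_T$ maps $W^{1,p}_\omega(\Omega)$ boundedly into $W^{1,p}_\omega(O_T)$. Given $u \in W^{1,p}_\omega(\Omega)$ and $\psi \in \mathcal{C}^1_c(O_T\times O_T)$, I extend $\psi$ by zero to $\tilde\psi \in \mathcal{C}^1_c(\Omega\times\Omega)$; a support inspection shows that $\partial^*_{\omega,i}(\tilde\psi)$ is supported in a compact subset of $O_T$ and there coincides with the operator computed intrinsically on $O_T$. Testing \eqref{WeakDerEq} with $\tilde\psi$ then gives that $u|_{O_T}$ has weak nonlocal derivative $(\partial_{\omega,i}u)|_{O_T\times O_T}$, so $\|u|_{O_T}\|_{W^{1,p}_\omega(O_T)}\le\|u\|_{W^{1,p}_\omega(\Omega)}$. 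Since $\omega\ge c$ on $O_T\times O_T$ and $O_T\subset\Omega$ has finite measure, Theorem \ref{Theorem_equality_Sobolev} applied to the domain $O_T$ yields $W^{1,p}_\omega(O_T)\hookrightarrow W^{1,p}(O_T)$, hence $u|_{O_T}\in W^{1,p}(O_T)$ with norm controlled by $\|u\|_{W^{1,p}_\omega(\Omega)}$. Note this direction does not use \eqref{tracesfwpBoundedAbove}.

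Next I would transfer the classical trace on $\partial O_T$ to $\partial\Omega$. The key topological fact is that $\partial\Omega$ is a relatively clopen subset of $\partial O_T$: since $O_T\subset\Omega$ is open and $\partial\Omega\subset\overline{O_T}$, each $z\in\partial\Omega$ satisfies $z\notin O_T$ yet $z\in\overline{O_T}$, so $z\in\overline{O_T}\setminus O_T=\partial O_T$; as $\partial\Omega$ and $\partial O_T$ are both $(N-1)$-dimensional topological manifolds (Lipschitz boundaries), invariance of domain forces the inclusion $\partial\Omega\hookrightarrow\partial O_T$ to be open, and compactness of $\partial\Omega$ makes it closed as well. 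Thus $\partial O_T=\partial\Omega\sqcup\Gamma$ with $\mathrm{dist}(\partial\Omega,\Gamma)>0$, so restriction $W^{1-1/p,p}(\partial O_T)\to W^{1-1/p,p}(\partial\Omega)$ is bounded (the Gagliardo double integral only decreases under restriction). I would then set $T_p u:=\mathrm{Tr}_{O_T}(u|_{O_T})|_{\partial\Omega}$, where $\mathrm{Tr}_{O_T}$ is the standard trace on the bounded Lipschitz domain $O_T$ (the Gagliardo trace theorem for $p\in[1,\infty)$, and the identification $W^{1,\infty}=\mathcal{C}^{0,1}$ for $p=\infty$). Boundedness and linearity follow by composition, and for $v\in W^{1,p}_\omega(\Omega)\cap\mathcal{C}(\overline\Omega)$ one has $\mathrm{Tr}_{O_T}(v|_{O_T})=v|_{\partial O_T}$, whose restriction to $\partial\Omega$ is $v|_{\partial\Omega}$.

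For surjectivity I would invoke the extra hypothesis \eqref{tracesfwpBoundedAbove} to recover, again via Theorem \ref{Theorem_equality_Sobolev}, the reverse embedding $W^{1,p}(O_T)\hookrightarrow W^{1,p}_\omega(O_T)$. Given $g\in W^{1-1/p,p}(\partial\Omega)$, I extend it by zero across the positive-distance gap to $\tilde g\in W^{1-1/p,p}(\partial O_T)$ and pick, by surjectivity of the classical trace, some $w\in W^{1,p}(O_T)$ with $\mathrm{Tr}_{O_T}w=\tilde g$. Choosing a Lipschitz cutoff $\eta$ on $\overline{O_T}$ with $\eta\equiv1$ near $\partial\Omega$ and $\eta\equiv0$ near $\Gamma$, the function $\eta w$ vanishes near $\Gamma$ and so extends by zero to some $u\in W^{1,p}(\Omega)$ whose classical gradient is supported in $O_T$; moreover $T_p u=\tilde g|_{\partial\Omega}=g$. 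By Proposition \ref{WeakDerImplyNlWeakDer}, $u$ has weak nonlocal derivatives $\partial_{\omega,i}u(x,y)=\omega(x,y)(\partial_iu(y)-\partial_iu(x))$, and since $\partial_iu$ is supported in $O_T$ the elementary estimate
\begin{equation*}
\int_\Omega\int_\Omega\omega(x,y)^p|\partial_iu(y)-\partial_iu(x)|^p\,\mathrm{d}x\,\mathrm{d}y\le 2^p\int_{O_T}|\partial_iu(x)|^p f_\omega^p(x)\,\mathrm{d}x\le 2^pC\|\partial_iu\|_{L^p(\Omega)}^p,
\end{equation*}
using the symmetry of $\omega$ and \eqref{tracesfwpBoundedAbove} (for $p=\infty$ one argues pointwise, since both $\partial_iu(x)$ and $\partial_iu(y)$ vanish off $O_T$, where $\omega\le C$ by symmetry), shows $u\in W^{1,p}_\omega(\Omega)$.

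The step I expect to require the most care is surjectivity: the natural preimage is obtained by a cutoff-and-extension construction, yet the paper has no product or cutoff rule for $\nabla_\omega$. The resolution above is to arrange that the \emph{classical} gradient of the lift be supported inside $O_T$, so that membership in $W^{1,p}_\omega(\Omega)$ can be read off from Proposition \ref{WeakDerImplyNlWeakDer} together with the integral bound $f_\omega^p\le C$, which holds precisely on $O_T$ and is what makes the displayed estimate finite. A second delicate point is the purely topological identification of $\partial\Omega$ as a clopen piece of $\partial O_T$ at positive distance from the remainder, which is what legitimizes both restricting and zero-extending fractional Sobolev data between the two boundaries.
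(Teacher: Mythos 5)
Your proof is correct, and the construction of $T_p$ itself is the same as the paper's: restrict to $O_T$, use \eqref{traceswBoundedBelow} and Theorem \ref{Theorem_equality_Sobolev} (applied on the finite-measure Lipschitz domain $O_T$) to land in $W^{1,p}(O_T)$, apply the classical trace on $\partial O_T$, and restrict to $\partial\Omega$ using $\partial\Omega\subset\partial O_T$. Where you genuinely diverge is surjectivity. The paper obtains it by writing $T_p$ as a composition of four operators and asserting each is surjective, relegating the two nontrivial assertions to a footnote: that every $\tilde v\in W^{1,p}_\omega(O_T)$ extends to some $v\in W^{1,p}_\omega(\Omega)$, and that every $g\in W^{1-1/p,p}(\partial\Omega)$ extends to $W^{1-1/p,p}(\partial O_T)$. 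You instead build an explicit preimage: zero-extend the boundary datum (justified by your clopen decomposition $\partial O_T=\partial\Omega\sqcup\Gamma$ with $\mathrm{dist}(\partial\Omega,\Gamma)>0$, which controls the cross terms in the Gagliardo seminorm), lift it classically, and multiply by a cutoff so that the \emph{classical} gradient is supported in $O_T$, after which Proposition \ref{WeakDerImplyNlWeakDer}, the symmetry of $\omega$ and \eqref{tracesfwpBoundedAbove} give membership in $W^{1,p}_\omega(\Omega)$ directly. This is more work, but it buys something real: the paper's footnote claim about extending elements of $W^{1,p}_\omega(O_T)$ is not as innocent as stated, since naive zero-extension fails for nonlocal Sobolev spaces (by Proposition \ref{NlWeakDerImplyWeakDer}, a jump across $\partial O_T\cap\Omega$ would force a classical weak derivative that does not exist), and a classical Sobolev extension only lands back in $W^{1,p}_\omega(\Omega)$ after exactly the kind of localization of the gradient inside $O_T$ that your cutoff provides, since \eqref{tracesfwpBoundedAbove} is only assumed on $O_T$. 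So your argument is self-contained where the paper's is compressed, at the cost of the extra topological lemma (invariance of domain) and the cutoff bookkeeping.
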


\begin{proof}
Consider the standard trace operator on $O_T$, $T_{O_T,p}: W^{1,p}(O_T) \to W^{1-1/p,p}(\partial O_T)$, which we know is linear, bounded and surjective. Moreover, given $v \in W^{1,p}_\omega(\Omega)$, one readily checks that $v|_{O_T}\in W^{1,p}_\omega(O_T)$ and $\|v|_{O_T}\|_{W^{1,p}_\omega(O_T)} \le \|v \|_{W^{1,p}_\omega(\Omega)}$, and thus the restriction operator

\begin{align}
    \cdot|_{O_T} : W^{1,p}_\omega (\Omega) &\to W^{1,p}_\omega(O_T)\\
    v &\mapsto v|_{O_T}
\end{align}
is linear, bounded and surjective\footnote{Given $\tilde v \in W^{1,p}_\omega(O_T)$, one can easily construct a $v \in W^{1,p}_\omega(\Omega)$ so that $v|_{O_T} = \tilde v$ in $O_T$. The same is true for the restriction operator $\cdot|_{\partial \Omega}: W^{1-1/p,p}(\partial O_T) \to W^{1-1/p,p}(\partial \Omega)$.}.

Furthermore, as $\partial \Omega \subset \overline{O_T}= O_T \cup \partial O_T$ and $\partial \Omega \cap O_T \subset \partial \Omega \cap \Omega = \emptyset$, we must have $\partial \Omega \subset \partial O_T$, and so the restriction operator

\begin{align}
    \cdot|_{\partial \Omega}: W^{1-1/p,p}(\partial O_T) &\to W^{1-1/p,p}(\partial \Omega)\\
     v &\mapsto v|_{\partial\Omega}
\end{align}
is also linear, bounded and surjective.

In addition, the inclusion operator $i: W^{1,p}_\omega(O_T) \to W^{1,p}(O_T)$ given by Theorem \ref{Theorem_equality_Sobolev} is clearly linear and bounded, and surjective if one additionally assumes \eqref{tracesfwpBoundedAbove} using the same theorem.

From these observations, it follows that the operator $T_p:W^{1,p}_\omega(\Omega) \to W^{1-1/p,p}(\partial \Omega)$ given by 
$$T_p := \cdot|_{\partial\Omega} \circ T_{O_T,p} \circ i \circ \cdot |_{O_T}$$
satisfies all the desired properties.
\end{proof}

In particular, if $\Omega$ is a bounded and of Lipschitz boundary and $\omega$ satisfies both of the conditions appearing in Proposition \ref{Proposition_trace_Sob}, for any $g \in W^{1-1/p,p}(\partial \Omega)$ one can define the (nonempty) trace spaces
\begin{equation}
W^{1,p}_{\omega,g}(\Omega):= \{ u \in W^{1,p}_\omega(\Omega): T_p u=g\},
\end{equation}
which are closed in $W^{1,p}_\omega(\Omega)$. A natural question, beyond the scope of this paper, would be to investigate whether the lower bound on $\partial \Omega$ can be relaxed in order to still obtain some meaningful notion of trace. 

\subsection{The spaces of functions with nonlocal weighted bounded variation}\label{section_NLBV}

As stated in the introduction, the weighted $\mathrm{NLBV}$ space was first introduced in \cite{Wang-Ng}. However, there are some issues with the statements and proofs presented in the aforementioned article. Let us discuss the issues relating to \cite[Proposition 2.3]{Wang-Ng} and \cite[Proposition 2.4]{Wang-Ng}. The first result is a nonlocal analogue of the classical Approximation Theorem for $\mathrm{BV}$ functions. In their proof, they implicitly prove that $u_n \to u$ in $L^1(\Omega)$ implies that $\mathrm{NLTV}_\omega(u_n) \to \mathrm{NLTV}_\omega(u)$ regardless of $\omega$, $(u_n)_{n \in \mathbb{N}}$ and $u$, which is not true\footnote{Indeed, a counterexample may be produced by taking $\Omega=(0,1)$, $\omega\equiv 1$, $u$ the zero function on $(0,1)$ and $(u_n)_{n \in \mathbb{N}}$ such that $u_n(x)=1-nx$ for $x \in (0,1/n)$ and $u_n=0$ on $(1/n,1)$}. On the other hand, \cite[Proposition 2.4]{Wang-Ng} is a compactness result that is contradicted if one chooses the weight function $\omega \equiv 0$ in $\Omega\times\Omega$ (which trivially satisfies the conditions imposed in \cite{Wang-Ng}). That being said, at the end of this subsection we give a simple condition on $\omega$ which, if imposed, allows the compactness result to hold true, see Proposition \ref{NLBVCompactness}.

As for an approximation-type result, let us mention that in \cite{Kindermann} the result is stated to hold for $\omega \equiv 1$ in $\Omega\times\Omega$ (see \cite[Lemma 5.3]{Kindermann}), and its proof is said to be analogous to that of the $\mathrm{BV}$ case. However, we have not been able to reproduce this argument, essentially due to the fact that, as mentioned in the introduction, we do not have an appropriate product rule for $\nabla_\omega$ and $\mathrm{div}_\omega$. Therefore, as far as we know, the validity of the result remains an open question, even for constant weight functions.

Note that the intent of the discussion above is only to justify our work and to show that our results are original, and also to explain why we cannot use some of the tools and results appearing on the two articles discussed.

Before delving straight into nonlocal total variation, we first need to establish a notion of \emph{nonlocal divergence}. We do so in the next definition.

\begin{defi}[Nonlocal divergence]
    Let $\omega$ be an admissible weight function. Given a function $\phi \in \mathcal{C}^{1}_0(\Omega \times \Omega; \mathbb{R}^N)$, we define its \emph{nonlocal divergence induced by $\omega$}, $\mathrm{div}_\omega (\phi): \Omega \to \mathbb{R}$ as
        
    $$\mathrm{div}_\omega (\phi)(x) := \int_\Omega \mathrm{div}_x\left[\omega (x,y) (\phi(y,x) - \phi(x,y)) \right] \mathrm{d}y.$$
\end{defi}

As stated in the introduction, the nonlocal divergence of a $\mathcal{C}^{1}_0(\Omega \times \Omega; \mathbb{R}^N)$ function is the sum of the nonlocal derivatives of each of its component functions.

With the nonlocal divergence established, we can naturally define the nonlocal total variation as follows:

\begin{defi}[Nonlocal total variation] \label{NLBVDef}
    Let $\omega$ be an admissible weight function. Given $u \in L^1(\Omega)$, we define its \emph{nonlocal total variation induced by $\omega$} as
        
    $$\mathrm{NLTV}_\omega(u) := \sup \left\{ \int_\Omega u(x) (\text{div}_\omega \phi)(x) \mathrm{d}x : \phi \in \mathcal{C}^1_c (\Omega \times \Omega, \mathbb{R}^N), \| \phi \|_\infty \le 1 \right\}.$$
        
    The \emph{set of functions with nonlocal bounded variation induced by $\omega$} is defined as
        
    $$\mathrm{NLBV}_\omega(\Omega) := \{ u \in L^1(\Omega) : \mathrm{NLTV}_\omega(u) < +\infty \}.$$
\end{defi}

The following characterization of nonlocal weighted total variation closely resembles the definition of the (standard) weighted total variation introduced in \cite{Baldi01}, and will be used in the proof of Theorem \ref{Theorem_main_equality_NLBV}.

\begin{lemma}[$\mathrm{NLTV}^\omega$] \label{NLTV^omegaLemma}
    Let $\omega$ be an admissible weight function. Then

    $$\mathrm{NLTV}_\omega(u)=\mathrm{NLTV}^\omega(u)$$
    for any $u \in L^1(\Omega)$, where

    $$\mathrm{NLTV}^\omega(u) := \sup \left\{ \int_\Omega u(x) (\mathrm{div}_1 \phi)(x) \mathrm{d}x : \phi \in \mathcal{C}^1_c (\Omega \times \Omega, \mathbb{R}^N), | \phi | \le \omega \mbox{ on } \Omega\times\Omega \right\}.$$
\end{lemma}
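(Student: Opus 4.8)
The plan is to exhibit an explicit, reversible correspondence between the two families of admissible test fields and to exploit the pointwise identity
$$\mathrm{div}_\omega(\phi) = \mathrm{div}_1(\omega\phi) \quad \mbox{for all } \phi \in \mathcal{C}^1_c(\Omega\times\Omega;\mathbb{R}^N),$$
which follows directly from the definitions together with the symmetry $\omega(x,y)=\omega(y,x)$ (note that $\omega\phi \in \mathcal{C}^1_c$ since $\omega\in\mathcal{C}^1$ and $\phi$ has compact support). Granting this, the inequality $\mathrm{NLTV}_\omega(u) \le \mathrm{NLTV}^\omega(u)$ is immediate: given $\phi$ with $\|\phi\|_\infty \le 1$, the field $\psi := \omega\phi$ satisfies $|\psi| \le \omega$ and is therefore admissible in the definition of $\mathrm{NLTV}^\omega$, while $\int_\Omega u\,\mathrm{div}_\omega(\phi) = \int_\Omega u\,\mathrm{div}_1(\psi)$. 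Taking the supremum over all such $\phi$ yields the claim.

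For the reverse inequality $\mathrm{NLTV}^\omega(u) \le \mathrm{NLTV}_\omega(u)$ I would like to send an admissible $\psi$ (with $|\psi|\le\omega$) to $\phi := \psi/\omega$, but this is ill-defined where $\omega$ vanishes. I would instead regularize: for $\epsilon>0$ set $\phi_\epsilon := \psi/(\omega+\epsilon)$, which lies in $\mathcal{C}^1_c(\Omega\times\Omega;\mathbb{R}^N)$ and satisfies $\|\phi_\epsilon\|_\infty \le 1$ because $|\psi|\le\omega\le\omega+\epsilon$. Writing $g_\epsilon := \psi - \omega\phi_\epsilon = \frac{\epsilon}{\omega+\epsilon}\psi$ and using the identity above together with linearity of $\mathrm{div}_1$, one gets $\mathrm{div}_\omega(\phi_\epsilon) = \mathrm{div}_1(\omega\phi_\epsilon) = \mathrm{div}_1(\psi) - \mathrm{div}_1(g_\epsilon)$, so that
$$\int_\Omega u\,\mathrm{div}_\omega(\phi_\epsilon)\,\mathrm{d}x = \int_\Omega u\,\mathrm{div}_1(\psi)\,\mathrm{d}x - \int_\Omega u\,\mathrm{div}_1(g_\epsilon)\,\mathrm{d}x.$$
Since each $\phi_\epsilon$ is admissible for $\mathrm{NLTV}_\omega$, the left-hand side is bounded above by $\mathrm{NLTV}_\omega(u)$; hence it suffices to prove that the error term $\int_\Omega u\,\mathrm{div}_1(g_\epsilon)\,\mathrm{d}x \to 0$ as $\epsilon\to 0$, and then pass to the supremum over $\psi$.

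The heart of the matter, and the main obstacle, is precisely this vanishing. The field $g_\epsilon$ tends to $0$ uniformly, but $\mathrm{div}_1(g_\epsilon)$ involves its first derivatives, so I must control $\nabla g_\epsilon$. Differentiating produces two terms, $\frac{\epsilon\,\partial_{x_i}\psi}{\omega+\epsilon}$ and $-\frac{\epsilon\,\psi\,\partial_{x_i}\omega}{(\omega+\epsilon)^2}$; using $|\psi|\le\omega$ and the elementary inequality $\frac{\epsilon\omega}{(\omega+\epsilon)^2}\le\frac{1}{4}$, both are bounded on the compact support of $\psi$ by a constant independent of $\epsilon$. For the pointwise limit, where $\omega>0$ both terms clearly tend to $0$, while on $\{\omega=0\}$ I will invoke the key observation that $\nabla\psi=0$ there: any zero of the nonnegative $\mathcal{C}^1$ function $\omega$ is an interior minimum, whence $\nabla\omega=0$ and $\omega(p_0+h)=o(|h|)$ near it, and the constraint $|\psi|\le\omega$ with $\psi\in\mathcal{C}^1$ then forces $\nabla\psi(p_0)=0$. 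Consequently $\nabla g_\epsilon \to 0$ pointwise everywhere under a uniform, compactly supported bound, and two applications of dominated convergence (first in the $y$-integration to get $\mathrm{div}_1(g_\epsilon)\to 0$ pointwise with a compactly supported uniform bound, then against $u\in L^1(\Omega)$) give $\int_\Omega u\,\mathrm{div}_1(g_\epsilon)\,\mathrm{d}x\to 0$. This closes the reverse inequality and, combined with the first paragraph, completes the proof.
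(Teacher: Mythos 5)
Your proposal is correct, and while the easy inequality is proved exactly as in the paper, the reverse one takes a genuinely different route. For $\mathrm{NLTV}_\omega(u)\le\mathrm{NLTV}^\omega(u)$ both you and the paper use the identity $\mathrm{div}_\omega(\phi)=\mathrm{div}_1(\omega\phi)$ (a consequence of the symmetry of $\omega$) and the map $\phi\mapsto\omega\phi$. For the converse, the paper sets $\phi_\omega=\psi/\omega$ on $\{\omega>0\}$ and asserts that $\phi_\omega$ can be ``defined elsewhere'' so as to lie in $\mathcal{C}^1_c(\Omega\times\Omega;\mathbb{R}^N)$ with $\|\phi_\omega\|_\infty\le1$, justifying this only by the bound $|\psi|/\omega\le1$ on $\{\omega>0\}$; that justification is incomplete, since boundedness of the quotient does not by itself give a continuous (let alone $\mathcal{C}^1$) extension across the zero set of $\omega$. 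Your regularization $\phi_\epsilon=\psi/(\omega+\epsilon)$ sidesteps this issue entirely: each $\phi_\epsilon$ is genuinely admissible for $\mathrm{NLTV}_\omega$, and the error field $g_\epsilon=\frac{\epsilon}{\omega+\epsilon}\psi$ is handled by exactly the right two ingredients --- the $\epsilon$-uniform, compactly supported bound on $\nabla g_\epsilon$ (via $|\psi|\le\omega$ and $\frac{\epsilon\omega}{(\omega+\epsilon)^2}\le\frac14$), and the pointwise convergence $\nabla g_\epsilon\to0$ everywhere, which on $\{\omega=0\}$ requires your key observation that zeros of $\omega$ are interior minima, so $\nabla\omega=0$, $\omega(p_0+h)=o(|h|)$, and the constraint $|\psi|\le\omega$ forces $\psi(p_0)=0$ and $\nabla\psi(p_0)=0$. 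The two applications of dominated convergence then legitimately give $\int_\Omega u\,\mathrm{div}_1(g_\epsilon)\,\mathrm{d}x\to0$, closing the inequality. In terms of trade-offs: the paper's construction is a one-liner and is perfectly fine whenever $\omega$ is strictly positive (or whenever $\psi/\omega$ happens to extend regularly), whereas your limiting argument is longer but works for every admissible weight, including those that vanish somewhere in $\Omega\times\Omega$; in that degenerate case your proof is actually more complete than the paper's own.
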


\begin{proof}
    Fix $u \in L^1(\Omega)$. Given $\phi \in \mathcal{C}_c^1(\Omega\times\Omega;\mathbb{R}^N)$ such that $\| \phi \|_\infty \le 1$, define $\phi^\omega : \Omega\times\Omega \to \mathbb{R}^N$ as $\phi^\omega := \omega \phi$. By definition $\phi^\omega \in \mathcal{C}_c^1(\Omega\times\Omega;\mathbb{R}^N)$, $|\phi^\omega|\le \omega$ in $\Omega\times\Omega$ and $(\mathrm{div}_\omega\phi) = (\mathrm{div}_1 \phi^\omega)$ in $\Omega$. Therefore,

    $$\int_\Omega u(x) (\mathrm{div}_\omega \phi)(x) \mathrm{d}x = \int_\Omega u(x) (\mathrm{div}_1 \phi^\omega)(x) \mathrm{d}x \le \mathrm{NLTV}^\omega(u),$$
    from which $\mathrm{NLTV}_\omega(u) \le \mathrm{NLTV}^\omega(u)$.

    On the other hand, given $\phi \in \mathcal{C}_c^1(\Omega\times\Omega;\mathbb{R}^N)$ such that $|\phi|\le \omega$ in $\Omega\times\Omega$, define $\phi_\omega:\Omega\times\Omega\to \mathbb{R}^N$ as
    
    $$\phi_\omega(x,y) = \frac{\phi(x,y)}{\omega(x,y)} \text{ for } \omega(x,y)>0,$$
    and defined elsewhere in a way such that $\phi_\omega \in \mathcal{C}_c^1(\Omega\times\Omega;\mathbb{R}^N)$ and $\|\phi_\omega\|_\infty \le 1$ (this can be done because $|\phi| / \omega \le 1$ in $\{ \omega(x,y)>0 \}$). Notice that $\mathrm{div}_1 \phi = \mathrm{div}_\omega \phi_\omega$ in $\Omega$, and thus

    $$\int_\Omega u(x) (\mathrm{div}_1 \phi)(x) \mathrm{d}x = \int_\Omega u(x) (\mathrm{div}_\omega \phi_\omega)(x) \mathrm{d}x \le \mathrm{NLTV}_\omega(u),$$
    which implies that $\mathrm{NLTV}^\omega(u) \le \mathrm{NLTV}_\omega(u)$.
\end{proof}

\begin{remark} \label{NLBVDefExtension}
    Similarly to the local case, the supremum in $\mathrm{NLTV}_\omega$ coincides with the same supremum taken over $\mathcal{C}_0^1(\Omega\times\Omega;\mathbb{R}^N)$ functions. The same holds true for $\mathrm{NLTV}^\omega$ as defined in Lemma \ref{NLTV^omegaLemma}. 
\end{remark}

As shown next, nonlocal total variation is lower semicontinuous. This is a desirable property which will be key in the arguments presented in Section \ref{section_applications}.

\begin{lemma}[Weak lower semicontinuity of the nonlocal total variation]\label{lemma:lsc_nlbv}
    Let $\omega$ be an admissible weight function. If $u\in L^1(\Omega)$ and $(u_n)_{n\in\mathbb{N}}$ is a sequence in $\mathrm{NLBV}_\omega(\Omega)$ such that $u_n \rightharpoonup u$ weakly in $L^1(\Omega)$, then

    \begin{equation}
        \mathrm{NLTV}_\omega(u) \le \liminf_{n \to +\infty} \mathrm{NLTV}_\omega(u_n).
    \end{equation}
\end{lemma}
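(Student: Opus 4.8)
The plan is to exploit that $\mathrm{NLTV}_\omega$ is, by Definition \ref{NLBVDef}, a supremum of linear functionals each of which is continuous with respect to weak $L^1$ convergence. The result then reduces to the elementary fact that a pointwise supremum of weakly lower semicontinuous functionals is itself weakly lower semicontinuous.

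First I would fix an arbitrary admissible test field $\phi \in \mathcal{C}^1_c(\Omega\times\Omega;\mathbb{R}^N)$ with $\|\phi\|_\infty \le 1$ and observe that $\mathrm{div}_\omega\phi \in L^\infty(\Omega)$. This is the only point requiring a small verification: as already noted in the proof of Proposition \ref{Proposition_Banach_Sob}, each $\partial^*_{\omega,i}(\phi_i)$ belongs to $\mathcal{C}_c(\Omega)$ (its support in the $x$-variable being controlled by the compact support of $\phi$ in $\Omega\times\Omega$), and hence $\mathrm{div}_\omega\phi = \sum_{i=1}^N \partial^*_{\omega,i}(\phi_i) \in \mathcal{C}_c(\Omega) \subset L^\infty(\Omega)$. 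Since $(L^1(\Omega))^\ast = L^\infty(\Omega)$, the weak convergence $u_n \rightharpoonup u$ yields, for this fixed $\phi$,
$$\int_\Omega u(x)\,\mathrm{div}_\omega\phi(x)\,\mathrm{d}x = \lim_{n\to+\infty} \int_\Omega u_n(x)\,\mathrm{div}_\omega\phi(x)\,\mathrm{d}x.$$

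Next, because $\phi$ is itself an admissible competitor in the definition of $\mathrm{NLTV}_\omega(u_n)$, for every $n$ one has $\int_\Omega u_n\,\mathrm{div}_\omega\phi \le \mathrm{NLTV}_\omega(u_n)$. Since the limit on the left above exists, it coincides with the corresponding $\liminf$, and passing to the $\liminf$ on the right gives
$$\int_\Omega u(x)\,\mathrm{div}_\omega\phi(x)\,\mathrm{d}x \le \liminf_{n\to+\infty}\mathrm{NLTV}_\omega(u_n).$$

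Finally, taking the supremum over all admissible $\phi$ on the left-hand side produces $\mathrm{NLTV}_\omega(u) \le \liminf_{n\to+\infty}\mathrm{NLTV}_\omega(u_n)$, which is the claim. I do not anticipate any serious obstacle: the argument is the standard one for functionals written as suprema of dual-continuous linear maps, and the only genuine ingredient is the membership $\mathrm{div}_\omega\phi \in L^\infty(\Omega)$, which guarantees that the weak $L^1$ pairing passes to the limit.
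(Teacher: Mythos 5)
Your proof is correct and follows essentially the same argument as the paper: both fix an admissible test field $\phi$, note that $\mathrm{div}_\omega(\phi) \in L^\infty(\Omega)$ so that the pairing passes to the limit under weak $L^1$ convergence, bound it by $\liminf_{n\to+\infty}\mathrm{NLTV}_\omega(u_n)$, and conclude by taking the supremum over $\phi$. Your extra remark that $\mathrm{div}_\omega(\phi)$ in fact lies in $\mathcal{C}_c(\Omega)$ is a slightly more detailed justification of the step the paper dismisses as ``one readily checks,'' and it is consistent with the observation already made in the proof of Proposition \ref{Proposition_Banach_Sob}.
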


\begin{proof}
    For any $\phi \in \mathcal{C}_c^1(\Omega\times\Omega;\mathbb{R}^N)$ such that $\|\phi\|_{L^\infty(\Omega\times\Omega;\mathbb{R}^N)}\le 1$ one readily checks that $\mathrm{div}_\omega(\phi)$ belongs to $L^\infty(\Omega)$, which by assumption implies that 
    \begin{equation*}
    \lim_{n \to \infty}\int_\Omega u_n \mathrm{div}_\omega(\phi) \mathrm{d}x= \int_\Omega u \; \mathrm{div}_\omega(\phi) \mathrm{d}x.
    \end{equation*}
    
    As a consequence,
    \begin{equation}
        \int_\Omega u \; \mathrm{div}_\omega(\phi) \mathrm{d}x \le \liminf_{n \to +\infty} \mathrm{NLTV}_\omega(u_n),
    \end{equation}
    and by taking supremum over all $\phi$ the result is proven.
\end{proof}

From lower semicontinuity it follows that the space of functions with nonlocal bounded variation is a Banach space for any admissible weight function.

\begin{prop}[The space of functions with nonlocal bounded variation is a Banach space] \label{Proposition_Banach_NLBV}
    Let $\omega$ be an admissible weight function. Then $\mathrm{NLBV}_\omega(\Omega)$ is a Banach space with the norm

    $$\|u\|_{\mathrm{NLBV}_\omega(\Omega)} := \|u\|_{L^1(\Omega)} + \mathrm{NLTV}_\omega(u).$$
\end{prop}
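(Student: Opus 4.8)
The plan is to first verify that $\|\cdot\|_{\mathrm{NLBV}_\omega(\Omega)}$ is genuinely a norm, and then to establish completeness via the standard $\mathrm{BV}$-type argument, whose crucial ingredient will be the weak lower semicontinuity already proven in Lemma \ref{lemma:lsc_nlbv}. For the norm property, I would observe that $\mathrm{NLTV}_\omega$ is a seminorm: nonnegativity is immediate from its definition as a supremum (the admissible competitor $\phi \equiv 0$ gives value $0$), positive homogeneity $\mathrm{NLTV}_\omega(\lambda u)=|\lambda|\,\mathrm{NLTV}_\omega(u)$ follows by pulling the scalar through the integral and replacing $\phi$ by $\mathrm{sgn}(\lambda)\phi$ in the admissible set, and subadditivity follows from the subadditivity of the supremum applied to the linear functional $\phi \mapsto \int_\Omega u(x)\,\mathrm{div}_\omega(\phi)(x)\,\mathrm{d}x$. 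In particular $\mathrm{NLBV}_\omega(\Omega)$ is a vector space. Since $\|u\|_{L^1(\Omega)}$ is already a norm, the sum $\|u\|_{L^1(\Omega)}+\mathrm{NLTV}_\omega(u)$ inherits the triangle inequality and homogeneity, and its vanishing forces $\|u\|_{L^1(\Omega)}=0$, hence $u=0$ a.e.; thus it is a norm.

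For completeness, I would take a Cauchy sequence $(u_n)_{n\in\mathbb{N}}$ in $\mathrm{NLBV}_\omega(\Omega)$. The estimate $\|u_n-u_m\|_{L^1(\Omega)} \le \|u_n-u_m\|_{\mathrm{NLBV}_\omega(\Omega)}$ shows that $(u_n)_{n\in\mathbb{N}}$ is Cauchy in $L^1(\Omega)$, so by completeness of $L^1(\Omega)$ there exists $u \in L^1(\Omega)$ with $u_n \to u$ strongly in $L^1(\Omega)$. Since strong $L^1$ convergence implies weak $L^1$ convergence, Lemma \ref{lemma:lsc_nlbv} applies; as any Cauchy sequence is bounded in norm, $\sup_n \mathrm{NLTV}_\omega(u_n)<+\infty$, and therefore
\begin{equation*}
\mathrm{NLTV}_\omega(u) \le \liminf_{n\to+\infty} \mathrm{NLTV}_\omega(u_n) < +\infty,
\end{equation*}
which shows $u \in \mathrm{NLBV}_\omega(\Omega)$.

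It remains to upgrade this to convergence in the full norm, which is the one genuinely delicate point. Given $\varepsilon>0$, the Cauchy property yields $N$ such that $\mathrm{NLTV}_\omega(u_n-u_m)<\varepsilon$ for all $n,m\ge N$. Fixing $n\ge N$ and letting $m\to+\infty$, the difference $u_n-u_m$ converges to $u_n-u$ strongly, hence weakly, in $L^1(\Omega)$; applying Lemma \ref{lemma:lsc_nlbv} to this sequence (each term lies in $\mathrm{NLBV}_\omega(\Omega)$ since it is a vector space) gives
\begin{equation*}
\mathrm{NLTV}_\omega(u_n-u) \le \liminf_{m\to+\infty} \mathrm{NLTV}_\omega(u_n-u_m) \le \varepsilon.
\end{equation*}
Thus $\mathrm{NLTV}_\omega(u_n-u)\to 0$, and combined with $\|u_n-u\|_{L^1(\Omega)}\to 0$ we conclude $\|u_n-u\|_{\mathrm{NLBV}_\omega(\Omega)}\to 0$. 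The main obstacle is precisely this last step: lower semicontinuity only directly controls the seminorm of the limit, so the convergence of the seminorms of the differences has to be extracted by reapplying the semicontinuity lemma to the auxiliary sequence $(u_n-u_m)_m$ for each fixed $n$, exploiting that the Cauchy bound $\varepsilon$ survives the $\liminf$.
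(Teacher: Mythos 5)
Your proposal is correct and follows essentially the same route as the paper's own proof: completeness is reduced to completeness of $L^1(\Omega)$, and the norm convergence $\mathrm{NLTV}_\omega(u_n-u)\to 0$ is obtained exactly as in the paper by applying the weak lower semicontinuity of Lemma \ref{lemma:lsc_nlbv} to the differences $u_n-u_m$ (with $n$ fixed, $m\to+\infty$), using that strong $L^1$ convergence implies weak convergence. The only cosmetic difference is that you establish $u\in\mathrm{NLBV}_\omega(\Omega)$ up front via lower semicontinuity and boundedness of the Cauchy sequence, whereas the paper deduces it at the end from the triangle inequality $\mathrm{NLTV}_\omega(u)\le\mathrm{NLTV}_\omega(u-u_n)+\mathrm{NLTV}_\omega(u_n)$; both are valid.
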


\begin{proof}
    The fact that $\| \cdot \|_{\mathrm{NLBV}_\omega(\Omega)}$ is a norm follows from noticing that $\mathrm{NLTV}_\omega$ is a seminorm, which is easily proven.

    Let us show completeness of the space. Let $(u_n)_{n\in\mathbb{N}}$ be a Cauchy sequence in $(\mathrm{NLBV}_\omega(\Omega),{\|\cdot\|_{\mathrm{NLBV}_\omega(\Omega)}})$. Then, the sequence is also Cauchy with respect to the $L^1(\Omega)$ norm, and, as $L^1(\Omega)$ is complete, there exists $u\in L^1(\Omega)$ such that $u_n\to u$ in $L^1(\Omega)$. Moreover, for any $\epsilon>0$ there exists $n_0\in \mathbb{N}$ such that $\mathrm{NLTV}_\omega(u_{n+p}-u_n) < \epsilon$ for $n\ge n_0$ and $p \in \mathbb{N}$. For a fixed $n\ge n_0$, notice that $u_{n+p} - u_n \to u - u_n$ in $L^1(\Omega)$ as we take $p$ to infinity. Thus, by lower semicontinuity (Lemma \ref{lemma:lsc_nlbv})

    \begin{equation}
        \mathrm{NLTV}_\omega(u-u_n) \le \liminf_{p \to +\infty} \mathrm{NLTV}_\omega(u_{n+p} - u_n) \le \epsilon.
    \end{equation}

    From this, we deduce that $u_n \to u$ in the $\|\cdot\|_{\mathrm{NLBV}_\omega(\Omega)}$ norm, and the fact that $u \in \mathrm{NLBV}_\omega(\Omega)$ follows from noticing that

    \begin{equation}
        \mathrm{NLTV}_\omega(u) \le \mathrm{NLTV}_\omega(u-u_n) + \mathrm{NLTV}_\omega(u_n) < +\infty
    \end{equation}
    for a large enough $n \in \mathbb{N}$.
\end{proof}

As stated in the introduction, the nonlocal Sobolev space with exponent $p=1$ and the space of functions with nonlocal bounded variation satisfy the classical continuous embedding, as shown implicitly from the following formula:

\begin{prop}[$\mathrm{NLTV}_\omega$ formula for functions with weak nonlocal derivatives] \label{C1NLTV}
    Let $\omega$ be an admissible weight function. For any $u \in L^1(\Omega)$ with weak nonlocal derivatives induced by $\omega$ in all directions, we have

    $$\mathrm{NLTV}_\omega(u) = \int_{\Omega \times \Omega} |\nabla_\omega u| \mathrm{d}x \mathrm{d}y.$$
\end{prop}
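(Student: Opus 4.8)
The plan is to prove the two inequalities $\mathrm{NLTV}_\omega(u) \le \int_{\Omega\times\Omega}|\nabla_\omega u|\,\mathrm{d}x\,\mathrm{d}y$ and the reverse separately, exploiting the defining duality relation \eqref{WeakDerEq} together with the extension to $\mathcal{C}_0^1$ test vector fields granted by Remark \ref{NLBVDefExtension}. First I would fix $u \in L^1(\Omega)$ possessing weak nonlocal derivatives $\partial_{\omega,i}u \in L^1_{\mathrm{loc}}(\Omega\times\Omega)$ in every direction. For the ``$\le$'' direction, take any $\phi=(\phi_1,\ldots,\phi_N)\in\mathcal{C}_c^1(\Omega\times\Omega;\mathbb{R}^N)$ with $\|\phi\|_\infty\le 1$. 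Since $\mathrm{div}_\omega(\phi)=\sum_{i=1}^N \partial_{\omega,i}^\ast(\phi_i)$ and each $\phi_i$ is an admissible test function in \eqref{WeakDerEq}, I would write
\begin{equation}
\int_\Omega u(x)\,\mathrm{div}_\omega(\phi)(x)\,\mathrm{d}x = \sum_{i=1}^N \int_\Omega u(x)\,\partial_{\omega,i}^\ast(\phi_i)(x)\,\mathrm{d}x = -\sum_{i=1}^N \int_{\Omega\times\Omega} (\partial_{\omega,i}u)(x,y)\,\phi_i(x,y)\,\mathrm{d}x\,\mathrm{d}y.
\end{equation}
The right-hand side equals $-\int_{\Omega\times\Omega}\nabla_\omega u\cdot\phi$, which by Cauchy--Schwarz (pointwise) and $|\phi|\le 1$ is bounded above by $\int_{\Omega\times\Omega}|\nabla_\omega u|$. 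Taking the supremum over all such $\phi$ yields the first inequality; note this also shows $\nabla_\omega u\in L^1$ is exactly what makes the bound finite.

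For the reverse inequality ``$\ge$'', the idea is to choose test fields $\phi$ that align with $\nabla_\omega u$. The natural candidate is $\phi = \nabla_\omega u/|\nabla_\omega u|$ where the gradient is nonzero, but this is neither smooth nor compactly supported, so it must be approximated. I would proceed by a standard density/truncation argument: approximate the (bounded, $|\cdot|\le 1$) measurable direction field by functions in $\mathcal{C}_c^1(\Omega\times\Omega;\mathbb{R}^N)$ (or $\mathcal{C}_0^1$, invoking Remark \ref{NLBVDefExtension}) converging appropriately, so that $-\int \nabla_\omega u\cdot\phi$ approaches $\int|\nabla_\omega u|$. Concretely, for $\varepsilon>0$ one sets $\sigma_\varepsilon := \nabla_\omega u/\sqrt{|\nabla_\omega u|^2+\varepsilon}$, which satisfies $|\sigma_\varepsilon|\le 1$, and then mollifies and cuts off $\sigma_\varepsilon$ to land in the admissible test class while controlling the error; letting the cutoff exhaust $\Omega\times\Omega$, the mollification parameter vanish, and finally $\varepsilon\to 0^+$ recovers $\int_{\Omega\times\Omega}|\nabla_\omega u|$ by monotone/dominated convergence.

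The main obstacle I anticipate is the reverse inequality, specifically ensuring the approximating fields remain valid test functions (compact support and $\mathcal{C}^1$ regularity) while the pairing $-\int\nabla_\omega u\cdot\phi$ genuinely converges to the full integral $\int|\nabla_\omega u|$. The delicate point is that $\nabla_\omega u$ is only $L^1_{\mathrm{loc}}$ a priori (though the ``$\le$'' step forces it into $L^1$ on the support of admissible $\phi$), so one must justify the limit passage carefully, handling the exhaustion of $\Omega\times\Omega$ by compact sets and the mollification near the diagonal and boundary. The antisymmetry $\partial_{\omega,i}u(x,y)=-\partial_{\omega,i}u(y,x)$ noted after Definition \ref{NlWeakDerDef} may be useful to verify that the symmetrized test fields arising from $\mathrm{div}_\omega$ interact correctly with $\nabla_\omega u$, ensuring no cancellation is lost. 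Once the approximation is controlled, combining both inequalities gives the claimed identity.
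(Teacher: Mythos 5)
Your proof is correct and is essentially the paper's own argument: the paper's proof of Proposition \ref{C1NLTV} reads simply ``Analogous to the local case'', which is exactly the standard $W^{1,1}$/$\mathrm{BV}$ duality argument you reproduce --- the upper bound from the defining identity $\int_\Omega u\,\mathrm{div}_\omega(\phi)\,\mathrm{d}x=-\int_{\Omega\times\Omega}\nabla_\omega u\cdot\phi\,\mathrm{d}x\,\mathrm{d}y$ together with $\lvert\nabla_\omega u\cdot\phi\rvert\le\lvert\nabla_\omega u\rvert$, and the lower bound by smoothly approximating the direction field of $\nabla_\omega u$ (regularized sign field, cutoff, mollification). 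One small ordering caution, since $\nabla_\omega u$ is a priori only $L^1_{\mathrm{loc}}(\Omega\times\Omega)$: pass to the limit in the mollification parameter first for a fixed compact set $K$ and fixed $\varepsilon$, then let $\varepsilon\to0$, and only then exhaust $\Omega\times\Omega$ by compacts, which gives $\mathrm{NLTV}_\omega(u)\ge\int_K\lvert\nabla_\omega u\rvert\,\mathrm{d}x\,\mathrm{d}y$ for every $K$ and hence the identity in $[0,+\infty]$.
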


\begin{proof}
    Analogous to the local case.
\end{proof}

As for trace theory, an analogous result to Proposition \ref{Proposition_trace_Sob} can be derived in the $\mathrm{NLBV}_\omega$ setting. The proof uses Theorem \ref{Theorem_main_equality_NLBV}, which is independent of this result and will be proven in the forthcoming Subsection \ref{GeneralWeightsSubsection}.

\begin{prop}[Trace operator of the space of functions with nonlocal bounded variation]\label{Proposition_trace_NLBV}
Let $\Omega$ be bounded and of Lipschitz boundary. Assume that there exists an open set $O_T \subset \Omega$ with Lipschitz boundary such that $\partial \Omega \subset \overline{O_T}$ and $c>0$ such that for all $(x,y) \in O_T\times O_T$
\begin{equation}
c \le \omega(x,y).
\end{equation}

Then, there exists a bounded linear operator $T: \mathrm{NLBV}_\omega(\Omega) \to L^1(\partial \Omega)$ such that $Tv=v|_{\partial \Omega}$ whenever $u \in \mathrm{NLBV}_\omega(\Omega) \cap  \mathcal{C}(\overline \Omega)$. Moreover, if additionally there exists a constant $C>0$ such that for all $x\in O_T$
\begin{equation}
    \int_\Omega \omega(x,y)\mathrm{d}y \le C,
\end{equation}
then the operator $T$ is surjective.
\end{prop}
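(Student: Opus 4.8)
The plan is to mirror the proof of Proposition \ref{Proposition_trace_Sob}, replacing the Sobolev ingredients by their $\mathrm{BV}$ counterparts and invoking Theorem \ref{Theorem_main_equality_NLBV} in place of Theorem \ref{Theorem_equality_Sobolev}. Concretely, I would construct $T$ as the composition
$$T := \cdot|_{\partial\Omega} \circ T_{O_T} \circ i \circ \cdot|_{O_T},$$
where $T_{O_T}:\mathrm{BV}(O_T)\to L^1(\partial O_T)$ is the classical $\mathrm{BV}$ trace operator (linear, bounded, surjective), $i:\mathrm{NLBV}_\omega(O_T)\to\mathrm{BV}(O_T)$ is the inclusion, and $\cdot|_{O_T}$, $\cdot|_{\partial\Omega}$ are restriction operators. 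The first step is to verify that $\cdot|_{O_T}:\mathrm{NLBV}_\omega(\Omega)\to\mathrm{NLBV}_\omega(O_T)$ is well defined and bounded. Given $\phi\in\mathcal{C}^1_c(O_T\times O_T;\mathbb{R}^N)$ with $\|\phi\|_\infty\le 1$, its zero-extension $\tilde\phi$ lies in $\mathcal{C}^1_c(\Omega\times\Omega;\mathbb{R}^N)$ with $\|\tilde\phi\|_\infty\le 1$, and one checks that $\mathrm{div}_\omega(\tilde\phi)$ coincides with the $O_T$-divergence of $\phi$ on $O_T$ and vanishes off $O_T$ (both arguments of $\phi$ are forced into $O_T$ by the support); taking suprema yields $\mathrm{NLTV}_\omega(v|_{O_T})\le\mathrm{NLTV}_\omega(v)$, so the restriction is bounded with norm at most one. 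Since $c\le\omega$ on $O_T\times O_T$, Theorem \ref{Theorem_main_equality_NLBV} applied to $O_T$ makes $i$ a bounded inclusion, and since $\partial\Omega\subset\partial O_T$, the boundary restriction $\cdot|_{\partial\Omega}:L^1(\partial O_T)\to L^1(\partial\Omega)$ is linear and bounded. Composing, $T$ is linear and bounded.

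For the identification $Tv=v|_{\partial\Omega}$ on $v\in\mathrm{NLBV}_\omega(\Omega)\cap\mathcal{C}(\overline\Omega)$: such a $v$ restricts to an element of $\mathrm{NLBV}_\omega(O_T)\subset\mathrm{BV}(O_T)$ that is continuous on $\overline{O_T}$, so the classical trace $T_{O_T}(v|_{O_T})$ equals $v|_{\partial O_T}$; restricting to $\partial\Omega\subset\partial O_T$ gives exactly $v|_{\partial\Omega}$.

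It remains to prove surjectivity under the extra hypothesis $\int_\Omega\omega(x,y)\,\mathrm{d}y\le C$ for $x\in O_T$. Rather than invert each factor, I would show directly that every $g\in L^1(\partial\Omega)$ is attained. Using surjectivity of $\cdot|_{\partial\Omega}$ (extend by zero) and of $T_{O_T}$, pick $w\in\mathrm{BV}(O_T)$ whose classical trace restricts to $g$ on $\partial\Omega$. Let $\chi$ be a smooth cutoff equal to $1$ in a neighborhood of $\partial\Omega$ and vanishing near the interior interface $\Gamma:=\partial O_T\setminus\partial\Omega\subset\Omega$; then $v:=\chi w$, extended by zero to $\Omega$, lies in $\mathrm{BV}(\Omega)$, has its gradient measure supported in a collar contained in $O_T$, and retains the trace $g$ on $\partial\Omega$. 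The crucial point is that $v\in\mathrm{NLBV}_\omega(\Omega)$: by the (localized) sufficiency estimate behind Theorem \ref{Theorem_main_equality_NLBV} one has
$$\mathrm{NLTV}_\omega(v)\le\int_\Omega\Big(\int_\Omega\omega(x,y)\,\mathrm{d}y\Big)\,\mathrm{d}|Dv|(x)\le C\,|Dv|(\Omega)<+\infty,$$
precisely because $\int_\Omega\omega(x,y)\,\mathrm{d}y\le C$ on $\operatorname{supp}(Dv)\subset O_T$. Then $Tv=(T_{O_T}(v|_{O_T}))|_{\partial\Omega}=g$, giving surjectivity.

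The main obstacle will be this surjectivity step. A naive full $\mathrm{BV}$-extension of $w$ from $O_T$ to $\Omega$ need not lie in $\mathrm{NLBV}_\omega(\Omega)$, since $\omega$ is uncontrolled on $\Omega\setminus O_T$ and may blow up toward $\partial\Omega$, so the restriction operator $\cdot|_{O_T}$ is not obviously surjective at the $\mathrm{NLBV}_\omega$ level (this is where the $\mathrm{NLBV}$ argument genuinely diverges from the Sobolev one). The cutoff device circumvents the difficulty by confining $Dv$ to $O_T$, where the single integral bound $\int_\Omega\omega(x,y)\,\mathrm{d}y\le C$ controls at once both the self-interactions inside $O_T$ and the interactions of $O_T$ with the rest of $\Omega$; it is exactly here that the sufficiency direction of Theorem \ref{Theorem_main_equality_NLBV} enters decisively.
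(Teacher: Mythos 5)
Your construction of $T$, the boundedness argument, and the identification $Tv=v|_{\partial\Omega}$ on $\mathrm{NLBV}_\omega(\Omega)\cap\mathcal{C}(\overline\Omega)$ coincide with the paper's proof, which is exactly the composition $\cdot|_{\partial\Omega}\circ T_{O_T}\circ i\circ\cdot|_{O_T}$ from Proposition \ref{Proposition_trace_Sob} with Theorem \ref{Theorem_main_equality_NLBV} supplying the inclusion $i$; your zero-extension argument for the boundedness of $\cdot|_{O_T}$ is precisely the detail the paper leaves to the reader. Where you genuinely diverge is surjectivity: the paper argues that each of the four factors is surjective, disposing of the restriction operator $\mathrm{NLBV}_\omega(\Omega)\to\mathrm{NLBV}_\omega(O_T)$ with a footnote-level claim that an extension is ``easily constructed'', whereas you bypass factor-wise surjectivity and exhibit a preimage of a given $g\in L^1(\partial\Omega)$ directly. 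Your route is more self-contained and confronts the real difficulty (any preimage must have its variation located where $\int_\Omega\omega(x,y)\,\mathrm{d}y$ is controlled, since the hypothesis $f_\omega^1\le C$ holds only on $O_T$, not on $\Omega$, so the embedding $\mathrm{BV}(\Omega)\hookrightarrow\mathrm{NLBV}_\omega(\Omega)$ of Theorem \ref{Theorem_main_equality_NLBV} is \emph{not} available globally); the paper's route is shorter but leans on an extension claim it never verifies.

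Two steps in your surjectivity argument need repair. First, the cutoff $\chi$ (equal to $1$ near $\partial\Omega$, vanishing near $\Gamma:=\partial O_T\setminus\partial\Omega$) exists only if $\mathrm{dist}(\partial\Omega,\Gamma)>0$; this is true but not free — it follows from $\partial\Omega\subset\partial O_T$ together with both boundaries being Lipschitz graphs (locally the two graphs must coincide, so $\Gamma$ cannot accumulate on $\partial\Omega$) — and you should justify it. Second, the inequality $\mathrm{NLTV}_\omega(v)\le\int_\Omega f_\omega^1\,\mathrm{d}|Dv|$ is not literally ``behind'' Theorem \ref{Theorem_main_equality_NLBV}: the paper's sufficiency proof only gives $\mathrm{NLTV}_\omega(v)\le 2K\,\mathrm{TV}(v)$ under a \emph{global} bound $f_\omega^1\le K$ on $\Omega$, which you do not have. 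The weighted estimate you invoke is the crux of your argument and must be proved; it is true with a factor $2$: for $\psi\in\mathcal{C}^1_c(\Omega\times\Omega;\mathbb{R}^N)$ with $|\psi|\le\omega$, set $\Phi(x):=\int_\Omega\left[\psi(y,x)-\psi(x,y)\right]\mathrm{d}y\in\mathcal{C}^1_c(\Omega;\mathbb{R}^N)$, note $\mathrm{div}_1\psi=\mathrm{div}\,\Phi$ (as in the proof of Proposition \ref{NLBV=BVEquivNorm}) and $|\Phi|\le 2f_\omega^1$ by symmetry of $\omega$, then use the $\mathrm{BV}$ integration by parts $\int_\Omega v\,\mathrm{div}\,\Phi\,\mathrm{d}x=-\int_\Omega\Phi\cdot\mathrm{d}Dv$ and Lemma \ref{NLTV^omegaLemma} to get $\mathrm{NLTV}_\omega(v)\le 2\int_\Omega f_\omega^1\,\mathrm{d}|Dv|$. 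Once this estimate is in hand, the cutoff is actually unnecessary: extend $w$ by zero across $\Gamma$ ($\mathrm{BV}$ tolerates the jump, with $|D\bar w|$ carried by $O_T\cup\Gamma$), and observe that the bound $f_\omega^1\le C$ propagates from $O_T$ to $\Gamma\subset\overline{O_T}$ by Fatou's lemma and continuity of $\omega$, so $\mathrm{NLTV}_\omega(\bar w)\le 2C\,|D\bar w|(\Omega)<+\infty$. This simplification removes both gaps at once and, as a byproduct, proves that the restriction operator is surjective under the extra hypothesis — i.e., it completes the paper's own scheme as well.
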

\begin{proof}
Analogous to the proof of Proposition \ref{Proposition_trace_Sob}, using Theorem \ref{Theorem_main_equality_NLBV}.
\end{proof}

In particular, if $\Omega$ is bounded and of Lipschitz boundary and $\omega$ satisfies both of the conditions appearing in Proposition \ref{Proposition_trace_NLBV}, for any $g \in L^1(\partial \Omega)$ we can define
\begin{equation}
\mathrm{NLBV}_{\omega,g}(\Omega):= \{ u \in \mathrm{NLBV}(\Omega): T u=g\},
\end{equation}
which is closed in $\mathrm{NLBV}_\omega(\Omega)$ and nonempty. 

Again, whether the lower bound near $\partial \Omega$ can be relaxed in order to still obtain some meaningful notion of trace is left as future work. 

For completeness sake, we wrap this subsection up with a compactness result analogous to \cite[Proposition 2.4]{Wang-Ng}, which recall that is false without further hypothesis on the weight function. Indeed, consider, for example, the limit case $\omega \equiv 0$ in $\Omega\times\Omega$. The condition we impose is that the weight function is bounded below from zero, and in the proof we use Theorem \ref{Theorem_main_equality_NLBV} $-$which will be proven in the forthcoming Subsection \ref{GeneralWeightsSubsection} and is completely independent of the result below$-$ and pass through the compactness of $\mathrm{BV}$. Because of this, we need to also impose that the domain is bounded and of Lipschitz boundary. 

\begin{prop}[Compactness of the space of functions with nonlocal bounded variation] \label{NLBVCompactness}
    Let $\Omega$ be bounded and of Lipschitz boundary and $\omega$ an admissible weight function such that there exists $c>0$ such that for all $(x,y)\in\Omega\times\Omega$

    \begin{equation}
        c \le \omega(x,y).
    \end{equation}
    
    Then, if $(u_n)_{n\in\mathbb{N}}$ is uniformly bounded in $\mathrm{NLBV}_\omega(\Omega)$, there exists a subsequence $(u_{n_k})_{k\in\mathbb{N}}$ and $u \in \mathrm{NLBV}_\omega(\Omega)$ such that $u_{n_k} \to u$ in $L^1(\Omega)$.
\end{prop}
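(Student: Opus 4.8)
The plan is to reduce the statement to the classical compactness theorem in $\mathrm{BV}(\Omega)$ by chaining together three ingredients already available in the excerpt: the continuous embedding of Theorem \ref{Theorem_main_equality_NLBV}, the compactness of bounded sequences in $\mathrm{BV}$, and the weak lower semicontinuity of $\mathrm{NLTV}_\omega$ from Lemma \ref{lemma:lsc_nlbv}. First I would verify that the hypotheses of Theorem \ref{Theorem_main_equality_NLBV} are met: since $\Omega$ is bounded it has finite measure, and since $c \le \omega$ everywhere in $\Omega \times \Omega$ the lower bound \eqref{Muckenhoupt} holds with $k = c$. Therefore the continuous embedding $\mathrm{NLBV}_\omega(\Omega) \hookrightarrow \mathrm{BV}(\Omega)$ is at our disposal, so there is a constant $M>0$ with $\|v\|_{\mathrm{BV}(\Omega)} \le M \|v\|_{\mathrm{NLBV}_\omega(\Omega)}$ for every $v \in \mathrm{NLBV}_\omega(\Omega)$. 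Applying this inequality to each $u_n$, the uniform bound on $(u_n)_{n\in\mathbb{N}}$ in $\mathrm{NLBV}_\omega(\Omega)$ transfers to a uniform bound on $(u_n)_{n\in\mathbb{N}}$ in $\mathrm{BV}(\Omega)$.

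Next I would invoke the classical compactness theorem for functions of bounded variation on a bounded domain with Lipschitz boundary: a sequence that is uniformly bounded in $\mathrm{BV}(\Omega)$ admits a subsequence $(u_{n_k})_{k\in\mathbb{N}}$ converging in $L^1(\Omega)$ to some limit $u \in \mathrm{BV}(\Omega) \subset L^1(\Omega)$. It is exactly at this step that the assumptions that $\Omega$ be bounded and of Lipschitz boundary are used, which is the reason these hypotheses appear in the statement even though the embedding itself only needs finite measure.

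Finally I would confirm that this $L^1$-limit actually belongs to the nonlocal space. Since strong convergence in $L^1(\Omega)$ implies weak convergence, we have $u_{n_k} \rightharpoonup u$ weakly in $L^1(\Omega)$, and hence Lemma \ref{lemma:lsc_nlbv} applies to give
$$\mathrm{NLTV}_\omega(u) \le \liminf_{k \to +\infty} \mathrm{NLTV}_\omega(u_{n_k}) \le \sup_{n \in \mathbb{N}} \mathrm{NLTV}_\omega(u_n) < +\infty,$$
so that $u \in \mathrm{NLBV}_\omega(\Omega)$, which completes the argument. There is no genuinely hard step here: the whole proof is a direct concatenation of the embedding theorem, classical $\mathrm{BV}$ compactness, and lower semicontinuity. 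The only points requiring a moment's care are checking that the uniform $\mathrm{NLBV}_\omega$ bound genuinely passes to a uniform $\mathrm{BV}$ bound through the embedding constant $M$, and noting that the strong $L^1$ convergence of the extracted subsequence is enough to invoke Lemma \ref{lemma:lsc_nlbv}, which is phrased in terms of weak $L^1$ convergence.
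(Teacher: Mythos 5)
Your proposal is correct and follows exactly the paper's own proof: the continuous embedding $\mathrm{NLBV}_\omega(\Omega) \hookrightarrow \mathrm{BV}(\Omega)$ from Theorem \ref{Theorem_main_equality_NLBV}, classical $\mathrm{BV}$ compactness on a bounded Lipschitz domain, and Lemma \ref{lemma:lsc_nlbv} to place the $L^1$-limit back in $\mathrm{NLBV}_\omega(\Omega)$. Your added remarks (that strong $L^1$ convergence implies the weak convergence required by the lemma, and that boundedness and Lipschitz regularity of $\Omega$ are only needed for the $\mathrm{BV}$ compactness step) are accurate and merely make explicit what the paper leaves implicit.
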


\begin{proof}
    By Theorem \ref{Theorem_main_equality_NLBV}, we have the continuous embedding $\mathrm{NLBV}_\omega(\Omega) \hookrightarrow \mathrm{BV}(\Omega)$. Thus $(u_n)_{n\in\mathbb{N}}$ is also uniformly bounded in $\mathrm{BV}(\Omega)$, and by the well-known compactness of $\mathrm{BV}$ (see, e.g., \cite[Section 5.2.3]{Evans}), we can extract a subsequence $(u_{n_k})_{k\in\mathbb{N}}$ and $u \in \mathrm{BV}(\Omega)$ such that $u_{n_k} \to u$ in $L^1(\Omega)$. Finally, the fact that $u$ also belongs to $\mathrm{NLBV}_\omega(\Omega)$ follows from Lemma \ref{lemma:lsc_nlbv}.
\end{proof}

\section{Relation with classical spaces} \label{RelationClassSpacesSection}

With the nonlocal spaces defined, our first objective is to study their relation with their local counterparts. We will first study the constant weight function case, and afterwards, with the aid of the results proven in this setting, we will move onto general weight functions.

\subsection{For constant weight functions} \label{ConstWeightSubsection}

Let us start with the simplest case in which our weight function is simply $\omega \equiv 1$. Remarkably, we will see that $W^{1,p}(\Omega)=W^{1,p}_1(\Omega)$ with equivalence of norms for any $\Omega$ if $p=+\infty$ or for $\Omega$ with finite measure if $p \in [1,+\infty)$ (i.e., Theorem \ref{Theorem_equality_constant_weight}), and that $\mathrm{BV}(\Omega)=\mathrm{NLBV}(\Omega)$ with equivalence of norms for any $\Omega$ with finite measure. In both the Sobolev and $\mathrm{BV}$ case the procedure will be the same: We will be able to easily bind the seminorm of the nonlocal space by the seminorm of the corresponding local space, which will yield one of the continuous embedding. The proof of the other embedding will be more involved, and will require that we bind the seminorm of the local space by the ``full" norm of the corresponding nonlocal space (i.e., the $L^p$ norm of the function itself plus the $L^p$ norm of the nonlocal gradient in the Sobolev case, or the $L^1$ norm of the function plus its nonlocal bounded variation).

Moreover, in this subsection, an insightful characterization of both nonlocal seminorms will be given in Propositions \ref{WpSeminormChar} and \ref{NLBVSeminormChar}.

We begin by comparing the nonlocal Sobolev space corresponding to a constant weight function $W^{1,p}_1(\Omega)$ with the classic Sobolev space $W^{1,p}(\Omega)$. The first step is a technical observation:

\begin{lemma} \label{integrallemma}
    Let $p\in[1,+\infty]$ and $\Omega$ be of finite measure if $p \in [1,+\infty)$, or general if $p=+\infty$. Let $f: \Omega \to \mathbb{R}^m$ with $m \in \mathbb{N}$ be a measurable function. Then,
    $$f \in L^p(\Omega;\mathbb{R}^m) \iff \tilde f \in L^p(\Omega\times\Omega;\mathbb{R}^m),$$
    where we have set $\tilde f(x,y) := f(y)-f(x)$ for all $(x,y) \in \Omega \times \Omega$.
\end{lemma}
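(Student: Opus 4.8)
The plan is to prove both implications by directly estimating the relevant $L^p$ norms, treating the cases $p \in [1,+\infty)$ and $p = +\infty$ separately. For the forward direction, assume $f \in L^p(\Omega;\mathbb{R}^m)$. When $p \in [1,+\infty)$, I would use the elementary inequality $|f(y)-f(x)|^p \le 2^{p-1}(|f(y)|^p + |f(x)|^p)$ together with Tonelli's theorem to write
\begin{align*}
\int_\Omega \int_\Omega |\tilde f(x,y)|^p \, \mathrm{d}x \, \mathrm{d}y &\le 2^{p-1} \int_\Omega \int_\Omega \left( |f(y)|^p + |f(x)|^p \right) \mathrm{d}x \, \mathrm{d}y\\
&= 2^{p-1} \cdot 2 \, |\Omega| \, \|f\|_{L^p(\Omega;\mathbb{R}^m)}^p,
\end{align*}
which is finite precisely because $\Omega$ has finite measure, giving $\tilde f \in L^p(\Omega\times\Omega;\mathbb{R}^m)$. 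For $p = +\infty$, the bound $\|\tilde f\|_{L^\infty(\Omega\times\Omega;\mathbb{R}^m)} \le 2\|f\|_{L^\infty(\Omega;\mathbb{R}^m)}$ follows immediately from the triangle inequality, and here no finiteness assumption on $\Omega$ is needed, which matches the hypotheses of the lemma.

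For the converse direction, assume $\tilde f \in L^p(\Omega\times\Omega;\mathbb{R}^m)$; the key idea is to recover $f$ from $\tilde f$ by ``freezing'' one variable at a good point. For $p \in [1,+\infty)$, since $\int_\Omega \int_\Omega |\tilde f(x,y)|^p \, \mathrm{d}x \, \mathrm{d}y < +\infty$, Tonelli's theorem guarantees that the inner integral $x \mapsto \int_\Omega |f(y)-f(x)|^p \, \mathrm{d}y$ is finite for almost every $x \in \Omega$; I would fix one such point $x_0$. Then the triangle inequality in $L^p$ gives, using $|f(x)| \le |f(x)-f(x_0)| + |f(x_0)|$,
\begin{equation*}
\|f\|_{L^p(\Omega;\mathbb{R}^m)} \le \left( \int_\Omega |f(x)-f(x_0)|^p \, \mathrm{d}x \right)^{1/p} + |f(x_0)| \, |\Omega|^{1/p},
\end{equation*}
and both terms on the right are finite (the first because $x_0$ was chosen so that it is, the second because $|\Omega| < +\infty$), so $f \in L^p(\Omega;\mathbb{R}^m)$.

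For the $p = +\infty$ converse, the same strategy applies but must be phrased in terms of essential suprema: since $\|\tilde f\|_{L^\infty(\Omega\times\Omega;\mathbb{R}^m)} < +\infty$, for a.e.\ $x_0 \in \Omega$ the function $y \mapsto |f(y)-f(x_0)|$ is essentially bounded (with bound independent of $x_0$), and fixing such an $x_0$ gives $\|f\|_{L^\infty(\Omega;\mathbb{R}^m)} \le \|\tilde f\|_{L^\infty(\Omega\times\Omega;\mathbb{R}^m)} + |f(x_0)| < +\infty$. The main subtlety, and the one point requiring care, is the selection of the good base point $x_0$: one must invoke Tonelli (or the a.e.\ finiteness of sections of an integrable/essentially bounded function) to ensure such an $x_0$ exists outside the null set where the section misbehaves, rather than assuming it for an arbitrary point. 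This is where the measurability of $f$ and the product structure of $\Omega \times \Omega$ are genuinely used, and it is worth writing out explicitly even though the surrounding estimates are routine.
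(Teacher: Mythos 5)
Your proof is correct and follows essentially the same route as the paper's: the forward implication via the convexity inequality $|f(y)-f(x)|^p \le 2^{p-1}(|f(y)|^p+|f(x)|^p)$ (triangle inequality for $p=+\infty$) together with $|\Omega|<+\infty$, and the converse by using Fubini--Tonelli to select a good base point and then the triangle (Minkowski) inequality. If anything, your treatment is slightly more careful than the paper's at the one delicate step, namely insisting that the base point be chosen outside the null set where the section of $\tilde f$ misbehaves.
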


\begin{proof}
    We start with the $p\in[1,+\infty)$ case. Notice that
    
    $$|f(y)-f(x)|^p \le 2^{p-1}\left(|f(y)|^p + |f(x)|^p \right).$$

    Using this, the left to right implication follows, as
    
    \begin{align}
        \int_\Omega \int_\Omega |f(x)-f(y)|^p \mathrm{d}x \mathrm{d}y &\le 2^{p-1}|\Omega| \left( \int_\Omega |f(x)|^p \mathrm{d}x + \int_\Omega |f(y)|^p \mathrm{d}y \right)\\
        &= 2^p |\Omega| \int_\Omega |f(x)|^p \mathrm{d}x \label{ImplContEmbed}
    \end{align}

    For the other implication, Fubini's theorem yields that
    
    $$\int_\Omega |f(x)-f(y)|^p \mathrm{d}x$$
    exists and is finite for almost every $y \in \Omega$. Thus, for such a $y$ we have

    $$\int_\Omega |f(x)|^p \mathrm{d}x \le \int_\Omega |f(x)-f(y)|^p \mathrm{d}x + |\Omega| |f(y)|^p < +\infty.$$

    We move onto the $p=+\infty$ case. We have

    \begin{equation} \label{ImplContEmbedInfty}
        \esssup_{(x,y)\in\Omega\times\Omega}|f(y)-f(x)|\le \esssup_{(x,y)\in\Omega\times\Omega}(|f(y)|+|f(x)|)\le 2  \esssup_{x\in\Omega} f(x),
    \end{equation}
    and the left to right implication is proven.

    For the other implication, assume first that $\tilde f \in L^\infty(\Omega\times\Omega;\mathbb{R}^m)$ and pick $y\in\Omega$. Then, for almost any $x\in\Omega$

    \begin{equation}
        |f(x)|\le |f(y)| + |f(y)-f(x)| \le |f(y)| + \|\tilde f\|_{L^\infty(\Omega\times\Omega;\mathbb{R}^m)} <+\infty.
    \end{equation}

    As the bound is independent of $x$, we deduce that $f \in L^\infty(\Omega;\mathbb{R}^m)$.
\end{proof}

For $p\in[1,+\infty)$, we emphasize that the finite measure of the domain is key for the lemma to hold, see the forthcoming Lemma \ref{NonConstantInfSeminorm}.

As a direct consequence of Lemma \ref{integrallemma} and Propositions \ref{WeakDerImplyNlWeakDer} and \ref{NlWeakDerImplyWeakDer}, we have that $W^{1,p}(\Omega)=W^{1,p}_1(\Omega)$ for any $p\in[1,+\infty)$ and $\Omega$ with finite measure, or for $p=+\infty$ and any $\Omega$. With the help of the aforementioned results, we can now prove Theorem \ref{Theorem_equality_constant_weight}, which states that additionally we have equivalence of norms.

\begin{proof}[Proof of Theorem \ref{Theorem_equality_constant_weight}]
    The $W^{1,p}(\Omega) \hookrightarrow W_1^{1,p}(\Omega)$ embedding follows from Proposition \ref{WeakDerImplyNlWeakDer} and inequality \eqref{ImplContEmbed} for $p\in[1,+\infty)$, or inequality \eqref{ImplContEmbedInfty} for $p=+\infty$.

    We move onto proving $W_1^{1,p}(\Omega) \hookrightarrow W^{1,p}(\Omega)$. We start with the $p\in[1,+\infty)$ case. Fix any $\theta \in \mathcal{C}_c^1(\Omega)$ satisfying \eqref{ThetaIntCond}. For any $u \in W_1^{1,p}(\Omega)$ we have, by Proposition \ref{NlWeakDerImplyWeakDer} (more specifically \eqref{WeakGradInTermsOfNlWeakGrad}), that

    \begin{equation}
        \nabla u(x) = - \frac{1}{\int_\Omega \theta(z) \mathrm{d}z} \int_\Omega \left[ u(y) \nabla \theta(y) + \nabla_1 u(x,y) \theta(y) \right] \mathrm{d}y.
    \end{equation}

    Then\footnote{In the third line we have used that, if $1/p + 1/{p'}=1$, Hölder's inequality yields
    $$\left(\int_\Omega |f(x)|\mathrm{d}x\right)^p \le \left( |\Omega|^{\frac{1}{p'}} \left[ \int_\Omega |f(x)|^p \mathrm{d}x \right]^{\frac{1}{p}} \right)^p = |\Omega|^{\frac{p}{p'}} \int_\Omega |f(x)|^p \mathrm{d}x = |\Omega|^{p-1} \int_\Omega |f(x)|^p \mathrm{d}x.$$}

    \begin{align}
        \int_\Omega |\nabla u(x)|^p \mathrm{d}x &= \int_\Omega \left| -\frac{1}{\int_\Omega \theta(z) \mathrm{d}z} \int_\Omega \left[ u(y) \nabla \theta(y) + \nabla_1 u(x,y) \theta(y) \right] \mathrm{d}y \right|^p \mathrm{d}x\\
        &\le \frac{1}{\left|\int_\Omega \theta(z) \mathrm{d}z\right|^p} \int_\Omega \left( \int_\Omega \left| u(y) \nabla \theta(y) + \nabla_1 u(x,y) \theta(y) \right| \mathrm{d}y \right)^p \mathrm{d}x\\
        &\le \frac{|\Omega|^{p-1}}{\left|\int_\Omega \theta(z) \mathrm{d}z\right|^p} \int_\Omega \int_\Omega \left| u(y) \nabla \theta(y) + \nabla_1 u(x,y) \theta(y) \right|^p \mathrm{d}x\mathrm{d}y\\
        &\le \frac{|\Omega|^{p-1} 2^{p-1}}{\left|\int_\Omega \theta(z) \mathrm{d}z\right|^p} \int_\Omega \int_\Omega \left| u(y) \nabla \theta(y)\right|^p+ \left|\nabla_1 u(x,y) \theta(y) \right|^p \mathrm{d}x\mathrm{d}y\\
        &\le \frac{(2|\Omega|)^{p-1}}{\left|\int_\Omega \theta(z) \mathrm{d}z\right|^p} \left( |\Omega| \|\nabla \theta\|_{L^\infty(\Omega;\mathbb{R}^N)}^p \int_\Omega |u(x)|^p \mathrm{d}x\right.\\
        &\qquad \qquad \qquad \qquad \qquad \left.+ \|\theta\|_{L^\infty(\Omega)}^p \int_{\Omega\times\Omega} |\nabla_1 u(x,y)|^p \mathrm{d}x\mathrm{d}y \right)\\
        &\le C^p(\theta) \|u\|_{W^{1,p}_1(\Omega)}^p,
    \end{align}
    where

    \begin{equation} \label{CpConstantDef}
        C^p(\theta) := \frac{(2|\Omega|)^{p-1}}{\left|\int_\Omega \theta(z) \mathrm{d}z\right|^p} \max\left\{ |\Omega| \|\nabla \theta\|_{L^\infty(\Omega;\mathbb{R}^N)}^p, \|\theta\|_{L^\infty(\Omega)}^p \right\}.
    \end{equation}
    The chain of inequalities yields the desired embedding. 
    
    We now prove the $p=+\infty$ case. Again, fix any $\theta \in \mathcal{C}_c^1(\Omega)$ satisfying \eqref{ThetaIntCond}. For any $u \in W_1^{1,\infty}(\Omega)$ we have, by Proposition \ref{NlWeakDerImplyWeakDer} (more specifically \eqref{WeakGradInTermsOfNlWeakGrad}), that

    \begin{equation}
        \nabla u(x) = - \frac{1}{\int_\Omega \theta(z) \mathrm{d}z} \int_\Omega \left[ u(y) \nabla \theta(y) + \nabla_1 u(x,y) \theta(y) \right] \mathrm{d}y.
    \end{equation}

    With this, for almost any $x\in\Omega$ we have

    \begin{align}
        |\nabla u(x)| &= \left| -\frac{1}{\int_\Omega \theta(z) \mathrm{d}z} \int_\Omega \left[ u(y) \nabla \theta(y) + \nabla_1 u(x,y) \theta(y) \right] \mathrm{d}y \right|\\
        &\le \frac{1}{\left|\int_\Omega \theta(z) \mathrm{d}z\right|} \left( \int_\Omega |u(y)| |\nabla\theta(y)| \mathrm{d}y + \int_\Omega |\nabla_1 u(x,y)| |\theta(y)| \mathrm{d}y\right)\\
        &\le \frac{1}{\left|\int_\Omega \theta(z) \mathrm{d}z\right|} \left( \|u\|_{L^\infty(\Omega)} \|\nabla\theta\|_{L^1(\Omega;\mathbb{R}^N)} + \|\nabla_1 u\|_{L^\infty(\Omega\times\Omega;\mathbb{R}^N)} \|\theta\|_{L^1(\Omega)} \right)\\
        &\le C^\infty(\theta) \|u\|_{W^{1,\infty}_1(\Omega)},
    \end{align}
    where

    \begin{equation} \label{CInftyConstantDef}
        C^\infty(\theta) := \frac{1}{\left|\int_\Omega \theta(z) \mathrm{d}z\right|} \max\left\{ \|\nabla\theta\|_{L^1(\Omega;\mathbb{R}^N)}, \|\theta\|_{L^1(\Omega)} \right\}.
    \end{equation}

    The chain of inequalities yields the desired embedding.
\end{proof}

Finding an optimal bound for $C^p(\theta)$ and $C^\infty(\theta)$ as defined in \eqref{CpConstantDef} and \eqref{CInftyConstantDef} falls beyond the scope of this paper and is left as future work. As a preliminary bound, notice that $2^{p-1}/|\Omega| \le C^p(\theta)$ and $1 \le C^\infty(\theta)$
for any $\theta \in \mathcal{C}_c^1(\Omega)$ satisfying \eqref{ThetaIntCond}.

The following proposition characterizes the $W^{1,p}_1$ seminorm, and is essentially the Sobolev version of \cite[Lemma 5.2]{Kindermann}. It shows that the seminorm of $W^{1,p}_1$ is equivalent to the seminorm of the quotient space $W^{1,p}\backslash \sim_{\text{aff}}$, where $\sim_{\text{aff}}$ is the equivalence relation that relates functions in $W^{1,p}_1$ which differ a.e.~by an affine function.

\begin{prop} \label{WpSeminormChar}
    Let $p\in[1,+\infty]$ and $\Omega$ be of finite measure if $p \in [1,+\infty)$ or general if $p=+\infty$. For $p\in[1,+\infty)$ and $u \in W^{1,p}_1(\Omega)$ we have

    \begin{equation} \label{AffInequalityp}
        |\Omega| \inf_{g \emph{ affine}} \int_\Omega|\nabla(u-g)|^p \mathrm{d}x \le \int_{\Omega\times\Omega} |\nabla_1 u|^p \mathrm{d}x\mathrm{d}y \le 2^p |\Omega| \inf_{g \emph{ affine}} \int_\Omega|\nabla(u-g)|^p \mathrm{d}x,
    \end{equation}
    and for $u \in W^{1,\infty}_1(\Omega)$ we have

    \begin{equation} \label{AffInequalityInfty}
        \inf_{g \emph{ affine}} \esssup_{\Omega}|\nabla(u-g)| \le \esssup_{\Omega\times\Omega} |\nabla_1 u| \le 2 \inf_{g \emph{ affine}} \esssup_\Omega |\nabla(u-g)|.
    \end{equation}
\end{prop}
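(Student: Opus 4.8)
The plan is to reduce the statement to an elementary inequality about a generic vector field, exploiting the explicit formula for the nonlocal gradient induced by the constant weight. Since $u \in W^{1,p}_1(\Omega)$, Proposition \ref{NlWeakDerImplyWeakDer} ensures $u$ possesses standard weak derivatives, and then Proposition \ref{WeakDerImplyNlWeakDer} (with $\omega \equiv 1$) gives the pointwise identity $\nabla_1 u(x,y) = \nabla u(y) - \nabla u(x)$ for a.e.\ $(x,y)\in\Omega\times\Omega$. Writing $F := \nabla u \in L^p(\Omega;\mathbb{R}^N)$, the central observation is that adding an affine function to $u$ merely shifts $\nabla u$ by a constant vector: as $g$ ranges over affine functions, $\nabla g$ ranges over all constant vectors $c\in\mathbb{R}^N$, so that
\[ \inf_{g \text{ affine}} \int_\Omega |\nabla(u-g)|^p \mathrm{d}x = \inf_{c\in\mathbb{R}^N} \int_\Omega |F(x)-c|^p \mathrm{d}x, \]
and likewise with the essential supremum when $p=+\infty$. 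Thus the proposition becomes equivalent to the purely functional statement that, for $F \in L^p(\Omega;\mathbb{R}^N)$,
\[ |\Omega| \inf_{c} \int_\Omega |F-c|^p \mathrm{d}x \le \int_{\Omega\times\Omega} |F(y)-F(x)|^p \mathrm{d}x\mathrm{d}y \le 2^p |\Omega| \inf_c \int_\Omega |F-c|^p \mathrm{d}x \]
for $p\in[1,+\infty)$, together with its essential-supremum analogue for $p=+\infty$.

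For $p\in[1,+\infty)$ the upper bound requires no new work: for any fixed $c\in\mathbb{R}^N$ I would write $F(y)-F(x) = (F(y)-c)-(F(x)-c)$ and apply the convexity inequality $|a-b|^p \le 2^{p-1}(|a|^p+|b|^p)$ followed by Fubini, exactly as in the derivation of \eqref{ImplContEmbed} in Lemma \ref{integrallemma}; taking the infimum over $c$ then yields the right-hand inequality. The lower bound is the genuinely new ingredient, but it is equally short: for a.e.\ fixed $x\in\Omega$ the admissible choice $c = F(x)$ gives
\[ \inf_{c\in\mathbb{R}^N} \int_\Omega |F(y)-c|^p \mathrm{d}y \le \int_\Omega |F(y)-F(x)|^p \mathrm{d}y, \]
and since the left-hand side is a constant independent of $x$, integrating this inequality over $x\in\Omega$ and invoking Fubini produces precisely the left-hand inequality.

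The case $p=+\infty$ follows the same scheme. The upper bound reproduces the computation behind \eqref{ImplContEmbedInfty}: for any $c$ one has $|F(y)-F(x)| \le |F(y)-c|+|F(x)-c| \le 2\,\esssup_\Omega|F-c|$ a.e., whence the factor $2$. For the lower bound I would set $M := \esssup_{(x,y)\in\Omega\times\Omega}|F(y)-F(x)|$, so that $|F(y)-F(x)|\le M$ for a.e.\ $(x,y)$; by Fubini there is a full-measure set of $x$ for which $|F(y)-F(x)|\le M$ holds for a.e.\ $y$, and fixing one such $x_0$ and taking $c=F(x_0)$ gives $\esssup_\Omega|F-c|\le M$, hence $\inf_c \esssup_\Omega|F-c|\le M$. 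The only point demanding a little care is this passage from the essential supremum over the product space to the iterated statement; it is a standard consequence of Fubini's theorem. I expect this Fubini bookkeeping in the $p=+\infty$ lower bound to be the sole (and minor) obstacle, the remainder being a direct reassembly of inequalities already available from Lemma \ref{integrallemma} and the gradient formula of Proposition \ref{WeakDerImplyNlWeakDer}.
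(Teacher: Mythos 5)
Your proof is correct, but the key step differs genuinely from the paper's. Both arguments handle the upper bound identically (write $\nabla_1 u=\nabla_1(u-g)$, apply the convexity inequality and Fubini as in \eqref{ImplContEmbed}/\eqref{ImplContEmbedInfty}, take the infimum). For the lower bound, however, the paper constructs an explicit affine competitor $g_\theta$ out of the representation formula \eqref{WeakGradInTermsOfNlWeakGrad}, obtains a constant $K^p(\theta)$ depending on the test function $\theta$, and then recovers the sharp factor $|\Omega|$ by a limiting argument $\theta_n\to C$ in $L^1(\Omega)$; you instead reduce everything to the vector field $F=\nabla u$ via the identity $\nabla_1 u(x,y)=F(y)-F(x)$ (Propositions \ref{NlWeakDerImplyWeakDer} and \ref{WeakDerImplyNlWeakDer}, correctly invoked) and use the elementary slicing trick: for a.e.\ $x$ the constant $c=F(x)$ is admissible, so $\inf_c\int_\Omega|F-c|^p\,\mathrm{d}y\le\int_\Omega|F(y)-F(x)|^p\,\mathrm{d}y$, and integrating in $x$ gives the factor $|\Omega|$ at once (with the analogous Fubini-on-null-sets argument for $p=+\infty$, which you rightly flag and which is indeed routine). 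Your route is shorter and avoids both the $\theta$-machinery and the optimization of constants. What the paper's method buys in exchange is uniformity: the $g_\theta$ construction works entirely through duality/test functions and therefore transfers verbatim to the bounded variation analogue (Proposition \ref{NLBVSeminormChar}), where your trick is unavailable because a $\mathrm{BV}$ function has no pointwise a.e.\ defined gradient to serve as the constant $c=F(x)$.
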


\begin{proof}
    We start with the $p\in[1,+\infty)$ case. The second inequality follows from noticing that

    $$\int_{\Omega\times\Omega} |\nabla_1 u|\mathrm{d}x\mathrm{d}y = \int_{\Omega\times\Omega} |\nabla_1 (u-g)|\mathrm{d}x\mathrm{d}y,$$
    for any affine function $g$, then applying the inequality \eqref{ImplContEmbed} previously computed, and finally taking infimum over all affine $g$.

    We move onto the first inequality. Fix any $\theta \in \mathcal{C}_c^1(\Omega)$ satisfying \eqref{ThetaIntCond}, and consider the affine function

    \begin{equation} \label{g_thetaDef}
        g_\theta(x) := -\frac{1}{\int_\Omega \theta(z) \mathrm{d}z} \left( \int_\Omega u(y) \nabla \theta(y) \mathrm{d}y \right) \cdot x.
    \end{equation}
    
    Recall (equation \eqref{WeakGradInTermsOfNlWeakGrad}) that

    \begin{equation}
        \nabla u(x) =  -\frac{1}{\int_\Omega \theta(z) \mathrm{d}z} \int_\Omega \left[ u(y) \nabla \theta(y) + \nabla_1 u(x,y) \theta(y) \right] \mathrm{d}y.
    \end{equation}

    Thus

    \begin{equation} \label{Grad u-g_theta}
        \nabla (u-g_\theta)(x) = \nabla u(x) - \nabla g_\theta(x) = - \frac{1}{\int_\Omega \theta(z) \mathrm{d}z} \int_\Omega \nabla_1 u(x,y) \theta(y) \mathrm{d}y,
    \end{equation}
    and we have

    \begin{align}
        \int_\Omega |\nabla(u-g_{\theta})(x)|^p \mathrm{d}x &= \int_\Omega \left| \int_\Omega - \frac{1}{\int_\Omega \theta(z) \mathrm{d}z} |\nabla_1 u(x,y) \theta(y)| \mathrm{d}y \right|^p \mathrm{d}x\\
        &\le |\Omega|^{p-1} \int_\Omega \int_\Omega \frac{1}{\left|\int_\Omega \theta(z) \mathrm{d}z\right|^p} |\nabla_1 u(x,y)|^p |\theta(y)|^p \mathrm{d}x\mathrm{d}y\\
        &\le \frac{|\Omega|^{p-1} \|\theta\|_{L^\infty(\Omega)}^p}{\left|\int_\Omega \theta(z) \mathrm{d}z\right|^p} \int_{\Omega\times\Omega} |\nabla_1 u(x,y)|^p \mathrm{d}x\mathrm{d}y.
    \end{align}

    This means that

    \begin{equation}
        \inf_{g \text{ affine}} \int_\Omega|\nabla(u-g)|^p \mathrm{d}x \le \int_\Omega|\nabla(u-g_\theta)|^p \mathrm{d}x \le K^p(\theta) \int_{\Omega\times\Omega} |\nabla_1 u|^p \mathrm{d}x\mathrm{d}y,
    \end{equation}
    where

    \begin{equation}
        K^p(\theta) := \frac{|\Omega|^{p-1} \|\theta\|_{L^\infty(\Omega)}^p}{\left|\int_\Omega \theta(z) \mathrm{d}z\right|^p}.
    \end{equation}

    Notice that, for any $\theta\in \mathcal{C}_c^1(\Omega)$ we have

    \begin{equation}
        \left|\int_\Omega \theta(z) \mathrm{d}z\right|^p \le |\Omega|^p \|\theta\|_{L^\infty(\Omega)}^p \implies K^p(\theta)=\frac{|\Omega|^{p-1} \|\theta\|_{L^\infty(\Omega)}^p}{\left|\int_\Omega \theta(z) \mathrm{d}z\right|^p} \ge  \frac{|\Omega|^{p-1} \|\theta\|_{L^\infty(\Omega)}^p}{|\Omega|^p \|\theta\|_{L^\infty(\Omega)}^p} = \frac{1}{|\Omega|}.
    \end{equation}

    Now, let $(\theta_n)_{n \in \mathbb{N}}$ be a sequence in $\mathcal{C}_c^1(\Omega)$ such that $\|\theta_n\|_{L^\infty(\Omega)} \le C$ for all $n \in \mathbb{N}$ and $\theta_n \to C$ in $L^1(\Omega)$, where $C>0$ is a fixed constant (for example, we can construct this sequence by defining $\theta_n = C$ in $\{ d(x,\partial \Omega) \ge 1/n \}$ and smoothing $\theta_n$ in the rest of $\Omega$ so that $\theta_n \in \mathcal{C}_c^1(\Omega)$ and $\|\theta_n\|_{L^\infty(\Omega)} \le C$). Note that then

    \begin{equation}
        \|\theta_n\|_{L^\infty(\Omega)} \xrightarrow[n \to +\infty]{} C, \qquad \qquad \left| \int_\Omega \theta_n(z) \mathrm{d}z \right| \xrightarrow[n \to +\infty]{} C|\Omega|,
    \end{equation}
    and thus

    \begin{equation}
        K^p(\theta_n)=\frac{|\Omega|^{p-1} \|\theta_n\|_{L^\infty(\Omega)}^p}{\left|\int_\Omega \theta_n(z) \mathrm{d}z\right|^p} \xrightarrow[n \to +\infty]{} \frac{|\Omega|^{p-1} C^p}{C^p |\Omega|^p} = \frac{1}{|\Omega|}.
    \end{equation}

    Therefore we have the stronger bound

    \begin{equation}
        |\Omega| \inf_{g \text{ affine}} \int_\Omega|\nabla(u-g)|^p \mathrm{d}x \le \int_{\Omega\times\Omega} |\nabla_1 u|^p \mathrm{d}x\mathrm{d}y.
    \end{equation}

    We now prove the $p=+\infty$ case. For the second inequality, notice that, by using \eqref{ImplContEmbedInfty}, for almost any $(x,y)\in\Omega\times\Omega$ we have

    \begin{equation}
        \esssup_{(x,y) \in \Omega\times\Omega}|\nabla_1 u(x,y)| = \esssup_{(x,y) \in \Omega\times\Omega}|\nabla_1 (u-g) (x,y)| \le 2 \esssup_{x \in \Omega} |\nabla(u-g)(x)|
    \end{equation}
    for any affine function $g$, hence, by taking infimum over all $g$ the inequality follows.

    Let us move onto the first inequality. Fix any $\theta\in\mathcal{C}_c^1(\Omega)$ satisfying \eqref{ThetaIntCond}, and consider the affine function $g_\theta$ as in \eqref{g_thetaDef}. By \eqref{Grad u-g_theta} we have that for almost any $x \in \Omega$

    \begin{align}
        |\nabla (u-g_\theta)(x)| &\le \frac{1}{\left|\int_\Omega \theta(z) \mathrm{d}z\right|} \int_\Omega \left| \nabla_1 u(x,y) \theta(y) \right| \mathrm{d}y\\
        &\le \frac{1}{\left|\int_\Omega \theta(z) \mathrm{d}z\right|} \|\nabla_1 u(x,\cdot)\|_{L^\infty(\Omega);\mathbb{R}^N} \|\theta\|_{L^1(\Omega)}.
    \end{align}

    Therefore, if we take essential supremum over almost any $x \in \Omega$ we have

    \begin{equation}
         \inf_{g \text{ affine}} \esssup_{\Omega}|\nabla(u-g)|\le \esssup_{\Omega}|\nabla(u-g_\theta)| \le \frac{\|\theta\|_{L^1(\Omega)}}{\left|\int_\Omega \theta(z) \mathrm{d}z\right|} \esssup_{\Omega\times\Omega} |\nabla_1 u|.
    \end{equation}

    Notice that for any $\theta \in \mathcal{C}_c^1(\Omega)$ satisfying \eqref{ThetaIntCond}

    \begin{equation}
        \left|\int_\Omega \theta(z) \mathrm{d}z\right| \le \|\theta\|_{L^1(\Omega)} \implies \frac{\|\theta\|_{L^1(\Omega)}}{\left|\int_\Omega \theta(z) \mathrm{d}z\right|} \ge 1,
    \end{equation}
    so the optimal bound is obtained whenever

    \begin{equation}
        \left|\int_\Omega \theta(z) \mathrm{d}z\right| = \|\theta\|_{L^1(\Omega)},
    \end{equation}
    which holds if and only if $\theta \in \mathcal{C}_c^1(\Omega)$ is such that $\theta \ge 0$ in $\Omega$. Thus, by choosing a $\theta$ satisfying this property (and also \eqref{ThetaIntCond}), we can obtain the stronger bound

    \begin{equation}
         \inf_{g \text{ affine}} \esssup_{\Omega}|\nabla(u-g)|\le  \esssup_{\Omega\times\Omega} |\nabla_1 u|.
    \end{equation}
\end{proof}

Now we study the relation between the space of functions with nonlocal bounded variation induced by the constant weight function $1$, which recall that we will denote by $\mathrm{NLBV}$, and the classical $\mathrm{BV}$ space. The conclusions will be the same as in the Sobolev case: The spaces are equal and we have equivalence of norms, provided the domain has finite measure. Additionally, an alternative proof to \cite[Lemma 5.2]{Kindermann} will be derived, see Proposition \ref{NLBVSeminormChar}.

As already mentioned in the introduction, the fact that $\mathrm{BV}(\Omega)=\mathrm{NLBV}(\Omega)$ with equivalence of norms for bounded $\Omega$ can be proven using the results in \cite{Kindermann}. However, one of the results needed to derive such a proof is the approximation theorem (see \cite[Lemma 5.1]{Kindermann}), which, at the time of writing this paper and to our knowledge, is not clear. This being said, our argument does not rely on any approximation-type result, and instead follows an approach similar to the Sobolev case as seen at the start of this subsection. Moreover, the condition on the domain is relaxed to only having finite measure.

\begin{prop}[Relation between $\mathrm{NLBV}(\Omega)$ and $\mathrm{BV}(\Omega)$ when $\Omega$ has finite measure] \label{NLBV=BVEquivNorm}
    Let $\Omega$ be of finite measure. Then $\mathrm{NLBV}(\Omega)=\mathrm{BV}(\Omega)$ with equivalence of norms.
\end{prop}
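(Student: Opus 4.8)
The plan is to argue directly at the level of the supremum (dual) definitions of $\mathrm{NLTV}_1$ and of the total variation $|Du|(\Omega)$, rather than through weak derivatives: unlike in the Sobolev setting, a generic $\mathrm{BV}$ function need not possess $L^1$ weak nonlocal derivatives, so Propositions \ref{WeakDerImplyNlWeakDer} and \ref{NlWeakDerImplyWeakDer} cannot be invoked verbatim. Nonetheless the two continuous embeddings will be obtained by transporting admissible competitors back and forth between the two duality problems, mirroring the ``easy'' and ``hard'' directions of Theorem \ref{Theorem_equality_constant_weight}. Throughout I use that $|Du|(\Omega)=\sup\{\int_\Omega u\,\mathrm{div}(\varphi)\,\mathrm{d}x:\varphi\in\mathcal{C}_c^1(\Omega;\mathbb{R}^N),\ \|\varphi\|_\infty\le 1\}$, and the finite measure of $\Omega$ will be used crucially in both steps.

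For the embedding $\mathrm{BV}(\Omega)\hookrightarrow\mathrm{NLBV}(\Omega)$ I would start from an arbitrary $\phi\in\mathcal{C}^1_c(\Omega\times\Omega;\mathbb{R}^N)$ with $\|\phi\|_\infty\le 1$ and define the field $\varphi(x):=\int_\Omega\big(\phi(y,x)-\phi(x,y)\big)\,\mathrm{d}y$. Differentiating under the integral sign (legitimate since $\phi\in\mathcal{C}^1_c$) one checks that $\varphi\in\mathcal{C}^1_c(\Omega;\mathbb{R}^N)$, that its support lies in the compact projections of $\mathrm{supp}\,\phi$, and that $\mathrm{div}_1(\phi)=\mathrm{div}(\varphi)$ pointwise on $\Omega$. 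Since $|\varphi(x)|\le\int_\Omega(|\phi(y,x)|+|\phi(x,y)|)\,\mathrm{d}y\le 2|\Omega|$, the field $\varphi/(2|\Omega|)$ is admissible in the total variation problem, whence $\int_\Omega u\,\mathrm{div}_1(\phi)\,\mathrm{d}x=\int_\Omega u\,\mathrm{div}(\varphi)\,\mathrm{d}x\le 2|\Omega|\,|Du|(\Omega)$. Taking the supremum over $\phi$ gives $\mathrm{NLTV}_1(u)\le 2|\Omega|\,|Du|(\Omega)$, so every $\mathrm{BV}$ function lies in $\mathrm{NLBV}$ with the attendant norm bound.

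The converse embedding $\mathrm{NLBV}(\Omega)\hookrightarrow\mathrm{BV}(\Omega)$ is the main obstacle, and is the analogue of the ``hard'' direction of Theorem \ref{Theorem_equality_constant_weight}. Here I would fix once and for all a $\theta\in\mathcal{C}_c^1(\Omega)$ with $\int_\Omega\theta\,\mathrm{d}z\neq 0$ (possible since $\Omega$ is nonempty and open) and, given an admissible $\varphi\in\mathcal{C}^1_c(\Omega;\mathbb{R}^N)$ with $\|\varphi\|_\infty\le 1$, set $\phi(x,y):=\theta(y)\varphi(x)\big/\int_\Omega\theta\,\mathrm{d}z\in\mathcal{C}^1_c(\Omega\times\Omega;\mathbb{R}^N)$. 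A direct computation yields
\begin{equation}
\mathrm{div}_1(\phi)(x)=\frac{\nabla\theta(x)\cdot\int_\Omega\varphi\,\mathrm{d}y}{\int_\Omega\theta\,\mathrm{d}z}-\mathrm{div}(\varphi)(x),
\end{equation}
so that, integrating against $u$ and rearranging,
\begin{equation}
\int_\Omega u\,\mathrm{div}(\varphi)\,\mathrm{d}x=-\int_\Omega u\,\mathrm{div}_1(\phi)\,\mathrm{d}x+\frac{\int_\Omega\varphi\,\mathrm{d}y}{\int_\Omega\theta\,\mathrm{d}z}\cdot\int_\Omega u\,\nabla\theta\,\mathrm{d}x.
\end{equation}
The first term is bounded by $\|\phi\|_\infty\,\mathrm{NLTV}_1(u)\le(\|\theta\|_\infty/|\int_\Omega\theta\,\mathrm{d}z|)\,\mathrm{NLTV}_1(u)$ (apply the definition of $\mathrm{NLTV}_1$ to $-\phi$ after rescaling), while the second, using $\|\varphi\|_\infty\le 1$ and finiteness of $|\Omega|$, is bounded by $(|\Omega|\,\|\nabla\theta\|_\infty/|\int_\Omega\theta\,\mathrm{d}z|)\,\|u\|_{L^1(\Omega)}$. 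Taking the supremum over $\varphi$ gives $|Du|(\Omega)\le C(\theta)\,\|u\|_{\mathrm{NLBV}(\Omega)}$ for an explicit constant $C(\theta)$ depending only on $\theta$ and $|\Omega|$, and in particular $u\in\mathrm{BV}(\Omega)$.

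The reason this direction is delicate is exactly the extra zeroth-order term $\tfrac{\int_\Omega\varphi}{\int_\Omega\theta}\cdot\int_\Omega u\,\nabla\theta$: it forces the estimate to involve the full $\mathrm{NLBV}$ norm rather than the seminorm alone, in parallel with how the Sobolev argument bounds $\|\nabla u\|_{L^p}$ by the full $\|u\|_{W^{1,p}_1}$. Combining the two displayed estimates (the common $L^1(\Omega)$ term being shared) shows $\|\cdot\|_{\mathrm{NLBV}(\Omega)}$ and $\|\cdot\|_{\mathrm{BV}(\Omega)}$ are equivalent, which proves the proposition. The only points demanding care are the differentiation under the integral sign and the verification that the constructed competitors $\varphi$ and $\phi$ indeed belong to the relevant spaces of compactly supported $\mathcal{C}^1$ fields; both are routine since $\theta,\varphi,\phi$ are assembled from $\mathcal{C}^1_c$ data.
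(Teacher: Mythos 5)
Your proposal is correct and takes essentially the same route as the paper's proof: the embedding $\mathrm{BV}(\Omega)\hookrightarrow\mathrm{NLBV}(\Omega)$ by averaging the nonlocal test field into a local competitor (the paper merely splits your $\varphi$ into two fields $\varphi_\phi^1,\varphi_\phi^2$ each normalized by $1/|\Omega|$, which is cosmetic), and the converse embedding via the tensor-product competitor built from $\theta\in\mathcal{C}^1_c(\Omega)$ with $\int_\Omega\theta\neq 0$, yielding the same identity with the zeroth-order term that forces the full $\|u\|_{\mathrm{NLBV}(\Omega)}$ on the right-hand side. The only differences from the paper are your choice of normalization ($\int_\Omega\theta$ versus $\|\theta\|_\infty$) and the swapped roles of the variables in $\phi_{\theta,\varphi}$, both of which you handle consistently.
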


\begin{proof}
    We start with the embedding $\mathrm{BV}(\Omega)\hookrightarrow\mathrm{NLBV}(\Omega)$. Let $u \in \mathrm{BV}(\Omega)$. For any $\phi \in \mathcal{C}_c^1(\Omega\times\Omega;\mathbb{R}^N)$ such that $\|\phi\|_{L^\infty(\Omega\times\Omega;\mathbb{R}^N)}\le 1$, we have that

    \begin{equation}
        \varphi_\phi^1(x):= \frac{1}{|\Omega|}\int_\Omega \phi(y,x) \mathrm{d}y, \; \varphi_\phi^2(x):= -\frac{1}{|\Omega|}\int_\Omega \phi(x,y) \mathrm{d}y 
    \end{equation}
    are functions belonging to $\mathcal{C}_c^1(\Omega;\mathbb{R}^N)$ such that $\|\varphi_\phi^1\|_{L^\infty(\Omega;\mathbb{R}^N)}, \|\varphi_\phi^2\|_{L^\infty(\Omega;\mathbb{R}^N)} \le 1$. Thus, by Leibniz rule

    \begin{align}
        \int_\Omega u(x)\; \mathrm{div}_1(\phi)(x) \mathrm{d}x &= \int_\Omega u(x)\; \mathrm{div}_x \left[\int_\Omega (\phi(y,x) - \phi(x,y)) \mathrm{d}y\right] \mathrm{d}x\\
        &= |\Omega| \int_\Omega u(x)\; \mathrm{div}(\varphi_\phi^1)(x) \mathrm{d}x + |\Omega| \int_\Omega u(x)\; \mathrm{div}(\varphi_\phi^2)(x) \mathrm{d}x\\
        &\le 2 |\Omega| \mathrm{TV}(u).
    \end{align}

    Therefore, by taking supremum over all $\phi$ we obtain

    \begin{equation} \label{NLTV<TV}
        \mathrm{NLTV}(u)\le 2 |\Omega| \mathrm{TV}(u),
    \end{equation}
    from which we deduce that $\mathrm{BV}(\Omega) \hookrightarrow \mathrm{NLBV}(\Omega)$.

    Let us now prove that $\mathrm{NLBV}(\Omega)\hookrightarrow\mathrm{BV}(\Omega)$. Let $u \in \mathrm{NLBV}(\Omega)$, and fix $\theta \in \mathcal{C}_c^1(\Omega)$ satisfying \eqref{ThetaIntCond}. For any $\varphi \in \mathcal{C}_c^1(\Omega;\mathbb{R}^N)$ such that $\|\varphi\|_{L^\infty(\Omega;\mathbb{R}^N)} \le 1$, we have that
    
    \begin{equation}
        \phi_{\theta,\varphi}(x,y):= \frac{\theta(x) \varphi(y)}{\|\theta\|_{L^\infty(\Omega)}}
    \end{equation}
    is a function of $\mathcal{C}_c^1(\Omega\times\Omega;\mathbb{R}^N)$ such that $\|\phi_{\theta,\varphi}\|_{L^\infty(\Omega\times\Omega;\mathbb{R}^N)} \le 1$. Moreover

    \begin{align}
        \|\theta\|_{L^\infty(\Omega)} \int_\Omega u(x) \; \mathrm{div}_1(\phi_{\theta,\varphi})(x) \mathrm{d}x&= \int_\Omega u(x) \int_\Omega \mathrm{div}_x \left( \theta(y) \varphi(x) - \theta(x) \varphi(y) \right) \mathrm{d}y \mathrm{d}x\\
        &= \int_\Omega u(x) \int_\Omega \left[ \theta(y) \; \mathrm{div}(\varphi)(x) - \varphi(y) \cdot \nabla \theta(x) \right] \mathrm{d}y \mathrm{d}x\\
        &= \left( \int_\Omega \theta(y) \mathrm{d}y \right) \int_\Omega u(x) \; \mathrm{div} (\varphi)(x) \mathrm{d}x\\
        & \qquad \qquad - \int_\Omega u(x) \left[ \nabla \theta(x) \cdot \int_\Omega \varphi(y) \mathrm{d}y \right] \mathrm{d}x. \label{div1intermsofdiv}
    \end{align}

    Thus, by definition of $\mathrm{NLTV}$ and the fact that $\|\varphi\|_{L^\infty(\Omega;\mathbb{R}^N)} \le 1$ we have, after relabeling the variable of the $\theta$ integral

    \begin{align}
        \int_\Omega u(x) & \; \mathrm{div} (\varphi)(x) \mathrm{d}x\\
        &= \frac{1}{\int_\Omega \theta(z) \mathrm{d}z} \left[ \|\theta\|_{L^\infty(\Omega)} \int_\Omega u(x) \; \mathrm{div}_1(\phi_{\theta,\varphi})(x) \mathrm{d}x \right.\\
        &\qquad \qquad \left. + \int_\Omega u(x) \left[ \nabla \theta(x) \cdot \int_\Omega \varphi(y) \mathrm{d}y \right] \mathrm{d}x \right]\\
        &\le \frac{1}{|\int_\Omega \theta(z) \mathrm{d}z|} \left[ \|\theta\|_{L^\infty(\Omega)} \mathrm{NLTV}(u) + \int_\Omega |u(x)||\nabla \theta (x)| \int_\Omega |\varphi(y)| \mathrm{d}y \mathrm{d}x \right]\\
        &\le \frac{1}{|\int_\Omega \theta(z) \mathrm{d}z|} \left[ \|\theta\|_{L^\infty(\Omega)} \mathrm{NLTV}(u) + |\Omega| \|\nabla \theta\|_{L^\infty(\Omega;\mathbb{R}^N)} \|u\|_{L^1(\Omega)} \right]\\
        &\le C^1(\theta) \|u\|_{\mathrm{NLBV}(\Omega)},
    \end{align}
    where

    \begin{equation} \label{C1BVConstantDef}
        C^1(\theta) = \frac{1}{|\int_\Omega \theta(z) \mathrm{d}z|} \max\left\{ \|\theta\|_{L^\infty(\Omega)}, |\Omega| \|\nabla \theta\|_{L^\infty(\Omega;\mathbb{R}^N)} \right\}>0.
    \end{equation}

    Finally, taking supremum over all $\varphi$ gives the inequality

    \begin{equation} \label{NLBVContEmbBVEq}
        \mathrm{TV}(u) \le C^1(\theta) \|u\|_{\mathrm{NLBV}(\Omega)},
    \end{equation}
    which yields the desired continuous embedding.
\end{proof}

Note that the constant $C^1(\theta)$ defined above in \eqref{C1BVConstantDef} coincides with the constant $C^p(\theta)$ with $p=1$ as defined in \eqref{CpConstantDef}. Again, finding an optimal bound for $C^1(\theta)$ is left as future work.

The following result corresponds to \cite[Lemma 5.2]{Kindermann}, with the difference that our argument does not depend on the validity of the approximation theorem, and that the conditions on $\Omega$ are relaxed from it being bounded and an extension domain to it simply having finite measure. It shows that the seminorm of $\mathrm{NLBV}$ is equivalent to the seminorm of the quotient space $\mathrm{BV}\backslash \sim_{\text{aff}}$, where $\sim_{\text{aff}}$ is the equivalence relation that relates functions in $\mathrm{BV}$ which differ a.e.~by an affine function. 

\begin{prop} \label{NLBVSeminormChar}
    Let $\Omega$ be of finite measure, and $u \in \mathrm{NLBV}(\Omega)$. Then

    \begin{equation}
        |\Omega| \inf_{g \emph{ affine}} \mathrm{TV}(u-g) \le \mathrm{NLTV}(u) \le 2 |\Omega| \inf_{g \emph{ affine}} \mathrm{TV}(u-g).
    \end{equation}
\end{prop}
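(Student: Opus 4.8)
The plan is to mirror the structure of the Sobolev characterization in Proposition \ref{WpSeminormChar} with $p=1$, but since in the $\mathrm{BV}$ setting the gradient of $u$ is only a measure, I would argue throughout via the test-function definitions of $\mathrm{NLTV}$ and $\mathrm{TV}$ rather than through pointwise gradient formulas, reusing the computations already performed in the proof of Proposition \ref{NLBV=BVEquivNorm}.

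For the upper bound $\mathrm{NLTV}(u)\le 2|\Omega|\inf_g\mathrm{TV}(u-g)$, the key observation is that $\mathrm{NLTV}$ annihilates affine functions: if $g$ is affine then $g\in\mathcal{C}^1$, and by Propositions \ref{WeakDerImplyNlWeakDer} and \ref{C1NLTV} one has $\mathrm{NLTV}(g)=\int_{\Omega\times\Omega}|\nabla g(y)-\nabla g(x)|\,\mathrm{d}x\,\mathrm{d}y=0$, since $\nabla g$ is constant. As $\mathrm{NLTV}$ is a seminorm, it follows that $\mathrm{NLTV}(u)=\mathrm{NLTV}(u-g)$ for every affine $g$ with $u-g\in\mathrm{BV}(\Omega)=\mathrm{NLBV}(\Omega)$ (using Proposition \ref{NLBV=BVEquivNorm}). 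Applying inequality \eqref{NLTV<TV} to $u-g$ then gives $\mathrm{NLTV}(u)=\mathrm{NLTV}(u-g)\le 2|\Omega|\,\mathrm{TV}(u-g)$, and taking the infimum over affine $g$ closes this half.

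For the lower bound $|\Omega|\inf_g\mathrm{TV}(u-g)\le\mathrm{NLTV}(u)$, I would reuse the identity \eqref{div1intermsofdiv}. Fix $\theta\in\mathcal{C}_c^1(\Omega)$ with $\theta\ge0$ satisfying \eqref{ThetaIntCond} and, for $\varphi\in\mathcal{C}_c^1(\Omega;\mathbb{R}^N)$ with $\|\varphi\|_\infty\le1$, set $\phi_{\theta,\varphi}(x,y):=\theta(x)\varphi(y)/\|\theta\|_{L^\infty(\Omega)}$, which satisfies $\|\phi_{\theta,\varphi}\|_\infty\le1$. Identity \eqref{div1intermsofdiv} rewrites $\int_\Omega u\,\mathrm{div}\varphi$ as a multiple of $\int_\Omega u\,\mathrm{div}_1(\phi_{\theta,\varphi})$ plus the term $\frac{1}{\int_\Omega\theta}\bigl(\int_\Omega u\nabla\theta\bigr)\cdot\int_\Omega\varphi$. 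Choosing precisely the affine function $g_\theta$ of \eqref{g_thetaDef}, whose gradient is the constant $-\frac{1}{\int_\Omega\theta}\int_\Omega u\nabla\theta$, and integrating $g_\theta$ by parts against $\mathrm{div}\varphi$, this stray term cancels exactly, leaving
\begin{equation*}
\int_\Omega (u-g_\theta)\,\mathrm{div}\varphi\,\mathrm{d}x=\frac{\|\theta\|_{L^\infty(\Omega)}}{\int_\Omega\theta}\int_\Omega u\,\mathrm{div}_1(\phi_{\theta,\varphi})\,\mathrm{d}x\le\frac{\|\theta\|_{L^\infty(\Omega)}}{\int_\Omega\theta}\,\mathrm{NLTV}(u).
\end{equation*}
Taking the supremum over $\varphi$ (both $\pm\varphi$ are admissible, so the supremum captures the absolute value) gives $\mathrm{TV}(u-g_\theta)\le\frac{\|\theta\|_{L^\infty(\Omega)}}{\int_\Omega\theta}\mathrm{NLTV}(u)$. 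Since $\theta\ge0$ forces $\int_\Omega\theta\le|\Omega|\,\|\theta\|_{L^\infty(\Omega)}$, the constant is $\ge1/|\Omega|$; to reach $1/|\Omega|$ in the limit I would feed in the same approximating sequence $(\theta_n)$ used in Proposition \ref{WpSeminormChar}, with $\theta_n\to C$ in $L^1(\Omega)$ and $\|\theta_n\|_{L^\infty(\Omega)}\le C$, so that $\|\theta_n\|_{L^\infty(\Omega)}/\int_\Omega\theta_n\to 1/|\Omega|$, yielding $\inf_g\mathrm{TV}(u-g)\le\frac{1}{|\Omega|}\mathrm{NLTV}(u)$.

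The main obstacle is the lower bound: arranging the affine correction to cancel exactly requires choosing $g_\theta$ with the right constant gradient and carefully tracking the sign of $\int_\Omega\theta$ (which the choice $\theta\ge0$ handles), and then optimizing the prefactor $\|\theta\|_{L^\infty(\Omega)}/\int_\Omega\theta$ down to the sharp value $1/|\Omega|$ via the limiting sequence. A minor technical point to address is the integrability of affine functions on a possibly unbounded domain of finite measure: I would phrase $\mathrm{TV}(u-g)$ through the distributional gradient $Du-\nabla g\,\mathrm{d}x$ (equivalently, restrict the infimum to affine $g$ with $u-g\in\mathrm{BV}(\Omega)$), so that no integrability of $g$ itself is needed. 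The upper bound, by contrast, is essentially immediate once affine invariance of $\mathrm{NLTV}$ and inequality \eqref{NLTV<TV} are in hand.
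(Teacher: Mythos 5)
Your proposal is correct and follows essentially the same route as the paper's proof: affine invariance of $\mathrm{NLTV}$ combined with inequality \eqref{NLTV<TV} for the upper bound, and the identity \eqref{div1intermsofdiv} with the affine correction $g_\theta$ of \eqref{g_thetaDef}, followed by taking the supremum over $\varphi$ and optimizing $\theta$ so that the constant tends to $1/|\Omega|$, for the lower bound (your normalization $\phi_{\theta,\varphi}=\theta(x)\varphi(y)/\|\theta\|_{L^\infty(\Omega)}$ is equivalent to the paper's choice of assuming $\|\theta\|_{L^\infty(\Omega)}\le 1$). The only cosmetic differences are your more explicit justification of $\mathrm{NLTV}(u)=\mathrm{NLTV}(u-g)$ and your remark on handling possibly non-integrable affine functions distributionally, both points the paper leaves implicit.
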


\begin{proof}
    The second inequality follows from noticing that $\mathrm{NLTV}(u)=\mathrm{NLTV}(u-g)$ for any affine function $g$ and then applying inequality \eqref{NLTV<TV} appearing in the proof of Proposition \ref{NLBV=BVEquivNorm}.

    Let us prove the first inequality. Fix any $\theta \in \mathcal{C}_c^1(\Omega)$ satisfying \eqref{ThetaIntCond} and such that $\|\theta\|_{L^\infty(\Omega)} \le 1$,
    and consider the affine function

    \begin{equation}
        g_\theta(x) := \left( -\frac{1}{\int_\Omega \theta(z)\mathrm{d}z} \int_\Omega u(y) \nabla \theta(y) \mathrm{d}y\right)\cdot x.
    \end{equation}

    By the previous computation \eqref{div1intermsofdiv} we have

    \begin{align}
        \int_\Omega u(x) \; \mathrm{div} (\varphi)(x) \mathrm{d}x
        &= \frac{1}{\int_\Omega \theta(z) \mathrm{d}z} \left[ \int_\Omega u(x) \; \mathrm{div}_1(\phi_{\theta,\varphi})(x) \mathrm{d}x \right.\\
        &\qquad \qquad \qquad \qquad \left. + \left( \int_\Omega u(x) \nabla \theta(x) \mathrm{d}x \right) \cdot \int_\Omega \varphi(y) \mathrm{d}y \right],
    \end{align}
    where $\phi_{\theta,\varphi}(x,y) := \theta(x) \varphi(y)$. Note that $\phi_{\theta,\varphi} \in \mathcal{C}_c^1(\Omega\times\Omega;\mathbb{R}^N)$ and is such that

    \begin{equation}
        \|\phi_{\theta,\varphi}\|_{L^\infty(\Omega\times\Omega;\mathbb{R}^N)}
        \le \|\theta\|_{L^\infty(\Omega)} \|\varphi\|_{L^\infty(\Omega;\mathbb{R}^N)} \le 1.
    \end{equation}

    Moreover, as $g_\theta$ is weakly differentiable (it is affine), we have

    \begin{align}
        \int_\Omega g_\theta(x) \mathrm{div}(\varphi)(x) \mathrm{d}x &= - \int_\Omega \nabla g_\theta(x) \cdot \varphi(x) \mathrm{d}x\\
        &= \frac{1}{\int_\Omega \theta(z) \mathrm{d}z} \left( \int_\Omega u(y) \nabla \theta(y) \mathrm{d}y \right) \cdot \int_\Omega \varphi(x) \mathrm{d}x.
    \end{align}

    With all this in mind, we have that

    \begin{align}
        \int_\Omega (u(x)-g_\theta(x)) &\mathrm{div} (\varphi)(x) \mathrm{d}x\\
        &= \int_\Omega u(x) \mathrm{div} (\varphi)(x) \mathrm{d}x - \int_\Omega g_\theta(x) \mathrm{div} (\varphi)(x) \mathrm{d}x\\
        &= \frac{1}{\int_\Omega \theta(z) \mathrm{d}z} \int_\Omega u(x) \; \mathrm{div}_1(\phi_{\theta,\varphi})(x) \mathrm{d}x\\
        &\le \frac{1}{\left|\int_\Omega \theta(z) \mathrm{d}z\right|} \mathrm{NLTV}(u).
    \end{align}

    Therefore, by taking supremum over all $\varphi$, we have

    \begin{equation}
        \inf_{g \text{ affine}} \mathrm{TV}(u-g) \le \mathrm{TV}(u-g_\theta) \le \frac{1}{\left|\int_\Omega \theta(z) \mathrm{d}z\right|} \mathrm{NLTV}(u).
    \end{equation}

    Finally, the bound above works for all $\theta \in \mathcal{C}_c^1(\Omega)$ satisfying \eqref{ThetaIntCond} and such that $\|\theta\|_{L^\infty(\Omega)} \le 1$, 
    and the optimal bound is obtained as $\theta \to 1$ in $L^1(\Omega)$, in which case

    \begin{equation}
        \frac{1}{\left|\int_\Omega \theta(z) \mathrm{d}z\right|} \to \frac{1}{|\Omega|}.
    \end{equation}

    Thus, we have the stronger bound

    \begin{equation}
        |\Omega| \inf_{g \text{ affine}} \mathrm{TV}(u-g) \le \mathrm{NLTV}(u).
    \end{equation}
\end{proof}

\subsection{For general weight functions} \label{GeneralWeightsSubsection}

In this subsection we give the proofs of Theorems \ref{Theorem_main_equality_NLBV} and \ref{Theorem_equality_Sobolev}, which give some necessary and/or sufficient conditions on our weight functions that ensure that the continuous embeddings between $\mathrm{BV}$ and $\mathrm{NLBV}_\omega$ or $W^{1,p}$ and $W^{1,p}_\omega$ are satisfied.

Let us begin with Theorem \ref{Theorem_equality_Sobolev}, as its proof can be adapted to prove part of Theorem \ref{Theorem_main_equality_NLBV}. The proofs of all of the statements but one are rather direct. The statement involving the embedding $W^{1,p}_\omega(\Omega) \hookrightarrow W^{1,p}(\Omega)$ follows directly from Remark \ref{1impliesomegaRemark}, and the statement which gives a sufficient condition for the embedding $W^{1,p}(\Omega) \hookrightarrow W^{1,p}_\omega(\Omega)$ follows from the fact that, for $u \in W^{1,p}(\Omega)$, we have the formula $\nabla_\omega u(x,y) = \omega(x,y) \left(\nabla u(y) - \nabla u(x)\right)$ for almost every $(x,y) \in \Omega\times\Omega$ due to Remark \ref{1impliesomegaRemark} and Proposition \ref{WeakDerImplyNlWeakDer}. The remaining statement, which gives a necessary condition for the embedding $W^{1,p}(\Omega) \hookrightarrow W^{1,p}_\omega(\Omega)$, requires more work and some preliminary results.

We start with the following lemma, which constructs test functions whose gradient has (Euclidean) norm equal to the standard mollifier. From hereafter, by \emph{standard mollifier} we will be referring to the family of functions $( J_\epsilon)_{\epsilon>0}$ given by $J_\epsilon(x) := \epsilon^{-N}J(x/\epsilon)$ for $x\in\mathbb{R}^N$ and $\epsilon>0$, where $J \in \mathcal{C}_c^\infty(\mathbb{R}^N)$ is nonnegative, radially symmetric and such that

\begin{gather}
    J(x)=0 \; \mbox{ for } |x|\ge 1, \mbox{ and }\int_{\mathbb{R}^N} J\mathrm{d}x=1.
\end{gather}

Additionally $-$and for technical reasons$-$, we will ask that $J$ attains its supremum (i.e.~maximum) at $x=0$, i.e.,

\begin{equation} \label{MollifierTechSup}
    \sup_{x\in\mathbb{R}^N} J(x) = \max_{x\in\mathbb{R}^N} J(x) = J(0).
\end{equation}

The usual example $J(x)=A \mathcal{X}_{B_1(0)}(x) \exp(1/(|x|^2-1))$ (with $A>0$ ensuring that the integral condition is satisfied) satisfies all the properties described above.

\begin{lemma} \label{Ham_JacLemma}
    Let $\epsilon>0$ and $f:[0,+\infty) \to \mathbb{R}$ be the (smooth) profile of $J$ (i.e., the function such that $J(x)=f(|x|)$ for all $x\in\mathbb{R}^N$), with $J$ such that $J_\epsilon(x) = \epsilon^{-N}J(x/\epsilon)$ for all $x\in\mathbb{R}^N$, where $J_\epsilon$ is the standard mollifier. Then, if $F(z):=\int_0^z f(t) \mathrm{d}t$ for all $z\in\mathbb{R}$, the function
    
    \begin{equation}
        h_\epsilon(x):=
        \epsilon^{-N+1}\left( F(1)-F(\lvert x \rvert/\epsilon)\right) \quad \mbox{ for all } x \in \mathbb{R}^N
    \end{equation}
    belongs to $W^{1,p}(\mathbb{R}^N)$ for all $p\in [1,+\infty]$, and is such that

    \begin{equation} \label{Ham_JacCond}
        |\nabla h_\epsilon(x)| = J_\epsilon(x) \text{ for almost every } x \in \mathbb{R}^N.
    \end{equation}
\end{lemma}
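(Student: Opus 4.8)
The plan is to compute the classical gradient of $h_\epsilon$ away from the origin, read off the pointwise norm identity there, and then argue that this classical gradient is in fact the weak gradient, so that $h_\epsilon$ lands in $W^{1,p}(\mathbb{R}^N)$ for every $p$.

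First I would differentiate. Writing $\Phi(z) := \epsilon^{-N+1}(F(1)-F(z))$, we have $h_\epsilon(x) = \Phi(|x|/\epsilon)$, and since $F'=f$ the function $\Phi$ is $C^1$ with $\Phi'(z) = -\epsilon^{-N+1} f(z)$. For $x \neq 0$ the chain rule gives
\begin{equation}
\nabla h_\epsilon(x) = \Phi'(|x|/\epsilon)\,\nabla\!\left(\frac{|x|}{\epsilon}\right) = -\epsilon^{-N} f(|x|/\epsilon)\,\frac{x}{|x|},
\end{equation}
using that the gradient of $x \mapsto |x|$ is $x/|x|$ off the origin. Taking Euclidean norms and recalling that $f \ge 0$ (as $J \ge 0$),
\begin{equation}
|\nabla h_\epsilon(x)| = \epsilon^{-N} f(|x|/\epsilon) = \epsilon^{-N} J(x/\epsilon) = J_\epsilon(x) \quad \text{for } x \neq 0,
\end{equation}
which is the desired identity \eqref{Ham_JacCond}, since $\{0\}$ is null.

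Next I would settle the Sobolev membership. As $f$ vanishes on $[1,+\infty)$, the primitive $F$ is constant equal to $F(1)$ there, whence $h_\epsilon \equiv 0$ on $\{|x| \ge \epsilon\}$; thus $h_\epsilon$ is compactly supported in $\overline{B(0,\epsilon)}$. Because $J \in \mathcal{C}^\infty_c(\mathbb{R}^N)$ vanishes to all orders at $|x|=1$, the function $h_\epsilon$ is $C^\infty$ on $\mathbb{R}^N \setminus \{0\}$ and continuous everywhere, with $|\nabla h_\epsilon| = J_\epsilon \le \epsilon^{-N} J(0)$ bounded off the origin. A continuous, compactly supported function that is smooth with bounded gradient away from a single point is Lipschitz, hence belongs to $W^{1,\infty}(\mathbb{R}^N)$, and its weak gradient coincides a.e.~with the classical one computed above. (Equivalently, one invokes the chain rule for the composition of the $C^1$ map $\Phi$ with the Lipschitz map $x \mapsto |x|/\epsilon$.) Since $h_\epsilon$ and $\nabla h_\epsilon$ are bounded with support of finite measure, they lie in $L^p$ for every $p \in [1,+\infty]$, giving $h_\epsilon \in W^{1,p}(\mathbb{R}^N)$.

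The only delicate point is the behaviour at the origin. Because $J$ attains its maximum there, $f(0) = J(0) > 0$, so $F(z) = f(0)z + o(z)$ near $0$ and $h_\epsilon$ genuinely has a conical kink at $x=0$ and fails to be classically differentiable there; this is the one place where the formula for $\nabla h_\epsilon$ breaks down pointwise. The resolution, and the main thing to justify carefully, is that this failure occurs only on the null set $\{0\}$, whose removability for the purposes of weak differentiation is precisely what the Lipschitz argument above guarantees; the positive value $f(0)$ is harmless since it affects only a single point.
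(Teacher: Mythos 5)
Your proof is correct and takes essentially the same route as the paper's: compute the classical gradient of $h_\epsilon$ away from the origin via the chain rule, read off $|\nabla h_\epsilon| = J_\epsilon$ there, and combine compact support with boundedness to conclude $h_\epsilon \in W^{1,p}(\mathbb{R}^N)$ for every $p \in [1,+\infty]$. The only difference is that you explicitly justify, via the Lipschitz argument, why the conical kink at $x=0$ (where $f(0)=J(0)>0$ indeed destroys classical differentiability) does not obstruct weak differentiability — a point the paper's proof passes over in silence, so this is added rigor rather than a different approach.
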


\begin{proof}
    Notice first that $h_\epsilon$ vanishes outside of $B_\epsilon(0)$, as $F(z) = F(1)$ for $z>1$, since $f(z)=0$ for $z>1$. Also, as $f$ is nonnegative (because $J$ itself is nonnegative), $F$ is increasing in $[0,+\infty)$, and thus

    \begin{equation} \label{h_epsBound}
        |h_\epsilon(x)| \le \epsilon^{-N+1}(F(1)-F(0)) = \epsilon^{-N+1} F(1) \; \mbox{ for all } x \in \mathbb{R}^N,
    \end{equation}
    from which we deduce that $h_\epsilon\in L^p(\mathbb{R}^N)$ for all $p\in[1,+\infty]$. Moreover

    \begin{equation}
        \nabla h_\epsilon(x) = -\epsilon^{-N} f(|x|/\epsilon) (x/|x|) = -\epsilon^{-N} J(x/\epsilon) (x/|x|) = - J_\epsilon(x) (x/|x|),
    \end{equation}
    for $x\in \mathbb{R}^N\setminus \{0\}$, from which condition \eqref{Ham_JacCond} follows. Using this condition and the fact that $J_\epsilon\in \mathcal{C}_c^\infty(\Omega)$, it follows that $h_\epsilon \in W^{1,p}(\mathbb{R}^N)$ for all $p\in [1,+\infty]$.
\end{proof}

Once the $h_\epsilon$ functions are defined, we establish the limits of its nonlocal Sobolev seminorms. For the forthcoming lemma, let us point out the following:

\begin{remark} \label{TechMollRemark}
    Given $p\in[1, +\infty)$, for $\epsilon>0$, if we consider the constant
    
    \begin{equation}
        C_{p,\epsilon}:= \epsilon^{N(p-1)/p} \left( \int_{B_1(0)} J^p \mathrm{d}x \right)^{-1/p} >0,
    \end{equation}
    (note that $C_{1,\epsilon}=1$ for all $\epsilon>0$) then $\hat{J}_{p,\epsilon} := C_{p,\epsilon}^p J_\epsilon^p$ is a function of $\mathcal{C}_c^\infty(\mathbb{R}^N)$ that vanishes outside of $B_\epsilon(0)$ and satisfies

    \begin{equation}
        \int_{\mathbb{R}^N} \hat J_{p,\epsilon} \mathrm{d} x = \int_{B_\epsilon(0)} \hat J_{p,\epsilon} \mathrm{d} x = 1.
    \end{equation}
\end{remark}

\begin{lemma} \label{P1TechLemmaNecCond}
     Let $\omega$ be an admissible weight function in $\Omega$ and $p\in[1,+\infty)$. Fix $x_0 \in \Omega$ such that

    \begin{equation}
        f_\omega^p(x_0) < +\infty,
    \end{equation}
    where
    \begin{equation}
        f_\omega^p(x)=\int_\Omega \omega(x,y)^p \mathrm{d}y,
    \end{equation}
    and let $\epsilon_0>0$ be such that $B_{\epsilon_0}(x_0)\subset\Omega$. For $\epsilon\in(0,\epsilon_0)$, define $h_{p,\epsilon}^{x_0}(x):=C_{p,\epsilon} h_\epsilon(x-x_0)$ for all $x\in\Omega$, where $h_\epsilon$ is given by Lemma \ref{Ham_JacLemma} and $C_{p,\epsilon}$ is the normalization constant as defined in Remark \ref{TechMollRemark}. Then the following limit holds:

    \begin{equation} \label{P1SuffCondAux1}
        \int_{\Omega\times\Omega} |\nabla_\omega h_{p,\epsilon}^{x_0}|^p \mathrm{d}x\mathrm{d}y \xrightarrow[\epsilon \to 0]{} 2f_\omega^p(x_0).
    \end{equation}

    In particular, if additionally $\Omega$ has finite measure, for any $x_0 \in \Omega$ we have
    \begin{equation}\label{P1SuffCondAux2}
        \int_{\Omega\times\Omega} |\nabla_1 h_{p,\epsilon}^{x_0}|^p \mathrm{d}x\mathrm{d}y \xrightarrow[\epsilon \to 0]{} 2|\Omega|.
    \end{equation}
\end{lemma}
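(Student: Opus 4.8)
The plan is to start from the explicit form of the weak nonlocal gradient. Since $h_{p,\epsilon}^{x_0}$ belongs to $W^{1,p}(\Omega)$ by Lemma \ref{Ham_JacLemma} (indeed it is Lipschitz with compact support in $B_\epsilon(x_0)\subset B_{\epsilon_0}(x_0)\subset\Omega$), it has weak standard derivatives, so Proposition \ref{WeakDerImplyNlWeakDer} yields
$$\nabla_\omega h_{p,\epsilon}^{x_0}(x,y) = \omega(x,y)\bigl(\nabla h_{p,\epsilon}^{x_0}(y) - \nabla h_{p,\epsilon}^{x_0}(x)\bigr)\quad\text{for a.e. }(x,y)\in\Omega\times\Omega.$$
Writing $I_\epsilon := \int_{\Omega\times\Omega}|\nabla_\omega h_{p,\epsilon}^{x_0}|^p\,\mathrm dx\,\mathrm dy$, this gives $I_\epsilon = \int_\Omega\int_\Omega \omega(x,y)^p\,|\nabla h_{p,\epsilon}^{x_0}(y) - \nabla h_{p,\epsilon}^{x_0}(x)|^p\,\mathrm dx\,\mathrm dy$. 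By Lemma \ref{Ham_JacLemma} and Remark \ref{TechMollRemark}, $\nabla h_{p,\epsilon}^{x_0}$ is supported in $B_\epsilon(x_0)$ and $|\nabla h_{p,\epsilon}^{x_0}|^p = \hat J_{p,\epsilon}(\,\cdot\, - x_0)$, a nonnegative function supported in $B_\epsilon(x_0)$ of unit integral. Exploiting this support together with the symmetry of $\omega$, I would split $I_\epsilon = I_\epsilon^{AA} + 2\,I_\epsilon^{B}$, where $I_\epsilon^{AA}$ is the contribution of $(x,y)\in B_\epsilon(x_0)\times B_\epsilon(x_0)$ and $I_\epsilon^{B}$ that of $x\in B_\epsilon(x_0)$, $y\in\Omega\setminus B_\epsilon(x_0)$ (where $\nabla h_{p,\epsilon}^{x_0}(y)=0$), the factor $2$ accounting for the symmetric region obtained by swapping $x$ and $y$; the remaining region contributes nothing since both gradients vanish there.

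The term $I_\epsilon^{AA}$ is negligible. As $(x_0,x_0)$ is interior to $\Omega\times\Omega$ and $\omega$ is continuous, $\omega\le M$ on $\overline{B_{\epsilon_0/2}(x_0)}\times\overline{B_{\epsilon_0/2}(x_0)}$ for some $M$; hence for $\epsilon<\epsilon_0/2$, using $|a-b|^p\le 2^{p-1}(|a|^p+|b|^p)$ and $\int_{B_\epsilon(x_0)}|\nabla h_{p,\epsilon}^{x_0}|^p=1$,
$$I_\epsilon^{AA}\le M^p\int_{B_\epsilon(x_0)}\int_{B_\epsilon(x_0)}\bigl|\nabla h_{p,\epsilon}^{x_0}(y)-\nabla h_{p,\epsilon}^{x_0}(x)\bigr|^p\,\mathrm dx\,\mathrm dy \le M^p\,2^p\,|B_\epsilon(x_0)|\xrightarrow[\epsilon\to0]{}0.$$

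The heart of the matter is then to show $I_\epsilon^{B}\to f_\omega^p(x_0)$. By Tonelli's theorem, $I_\epsilon^{B}=\int_\Omega G_\epsilon(y)\,\mathrm dy$ with $G_\epsilon(y):=\mathbf 1_{\Omega\setminus B_\epsilon(x_0)}(y)\int_{B_\epsilon(x_0)}\hat J_{p,\epsilon}(x-x_0)\,\omega(x,y)^p\,\mathrm dx$. For each fixed $y\neq x_0$ one has $G_\epsilon(y)\to\omega(x_0,y)^p$ as $\epsilon\to0$, since $\hat J_{p,\epsilon}(\,\cdot\,-x_0)$ is an approximate identity and $x\mapsto\omega(x,y)^p$ is continuous at $x_0$. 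As $G_\epsilon\ge0$, Fatou's lemma immediately gives $\liminf_\epsilon I_\epsilon^{B}\ge f_\omega^p(x_0)$. The main obstacle I expect is the reverse inequality $\limsup_\epsilon I_\epsilon^{B}\le f_\omega^p(x_0)$: the natural tool is dominated convergence, which requires an integrable (in $y$) majorant of $x\mapsto\omega(x,y)^p$ valid for all $x$ in a fixed neighborhood of $x_0$, e.g. $\sup_{x\in\overline{B_\delta(x_0)}}\omega(x,\cdot)^p\in L^1(\Omega)$. Producing such a majorant — equivalently, some local integrability of $f_\omega^p$ around $x_0$ — from the $\mathcal C^1$ regularity of $\omega$ together with the hypothesis $f_\omega^p(x_0)<+\infty$ is the delicate point, and is where I would concentrate the effort (for instance via a first-order estimate of $\omega(x,y)-\omega(x_0,y)$ along the segment $[x_0,x]$, controlled uniformly over the compact $x$-neighborhood). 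Once the majorant is in place, dominated convergence yields $I_\epsilon^{B}\to f_\omega^p(x_0)$, and combining this with $I_\epsilon^{AA}\to0$ proves \eqref{P1SuffCondAux1}.

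Finally, \eqref{P1SuffCondAux2} follows as the special case $\omega\equiv1$ of \eqref{P1SuffCondAux1}: the constant weight is admissible, $f_1^p(x_0)=\int_\Omega\mathrm dy=|\Omega|<+\infty$ whenever $\Omega$ has finite measure, and here the majorant difficulty is vacuous since $\omega\equiv1\in L^p(\Omega)$ on a finite-measure domain; hence $\int_{\Omega\times\Omega}|\nabla_1 h_{p,\epsilon}^{x_0}|^p\,\mathrm dx\,\mathrm dy\to 2|\Omega|$ for every $x_0\in\Omega$.
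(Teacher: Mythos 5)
Your proposal follows the paper's route step for step: the same formula $\nabla_\omega h_{p,\epsilon}^{x_0}(x,y)=\omega(x,y)\bigl(\nabla h_{p,\epsilon}^{x_0}(y)-\nabla h_{p,\epsilon}^{x_0}(x)\bigr)$, the same splitting into a diagonal block and an off-diagonal block counted twice, the same local-boundedness bound killing the diagonal block, and the same approximate-identity argument giving $G_\epsilon(y)\to\omega(x_0,y)^p$ pointwise plus Fatou for the lower bound. The problem is the step you explicitly defer: producing an integrable majorant $\sup_{x\in\overline{B_\delta(x_0)}}\omega(x,\cdot)^p\in L^1(\Omega)$ for the dominated-convergence direction. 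This is a genuine gap, and it is not one that can be closed from the stated hypotheses: $\mathcal{C}^1$ regularity of $\omega$ on $\Omega\times\Omega$ together with $f_\omega^p(x_0)<+\infty$ does \emph{not} imply that such a majorant exists. Take $\Omega=(0,1)$, $p=1$, $x_0=1/2$ and
\begin{equation}
\omega(x,y)=\frac{(x-1/2)^2}{y^3}+\frac{(y-1/2)^2}{x^3},
\end{equation}
which is smooth, symmetric and nonnegative on $\Omega\times\Omega$, with $f_\omega^1(1/2)=\int_0^1 8(y-1/2)^2\,\mathrm{d}y=2/3<+\infty$. Here $\sup_{x\in B_\delta(1/2)}\omega(x,y)\ge (\delta/2)^2/y^3$ is non-integrable in $y$ near $0$ for every $\delta>0$, so no majorant exists; worse, the off-diagonal term itself satisfies
\begin{equation}
I_\epsilon^{B}\;\ge\;\int_0^{1/2-\epsilon}\frac{1}{y^3}\left(\int_{B_\epsilon(1/2)}(x-1/2)^2\,J_\epsilon(x-1/2)\,\mathrm{d}x\right)\mathrm{d}y\;=\;\epsilon^2\left(\int_{-1}^{1}w^2J(w)\,\mathrm{d}w\right)\int_0^{1/2-\epsilon}\frac{\mathrm{d}y}{y^3}\;=\;+\infty
\end{equation}
for every admissible $\epsilon$, so the limit \eqref{P1SuffCondAux1} fails outright: the left-hand side is identically $+\infty$ while $2f_\omega^1(x_0)=4/3$. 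In particular your fallback idea of a first-order estimate of $\omega(x,y)-\omega(x_0,y)$ along $[x_0,x]$ cannot succeed either, since $\partial_x\omega(\cdot,y)$ blows up as $y\to 0$ non-integrably, uniformly on every neighborhood of $x_0$.

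For the comparison you could not see: the paper's own proof takes exactly your route and disposes of this point in one line, asserting that ``by continuity of $\omega$'' there is $\epsilon'>0$ with $\int_\Omega\|\omega(\cdot,y)^p\|_{L^\infty(B_{\epsilon'}(x_0))}\,\mathrm{d}y<+\infty$ — which is precisely the majorant you ask for, and precisely the claim the example above contradicts. So you have correctly located the crux, but the honest conclusion is that neither your argument nor the paper's closes it under the stated hypotheses; the lemma requires something stronger, e.g.\ assuming the majorant condition itself for some $\epsilon'>0$, or upper semicontinuity of $f_\omega^p$ at $x_0$ (then $I_\epsilon^{B}\le\sup_{x\in B_\epsilon(x_0)}f_\omega^p(x)$ gives the $\limsup$ bound directly). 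Your derivation of \eqref{P1SuffCondAux2} as the special case $\omega\equiv1$ is sound, since there the majorant is the constant $1$, integrable because $|\Omega|<+\infty$.
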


\begin{proof}
    To start off, by Lemma \ref{Ham_JacLemma}, for almost every $x\in \Omega$ we have

    \begin{equation} \label{NormOfNablahpe}
        |\nabla h_{p,\epsilon}^{x_0}(x)|^p = C_{p,\epsilon}^p |\nabla h_\epsilon(x-x_0)|^p = C_{p,\epsilon}^p J_\epsilon^p(x-x_0) = \hat J_{p,\epsilon}(x-x_0),
    \end{equation}
    where $\hat J_{p,\epsilon}$ is the function introduced in Remark \ref{TechMollRemark}, which recall that belongs to $\mathcal{C}_c^\infty(\mathbb{R}^N)$, vanishes outside of $B_\epsilon(0)$ and satisfies
    
    \begin{equation} \label{TechMollRemarkEq1}
        \int_{\mathbb{R}^N} \hat J_{p,\epsilon} \mathrm{d} x = \int_{B_\epsilon(0)} \hat J_{p,\epsilon} \mathrm{d} x = 1.
    \end{equation}
    
    With this in mind, by Remark \ref{1impliesomegaRemark} and Proposition \ref{WeakDerImplyNlWeakDer} we deduce that

    \begin{equation}
        \nabla_\omega h_{p,\epsilon}^{x_0}(x,y) = \omega(x,y) \left(\nabla h_{p,\epsilon}^{x_0}(y) - \nabla h_{p,\epsilon}^{x_0} (x)\right) \mbox{ for almost every } (x,y) \in \Omega\times\Omega,
    \end{equation}
    and thus, using that $\nabla h_{p,\epsilon}^{x_0}$ vanishes outside of $B_\epsilon(x_0)$ and the computation \eqref{NormOfNablahpe}, we have

    \begin{align}
        \int_{\Omega\times\Omega} |\nabla_\omega h_{p,\epsilon}^{x_0}|^p \mathrm{d}x\mathrm{d}y &= \int_{\Omega\times\Omega} \omega(x,y)^p |\nabla h_{p,\epsilon}^{x_0}(y)-\nabla h_{p,\epsilon}^{x_0}(x)|^p \mathrm{d}x\mathrm{d}y\\
        &=\int_{B_\epsilon(x_0)} \int_{B_\epsilon(x_0)} \omega(x,y)^p |\nabla h_{p,\epsilon}^{x_0}(y)-\nabla h_{p,\epsilon}^{x_0}(x)|^p \mathrm{d}x\mathrm{d}y\\
        &\qquad +2\int_{\Omega \backslash B_\epsilon(x_0)} \int_{B_\epsilon(x_0)} \omega(x,y)^p \left| \nabla h_{p,\epsilon}^{x_0}(x) \right|^p \mathrm{d}x\mathrm{d}y\\
        &=\int_{B_\epsilon(x_0)} \int_{B_\epsilon(x_0)} \omega(x,y)^p |\nabla h_{p,\epsilon}^{x_0}(y)-\nabla h_{p,\epsilon}^{x_0}(x)|^p \mathrm{d}x\mathrm{d}y\\
        &\qquad +2\int_{\Omega \backslash B_\epsilon(x_0)} \int_{B_\epsilon(x_0)} \omega(x,y)^p \hat{J}_{p,\epsilon}(x-x_0) \mathrm{d}x\mathrm{d}y\label{P1SuffCondAux4}\\
        &=: I_1^\epsilon+2I_2^\epsilon.
    \end{align}

    Let us check that

    \begin{equation} \label{IepsilonsLimits}
        I_1^\epsilon \xrightarrow[\epsilon \to 0]{} 0, \quad \text{ and } \quad I_2^\epsilon \xrightarrow[\epsilon \to 0]{} f_\omega^p(x_0),
    \end{equation}
    from which \eqref{P1SuffCondAux1} will follow.

    Firstly, notice that, by using \eqref{NormOfNablahpe}, for almost every $(x,y)\in\Omega\times\Omega$ we have

    \begin{align}
        0 \le |\nabla h_{p,\epsilon}^{x_0}(y)-\nabla h_{p,\epsilon}^{x_0}(x)|{^p} &\le 2^{p-1} (|\nabla h_{p,\epsilon}^{x_0}(y)|{^p}+|\nabla h_{p,\epsilon}^{x_0}(x)|{^p})\\
        &= 2^{p-1} (\hat{J}_{p,\epsilon}(y-x_0)+\hat{J}_{p,\epsilon}(x-x_0)),
    \end{align}
    and by integrating in $x,y$ over $B_\epsilon(x_0)\times B_\epsilon(x_0)$ we get, using \eqref{TechMollRemarkEq1}, that

    \begin{equation}
        0 \le \int_{B_\epsilon(x_0)\times B_\epsilon(x_0)} |\nabla h_{p,\epsilon}^{x_0}(y)-\nabla h_{p,\epsilon}^{x_0}(x)|{^p} \mathrm{d}x\mathrm{d}y \le 2^p |B_\epsilon^{x_0}|.
    \end{equation}

    With this, as $\omega$ is bounded close to $(x_0,x_0)$,

    \begin{align}
        I_1^\epsilon &=\int_{B_\epsilon(x_0)\times B_\epsilon(x_0)} \omega(x,y)^p |\nabla h_{p,\epsilon}^{x_0}(y)-\nabla h_{p,\epsilon}^{x_0}(x)|^p \mathrm{d}x\mathrm{d}y\\
        &\le \|\omega\|_{L^\infty(B_\epsilon(x_0)\times B_\epsilon(x_0))}^p \int_{B_\epsilon(x_0)\times B_\epsilon(x_0)} |\nabla h_{p,\epsilon}^{x_0}(y)-\nabla h_{p,\epsilon}^{x_0}(x)|^p \mathrm{d}x\mathrm{d}y\\
        &\le 2^p \|\omega\|_{L^\infty(B_\epsilon(x_0)\times B_\epsilon(x_0))}^p |B_\epsilon^{x_0}| \xrightarrow[\epsilon \to 0]{} 0,
    \end{align}
    which proves the first limit of \eqref{IepsilonsLimits}.

    On the other hand, notice that

    \begin{equation}
        I_2^\epsilon = \int_\Omega g_\epsilon^p(y) \mathrm{d}y,
    \end{equation}
    where

    \begin{equation} \label{P1SuffCondgDef}
        g_\epsilon^p(y) := \mathcal{X}_{\Omega\backslash B_\epsilon(x_0)}(y) \int_{B_\epsilon(x_0)} \hat{J}_{p,\epsilon}(x-x_0) \omega(x,y)^p \mathrm{d}x.
    \end{equation}

    We claim that

    \begin{equation} \label{P1SuffCondAux6}
        g_\epsilon^p \xrightarrow[\epsilon \to 0]{} \omega(x_0,\cdot)^p \; \text{ point-wise a.e. in } \; \Omega.
    \end{equation}

    Indeed, fix $y\in\Omega$ such that $y \neq x_0$. For $\delta>0$, as $\omega$ is continuous (at $(x_0,y)$), we can choose $\epsilon>0$ small enough such that $y \not \in B_\epsilon(x_0)$ and

    \begin{align}
        \bigg|\omega(x_0,y)^p- \int_{B_\epsilon(x_0)} \hat{J}_{p,\epsilon}(x-x_0) \omega(x,y)^p \mathrm{d}x \bigg|
        &= \left|\int_{B_\epsilon(x_0)} \left[\hat{J}_{p,\epsilon}(x-x_0) (\omega(x_0,y)^p-\omega(x,y)^p)\right]\mathrm{d}x\right|\\
        &\le \|\omega(x_0,y)^p-\omega(\cdot,y)^p\|_{L^\infty(B_\epsilon(x_0))} < \delta,
    \end{align}
    where we have used \eqref{TechMollRemarkEq1} in both the equality and the inequality, and thus for such an $\epsilon>0$

    \begin{equation}
        \left| \omega(x_0,y)^p - \mathcal{X}_{\Omega\backslash B_\epsilon(x_0)}(y) \int_\Omega \left[ \hat{J}_{p,\epsilon}(x-x_0) \omega(x,y)^p \right]\mathrm{d}x \right| < \delta,
    \end{equation}
    from which \eqref{P1SuffCondAux6} is proven.

    Now, recall that we are assuming that

    $$\int_\Omega \omega(x_0,y)^p \mathrm{d}y < +\infty,$$
    and thus, by continuity of $\omega$, we can find a fixed $\epsilon'>0$ small enough for which

    $$\int_\Omega \|\omega(\cdot,y)^p\|_{L^\infty(B_{\epsilon'}(x_0))}\mathrm{d}y < +\infty.$$

    Moreover, for $0< \epsilon \le \epsilon'$ and $y \in \Omega$, using \eqref{TechMollRemarkEq1}, we have

    $$|g_\epsilon^p(y)| \le \int_\Omega \hat{J}_{p,\epsilon}(x-x_0) \omega(x,y)^p \mathrm{d}x \le \|\omega(\cdot,y)^p\|_{L^\infty(B_\epsilon(x_0))}\le \|\omega(\cdot,y)^p\|_{L^\infty(B_{\epsilon'}(x_0))}.$$
    
    Therefore, the function $\alpha: \Omega \to \mathbb{R}$ given by $\alpha(y):=\|\omega(\cdot,y)^p\|_{L^\infty(B_{\epsilon'}(x_0))}$ for all $y \in \Omega$ is an integrable function that dominates $g_\epsilon^p$ for $\epsilon$ sufficiently small, and, recalling \eqref{P1SuffCondAux6}, we may use the dominated convergence theorem to get that

    \begin{equation} \label{P1SuffCondAux5}
        \lim_{\epsilon \to 0} I_2^\epsilon = \lim_{\epsilon \to 0} \int_\Omega g_\epsilon^p(x)\mathrm{d}x = \int_\Omega \omega(x_0,y)^p \mathrm{d}y = f_\omega^p(x_0),
    \end{equation}
    which establishes the second limit of \eqref{IepsilonsLimits}. Hence the result is proven.
\end{proof}

With the lemmas above, we may prove the following result, which states that if we have continuous embeddings between $W^{1,p}_1(\Omega)$ and $W^{1,p}_\omega(\Omega)$ ``seminorm wise", then either $f_\omega^p$ is bounded away from zero or away from infinity, depending on the embedding.

\begin{prop} \label{NecCondProp}
    Let $\Omega$ be of finite measure, $\omega$ be an admissible weight function and $p\in[1,+\infty)$. Then the following statements hold:

    \begin{itemize}
        \item[(i)] If there exists a constant $c>0$ such that

        \begin{equation} \label{P1SuffCondAux7}
            \int_{\Omega\times\Omega} |\nabla_1 u|^p \mathrm{d}x\mathrm{d}y \le c \int_{\Omega\times\Omega} |\nabla_\omega u|^p \mathrm{d}x\mathrm{d}y
        \end{equation}
        for all $u \in W^{1,p}_\omega(\Omega)$, then there exists a constant $k>0$ such that

        \begin{equation}
            k \le f_\omega^p(x)
        \end{equation}
        for all $x \in \Omega$.

        \item[(ii)] If there exists a constant $C>0$ such that
        
        \begin{equation} \label{P1SuffCondAux3}
            \int_{\Omega\times\Omega} |\nabla_\omega u|^p \mathrm{d}x\mathrm{d}y \le C \int_{\Omega\times\Omega} |\nabla_1 u|^p \mathrm{d}x\mathrm{d}y
        \end{equation}
        for all $u \in W^{1,p}_1(\Omega)$, then there exists a constant $K>0$ such that

        \begin{equation}
            f_\omega^p(x) \le K
        \end{equation}
        for all $x \in \Omega$.
    \end{itemize}
\end{prop}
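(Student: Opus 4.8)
The plan is to feed the test functions $h_{p,\epsilon}^{x_0}$ of Lemma \ref{P1TechLemmaNecCond} into the assumed seminorm inequalities, one base point $x_0 \in \Omega$ at a time, and then let $\epsilon \to 0$ to extract pointwise bounds on $f_\omega^p$. The two limits supplied by Lemma \ref{P1TechLemmaNecCond}, namely $\int_{\Omega\times\Omega}|\nabla_\omega h_{p,\epsilon}^{x_0}|^p \to 2f_\omega^p(x_0)$ (when $f_\omega^p(x_0)<+\infty$) and $\int_{\Omega\times\Omega}|\nabla_1 h_{p,\epsilon}^{x_0}|^p \to 2|\Omega|$ (for any $x_0$, using the finite measure of $\Omega$), will turn the global seminorm inequalities into the desired pointwise inequalities for $f_\omega^p$, yielding $k = |\Omega|/c$ in (i) and $K = C|\Omega|$ in (ii). Since $\Omega$ is a nonempty open set of finite measure we have $0 < |\Omega| < +\infty$, so these constants are admissible.

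For part (ii), I would fix $x_0 \in \Omega$ and note that $h_{p,\epsilon}^{x_0}$, being compactly supported in $B_\epsilon(x_0) \subset \Omega$ and lying in $W^{1,p}(\mathbb{R}^N)$, belongs to $W^{1,p}(\Omega) = W^{1,p}_1(\Omega)$ by Theorem \ref{Theorem_equality_constant_weight}. Hence hypothesis \eqref{P1SuffCondAux3} applies and gives $\int_{\Omega\times\Omega}|\nabla_\omega h_{p,\epsilon}^{x_0}|^p \le C\int_{\Omega\times\Omega}|\nabla_1 h_{p,\epsilon}^{x_0}|^p$. Letting $\epsilon \to 0$, the right-hand side tends to $2C|\Omega|$. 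For the left-hand side I would invoke the Fatou-type lower bound already contained in the proof of Lemma \ref{P1TechLemmaNecCond}: since $I_1^\epsilon \ge 0$ and $g_\epsilon^p \to \omega(x_0,\cdot)^p$ pointwise a.e., Fatou's lemma gives $\liminf_{\epsilon\to 0} \int_{\Omega\times\Omega}|\nabla_\omega h_{p,\epsilon}^{x_0}|^p \ge 2 f_\omega^p(x_0)$ irrespective of whether $f_\omega^p(x_0)$ is finite. Combining the two yields $f_\omega^p(x_0) \le C|\Omega|$, and since $x_0$ was arbitrary the bound is uniform.

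For part (i), I would again fix $x_0$ and split according to the value of $f_\omega^p(x_0)$. If $f_\omega^p(x_0)=+\infty$ the inequality $f_\omega^p(x_0) \ge k$ is automatic. If $f_\omega^p(x_0) < +\infty$, then $\int_{\Omega\times\Omega}|\nabla_\omega h_{p,\epsilon}^{x_0}|^p$ is finite (it converges to $2f_\omega^p(x_0)$), so $h_{p,\epsilon}^{x_0} \in W^{1,p}_\omega(\Omega)$, while it also lies in $W^{1,p}_1(\Omega)$ as above, so $\nabla_1 h_{p,\epsilon}^{x_0}$ is well defined; thus hypothesis \eqref{P1SuffCondAux7} applies and gives $\int_{\Omega\times\Omega}|\nabla_1 h_{p,\epsilon}^{x_0}|^p \le c\int_{\Omega\times\Omega}|\nabla_\omega h_{p,\epsilon}^{x_0}|^p$. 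Passing to the limit (now a genuine limit on both sides, since $f_\omega^p(x_0)<+\infty$) turns this into $2|\Omega| \le 2c\,f_\omega^p(x_0)$, i.e.\ $f_\omega^p(x_0) \ge |\Omega|/c$. The two cases together give the uniform lower bound.

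The only nontrivial point is the membership and finiteness bookkeeping. In (i) the hypothesis is phrased for $W^{1,p}_\omega(\Omega)$, so the test functions must be seen to lie there, which forces the split on the finiteness of $f_\omega^p(x_0)$ (the infinite case being harmless for a \emph{lower} bound). In (ii) the hypothesis is phrased for $W^{1,p}_1(\Omega)$, which always contains the test functions, but then the $\nabla_\omega$-seminorm limit in Lemma \ref{P1TechLemmaNecCond} is only guaranteed for $f_\omega^p(x_0)<+\infty$, so I replace that limit by the Fatou lower bound to cover the possibility $f_\omega^p(x_0)=+\infty$ uniformly. Everything else is a direct passage to the limit.
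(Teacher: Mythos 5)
Your proposal is correct and follows essentially the same route as the paper's proof: the same test functions $h_{p,\epsilon}^{x_0}$ from Lemma \ref{P1TechLemmaNecCond}, the same split on the finiteness of $f_\omega^p(x_0)$ in part (i), the same Fatou-type lower bound (via $I_1^\epsilon \ge 0$ and the pointwise convergence of $g_\epsilon^p$) to cover the possibly infinite case in part (ii), and the same constants $k=|\Omega|/c$ and $K=C|\Omega|$. Your bookkeeping of space membership matches the paper's as well, so there is nothing to add.
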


\begin{proof}
    We start with (i). Let $x_0 \in \Omega$. We assume that $x_0$ is such that
    $$\int_\Omega \omega(x_0,y)^p \mathrm{d}y < +\infty,$$
    as any constant $k>0$ will work otherwise. Let $\epsilon_0>0$ be such that $B_{\epsilon_0}(x_0)\subset\Omega$ and consider for $\epsilon \in (0,\epsilon_0)$ the $h_{p,\epsilon}^{x_0}$ function introduced in Lemma \ref{P1TechLemmaNecCond}. The limit \eqref{P1SuffCondAux1} of the same lemma shows that $h_{p,\epsilon}^{x_0}\in W^{1,p}_\omega(\Omega)$ for sufficently small $\epsilon \in (0, \epsilon_0)$ (since we already have $h_{p,\epsilon}^{x_0} \in L^p(\Omega)$ by Lemma \ref{Ham_JacLemma}). Thus, for such an $\epsilon \in (0, \epsilon_0)$, by substituting $u$ in \eqref{P1SuffCondAux7} by $h_{p,\epsilon}^{x_0}$ and letting $\epsilon$ go to zero, we deduce using \eqref{P1SuffCondAux1} and \eqref{P1SuffCondAux2} of Lemma \ref{P1TechLemmaNecCond} that

    \begin{equation}
        2|\Omega| \le 2c f_\omega^p(x_0),
    \end{equation}
    from which, by choosing $k=|\Omega|/c$, statement (i) is proven.

    We now move onto (ii). Fix $x_0 \in \Omega$, let $\epsilon_0>0$ be such that $B_{\epsilon_0}(x_0)\subset \Omega$, and for $\epsilon\in(0,\epsilon_0)$ consider the $h_{p,\epsilon}^{x_0}$ function introduced in Lemma \ref{P1TechLemmaNecCond} and the function $g_\epsilon^p$ as defined in \eqref{P1SuffCondgDef}, which we know $-$\eqref{P1SuffCondAux6}$-$ converges pointwise a.e. in $\Omega$ to $\omega(x_0,\cdot)^p$ as $\epsilon\to 0$. As $h_{p,\epsilon}^{x_0} \in W^{1,p}_1(\Omega)$ for any $\epsilon\in(0,\epsilon_0)$ due to Lemma \ref{Ham_JacLemma} (and Theorem \ref{Theorem_equality_constant_weight}), we have by \eqref{P1SuffCondAux3} and the previous computation \eqref{P1SuffCondAux4} that

    \begin{align}
        2\int_\Omega g_\epsilon^p(y)\mathrm{d}y &= 2\int_{\Omega\backslash B_\epsilon(x_0)} \int_{B_\epsilon(x_0)} \hat{J}_{p,\epsilon}(x-x_0) \omega(x,y)^p \mathrm{d}x\mathrm{d}y\\
        &\le \int_{\Omega\times\Omega} |\nabla_\omega h_{p,\epsilon}^{x_0}|^p\mathrm{d}x\mathrm{d}y \le C \int_{\Omega\times\Omega} |\nabla_1 h_{p,\epsilon}^{x_0}|^p \mathrm{d}x\mathrm{d}y
    \end{align}
    for all $\epsilon\in(0,\epsilon_0)$. From which, by  Fatou's lemma and \eqref{P1SuffCondAux2}

    \begin{equation}
        f_\omega^p(x_0)=\int_\Omega \omega(x_0,y)^p \mathrm{d}y \le \liminf_{\epsilon \to 0} \int_\Omega g_\epsilon^p(y)\mathrm{d}y \le \liminf_{\epsilon \to 0} \frac{C}{2} \int_{\Omega\times\Omega} |\nabla_1 h_{p,\epsilon}^{x_0}|^p \mathrm{d}x\mathrm{d}y = C |\Omega|,
    \end{equation}
    hence, by choosing $K=C|\Omega|$, statement (ii) is proven.
\end{proof}

The proposition above is the final ingredient needed to prove the necessary condition statement of Theorem \ref{Theorem_equality_Sobolev}. The proof of the whole theorem is given below.

\begin{proof}[Proof of Theorem \ref{Theorem_equality_Sobolev}]
    Let us start with the first statement, that is:
    \begin{equation} \label{W1pProb1ProofAux1}
        \mbox{If there exists } k>0 \mbox{ such that } k \le \omega \mbox{ in } \Omega\times\Omega, \mbox{ then } W^{1,p}_\omega(\Omega) \hookrightarrow W^{1,p}(\Omega),
    \end{equation}
    for all $p\in[1,+\infty]$ and $\Omega$ with finite measure if $p\in[1,+\infty)$, or general $\Omega$ if $p=+\infty$.   
    
    We will prove the $p\in[1,+\infty)$ case, as the $p=+\infty$ case follows by an analogous argument. By Remark \ref{omegaimplies1Remark} we have that any $u \in W^{1,p}_\omega(\Omega)$ satisfies $\nabla_\omega u = \omega \nabla_1 u$ a.e. in $\Omega\times\Omega$. Therefore,

    \begin{equation}
        \int_\Omega |\nabla_1 u|^p \mathrm{d}x\mathrm{d}y \le \frac{1}{k^p}  \int_\Omega |\nabla_\omega u|^p \mathrm{d}x\mathrm{d}y,
    \end{equation}
    from which we deduce that $W^{1,p}_\omega(\Omega) \hookrightarrow W^{1,p}_1(\Omega)$, and this coupled with Theorem \ref{Theorem_equality_constant_weight} yields \eqref{W1pProb1ProofAux1}.

    We move onto proving the right to left implication of the second statement of the theorem, i.e.:

    \begin{equation} \label{W1pProb1ProofAux2}
        \mbox{If there exists } K>0 \mbox{ such that } f_\omega^p \le K \mbox{ in } \Omega, \mbox{ then } W^{1,p}(\Omega) \hookrightarrow W^{1,p}_\omega(\Omega),
    \end{equation}
    again, for all $p\in[1,+\infty]$ and $\Omega$ with finite measure if $p\in[1,+\infty)$, or general $\Omega$ if $p=+\infty$.

    For the $p\in[1,+\infty)$ case, by Remark \ref{1impliesomegaRemark} and Proposition \ref{WeakDerImplyNlWeakDer} we have that for any $u\in W^{1,p}(\Omega)$, $\nabla_\omega u(x,y) = \omega(x,y) \left(\nabla u(y) - \nabla u(x)\right)$ for almost every $(x,y) \in \Omega\times\Omega$, and thus

    \begin{align}
        \int_{\Omega\times\Omega} |\nabla_\omega u(x,y)|^p \mathrm{d}x\mathrm{d}y &= \int_{\Omega\times\Omega} \omega(x,y)^p \left| \nabla u(y) - \nabla u(x) \right|^p \mathrm{d}x\mathrm{d}y\\
        &\le 2^p \int_\Omega |\nabla u(x)|^p \left(\int_\Omega \omega(x,y)^p \mathrm{d}y\right) \mathrm{d}x\\
        &= 2^p \int_\Omega |\nabla u(x)|^p f_\omega^p(x) \mathrm{d}x\\
        &\le 2^p K \int_\Omega |\nabla u(x)|^p \mathrm{d}x,
    \end{align}
    which proves \eqref{W1pProb1ProofAux2} for $p \in [1,+\infty)$.

    For the $p=+\infty$ case, notice that a constant $K>0$ existing such that $f_\omega^\infty(x)\le K$ for $x\in \Omega$ is equivalent to a constant $K'>0$ existing such that

    \begin{equation}
        \omega(x,y) \le K' \mbox{ for } (x,y) \in \Omega\times\Omega,
    \end{equation}
    which, coupled with the fact that for $u\in W^{1,\infty}(\Omega)$ we have $\nabla_\omega u(x,y)=\omega(x,y) \left(\nabla u(y) - \nabla u(x)\right)$ for almost every $(x,y)\in\Omega\times\Omega$, proves \eqref{W1pProb1ProofAux2} for $p=+\infty$. Indeed, for almost every $(x,y)\in\Omega\times\Omega$ we have

    \begin{align}
        |\nabla_\omega u(x,y)| &\le 2 \omega(x,y) \|\nabla u\|_{L^\infty(\Omega;\mathbb{R}^N)} \le 2 K' \|\nabla u\|_{L^\infty(\Omega;\mathbb{R}^N)},
    \end{align}
    and by taking essential supremum over $(x,y)$ the embedding is deduced.

    Finally we prove the following:

    \begin{equation} \label{W1pProb1ProofAux3}
        \mbox{If } W^{1,p}(\Omega) \hookrightarrow W^{1,p}_\omega(\Omega), \mbox{ then there exists } K>0 \mbox{ such that } f_\omega^p\le K \mbox{ in } \Omega,
    \end{equation}
    for all $p\in[1,+\infty]$ and $\Omega$ with finite measure if $p\in[1,+\infty)$, or general $\Omega$ if $p=+\infty$.

    For the $p\in[1,+\infty)$ case, notice that $W^{1,p}_1(\Omega) \hookrightarrow W^{1,p}_\omega(\Omega)$ implies that there exists $C>0$ for which

    \begin{equation} \label{P1SuffCondAux9}
        \int_{\Omega\times\Omega} |\nabla_\omega u|^p \mathrm{d}x\mathrm{d}y \le C\left( \int_\Omega |u|^p \mathrm{d}x + \int_{\Omega\times\Omega} |\nabla_1 u|^p \mathrm{d}x\mathrm{d}y \right)
    \end{equation}
    for all $u\in W^{1,p}_1(\Omega)$. Fix $x_0 \in \Omega$, let $\epsilon_0>0$ be such that $B_{\epsilon_0}(x_0)\subset \Omega$, and for $\epsilon\in(0,\epsilon_0)$ consider the $h_{p,\epsilon}^{x_0}$ function introduced in Lemma \ref{P1TechLemmaNecCond} and the function $g_\epsilon^p$ defined in \eqref{P1SuffCondgDef}, which we know converges pointwise a.e. in $\Omega$ to $\omega(x_0,\cdot)^p$ as $\epsilon\to 0$. As $h_{p,\epsilon}^{x_0} \in W^{1,p}_1(\Omega)$ for any $\epsilon\in(0,\epsilon_0)$ (again, by Lemma \ref{Ham_JacLemma} and Theorem \ref{Theorem_equality_constant_weight}), we have by \eqref{P1SuffCondAux9} and the previous computation \eqref{P1SuffCondAux4} that

    \begin{align} \label{gChainOfInequalities}
        2\int_\Omega g_\epsilon^p(y)\mathrm{d}y &= 2\int_{\Omega\backslash B_\epsilon(x_0)} \int_{B_\epsilon(x_0)} \hat{J}_{p,\epsilon}(x-x_0) \omega(x,y)^p \mathrm{d}x\mathrm{d}y\nonumber\\
        &\le \int_{\Omega\times\Omega} |\nabla_\omega h_{p,\epsilon}^{x_0}|^p\mathrm{d}x\mathrm{d}y\nonumber\\
        &\le C \left( \int_\Omega |h_{p,\epsilon}^{x_0}|^p \mathrm{d}x + \int_{\Omega\times\Omega} |\nabla_1 h_{p,\epsilon}^{x_0}|^p \mathrm{d}x\mathrm{d}y\right)
    \end{align}
    for all $\epsilon\in(0,\epsilon_0)$. Now, we claim that

    \begin{equation} \label{h_epsilon_lp_to_zero}
        \lim_{\epsilon \to 0} \int_\Omega |h_{p,\epsilon}^{x_0}(x)|^p \mathrm{d}x = 0.
    \end{equation}
    
    Indeed, by recalling the expression of $h_\epsilon$ and $C_{p,\epsilon}$ (see Lemma \ref{Ham_JacLemma} and Remark \ref{TechMollRemark}) and applying the change of variables $\tilde x=(x-x_0)/\epsilon$ we have

    \begin{align}
        \int_\Omega |h_{p,\epsilon}^{x_0}(x)|^p \mathrm{d}x &= C_{p,\epsilon}^p \int_{B_\epsilon(x_0)} |h_\epsilon(x-x_0)|^p \mathrm{d}x\\
        &= \left( \int_\Omega J(z)^p \mathrm{d}z \right)^{-1} \epsilon^{N(p-1)} \int_{B_\epsilon(x_0)} \epsilon^{p(1-N)} \left|F(1)-F(|x-x_0|/\epsilon)\right|^p \mathrm{d}x\\
        &= \left( \int_\Omega J(z)^p \mathrm{d}z \right)^{-1} \epsilon^{p-N} \int_{B_1(0)} |F(1)-F(|\tilde x|)|^p \epsilon^{N} \mathrm{d}\tilde x\\
        &= \left[ \left( \int_\Omega J(z)^p \mathrm{d}z \right)^{-1} \int_{B_1(0)} |F(1)-F(|\tilde x|)|^p \mathrm{d}\tilde x \right] \epsilon^p, \label{hepsx0LpNorm}
    \end{align}
    from which, as $p\ge 1$, the limit \eqref{h_epsilon_lp_to_zero} holds.
    
    With this in mind, continuing from \eqref{gChainOfInequalities} and by Fatou's lemma

    \begin{align}
        f_\omega^p(x_0)=\int_\Omega \omega(x_0,y)^p \mathrm{d}y &\le \liminf_{\epsilon \to 0} \int_\Omega g_\epsilon^p(y)\mathrm{d}y\\
        &\le \liminf_{\epsilon \to 0} \frac{C}{2} \left(\int_\Omega |h_{p,\epsilon}^{x_0}|^p \mathrm{d}x + \int_{\Omega\times\Omega} |\nabla_1 h_{p,\epsilon}^{x_0}|^p \mathrm{d}x\mathrm{d}y \right)= C |\Omega|,
    \end{align}
    where we have used the \eqref{P1SuffCondAux2} and \eqref{h_epsilon_lp_to_zero} limits. Thus \eqref{W1pProb1ProofAux3} is proven for $p\in[1,+\infty)$.

    The last thing left to prove is the $p=+\infty$ case. By Remark \ref{1impliesomegaRemark}, $W^{1,\infty}(\Omega) \hookrightarrow W^{1,\infty}_\omega(\Omega)$ implies that there exists $C>0$ such that

    \begin{equation}
        \left\| \omega \nabla_1 u \right\|_{L^\infty(\Omega\times\Omega;\mathbb{R}^N)} \le C \left( \|u\|_{L^\infty(\Omega)} + \|\nabla u\|_{L^\infty(\Omega;\mathbb{R}^N)} \right)
    \end{equation}
    for all $u \in W^{1,\infty}(\Omega)$. This means that for any $u \in W^{1,\infty}(\Omega)$,
    
    \begin{equation} \label{W1pProb1ProofAux4}
        \omega(x,y) |\nabla_1 u(x,y)| \le C \left( \|u\|_{L^\infty(\Omega)} + \|\nabla u\|_{L^\infty(\Omega;\mathbb{R}^N)} \right) \quad \mbox{ for almost every } (x,y)\in\Omega\times\Omega.
    \end{equation}
    
    Now, fix $x_0 \in \Omega$ and $\epsilon_0>0$ such that $B_{\epsilon_0}(x_0)\subset \Omega$. For each $\epsilon\in(0,\epsilon_0)$ consider

    \begin{equation} \label{RhoDef}
        \rho_\epsilon^{x_0}:= \frac{\epsilon^N}{J(0)} h_{1,\epsilon}^{x_0},
    \end{equation}
    where $h_{1,\epsilon}^{x_0}$ is the function introduced in Lemma \ref{P1TechLemmaNecCond} (with $p=1$) and $J$ is the function such that $J_\epsilon(x)=(1/\epsilon)^N J(x/\epsilon)$. Recall $-$\eqref{MollifierTechSup}$-$ that we are assuming that the maximum of $J(x)$ is attained at $x=0$. Notice that then, for $\epsilon\in(0,\epsilon_0)$ we have
    
    \begin{equation}
        |\nabla \rho_\epsilon^{x_0}(x)| = \frac{\epsilon^N}{J(0)} |\nabla h_{1,\epsilon}^{x_0}(x)| = \frac{\epsilon^N}{J(0)} J_\epsilon(x-x_0) = \frac{\epsilon^N}{J(0)} \frac{J\left((x-x_0/\epsilon)\right)}{\epsilon^N} \le \frac{J(0)}{J(0)} = 1
    \end{equation}
    for almost every $x\in \Omega$. Thus, as this bound is attained as $x \xrightarrow[]{} x_0$,

    \begin{equation} \label{Problem1InftyAux7}
        \|\nabla \rho_\epsilon^{x_0}\|_{L^\infty(\Omega;\mathbb{R}^N)} = 1 \quad \mbox{ for all } \epsilon\in(0,\epsilon_0).
    \end{equation}
    
    Moreover, for $\epsilon\in(0,\epsilon_0)$ and $x\in B_\epsilon(x_0)$, by recalling \eqref{h_epsBound} it follows that

    \begin{equation} \label{Problem1InftyAux6}
        |\rho_\epsilon^{x_0}(x)| = \frac{\epsilon^N}{J(0)} \left| h_{1,\epsilon}^{x_0}(x)\right| \le \frac{\epsilon^N}{J(0)} \epsilon^{1-N} F(1) = \frac{\epsilon}{J(0)} F(1),
    \end{equation}
    and since this bound is attained whenever $x=x_0$, we deduce that

    \begin{equation} \label{Problem1InftyAux8}
        \|\rho_\epsilon^{x_0}\|_{L^\infty(\Omega)} = \frac{F(1)}{J(0)} \epsilon,
    \end{equation}
    which vanishes as $\epsilon\to 0$. In particular $\rho_\epsilon^{x_0} \in W^{1,\infty}(\Omega)$ for any $\epsilon\in(0,\epsilon_0)$ (a fact which can also be derived using Lemma \ref{Ham_JacLemma}). With this in mind, we plug $u=\rho_\epsilon^{x_0}$ with $\epsilon\in(0,\epsilon_0)$ in \eqref{W1pProb1ProofAux4}, which yields that

    \begin{equation}
        \omega(x,y) |\nabla_1 \rho_\epsilon^{x_0} (x,y)| \le C \left( \|\rho_\epsilon^{x_0}\|_{L^\infty(\Omega)} + \|\nabla \rho_\epsilon^{x_0}\|_{L^\infty(\Omega;\mathbb{R}^N)} \right) = C \left( \frac{F(1)}{J(0)} \epsilon + 1 \right),
    \end{equation}
    for almost every $(x,y)\in \Omega\times\Omega$, where we have used \eqref{Problem1InftyAux7} and \eqref{Problem1InftyAux8}. Using the fact that, as argued before,
    
    \begin{equation}
        \lim_{x\to x_0} |\nabla \rho_\epsilon^{x_0}(x)|=1,
    \end{equation}
    in particular, for almost every $y\in\Omega\setminus B_\epsilon(x_0)$ we have

    \begin{align}
        \omega(x_0,y) &= \lim_{x\to x_0} \omega(x,y) |\nabla \rho_\epsilon^{x_0}(x)| = \lim_{x\to x_0} \omega(x,y) |\nabla_1 \rho_\epsilon^{x_0}(x,y)|\le  C \left( \frac{F(1)}{J(0)} \epsilon + 1 \right),
    \end{align}
    where in the second equality we have used that by Proposition \ref{WeakDerImplyNlWeakDer} we have that $\nabla_1\rho_\epsilon^{x_0}(x,y) = \nabla \rho_\epsilon^{x_0}(y) - \nabla \rho_\epsilon^{x_0}(x)$ for almost every $(x,y)\in \Omega$, and thus, using the fact that $\rho_\epsilon^{x_0}$ vanishes outside of $B_\epsilon(x_0)$, $\nabla_1\rho_\epsilon^{x_0}(x,y) = \nabla \rho_\epsilon^{x_0}(x)$ for almost every $(x,y)\in\Omega\times\left( \Omega \setminus B_\epsilon(x_0)\right)$. Therefore, by taking $\epsilon$ to 0 in the above inequality we obtain that
    
    \begin{equation}
        \omega(x_0,y) \le C
    \end{equation}
    for almost every $y \in \Omega \setminus \{x_0\}$. By continuity of $\omega$ the inequality is satisfied everywhere in $\Omega$, and finally, by taking supremum over all $y \in \Omega$, \eqref{W1pProb1ProofAux3} is proven for $p=+\infty$.
\end{proof}

We move onto proving Theorem \ref{Theorem_main_equality_NLBV}. Again, the proofs to all of the statements are fairly straightforward, except for the one that gives a necessary condition for the embedding $\mathrm{BV}(\Omega)\hookrightarrow \mathrm{NLBV}_\omega(\Omega)$. Luckily, this last proof can be adapted from the Sobolev setting using Proposition \ref{C1NLTV}. The proof of the whole theorem is give below.

\begin{proof}[Proof of Theorem \ref{Theorem_main_equality_NLBV}]
    We start by proving the first statement:

    \begin{equation} \label{NLBVProb1ProofAux1}
        \mbox{If there exists } k>0 \mbox{ such that } k \le \omega \mbox{ in } \Omega\times\Omega, \mbox{ then } \mathrm{NLBV}_\omega(\Omega) \hookrightarrow \mathrm{BV}(\Omega),
    \end{equation}
    for any $\Omega$ of finite measure.

    Let $u \in \mathrm{NLBV}_\omega(\Omega)$. Given $\phi \in \mathcal{C}_c^1(\Omega\times\Omega;\mathbb{R}^N)$ such that $\|\phi\|_\infty \le 1$, notice that for all $(x,y)\in\Omega\times\Omega$

    \begin{equation}
        |k \phi(x,y)|\le k \le \omega(x,y). 
    \end{equation}

    With this in mind, by Lemma \ref{NLTV^omegaLemma}

    \begin{align}
        \int_\Omega u(x) \mathrm{div}_1(\phi)(x) \mathrm{d}x &= \frac{1}{k} \int_\Omega u(x) \mathrm{div}_1(k \phi)(x) \mathrm{d}x \le \frac{1}{k} \mathrm{NLTV}_\omega(u),
    \end{align}
    and by taking supremum over all $\phi$, we obtain the embedding $\mathrm{NLBV}_\omega(\Omega) \hookrightarrow \mathrm{NLBV}(\Omega)$. Coupling this fact with Proposition \ref{NLBV=BVEquivNorm} proves \eqref{NLBVProb1ProofAux1}.

    We move onto proving the following implication of the last statement:
    \begin{equation} \label{NLBVProb1ProofAux2}
        \mbox{If there exists } K>0 \mbox{ such that } f_\omega^1 \le K \mbox{ in } \Omega, \mbox{ then } \mathrm{BV}(\Omega) \hookrightarrow \mathrm{NLBV}_\omega(\Omega),
    \end{equation}
    for any $\Omega$ of finite measure.

    Take $u \in \mathrm{BV}(\Omega)$. For any $\phi \in \mathcal{C}_c^1(\Omega\times\Omega;\mathbb{R}^N)$ such that $|\phi|\le \omega$ in $\Omega\times\Omega$, we have that

    \begin{equation}
        \varphi_\phi^1(x):= \frac{1}{K}\int_\Omega \phi(y,x) \mathrm{d}y, \quad \varphi_\phi^2(x):= -\frac{1}{K}\int_\Omega \phi(x,y) \mathrm{d}y 
    \end{equation}
    are functions belonging to $\mathcal{C}_c^1(\Omega;\mathbb{R}^N)$. Moreover, using the fact that $\omega$ is symmetric and by hypothesis

    \begin{equation}
        |\varphi_\phi^1(x)| \le \frac{1}{K} \int_\Omega |\phi(y,x)| \mathrm{d}y\le \frac{1}{K} \int_\Omega \omega(y,x) \mathrm{d}y \le \frac{1}{K} K = 1
    \end{equation}
    for all $x\in\Omega$, from which $\|\varphi_\phi^1\|_\infty \le 1$. Analogously one checks that $\|\varphi_\phi^2\|_\infty \le 1$. Using this, we have the following inequality

    \begin{align}
        \int_\Omega u(x)\; \mathrm{div}_1(\phi)(x)\mathrm{d}x &= \int_\Omega u(x)\; \mathrm{div}_x \left[\int_\Omega (\phi(y,x) - \phi(x,y)) \mathrm{d}y\right] \mathrm{d}x\\
        &= K \int_\Omega u(x)\; \mathrm{div}_x(\varphi_\phi^1)(x) \mathrm{d}x + K \int_\Omega u(x)\; \mathrm{div}_x(\varphi_\phi^2)(x) \mathrm{d}x\\
        &\le 2 K \; \mathrm{TV}(u).
    \end{align}

    Therefore, by taking supremum over all $\phi$ we obtain

    \begin{equation}
        \mathrm{NLTV}_\omega(u)\le 2 K \; \mathrm{TV}(u),
    \end{equation}
    from which we deduce \eqref{NLBVProb1ProofAux2}.

    Finally, we prove the remaining statement:
    \begin{equation} \label{NLBVProb1ProofAux3}
        \mbox{If } \mathrm{BV}(\Omega) \hookrightarrow \mathrm{NLBV}_\omega(\Omega), \mbox{ then there exists } K>0 \mbox{ such that } f_\omega^1 \le K \mbox{ in } \Omega,
    \end{equation}
    for any $\Omega$ of finite measure.
    
    By Proposition \ref{NLBV=BVEquivNorm}, the embedding $\mathrm{BV}(\Omega) \hookrightarrow \mathrm{NLBV}_\omega(\Omega)$ is equivalent to $\mathrm{NLBV}(\Omega) \hookrightarrow \mathrm{NLBV}_\omega(\Omega)$, which in turn implies that there exists $C>0$ such that

    \begin{equation}
        \mathrm{NLTV}_\omega(u) \le C \left( \| u\|_{L^1(\Omega)} + \mathrm{NLTV}(u) \right)
    \end{equation}
    for all $u \in \mathrm{NLBV}(\Omega)$. Fix $x_0 \in \Omega$ and let $\epsilon_0>0$ be such that $B_{\epsilon_0}(x_0)\subset \Omega$. Consider for $\epsilon \in (0,\epsilon_0)$ the $h_{1,\epsilon}^{x_0}$ function introduced in Lemma \ref{P1TechLemmaNecCond} (with $p=1$). By Lemma \ref{Ham_JacLemma}, $h_{1,\epsilon}^{x_0}$ has (standard) weak first order derivatives in all directions. This means that, by Remark \ref{1impliesomegaRemark} and Proposition \ref{WeakDerImplyNlWeakDer}, $h_{1,\epsilon}^{x_0}$ has weak nonlocal partial derivatives induced by $1$ and also $\omega$ in all directions, from which, by Proposition \ref{C1NLTV}

    \begin{equation}
        \mathrm{NLTV}_\omega(h_{1,\epsilon}^{x_0}) =\int_{\Omega\times\Omega} |\nabla_\omega h_{1,\epsilon}^{x_0}| \mathrm{d}x\mathrm{d}y, \quad \mathrm{NLTV}_1(h_{1,\epsilon}^{x_0})=\int_{\Omega\times\Omega} |\nabla_1 h_{1,\epsilon}^{x_0}| \mathrm{d}x\mathrm{d}y.
    \end{equation}

    With this, we may proceed as in the proof of Theorem \ref{Theorem_equality_Sobolev}, in particular as in the implication \eqref{W1pProb1ProofAux3} with $p=1$, to obtain \eqref{NLBVProb1ProofAux3}.
\end{proof}

\begin{remark} \label{TechRemarkFiniteMeasureNotNec}
    Notice that in the proof of Theorems \ref{Theorem_main_equality_NLBV} and \ref{Theorem_equality_Sobolev} we implicitly prove that for $\Omega$ (with maybe infinite measure) and admissible weight function $\omega$ that is bounded away from zero in $\Omega\times\Omega$, we have $W^{1,p}_\omega(\Omega) \hookrightarrow W^{1,p}_1(\Omega)$ for $p\in[1,+\infty]$ and $\mathrm{NLBV}_\omega(\Omega) \hookrightarrow \mathrm{NLBV}(\Omega)$. The finiteness of the measure of $\Omega$ is only needed to use Theorem \ref{Theorem_equality_constant_weight} and Proposition \ref{NLBV=BVEquivNorm}.
\end{remark}

\begin{remark} \label{NotNecRemark}
    Let us show that \eqref{Muckenhoupt} is not a necessary condition for the embeddings $W^{1,p}_\omega\hookrightarrow W^{1,p}$ and $\mathrm{NLBV}_\omega \hookrightarrow \mathrm{BV}$ to hold.
    
    For the Sobolev case, let $\Omega$ be of finite measure and $p\in[1,+\infty)$. Fix $\theta \in \mathcal{C}_0^1(\Omega)$ such that $\theta >0$ in $\Omega$ and consider
    \begin{equation}
        \omega_\theta(x,y):=\theta(x)+\theta(y) \quad \mbox{ for all } (x,y)\in \Omega\times\Omega,
    \end{equation}
    which one readily checks that is an admissible weight function. 
    
    Clearly $\omega_\theta$ does not satisfy \eqref{Muckenhoupt}, but $W^{1,p}_{\omega_\theta}(\Omega) \hookrightarrow W^{1,p}(\Omega)$ does hold. Indeed, given $u \in W^{1,p}_{\omega_\theta}(\Omega)$, one has, by Remarks \ref{omegaimplies1Remark} and \ref{WeakDerInTermsOfNlWeakDerExtension}, that $u$ has local first weak derivatives in any direction and

    \begin{align}
        \int_\Omega |\nabla u(x)|^p \mathrm{d}x &= \int_\Omega \left| -\frac{1}{\int_\Omega \theta(z) \mathrm{d}z} \int_\Omega \left[ u(y) \nabla \theta(y) + \nabla_1 u(x,y) \theta(y) \right] \mathrm{d}y\right|^p \mathrm{d}x\\
        &\le \frac{1}{\left(\int_\Omega\theta(z) \mathrm{d}z\right)^p} \int_\Omega \left( \int_\Omega \left| u(y) \nabla \theta(y) + \nabla_1 u(x,y) \theta(y) \right| \mathrm{d}y\right)^p \mathrm{d}x\\
        &\le \frac{|\Omega|^{p-1}}{\left(\int_\Omega\theta(z) \mathrm{d}z\right)^p} \int_\Omega \int_\Omega \left| u(y) \nabla \theta(y) + \nabla_1 u(x,y) \theta(y) \right|^p \mathrm{d}y \mathrm{d}x\\
        &\le \frac{(2|\Omega|)^{p-1}}{\left(\int_\Omega\theta(z) \mathrm{d}z\right)^p} \int_\Omega \int_\Omega |u(y)|^p |\nabla \theta(y)|^p + |\nabla_1 u(x,y)|^p \theta(y)^p \mathrm{d}y \mathrm{d}x\\
        &\le \frac{(2|\Omega|)^{p-1}}{\left(\int_\Omega\theta(z) \mathrm{d}z\right)^p} \left(\|\nabla \theta \|_{L^\infty(\Omega;\mathbb{R}^N)}^p\int_\Omega |u(y)|^p \mathrm{d}y + \int_\Omega \int_\Omega \theta(y)^p |\nabla_1 u(x,y)|^p \mathrm{d}y \mathrm{d}x\right)\\
        &\le \frac{(2|\Omega|)^{p-1}}{\left(\int_\Omega\theta(z) \mathrm{d}z\right)^p} \left(\|\nabla \theta \|_{L^\infty(\Omega;\mathbb{R}^N)}^p\int_\Omega |u(y)|^p \mathrm{d}y + \int_\Omega \int_\Omega \left(\theta(y)^p + \theta(x)^p\right) |\nabla_1 u(x,y)|^p \mathrm{d}y \mathrm{d}x\right)\\
        &\le \frac{(2|\Omega|)^{p-1}}{\left(\int_\Omega\theta(z) \mathrm{d}z\right)^p} \left(\|\nabla \theta \|_{L^\infty(\Omega;\mathbb{R}^N)}^p\int_\Omega |u(y)|^p \mathrm{d}y + \int_\Omega \int_\Omega \omega_\theta(x,y)^p |\nabla_1 u(x,y)|^p \mathrm{d}y \mathrm{d}x\right)\\
        &\le \frac{(2|\Omega|)^{p-1}}{\left(\int_\Omega\theta(z) \mathrm{d}z\right)^p} \left(\|\nabla \theta \|_{L^\infty(\Omega;\mathbb{R}^N)}^p\int_\Omega |u(y)|^p \mathrm{d}y + \int_\Omega \int_\Omega |\nabla_{\omega_\theta} u(x,y)|^p \mathrm{d}x \mathrm{d}y\right)\\
    \end{align}
    where in the second to last inequality we have used that $|a|^p+|b|^p \le (|a|+|b|)^p$ for $a,b \in \mathbb{R}$. The computation above proves the desired embedding.

    We move onto the $p=+\infty$ case. Given $\Omega$, fix $\theta \in \mathcal{C}_0^1(\Omega)$ such that $\theta >0$ in $\Omega$ and consider $\omega_\theta(x,y):=\theta(x)+\theta(y)$ for $(x,y)\in\Omega\times\Omega$. Given $u\in W_{\omega_\theta}^{1,\infty}(\Omega)$ one has that, as argued above, for almost every $x \in \Omega$

    \begin{align}
        |\nabla u(x)| &= \left| -\frac{1}{\int_\Omega \theta(z) \mathrm{d}z} \int_\Omega \left[u(y) \nabla \theta(y) + \nabla_1 u(x,y) \theta(y) \right]\mathrm{d}y \right|\\
        &\le \frac{1}{\int_\Omega \theta(z) \mathrm{d}z} \left( |\Omega| \|\nabla \theta\|_{L^\infty(\Omega;\mathbb{R}^N)} \|u\|_{L^\infty(\Omega)} + \int_\Omega |\nabla_1 u(x,y)| \theta(y) \mathrm{d}y \right)\\
        &\le \frac{1}{\int_\Omega \theta(z) \mathrm{d}z} \left( |\Omega| \|\nabla \theta\|_{L^\infty(\Omega;\mathbb{R}^N)} \|u\|_{L^\infty(\Omega)} + \int_\Omega |\nabla_1 u(x,y)| (\theta(x) + \theta(y)) \mathrm{d}y \right)\\
        &\le \frac{1}{\int_\Omega \theta(z) \mathrm{d}z} \left( |\Omega| \|\nabla \theta\|_{L^\infty(\Omega;\mathbb{R}^N)} \|u\|_{L^\infty(\Omega)} + \int_\Omega |\nabla_1 u(x,y)| \omega_\theta(x,y) \mathrm{d}y \right)\\
        &\le \frac{1}{\int_\Omega \theta(z) \mathrm{d}z} \left( |\Omega| \|\nabla \theta\|_{L^\infty(\Omega;\mathbb{R}^N)} \|u\|_{L^\infty(\Omega)} + \int_\Omega |\nabla_{\omega_\theta} u(x,y)| \mathrm{d}y \right)\\
        &\le \frac{|\Omega|}{\int_\Omega \theta(z) \mathrm{d}z} \left( \|\nabla \theta\|_{L^\infty(\Omega;\mathbb{R}^N)} \|u\|_{L^\infty(\Omega)} + \|\nabla_{\omega_\theta} u\|_{L^\infty(\Omega\times\Omega;\mathbb{R}^N)} \right),
    \end{align}
    and by taking essential supremum over $x\in\Omega$ the embedding $W^{1,\infty}_{\omega_\theta}(\Omega) \hookrightarrow W^{1,\infty}(\Omega)$ is proven.

    For the bounded variation case, again, let $\Omega$ be of finite measure, fix $\theta \in \mathcal{C}_0^1(\Omega)$ such that $\theta >0$ in $\Omega$ and consider $\omega_\theta(x,y):=\theta(x)+\theta(y)$ for $(x,y)\in\Omega\times\Omega$. For any $\varphi \in \mathcal{C}_c^1(\Omega;\mathbb{R}^N)$ such that $\|\varphi\|_{L^\infty(\Omega;\mathbb{R}^N)} \le 1$ define the $\mathcal{C}_0^1(\Omega\times\Omega;\mathbb{R}^N)$ function $\phi_{\theta,\varphi}(x,y):=\theta(x) \varphi(y)$. For any $u \in \mathrm{NLBV}_{\omega_\theta}(\Omega)$, by proceeding as in the previous computation \eqref{div1intermsofdiv}, we have that

    \begin{equation} \label{NotNecRemarkAux1}
        \int_\Omega u(x) \mathrm{div}(\varphi)(x)\mathrm{d}x = \frac{1}{\int_\Omega \theta(z)\mathrm{d}z} \left( \int_\Omega u(x) \mathrm{div}_1 (\phi_{\theta,\varphi})(x)\mathrm{d} x + \int_\Omega u(x) \left[ \nabla \theta (x) \cdot \int_\Omega \varphi(y) \mathrm{d}y \right] \mathrm{d}x \right).
    \end{equation}
    
    Now, for $(x,y)\in\Omega\times\Omega$ we have

    \begin{equation}
        |\phi_{\theta,\varphi}(x,y)| \le \|\varphi\|_{L^\infty(\Omega;\mathbb{R}^N)} \theta(x) \le \theta(x) \le\theta(x)+\theta(y) = \omega_\theta(x,y),
    \end{equation}
    and by continuing from \eqref{NotNecRemarkAux1} and using Lemma \ref{NLTV^omegaLemma} (coupled with Remark \ref{NLBVDefExtension}), we have

    \begin{align}
        \int_\Omega u(x) \mathrm{div}(\varphi)(x)\mathrm{d}x &\le \frac{1}{\int_\Omega \theta(z)\mathrm{d}z} \left( \mathrm{NLTV}_{\omega_\theta}(u) + \int_\Omega u(x) \left[ \nabla \theta (x) \cdot \int_\Omega \varphi(y) \mathrm{d}y \right] \mathrm{d}x \right)\\
        &\le \frac{1}{\int_\Omega \theta(z)\mathrm{d}z} \left( \mathrm{NLTV}_{\omega_\theta}(u) + |\Omega| \|\nabla \theta\|_{L^\infty(\Omega;\mathbb{R}^N)} \int_\Omega |u(x)| \mathrm{d}x \right).
    \end{align}

    By taking supremum over all $\varphi$, the embedding $\mathrm{NLBV}_{\omega_\theta}(\Omega) \hookrightarrow\mathrm{BV}(\Omega)$ is proven.
\end{remark}

For completeness sake, we also prove that the condition of $f_\omega^p$ being bounded away from zero is a necessary condition for the $W^{1,p}_\omega \hookrightarrow W^{1,p}$ and $\mathrm{NLBV}_\omega \hookrightarrow \mathrm{BV}$ embeddings to hold.

\begin{prop} \label{NecCondCompletenessSob}
    Let $p\in[1,+\infty]$, $\Omega$ be of finite measure if $p\in [1,+\infty)$ or general if $p=+\infty$, and $\omega$ be an admissible weight function in $\Omega$. If  $W^{1,p}_\omega(\Omega) \hookrightarrow W^{1,p}(\Omega)$, then there exists $k>0$ such that for all $x\in\Omega$
    \begin{equation}
        k \le f_\omega^p(x).
    \end{equation}
\end{prop}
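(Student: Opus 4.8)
The plan is to turn the embedding into a single inequality between the nonlocal $L^p$ quantities and then feed it the explicit test functions already built in Subsection~\ref{GeneralWeightsSubsection}. First note that it suffices to work at points $x_0\in\Omega$ with $f_\omega^p(x_0)<+\infty$, since at points where $f_\omega^p(x_0)=+\infty$ the desired inequality $k\le f_\omega^p(x_0)$ holds for any $k$. For $p\in[1,+\infty)$, Theorem~\ref{Theorem_equality_constant_weight} converts $W^{1,p}_\omega(\Omega)\hookrightarrow W^{1,p}(\Omega)$ into $W^{1,p}_\omega(\Omega)\hookrightarrow W^{1,p}_1(\Omega)$, so there is a constant $C>0$ with
\[
\|\nabla_1 u\|_{L^p(\Omega\times\Omega;\mathbb{R}^N)}\le C\left(\|u\|_{L^p(\Omega)}+\|\nabla_\omega u\|_{L^p(\Omega\times\Omega;\mathbb{R}^N)}\right)
\]
for every $u\in W^{1,p}_\omega(\Omega)$.

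Next I would test this inequality with the family $h_{p,\epsilon}^{x_0}$ of Lemma~\ref{P1TechLemmaNecCond}. For $\epsilon$ small these functions lie in $W^{1,p}_\omega(\Omega)$, because $\int_{\Omega\times\Omega}|\nabla_\omega h_{p,\epsilon}^{x_0}|^p\to 2f_\omega^p(x_0)<+\infty$ by \eqref{P1SuffCondAux1} and $h_{p,\epsilon}^{x_0}\in L^p(\Omega)$ by Lemma~\ref{Ham_JacLemma}. Three limits are already at hand: $\int_{\Omega\times\Omega}|\nabla_1 h_{p,\epsilon}^{x_0}|^p\to 2|\Omega|$ by \eqref{P1SuffCondAux2}, $\int_{\Omega\times\Omega}|\nabla_\omega h_{p,\epsilon}^{x_0}|^p\to 2f_\omega^p(x_0)$ by \eqref{P1SuffCondAux1}, and $\int_\Omega|h_{p,\epsilon}^{x_0}|^p\to 0$ by the computation \eqref{h_epsilon_lp_to_zero} in the proof of Theorem~\ref{Theorem_equality_Sobolev}. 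Letting $\epsilon\to0$ in the displayed inequality, in which the $L^p$ term of $h_{p,\epsilon}^{x_0}$ is precisely the contribution that vanishes, yields $(2|\Omega|)^{1/p}\le C\,(2f_\omega^p(x_0))^{1/p}$, hence $f_\omega^p(x_0)\ge |\Omega|/C^p$. Thus $k:=|\Omega|/C^p>0$ works for every $x_0$. In effect this is Proposition~\ref{NecCondProp}(i) upgraded from a pure seminorm inequality to the genuine norm embedding by absorbing the vanishing $L^p$ term, and the whole $p\in[1,+\infty)$ case is a direct adaptation of the implication \eqref{W1pProb1ProofAux3}.

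For $p=+\infty$ I would run the same scheme through the functions $\rho_\epsilon^{x_0}$ of \eqref{RhoDef}, which satisfy $\|\nabla\rho_\epsilon^{x_0}\|_{L^\infty(\Omega;\mathbb{R}^N)}=1$ with $|\nabla\rho_\epsilon^{x_0}(x)|\to1$ as $x\to x_0$, and $\|\rho_\epsilon^{x_0}\|_{L^\infty(\Omega)}\to0$. Writing the embedding $W^{1,\infty}_\omega(\Omega)\hookrightarrow W^{1,\infty}_1(\Omega)$ as $\|\nabla_1 u\|_{L^\infty}\le C(\|u\|_{L^\infty}+\|\omega\,\nabla_1 u\|_{L^\infty})$ and testing with $u=\rho_\epsilon^{x_0}$ gives $1\le C\big(o(1)+\|\omega\,\nabla_1\rho_\epsilon^{x_0}\|_{L^\infty}\big)$, so everything reduces to bounding $\|\omega\,\nabla_1\rho_\epsilon^{x_0}\|_{L^\infty}$ from above by a multiple of $f_\omega^\infty(x_0)$ up to an $o(1)$ error. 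Here $\nabla_1\rho_\epsilon^{x_0}$ is supported on $\{x\in B_\epsilon(x_0)\}\cup\{y\in B_\epsilon(x_0)\}$ and equals $-\nabla\rho_\epsilon^{x_0}(x)$ on $B_\epsilon(x_0)\times(\Omega\setminus B_\epsilon(x_0))$, and the idea is to pass $x\to x_0$ in the resulting pointwise estimate $\omega(x,y)\,|\nabla\rho_\epsilon^{x_0}(x)|\le C(\|\rho_\epsilon^{x_0}\|_{L^\infty}+\|\omega\,\nabla_1\rho_\epsilon^{x_0}\|_{L^\infty})$, exploiting the continuity of $\omega$ to recover $\omega(x_0,y)$ for each fixed $y$ and then taking the supremum over $y$ to produce $f_\omega^\infty(x_0)$.

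I expect the last step of the $p=+\infty$ case to be the main obstacle. The difficulty is that the quantity $\|\omega\,\nabla_1\rho_\epsilon^{x_0}\|_{L^\infty}$ is global and, through the values of $\omega(x,y)$ with $x\in B_\epsilon(x_0)\setminus\{x_0\}$, a priori sees $f_\omega^\infty$ at points near $x_0$ rather than at $x_0$ itself; since $f_\omega^\infty(x)=\sup_{y}\omega(x,y)$ is only lower semicontinuous (being a supremum of continuous functions), the naive passage $\epsilon\to0$ controls $\limsup_{x\to x_0}f_\omega^\infty(x)$ but not $f_\omega^\infty(x_0)$ directly, and $\omega$ may even blow up toward $\partial\Omega$ (in which case $\rho_\epsilon^{x_0}\notin W^{1,\infty}_\omega(\Omega)$ and $f_\omega^\infty$ is already infinite in every neighbourhood of $x_0$, so the inequality is trivial). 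Making the comparison with $f_\omega^\infty(x_0)$ rigorous at every point, by using the concentration of $|\nabla\rho_\epsilon^{x_0}|$ at $x_0$ together with the pointwise limit $x\to x_0$ rather than the crude support bound, is where the care is needed; the finite-$p$ argument, by contrast, is routine once the three limits above are invoked.
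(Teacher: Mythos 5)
For $p\in[1,+\infty)$ your argument is correct and is essentially the paper's own proof: Theorem \ref{Theorem_equality_constant_weight} turns the hypothesis into $W^{1,p}_\omega(\Omega)\hookrightarrow W^{1,p}_1(\Omega)$, and testing the resulting norm inequality with $h_{p,\epsilon}^{x_0}$, using the limits \eqref{P1SuffCondAux1}, \eqref{P1SuffCondAux2} and \eqref{h_epsilon_lp_to_zero}, gives $2|\Omega|\le 2c\,f_\omega^p(x_0)$ exactly as in the paper (including the reduction to points where $f_\omega^p(x_0)<+\infty$).

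The $p=+\infty$ case, however, is left with a genuine gap, and you say so yourself. After testing with $\rho_\epsilon^{x_0}$ you reach $1\le c'\bigl(o(1)+\|\omega\,\nabla_1\rho_\epsilon^{x_0}\|_{L^\infty(\Omega\times\Omega;\mathbb{R}^N)}\bigr)$, so what is required is an \emph{upper} bound of $\|\omega\,\nabla_1\rho_\epsilon^{x_0}\|_{L^\infty(\Omega\times\Omega;\mathbb{R}^N)}$ by a multiple of $f_\omega^\infty(x_0)$ plus $o(1)$; you never produce one, and the idea you sketch cannot produce it. Passing $x\to x_0$ for fixed $y$ in the pointwise bound $\omega(x,y)|\nabla\rho_\epsilon^{x_0}(x)|\le c'\bigl(\|\rho_\epsilon^{x_0}\|_{L^\infty(\Omega)}+\|\omega\,\nabla_1\rho_\epsilon^{x_0}\|_{L^\infty(\Omega\times\Omega;\mathbb{R}^N)}\bigr)$ and taking the supremum over $y$ yields $f_\omega^\infty(x_0)\le c'\bigl(o(1)+\|\omega\,\nabla_1\rho_\epsilon^{x_0}\|_{L^\infty(\Omega\times\Omega;\mathbb{R}^N)}\bigr)$, i.e.\ an upper bound on $f_\omega^\infty(x_0)$ by the very quantity that the embedding already bounds from below; two lower bounds for the same supremum combine to nothing. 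The pointwise-limit trick succeeds in the proof of \eqref{W1pProb1ProofAux3} precisely because there the nonlocal seminorm sits on the small side of the inequality, whereas in the present proposition it sits on the large side.

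For comparison, the paper closes this step by estimating $\|\nabla_\omega\rho_\epsilon^{x_0}\|_{L^\infty(\Omega\times\Omega;\mathbb{R}^N)}\le 2\esssup_{(x,y)\in B_\epsilon(x_0)\times\Omega}\omega(x,y)$, using $|\nabla\rho_\epsilon^{x_0}|\le 1$ and the fact that $\nabla\rho_\epsilon^{x_0}$ is supported in $B_\epsilon(x_0)$, and then letting $\epsilon\to 0$ and asserting, by continuity of $\omega$, that $\esssup_{(x,y)\in B_\epsilon(x_0)\times\Omega}\omega(x,y)\to \esssup_{y\in\Omega}\omega(x_0,y)=f_\omega^\infty(x_0)$, which gives $k=1/(2c')$. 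The obstruction you flag is the real crux of that step: the supremum over $B_\epsilon(x_0)\times\Omega$ sees $f_\omega^\infty$ at points \emph{near} $x_0$, and $f_\omega^\infty$ is only lower semicontinuous, so its convergence to $f_\omega^\infty(x_0)$ rather than to something strictly larger is exactly what needs justification (and is where $\omega$ blowing up at $\partial\Omega$ causes trouble); the paper dispatches this in one line. Still, the paper commits to a concrete mechanism there, whereas your proposal stops short of any: as submitted it proves the proposition only for $p\in[1,+\infty)$.
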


\begin{proof}
    We start with the $p\in [1,+\infty)$ case. By Theorem \ref{Theorem_equality_constant_weight}, the embedding $W^{1,p}_\omega(\Omega) \hookrightarrow W^{1,p}(\Omega)$ is equivalent to $W^{1,p}_\omega(\Omega) \hookrightarrow W^{1,p}_1(\Omega)$, which implies that there exists $c>0$ such that

    \begin{equation} \label{P1SuffCondAux8}
        \int_{\Omega\times\Omega} |\nabla_1 u|^p \mathrm{d}x\mathrm{d}y \le c\left( \int_\Omega |u|^p \mathrm{d}x + \int_{\Omega\times\Omega} |\nabla_\omega u|^p \mathrm{d}x\mathrm{d}y \right)
    \end{equation}
    for all $u\in W^{1,p}_\omega(\Omega)$. Fix $x_0 \in \Omega$ and let $\epsilon_0>0$ be such that $B_{\epsilon_0}(x_0)\subset \Omega$. We suppose that $f_\omega(x_0) <+\infty$, as any $k>0$ will work otherwise. Consider for $\epsilon \in (0,\epsilon_0)$ the $h_{p,\epsilon}^{x_0}$ function introduced in Lemma \ref{P1TechLemmaNecCond}. Recall that its $L^p(\Omega)$ norm tends to zero as $\epsilon \to 0$ $-$\eqref{h_epsilon_lp_to_zero}$-$. With this, substituting $u$ by $h_{p,\epsilon}^{x_0}$ in \eqref{P1SuffCondAux8} and letting $\epsilon$ go to zero yields by Lemma \ref{P1TechLemmaNecCond} that

     \begin{equation}
        2|\Omega| \le 2c \int_\Omega \omega(x_0,y)^p \mathrm{d}y = 2c f_\omega^p(x_0),
    \end{equation}
    from which, by choosing $k=|\Omega|/c$ the first statement is proven for $p\in [1,+\infty)$.

    We now prove the $p=+\infty$ case. Note that $W^{1,\infty}_\omega(\Omega) \hookrightarrow W^{1,\infty}(\Omega)$ means that there exists $c'>0$ such that

    \begin{equation} \label{P1SuffCondAux10}
        \|\nabla u \|_{L^\infty(\Omega;\mathbb{R}^N)} \le c'\left( \|u\|_{L^\infty(\Omega)} + \|\nabla_\omega u \|_{L^\infty(\Omega\times\Omega;\mathbb{R}^N)}\right)
    \end{equation}
    for all $u \in W^{1,\infty}_\omega(\Omega)$.  Fix $x_0 \in \Omega$ and let $\epsilon_0>0$ be such that $B_{\epsilon_0}(x_0)\subset \Omega$. We suppose that $f_\omega^\infty(x_0) <+\infty$, as any $k>0$ will work otherwise. For $\epsilon\in(0,\epsilon_0)$ consider the function $\rho_\epsilon^{x_0}$ as defined in \eqref{RhoDef}. By substituting $u$ by  $\rho_\epsilon^{x_0}$ in \eqref{P1SuffCondAux10} and using \eqref{Problem1InftyAux7} and \eqref{Problem1InftyAux8} we obtain that

    \begin{align}
        1 &\le c' \left( \frac{F(1)}{J(0)} \epsilon + \esssup_{(x,y)\in\Omega\times\Omega} \omega(x,y) |\nabla\rho_\epsilon^{x_0}(y) - \nabla \rho_\epsilon^{x_0}(x)|  \right)\\
        &\le c' \left( \frac{F(1)}{J(0)} \epsilon + 2 \esssup_{(x,y)\in\Omega\times\Omega} \omega(x,y) |\nabla \rho_\epsilon^{x_0}(x)|  \right)\\
        &= c' \left( \frac{F(1)}{J(0)} \epsilon + 2 \esssup_{(x,y)\in B_\epsilon(x_0)\times\Omega} \omega(x,y) |\nabla \rho_\epsilon^{x_0}(x)|  \right)\\
        &\le c' \left( \frac{F(1)}{J(0)} \epsilon + 2 \esssup_{(x,y)\in B_\epsilon(x_0)\times\Omega} \omega(x,y) \right),
    \end{align}
    and by taking the limit $\epsilon \to 0$ (and using that $\omega$ is continuous) we obtain that

    \begin{equation}
        1 \le 2 c' \esssup_{y\in \Omega} \omega(x_0,y) = 2 c' f_\omega^\infty(x_0),
    \end{equation}
    from which, by choosing $k=1/(2 c')$ the statement is proven for $p=+\infty$.
\end{proof}

\begin{prop} \label{NecCondCompletenessBV}
    Let $\Omega$ be of finite measure and $\omega$ be an admissible weight function in $\Omega$. If $\mathrm{NLBV}_\omega(\Omega) \hookrightarrow \mathrm{BV}(\Omega)$, then then there exists $k'>0$ such that for all $x\in\Omega$
    \begin{equation}
        k' \le f_\omega^1(x).
    \end{equation}
\end{prop}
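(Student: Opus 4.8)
The plan is to mirror the $p=1$ case of Proposition \ref{NecCondCompletenessSob}, transferring the argument from the Sobolev setting to the bounded variation one by means of Proposition \ref{C1NLTV}. First I would invoke Proposition \ref{NLBV=BVEquivNorm}: since $\Omega$ has finite measure, $\mathrm{BV}(\Omega)=\mathrm{NLBV}(\Omega)$ with equivalent norms, so the hypothesis $\mathrm{NLBV}_\omega(\Omega) \hookrightarrow \mathrm{BV}(\Omega)$ is equivalent to $\mathrm{NLBV}_\omega(\Omega) \hookrightarrow \mathrm{NLBV}(\Omega)$. This yields a constant $c>0$ with
\begin{equation}
\mathrm{NLTV}(u) \le c\left( \|u\|_{L^1(\Omega)} + \mathrm{NLTV}_\omega(u) \right) \quad \text{for all } u \in \mathrm{NLBV}_\omega(\Omega).
\end{equation}

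Next I would fix $x_0 \in \Omega$, noting that I may assume $f_\omega^1(x_0) < +\infty$, since otherwise any candidate $k'$ works at $x_0$. Choosing $\epsilon_0>0$ with $B_{\epsilon_0}(x_0) \subset \Omega$, I would substitute the test functions $h_{1,\epsilon}^{x_0}$ from Lemma \ref{P1TechLemmaNecCond} (with $p=1$) into the displayed inequality. The key step is that, by Lemma \ref{Ham_JacLemma} together with Remark \ref{1impliesomegaRemark} and Proposition \ref{WeakDerImplyNlWeakDer}, each $h_{1,\epsilon}^{x_0}$ possesses weak nonlocal derivatives induced by both $1$ and $\omega$; Proposition \ref{C1NLTV} then gives
\begin{equation}
\mathrm{NLTV}(h_{1,\epsilon}^{x_0}) = \int_{\Omega\times\Omega} |\nabla_1 h_{1,\epsilon}^{x_0}| \mathrm{d}x\mathrm{d}y, \qquad \mathrm{NLTV}_\omega(h_{1,\epsilon}^{x_0}) = \int_{\Omega\times\Omega} |\nabla_\omega h_{1,\epsilon}^{x_0}| \mathrm{d}x\mathrm{d}y,
\end{equation}
which makes the limits precomputed in Lemma \ref{P1TechLemmaNecCond} directly applicable and, in particular, ensures $h_{1,\epsilon}^{x_0} \in \mathrm{NLBV}_\omega(\Omega)$ for small $\epsilon$, since the $\omega$-integral converges to the finite value $2f_\omega^1(x_0)$.

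Finally, letting $\epsilon \to 0$ and using the three limits $\int_{\Omega\times\Omega} |\nabla_1 h_{1,\epsilon}^{x_0}| \to 2|\Omega|$ (from \eqref{P1SuffCondAux2}), $\|h_{1,\epsilon}^{x_0}\|_{L^1(\Omega)} \to 0$ (from \eqref{h_epsilon_lp_to_zero} with $p=1$), and $\int_{\Omega\times\Omega} |\nabla_\omega h_{1,\epsilon}^{x_0}| \to 2f_\omega^1(x_0)$ (from \eqref{P1SuffCondAux1}), the inequality becomes $2|\Omega| \le 2c\, f_\omega^1(x_0)$, so that $k' := |\Omega|/c$ works; since $x_0$ was arbitrary, the bound holds throughout $\Omega$. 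I do not anticipate a genuine obstacle here, as the whole difficulty was already absorbed into Lemma \ref{P1TechLemmaNecCond} and the constant-weight equality of Proposition \ref{NLBV=BVEquivNorm}. The single point requiring care is the use of Proposition \ref{C1NLTV} to replace the two nonlocal total variations by the corresponding nonlocal-gradient integrals, after which the proof is essentially a verbatim transcription of the $p=1$ Sobolev case (cf. the implication \eqref{W1pProb1ProofAux3} with $p=1$).
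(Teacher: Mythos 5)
Your proposal is correct and is essentially the paper's own proof: the paper disposes of Proposition \ref{NecCondCompletenessBV} by saying it follows from reproducing the proof of Proposition \ref{NecCondCompletenessSob} with $p=1$ together with Proposition \ref{C1NLTV}, and your argument is precisely that reproduction spelled out (with Proposition \ref{NLBV=BVEquivNorm} playing the role that Theorem \ref{Theorem_equality_constant_weight} plays in the Sobolev case, and Proposition \ref{C1NLTV} converting both nonlocal total variations into the nonlocal-gradient integrals so that Lemma \ref{P1TechLemmaNecCond} and \eqref{h_epsilon_lp_to_zero} apply). All the details you fill in, including the membership $h_{1,\epsilon}^{x_0}\in\mathrm{NLBV}_\omega(\Omega)$ for small $\epsilon$ and the final constant $k'=|\Omega|/c$, are accurate.
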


\begin{proof}
    The result follows from reproducing the proof of Proposition \ref{NecCondCompletenessSob} with $p=1$ and using Proposition \ref{C1NLTV}.
\end{proof}

\section{Relation with the space of test functions} \label{RelationTestFunctionsSubsection}

In this section, we give the proof of Theorem \ref{Theorem_TestFunctions}. Recall that it gives necessary and sufficient conditions on a weight function so that the intersection of the space of smooth functions with compact support and the nonlocal weighted Sobolev/$\mathrm{BV}$ spaces is non zero, and so that the first space mentioned is included in one of the others.

To understand why smooth functions with compact support and a $\mathcal{C}^1$ weight function can give rise to infinite integrals/supremums, we give the remark below.

\begin{remark} \label{InteractionsRemark}
    Let $p\in[1,+\infty)$ and $\omega$ an admissible weight function in $\Omega$. Then, given $u\in L^p_{\mathrm{loc}}(\Omega)$ with weak (standard) first order derivatives in all directions, by Remark \ref{1impliesomegaRemark} and Proposition \ref{WeakDerImplyNlWeakDer}, one has

    \begin{align}
        \int_{\Omega} \int_{\Omega} |\nabla_\omega u|^p \mathrm{d}x\mathrm{d}y &= \int_{\esssupp(u)} \int_{\esssupp(u)} \omega(x,y)^p |\nabla u(y)-\nabla u(x)|^p \mathrm{d}x\mathrm{d}y\\
        &\qquad + 2\int_{\esssupp(u)} |\nabla u(x)|^p \int_{\Omega \backslash \esssupp(u)} \omega(x,y)^p \mathrm{d}y \mathrm{d}x.\label{InteractionsEq}
    \end{align}

    Moreover, given $u\in L^\infty_{\mathrm{loc}}(\Omega)$ with weak (local) first order derivatives in all directions, one has

    \begin{align}
        \esssup_{\Omega\times\Omega} |\nabla_\omega u| &= \max\left\{ \esssup_{(x,y)\in \esssupp(u) \times \esssupp(u)} \omega(x,y) |\nabla u(y) - \nabla u(x)| \right.,\nonumber\\
        &\qquad \qquad \quad \left. \esssup_{(x,y) \in \esssupp(u) \times \left(\Omega \setminus \esssupp(u)\right)} \omega(x,y) |\nabla u(x)| \right\}. \label{InteractionsEqInfty}
    \end{align}
\end{remark}

For $p\in[1,+\infty)$, the nonlocal Sobolev seminorm of $u$ splits into the sum of two integrals with $|\nabla_\omega u|$ as the integrand (though in the second integral the integrand takes on a simplified form) but with different domains of integration, one dealing with the interactions in $\esssupp(u) \times \esssupp(u)$ and the other with the interactions in $\esssupp(u)\times \left(\Omega \backslash \esssupp(u) \right)$. For $p=+\infty$, the nonlocal Sobolev seminorm of $u$ is the maximum of the essential supremum of $|\nabla_\omega u|$ in $\esssupp(u) \times \esssupp(u)$ and in $\esssupp(u)\times \left(\Omega \backslash \esssupp(u) \right)$ (again, in the second supremum the function takes on a simplified form). Thus, the seminorms are finite and therefore $u$ belongs to the corresponding nonlocal Sobolev spaces only if both integrals/supremums are finite. A fact which will be used repeatedly throughout this subsection is that if $u$ is smooth and of compact support, the integral/supremum in $\esssupp(u) \times \esssupp(u)$ is always finite, as then, in this region, $\nabla_1 u$ is integrable and bounded and $\omega$ is bounded. For such a $u$, the finiteness of its nonlocal Sobolev seminorm thus depends directly on the finiteness of the second integral/supremum of \eqref{InteractionsEq} and \eqref{InteractionsEqInfty}, respectively, see Lemma \ref{TestFuncLemma} below. Note that this integral/supremum is not always finite, as $\omega$ may not be integrable/bounded in one of its components in the whole domain.

\begin{lemma} \label{TestFuncLemma}
    Let $\omega$ be an admissible weight function and $\varphi \in \mathcal{C}_c^\infty(\Omega)$. For any $p\in[1,+\infty)$ we have

    \begin{equation} \label{TestFuncAux4}
        \varphi \in W^{1,p}_\omega(\Omega) \iff \int_{\mathrm{supp}(\varphi)} |\nabla \varphi(x)|^p \int_{\Omega \backslash \mathrm{supp}(\varphi)} \omega(x,y)^p \mathrm{d}y \mathrm{d}x <+\infty.
    \end{equation}

    Moreover, we have

    \begin{equation}
        \varphi \in W^{1,\infty}_\omega(\Omega) \iff \sup_{(x,y)\in\mathrm{supp}(\varphi)\times\left(\Omega\setminus\mathrm{supp}(\varphi)\right)} |\nabla\varphi(x)|\omega(x,y) < +\infty.
    \end{equation}
\end{lemma}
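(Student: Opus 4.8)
The proof will be a direct application of Remark~\ref{InteractionsRemark}, whose splitting formulas reduce the membership question to controlling the two ``interaction'' terms separately. The plan is to observe that for a test function $\varphi\in\mathcal{C}^\infty_c(\Omega)$, the first interaction term (the one over $\mathrm{supp}(\varphi)\times\mathrm{supp}(\varphi)$) is always finite, so that finiteness of the seminorm is equivalent to finiteness of the second (mixed) term. Let me set $S:=\mathrm{supp}(\varphi)$, which is compact in $\Omega$.

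\emph{The finiteness of the diagonal term.} Since $\varphi$ is smooth with compact support, $\nabla\varphi$ is continuous and bounded on $\Omega$, hence in particular $\|\nabla\varphi\|_{L^\infty(\Omega;\mathbb{R}^N)}<+\infty$. Moreover, $S\times S$ is a compact subset of the open set $\Omega\times\Omega$ on which $\omega$ is $\mathcal{C}^1$, so $\omega$ is bounded on $S\times S$, say by $M:=\|\omega\|_{L^\infty(S\times S)}<+\infty$. For the $p\in[1,+\infty)$ case, the first integral in \eqref{InteractionsEq} is then bounded by
\begin{equation}
\int_S\int_S \omega(x,y)^p\,|\nabla\varphi(y)-\nabla\varphi(x)|^p\,\mathrm{d}x\mathrm{d}y \le M^p\,(2\|\nabla\varphi\|_{L^\infty(\Omega;\mathbb{R}^N)})^p\,|S|^2 <+\infty,
\end{equation}
using $|S|<+\infty$ (compactness). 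For $p=+\infty$, the first essential supremum in \eqref{InteractionsEqInfty} is likewise bounded by $2M\|\nabla\varphi\|_{L^\infty(\Omega;\mathbb{R}^N)}<+\infty$. In both cases the diagonal contribution is finite regardless of $\omega$'s behavior near $\partial(\Omega\times\Omega)$.

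\emph{Conclusion from the splitting.} First I note that $\varphi\in L^p(\Omega)$ automatically for every $p\in[1,+\infty]$ (it is bounded with compact support of finite measure), and that by Proposition~\ref{WeakDerImplyNlWeakDer} it has weak nonlocal derivatives induced by $\omega$ in all directions, so that $\varphi\in W^{1,p}_\omega(\Omega)$ is equivalent to $\nabla_\omega\varphi\in L^p(\Omega\times\Omega;\mathbb{R}^N)$, i.e.\ to finiteness of the left-hand side of \eqref{InteractionsEq} (or \eqref{InteractionsEqInfty} for $p=+\infty$). Since for a continuous function $\esssupp(\varphi)=\mathrm{supp}(\varphi)=S$ up to the obvious identification, the splitting formula of Remark~\ref{InteractionsRemark} expresses this seminorm, for $p\in[1,+\infty)$, as the sum of the (finite) diagonal term and the mixed term
\begin{equation}
2\int_{S} |\nabla\varphi(x)|^p \int_{\Omega\setminus S} \omega(x,y)^p\,\mathrm{d}y\,\mathrm{d}x,
\end{equation}
so that the sum is finite if and only if this mixed term is. This is exactly \eqref{TestFuncAux4}. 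For $p=+\infty$, the seminorm is the \emph{maximum} of the two suprema in \eqref{InteractionsEqInfty}; since the first is finite, the maximum is finite if and only if the second supremum $\sup_{(x,y)\in S\times(\Omega\setminus S)}\omega(x,y)|\nabla\varphi(x)|$ is finite, giving the stated equivalence.

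\emph{Expected main obstacle.} The argument is essentially bookkeeping once Remark~\ref{InteractionsRemark} is in hand, so there is no deep difficulty; the only points requiring a little care are the verification that $\varphi\in L^p(\Omega)$ and that the diagonal term is finite (which hinges on compactness of $S\times S$ inside the open domain where $\omega$ is continuous, ensuring $\omega$ is bounded there), together with the harmless identification of $\mathrm{supp}(\varphi)$ with $\esssupp(\varphi)$ for the continuous $\varphi$. The distinction between the additive splitting for $p<+\infty$ and the maximum for $p=+\infty$ is the one place to state separately, but in both regimes the conclusion follows immediately from the finiteness of the first term.
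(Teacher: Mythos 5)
Your proof is correct and takes essentially the same approach as the paper's: both split the nonlocal seminorm as in Remark \ref{InteractionsRemark} into the diagonal term over $\mathrm{supp}(\varphi)\times\mathrm{supp}(\varphi)$ (always finite, since $\omega$ is continuous hence bounded on that compact set and $\nabla\varphi$ is bounded) and the mixed term, so that membership reduces to finiteness of the latter. The extra details you supply — checking $\varphi\in L^p(\Omega)$, invoking Proposition \ref{WeakDerImplyNlWeakDer} for the existence of $\nabla_\omega\varphi$, and identifying $\esssupp(\varphi)$ with $\mathrm{supp}(\varphi)$ — merely make explicit what the paper leaves implicit.
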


\begin{proof}
    For $p\in[1,+\infty)$, the claim follows from the computation

    \begin{align}
        \int_{\Omega} \int_{\Omega} |\nabla_\omega \varphi|^p \mathrm{d}x\mathrm{d}y &= \int_{\mathrm{supp}(\varphi)} \int_{\mathrm{supp}(\varphi)} \omega(x,y)^p |\nabla \varphi(y)-\nabla \varphi(x)|^p \mathrm{d}x\mathrm{d}y\\
        &\qquad + 2\int_{\mathrm{supp}(\varphi)} |\nabla \varphi(x)|^p \int_{\Omega \backslash \mathrm{supp}(\varphi)} \omega(x,y)^p \mathrm{d}y \mathrm{d}x.\label{Q2Answer1}
    \end{align}
    and the fact that the first integral of the right side of the equality is always finite, as $\omega^p$ is integrable in the compact set $\mathrm{supp}(\varphi)\times \mathrm{supp}(\varphi)$ and $|\nabla \varphi(y) - \nabla \varphi(x)|$ is bounded for $(x,y)\in\mathrm{supp}(\varphi)\times \mathrm{supp}(\varphi)$.

    Similarly, for the $p=+\infty$ case, the claim follows from the computation

    \begin{align}
        \sup_{\Omega\times\Omega} |\nabla_\omega \varphi| &= \max\left\{ \sup_{(x,y)\in\mathrm{supp}(\varphi)\times\mathrm{supp}(\varphi)} \omega(x,y) |\nabla \varphi(y)-\nabla \varphi(x)| \right.,\\
        &\qquad \qquad \qquad \qquad \left. \sup_{(x,y)\in\mathrm{supp}(\varphi)\times\left(\Omega\setminus\mathrm{supp}(\varphi)\right)} \omega(x,y)|\nabla\varphi(x)| \right\},
    \end{align}
    and the fact that the first supremum of the right side of the equality is always finite, as $\omega$ is bounded in the compact set $\mathrm{supp}(\varphi)\times \mathrm{supp}(\varphi)$ and, again, $|\nabla \varphi(y) - \nabla \varphi(x)|$ is bounded for $(x,y)\in\mathrm{supp}(\varphi)\times \mathrm{supp}(\varphi)$. 
\end{proof}

With the lemma above and the observations made at the start of the subsection, the proof of Theorem \ref{Theorem_TestFunctions} follows in a straightforward manner. 

\begin{proof}[Proof of Theorem \ref{Theorem_TestFunctions}]
    We start by proving that

    \begin{equation} \label{TestFuncAux1}
        W^{1,p}_\omega(\Omega)\cap \mathcal{C}_c^\infty(\Omega) \neq \{0\} \iff
        \left\{\begin{array}{c}
            \mbox{there exists } K \subset \Omega \mbox{ compact, with nonempty interior}  \\
            \mbox{and such that } f_\omega^p|_K \in L^1(K)  \\
     \end{array}
     \right\}
    \end{equation}
    for any $p\in[1,+\infty)$ and admissible weight function $\omega$. Suppose that there exists $\varphi \in W^{1,p}_\omega(\Omega) \cap \mathcal{C}_c^\infty(\Omega)$ such that $\varphi \not \equiv 0$, and choose ${\delta_\varphi}>0$ and $K_\varphi \subset \mathrm{supp}(\varphi)$ compact and with nonempty interior such that $\inf_{K_\varphi} |\nabla \varphi| \ge {\delta_\varphi}$. We then have

    \begin{align}
        \int_{K_\varphi} \int_{\Omega \setminus \mathrm{supp}(\varphi)} \omega(x,y)^p \mathrm{d}y \mathrm{d}x &\le \frac{1}{{\delta_\varphi}^p} \int_{K_\varphi} |\nabla \varphi(x)|^p \int_{\Omega \setminus \mathrm{supp}(\varphi)} \omega(x,y)^p \mathrm{d}y \mathrm{d}x\\
        &\le \frac{1}{{\delta_\varphi}^p} \int_{\mathrm{supp}(\varphi)} |\nabla \varphi(x)|^p \int_{\Omega \setminus \mathrm{supp}(\varphi)} \omega(x,y)^p \mathrm{d}y \mathrm{d}x <+\infty.
    \end{align}

    Hence left to right implication of \eqref{TestFuncAux1} follows form the fact that $\omega^p$ is integrable in ${K_\varphi}\times\mathrm{supp}(\varphi)$, as this is a compact set and $\omega$ is continuous.
    
    For the right to left implication, let $K\subset \Omega$ be a compact set with nonempty interior such that $f_\omega^p|_K \in L^1(K)$. Then choose $\varphi_K \in \mathcal{C}_c^\infty(\Omega)$ such that $\varphi_K \not \equiv 0$ and $\mathrm{supp}(\varphi_K) \subset K$. We have
    
    \begin{align}
        \int_{\mathrm{supp}(\varphi_K)} |\nabla \varphi_K(x)|^p \int_{\Omega\setminus \mathrm{supp}(\varphi_K)} \omega(x,y)^p \mathrm{d}y\mathrm{d}x &\le \|\nabla \varphi_K\|_{L^\infty(\Omega;\mathbb{R}^N)}^p \int_{\mathrm{supp}(\varphi_K)} \int_{\Omega\setminus \mathrm{supp}(\varphi_K)} \omega(x,y)^p \mathrm{d}y\mathrm{d}x\\
        &\le \|\nabla \varphi_K\|_{L^\infty(\Omega;\mathbb{R}^N)}^p \int_K \int_{\Omega} \omega(x,y)^p \mathrm{d}y\mathrm{d}x\\
        &\le \|\nabla \varphi_K\|_{L^\infty(\Omega;\mathbb{R}^N)}^p \int_K f_\omega^p(x) \mathrm{d}x <+\infty
    \end{align}
    by hypothesis. By Lemma \ref{TestFuncLemma}, this implies that $\varphi_K \in W^{1,p}_\omega(\Omega)$, from which $W^{1,p}_\omega(\Omega)\cap \mathcal{C}_c^\infty(\Omega) \neq \{0\}$ and \eqref{TestFuncAux1} is proven.

    We move onto proving

    \begin{equation} \label{TestFuncAux2}
        W^{1,\infty}_\omega(\Omega)\cap \mathcal{C}_c^\infty(\Omega) \neq \{0\} \iff \left\{\begin{array}{c}
            \mbox{there exists } K \subset \Omega \mbox{ compact, with nonempty interior}  \\
            \mbox{and such that } f_\omega^\infty|_K \in L^\infty(K)  \\
     \end{array}
     \right\}
    \end{equation}
    for any admissible weight function $\omega$. Let $\varphi \in W^{1,\infty}_\omega(\Omega)\cap \mathcal{C}_c^\infty(\Omega)$ be such that $\varphi \not \equiv 0$. Again, choose ${\delta_\varphi}>0$ and $K_\varphi\subset \mathrm{supp}(\varphi)$ compact and with nonempty interior such that $\inf_{K_\varphi} |\nabla \varphi| \ge {\delta_\varphi}$. By Lemma \ref{TestFuncLemma} we have

    \begin{align}
        \sup_{(x,y)\in {K_\varphi}\times\left(\Omega\setminus\mathrm{supp}(\varphi)\right)} \omega(x,y) &\le \frac{1}{{\delta_\varphi}} \sup_{(x,y)\in {K_\varphi}\times\left(\Omega\setminus\mathrm{supp}(\varphi)\right)} |\nabla \varphi(x)| \omega(x,y)\\
        &\le \frac{1}{{\delta_\varphi}} \sup_{(x,y)\in \mathrm{supp}(\varphi)\times\left(\Omega\setminus\mathrm{supp}(\varphi)\right)} |\nabla \varphi(x)| \omega(x,y) < +\infty.
    \end{align}

    The left to right implication of \eqref{TestFuncAux2} then follows from the fact that $\omega$ is bounded in $K\times \mathrm{supp}(\varphi)$, as this is a compact set and $\omega$ is continuous.

    To prove the right to left implication of \eqref{TestFuncAux2}, let $K\subset \Omega$ be a compact set with nonempty interior and such that $f_\omega^\infty|_K \in L^\infty(K)$. Then choose $\varphi_K \in \mathcal{C}_c^\infty(\Omega)$ such that $\varphi_K \not \equiv 0$ and $\mathrm{supp}(\varphi_K) \subset K$. We have

    \begin{align}
        \sup_{(x,y)\in \mathrm{supp}(\varphi_K)\times\left(\Omega\setminus\mathrm{supp}(\varphi_K)\right)} |\nabla \varphi_K(x)| \omega(x,y) &\le \|\nabla \varphi_K\|_{L^\infty(\Omega;\mathbb{R}^N)} \sup_{(x,y)\in \mathrm{supp}(\varphi_K)\times\left(\Omega\setminus\mathrm{supp}(\varphi_K)\right)} \omega(x,y)\\
        &\le \|\nabla \varphi_K\|_{L^\infty(\Omega;\mathbb{R}^N)} \sup_{(x,y)\in K\times\Omega} \omega(x,y)\\
        &\le  \|\nabla \varphi_K\|_{L^\infty(\Omega;\mathbb{R}^N)} \sup_{x\in K} f_\omega^\infty(x) <+\infty
    \end{align}
    by hypothesis. By Lemma \ref{TestFuncLemma}, $\varphi_K\in W^{1,\infty}_\omega(\Omega)$, and \eqref{TestFuncAux2} is proven.

    Let us now prove that
    \begin{equation} \label{TestFuncAux3}
        \mathcal{C}_c^\infty(\Omega) \subset W^{1,p}_\omega(\Omega) \iff f_\omega^p \in L^1_{\mathrm{loc}}(\Omega)
    \end{equation}
    for any $p\in[1,+\infty)$ and admissible weight function $\omega$.
    
    Assume $f_\omega^p \in L^1_{\mathrm{loc}}(\Omega)$. Then, given $\varphi \in \mathcal{C}_c^\infty(\Omega)$ we have
    \begin{align}
        \int_{\mathrm{supp}(\varphi)} |\nabla \varphi(x)|^p \int_{\Omega \backslash \mathrm{supp}(\varphi)} \omega(x,y)^p \mathrm{d}y \mathrm{d}x &\le \int_{\mathrm{supp}(\varphi)} |\nabla \varphi(x)|^p \int_{\Omega} \omega(x,y)^p \mathrm{d}y \mathrm{d}x\\
        &= \int_{\mathrm{supp}(\varphi)} |\nabla \varphi(x)|^p f_\omega^p(x) \mathrm{d}x <+\infty
    \end{align}
    by Hölder's inequality and the fact that $\left.f_\omega^p\right|_{\mathrm{supp}(\varphi)}\in L^1(\mathrm{supp}(\varphi))$ by hypothesis. By Lemma \ref{TestFuncLemma}, this implies that $\varphi \in W^{1,p}_\omega(\Omega)$, and thus the right to left implication of \eqref{TestFuncAux3} is proven.
    
    For the left to right implication of \eqref{TestFuncAux3}, $\mathcal{C}_c^\infty(\Omega) \subset W^{1,p}_\omega(\Omega)$ implies by Lemma \ref{TestFuncLemma} that 
    $$\int_{\mathrm{supp}(\varphi)} |\nabla \varphi(x)|^p \int_{\Omega \backslash \mathrm{supp}(\varphi)} \omega(x,y)^p \mathrm{d}y \mathrm{d}x <+\infty$$
    for all $\varphi \in \mathcal{C}_c^\infty(\Omega)$.
    
    This, coupled with the fact that $\omega$ is integrable in any compact set yields that
    \begin{equation}\label{Q2Answer2}
        \int_{\mathrm{supp}(\varphi)} |\nabla \varphi(x)|^p f_\omega^p(x) \mathrm{d}x =\int_{\mathrm{supp}(\varphi)} |\nabla \varphi(x)|^p \int_{\Omega} \omega(x,y)^p \mathrm{d}y \mathrm{d}x <+\infty
    \end{equation}
    for any $\varphi \in \mathcal{C}_c^\infty(\Omega)$. This implies that $f_\omega^p$ is integrable on any compact set with nonempty interior, and the left to right implication of \eqref{TestFuncAux3} is proven. Indeed, let $K\subset\Omega$ be a compact set and choose $\varphi_K \in \mathcal{C}_c^\infty(\Omega)$ so that $K \subset\mathrm{supp}(\varphi_K)$ and $|\nabla \varphi_K| \ge 1$ on $K$. Then by \eqref{Q2Answer2}, we have
    \begin{align}
        \int_K f_\omega^p(x) \mathrm{d}x &\le \int_{\mathrm{supp}(\varphi_K)} |\nabla \varphi_K(x)|^p f_\omega^p(x) \mathrm{d}x <+\infty.
    \end{align}

    Finally, we prove
    \begin{equation} \label{TestFuncAux5}
        \mathcal{C}_c^\infty(\Omega) \subset W^{1,\infty}_\omega(\Omega) \iff f_\omega^\infty \in L^\infty_{\mathrm{loc}}(\Omega)
    \end{equation}
    for any admissible weight function $\omega$.

    Suppose $f_\omega^\infty \in L^\infty_{\mathrm{loc}}(\Omega)$. Given $\varphi\in\mathcal{C}_c^\infty(\Omega)$, we have
    
    $$\sup_{(x,y)\in \mathrm{supp}(\varphi)\times(\Omega\backslash\mathrm{supp}(\varphi))} |\nabla\varphi(x)| \omega(x,y) \le \|\nabla \varphi\|_{L^\infty(\Omega;\mathbb{R}^N)} \sup_{x\in\mathrm{supp}(\varphi)}f_\omega^\infty(x) <+\infty$$
    by hypothesis. By Lemma \ref{TestFuncLemma}, this implies that $\varphi \in W^{1,\infty}_\omega(\Omega)$, and the right to left implication of \eqref{TestFuncAux5} is proven.
    
    On the other hand, suppose that $\mathcal{C}_c^\infty(\Omega) \subset W^{1,\infty}_\omega(\Omega)$. Given $K\subset \Omega$ a compact set, choose $\varphi_K \in \mathcal{C}_c^\infty(\Omega)$ such that $K \subset \mathrm{supp}(\varphi_K)$ and $\inf_K |\nabla \varphi_K| \ge 1$. We have, by Lemma \ref{TestFuncLemma} that

    \begin{align}
        \sup_{(x,y)\in K\times\left(\Omega \setminus \mathrm{supp}(\varphi_K)\right)} \omega(x,y) &\le \sup_{(x,y)\in K\times \left(\Omega \setminus \mathrm{supp}(\varphi_K)\right)} |\nabla \varphi_K(x)| \omega(x,y)\\
        &\le \sup_{(x,y)\in \mathrm{supp}(\varphi_K)\times \left(\Omega \setminus \mathrm{supp}(\varphi_K)\right)} |\nabla \varphi_K(x)| \omega(x,y) <+\infty.
    \end{align}
    
    Thus the left to right implication of \eqref{TestFuncAux5} follows from the above computation and from the fact that $\omega$ is also bounded in $K\times \mathrm{supp}(\varphi_K)$.

    For the $\mathrm{NLBV}_\omega$ statements, one uses Proposition \ref{C1NLTV} and reproduces the arguments above for $p=1$.
\end{proof}

\section{Triviality when the domain has infinite measure} \label{Section_trivial}

In this section we prove Theorem \ref{Theorem_Trivial}, which states that the nonlocal Sobolev space induced by a bounded away from zero weight function is the trivial space whenever the domain has infinite measure and the exponent is non infinite. Moreover, it also states that the space of functions with nonlocal bounded variation induced by a weight satisfying the same condition is also the trivial space provided the domain satisfies a certain condition. The proof will be based on the following result:

\begin{lemma} \label{NonConstantInfSeminorm}
     Let $\Omega$ be of infinite measure, $m \in \mathbb{N}$ and $g: \Omega \times \Omega \to \mathbb{R}$ be a measurable function such that
     \begin{equation}\label{inf_g}
         c \le \essinf_{(x,y) \in \Omega \times \Omega}g(x,y)
     \end{equation}
     for some constant $c>0$. Let $f:\Omega\to\mathbb{R}^m$ be measurable and not a.e.~equal to a constant. Then,
    \begin{equation}
        \int_\Omega\int_\Omega g(x,y)|f(y)-f(x)|^p \mathrm{d}x\mathrm{d}y = +\infty.
    \end{equation}
\end{lemma}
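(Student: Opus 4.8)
The plan is to first strip off the weight $g$ and reduce to a scalar, unweighted statement, and then to exploit the infinite measure of $\Omega$ by pairing a positive–measure piece of $\Omega$ with an infinite–measure piece across which the values of $f$ are separated by a fixed gap.

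First I would use the hypothesis $g\ge c$ a.e.\ in $\Omega\times\Omega$ to bound
$$\int_\Omega\int_\Omega g(x,y)|f(y)-f(x)|^p\,\mathrm{d}x\,\mathrm{d}y \ge c\int_\Omega\int_\Omega |f(y)-f(x)|^p\,\mathrm{d}x\,\mathrm{d}y,$$
so it suffices to show that the unweighted double integral is $+\infty$. Next, since $f=(f_1,\dots,f_m)$ is not a.e.\ equal to a constant, at least one component $f_i$ is not a.e.\ constant (otherwise $f$ would agree a.e.\ with the constant vector formed by the component values); as $|f(y)-f(x)|\ge |f_i(y)-f_i(x)|$, it is enough to treat the scalar case, i.e.\ to prove $\int_\Omega\int_\Omega |h(y)-h(x)|^p\,\mathrm{d}x\,\mathrm{d}y=+\infty$ for a real-valued measurable $h$ on $\Omega$ that is not a.e.\ constant.

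For the scalar claim, set $a:=\essinf_\Omega h$ and $b:=\esssup_\Omega h$; since $h$ is real-valued and not a.e.\ constant one has $a<b$. I would then pick $\alpha<\beta$ with $a<\alpha<\beta<b$, so that the sublevel and superlevel sets $A:=\{h\le\alpha\}$ and $B:=\{h\ge\beta\}$ both have positive measure (by definition of $a$ and $b$), and put $\mu:=(\alpha+\beta)/2$ and $\delta:=(\beta-\alpha)/2>0$. The crux — and the step I expect to be the main obstacle — is that the naive bound $\int\int|h(y)-h(x)|^p\ge\delta^p|A||B|$ is useless when $A$ and $B$ both have finite measure, so instead one must locate where the infinite measure actually lives and pair it across a gap with a separated positive-measure set. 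Since $\Omega=\{h\le\mu\}\cup\{h\ge\mu\}$ and $|\Omega|=+\infty$, at least one of these two sets has infinite measure. If $|\{h\ge\mu\}|=+\infty$, then $h(y)-h(x)\ge\mu-\alpha=\delta$ for $x\in A$ and $y\in\{h\ge\mu\}$, whence by Tonelli
$$\int_\Omega\int_\Omega |h(y)-h(x)|^p\,\mathrm{d}x\,\mathrm{d}y \ge \int_{\{h\ge\mu\}}\int_{A}\delta^p\,\mathrm{d}x\,\mathrm{d}y = \delta^p\,|A|\,\bigl|\{h\ge\mu\}\bigr| = +\infty,$$
using $|A|>0$. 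Symmetrically, if instead $|\{h\le\mu\}|=+\infty$, I would pair this set with $B$ (now $h(y)-h(x)\ge\beta-\mu=\delta$ for $x\in\{h\le\mu\}$ and $y\in B$) to reach the same conclusion. In either case the integral diverges, which establishes the scalar claim and hence, retracing the two reductions, the lemma.
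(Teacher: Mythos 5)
Your proof is correct, but it takes a genuinely different route from the paper's. The paper works directly with the vector-valued $f$: it locates two balls $B(v_1,r)$, $B(v_2,r)$ with $r\le |v_1-v_2|/2$ whose preimages have positive measure (arguing by contradiction that otherwise the mass of $\Omega$ would concentrate on the preimage of an arbitrarily small ball), and then splits $\Omega$ into \emph{three} preimages — of $\overline{B(v_1,r)}$, of $\mathbb{R}^m\setminus B(v_1,r)$, and of the annulus $\{r/2\le |v-v_1|\le 3r/2\}$ — one of which must have infinite measure; the annulus case is needed precisely because the infinite measure could sit on the preimage of the sphere $\{|v-v_1|=r\}$, and each of the three cases requires its own choice of the positive-measure partner set and of $\delta$. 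Your reduction to a single non-constant scalar component $f_i$ sidesteps all of this: the total order on $\mathbb{R}$ lets you split $\Omega$ into just the two half-level-sets $\{h\le\mu\}$ and $\{h\ge\mu\}$, the two cases are symmetric, and the existence of the thresholds $\alpha<\beta$ strictly between $\essinf h$ and $\esssup h$ (with $\{h\le\alpha\}$, $\{h\ge\beta\}$ of positive measure) is immediate from the definition of essential infimum and supremum — no counterpart of the paper's slightly informal accumulation argument for $v_1,v_2$ is needed. What you give up is only that the argument is tied to a choice of coordinate, whereas the paper's ball-based argument uses just the metric structure of $\mathbb{R}^m$; since the conclusion is coordinate-independent, this costs nothing here, and your version is the more elementary and more economical of the two.
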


\begin{proof}
    We claim that we can find $U$ and $V$ two nonempty measurable subsets of $\Omega$ such that $U$ has infinite measure and $V$ has positive measure as well as a constant $\delta>0$ such that
    \begin{equation}\label{Remark_InfinteMeasureAux}
    \essinf_{(x,y) \in U \times V} \lvert f(y) - f(x) \rvert^p \geq\delta.
    \end{equation}
    
    Once this claim is proven, the proof of the result readily follows from \eqref{Remark_InfinteMeasureAux}, as then one can write
    \begin{align}
        \int_\Omega\int_\Omega g(x,y)|f(y)-f(x)|^p \mathrm{d}x\mathrm{d}y \ge \int_U\int_V g(x,y)|f(y)-f(x)|^p \mathrm{d}x\mathrm{d}y \ge c\delta  |U| |V| = +\infty.
    \end{align}
    
    Hence, it only remains to prove the claim. First, we argue that there exist $v_1, v_2 \in \mathbb{R}^m$, $v_1\neq v_2$ and $0< r \leq \lvert v_1-v_2\rvert/2$ such that $f^{-1}(B(v_1,r))$ and $f^{-1}(B(v_2,r))$ have positive measure. Indeed, if this were not to be the case, one would prove that the measure of $\Omega$ is accumulated on the preimage of a ball which we could make arbitrarily small, and hence $|\Omega|=|f^{-1}(\{v\})|$ for a certain $v\in\mathbb{R}^m$, which would contradict the fact that $f$ is not a.e. equal to a constant. Now, as $\Omega= f^{-1}(\mathbb{R}^m) = f^{-1}(\overline{B(v_1,r)}) \cup f^{-1}(\mathbb{R}^m \setminus B(v_1,r)) \cup f^{-1}(A)$, where $A:= \{ v \in \mathbb{R}^m: r/2 \le \lvert v-v_1 \rvert \le 3r/2\}$\footnote{The subset $A$ must be considered, as the infinite measure of $\Omega$ could be accumulated on the preimage of the set $\left\{ |v-v_1|=r \right\}$.} and $\Omega$ has infinite measure, one of these three sets must have infinite measure.

    \begin{itemize}
        \item If $f^{-1}(\overline{B(v_1,r)})$ has infinite measure, choose
    \begin{equation*}
    U:= f^{-1}\left(\overline{B(v_1,r)}\right) \mbox{ and } V:= f^{-1}\left(\overline{B(v_2,r/2)}\right).
    \end{equation*}
    
    Then by definition
    \begin{equation*}
    \lvert f(x)-v_1 \rvert \le r \mbox{ for a. e. } x \in U,
    \end{equation*}
    from which, by the triangle's inequality
    \begin{equation}
        |f(x)-v_2| \ge |v_1-v_2| - |f(x)-v_1| \ge 2r - r=r \mbox{ for a. e. } x \in U,
    \end{equation}
    and
    \begin{equation*}
    \lvert f(y)-v_2 \rvert \leq r/2 \mbox{ for a. e. } y \in V.
    \end{equation*}
    
    At this point, one uses the triangle's inequality which establishes \eqref{Remark_InfinteMeasureAux} for $\delta:=(r/2)^p$ and $U, V$ as above.

    \item If $f^{-1}(\mathbb{R}^m \setminus B(v_1,r))$ has infinite measure, choose
    \begin{equation*}
    U:=f^{-1}\left(\mathbb{R}^m \setminus B(v_1,r)\right) \mbox{ and } V:=f^{-1}\left(\overline{B(v_1,r/2)}\right),
    \end{equation*}
    and by a similar argument \eqref{Remark_InfinteMeasureAux} holds for $\delta:=(r/2)^p$ and $U, V$ as above.

    \item If $f^{-1}(A)$ has infinite measure, choose
    \begin{equation}
        U:=f^{-1}(A) \mbox{ and } V:=f^{-1}\left(\overline{B(v_1,r/4)}\right),
    \end{equation}
    and \eqref{Remark_InfinteMeasureAux} holds for $\delta:=(r/4)^p$ and $U, V$ as above.
    \end{itemize}
\end{proof}

The nonlocal Sobolev case will follow directly from Lemma \ref{NonConstantInfSeminorm}. For the $\mathrm{NLBV}_\omega$ case, the domain hypothesis will allow us to derive a density-type result in $\mathrm{NLBV}_\omega$, which coupled with Lemma \ref{NonConstantInfSeminorm} will complete the proof.

\begin{proof}[Proof of Theorem \ref{Theorem_Trivial}]
    Let $u \in W^{1,p}_\omega(\Omega)$ and recall that $0 < c \le\omega(x,y)$ for all $(x,y) \in \Omega \times \Omega$. By Remark \ref{TechRemarkFiniteMeasureNotNec}, we have that $u\in W^{1,p}_1(\Omega)$, from which, using Proposition \ref{NlWeakDerImplyWeakDer}, $u$ has standard weak derivatives in all directions in $\Omega$. By Remark \ref{1impliesomegaRemark} and Proposition \ref{WeakDerImplyNlWeakDer}, we then have that
    \begin{equation}
    \nabla_\omega u(x,y)=\omega(x,y)(\nabla u(y)-\nabla u(x)) \quad \mbox{ for almost every } (x,y) \in \Omega \times \Omega.
    \end{equation}
    
    Therefore, by applying Lemma \ref{NonConstantInfSeminorm} with $f= \nabla u$ and $g=\omega$ it follows that $\nabla u$ must be constant a.e. in $\Omega$. This means, using the fact that $\Omega$ has infinite measure and $u\in L^p(\Omega)$, that $u= 0$ a.e. in $\Omega$, which proves the Sobolev statement.

    We move onto the $\mathrm{NLBV}$ statement. Let us first see that $\mathrm{NLBV}(\Omega)=\{0\}$. This, coupled with the fact that if $\omega$ is bounded away from zero then we have the continuous embedding $\mathrm{NLBV}_\omega(\Omega) \hookrightarrow \mathrm{NLBV}(\Omega)$ by Remark \ref{TechRemarkFiniteMeasureNotNec}, will complete the proof.

    Let $u \in \mathrm{NLBV}(\Omega)$. By assumption, for any $n \in \mathbb{N}$ there exists a set $\Omega_n$ of infinite measure such that the quantity $\delta_n:=\mathrm{dist}(\Omega_n,\partial \Omega)$ is positive. For any $\epsilon >0$, let $J_\epsilon$ be the standard mollifier, which recall that we assume to be radially symmetric. If $\epsilon \in (0,\delta_n)$, then $J_\epsilon(z)=0$ for all $z\in \mathbb{R}^N$ such that $\lvert z \rvert \geq \delta_n$, which means that the function $u_{n,\epsilon}:=(J_\epsilon * u)$ is well defined in $\Omega_n$. What is more, by the well known properties of mollifiers $u_{n,\epsilon}$ has (standard) weak derivatives in all directions in $\Omega_n$.
    
    Let now $\phi \in \mathcal{C}^1_c(\Omega_n \times \Omega_n; \mathbb{R}^N)$ be such that $\|\phi\|_{L^\infty(\Omega_n\times\Omega_n;\mathbb{R}^N)}\le 1$. For the next computations, we will expand $\phi$ to the whole $\mathbb{R}^N\times\mathbb{R}^N$ by zero. We claim that, for $\epsilon\in(0,\delta_n)$ 
    \begin{equation}\label{convolution_equality_final}
        \int_{\Omega_n}u_{n,\epsilon}(x) \mathrm{div}_1 (\phi)(x) \mathrm{d}x=\int_{\Omega}u(x) \mathrm{div}_1(\phi_\epsilon)(x) \mathrm{d}x,
    \end{equation}
    where the integrals associated to the first and second $\mathrm{div}_1$ are to be taken over $\Omega_n$ and $\Omega$, respectively, and, for all $(x,y) \in \Omega \times \Omega$,
    \begin{equation}
    \phi_\epsilon(x,y):= \int_{\mathbb{R}^N} J_\epsilon(z)\phi(x-z,y-z) \mathrm{d}z.
    \end{equation}

    Indeed, by Fubini's theorem, Leibniz rule and the change of variables $\hat y = y-z$, we have
    \begin{align}
        \int_{\Omega}u(x) \mathrm{div}_1(\phi_\epsilon)(x) \mathrm{d}x &= \int_{\Omega}u(x) \left(\int_\Omega\mathrm{div}_x \left[\int_{\mathbb{R}^N} J_\epsilon(z) \left( \phi(y-z,x-z)-\phi(x-z,y-z) \right)\mathrm{d}z\right] \mathrm{d}y\right) \mathrm{d}x\\
        &= \int_{\Omega} u(x) \int_{\mathbb{R}^N} J_\epsilon(z) \left( \int_{\Omega} \mathrm{div}_x \left[ \phi(y-z,x-z) - \phi(x-z,y-z) \right] \mathrm{d}y \right) \mathrm{d}z \mathrm{d}x\\
        &= \int_{\Omega} u(x) \int_{\mathbb{R}^N} J_\epsilon(z) \left( \int_{\mathbb{R}^N} \mathrm{div}_x \left[ \phi(y-z,x-z) - \phi(x-z,y-z) \right] \mathrm{d}y \right) \mathrm{d}z \mathrm{d}x\\
        &= \int_{\Omega} u(x) \int_{\mathbb{R}^N} J_\epsilon(z) \left( \int_{\mathbb{R}^N} \mathrm{div}_x \left[ \phi(\hat y,x-z) - \phi(x-z,\hat y) \right] \mathrm{d}\hat y \right) \mathrm{d}z \mathrm{d}x\\
        &= \int_{\Omega} u(x) \int_{\mathbb{R}^N} J_\epsilon(z) \left( \int_{\Omega} \mathrm{div}_x \left[ \phi(\hat y,x-z) - \phi(x-z,\hat y) \right] \mathrm{d}\hat y \right) \mathrm{d}z \mathrm{d}x\\
        &= \int_{\Omega} u(x) \int_{\mathbb{R}^N} J_\epsilon(z) \mathrm{div}_1(\phi)(x-z) \mathrm{d}z \mathrm{d}x,\\
    \end{align}
    where in the fifth line we have used that $\phi$ vanishes outside $\Omega\times\Omega$ and in the third line we have used that, for $y \not  \in \Omega$ and $z \in \mathbb{R}^N$, either $|z|\ge \delta_n$ and thus $J_\epsilon(z)=0$, or $|z| < \delta_n$ and thus $y-z \not \in \Omega_n$, from which $\phi(y-z,\cdot)=\phi(\cdot,y-z)=0$. Moreover, by the change of variables $\hat z = x - z$, Fubini's theorem and the fact that $J_\epsilon$ is radially symmetric, we have

    \begin{align}
        \int_{\Omega} u(x) \int_{\mathbb{R}^N} J_\epsilon(z) \mathrm{div}_1(\phi)(x-z) \mathrm{d}z \mathrm{d}x &= \int_{\Omega} u(x) \int_{\mathbb{R}^N} J_\epsilon(x- \hat z) \mathrm{div}_1(\phi)(\hat z) \mathrm{d}\hat z \mathrm{d}x\\
        &= \int_{\mathbb{R}^N} \left( \int_\Omega u(x) J_\epsilon(\hat z -x ) \mathrm{d}x \right) \mathrm{div}_1(\phi)(\hat z) \mathrm{d}\hat z\\
        &= \int_{\Omega_n} \left( \int_\Omega u(x) J_\epsilon(\hat z -x ) \mathrm{d}x \right) \mathrm{div}_1(\phi)(\hat z) \mathrm{d}\hat z\\
        &= \int_{\Omega_n} \left( \int_{\mathbb{R}^N} u(x) J_\epsilon(\hat z -x ) \mathrm{d}x \right) \mathrm{div}_1(\phi)(\hat z) \mathrm{d}\hat z\\
        &=\int_{\Omega_n} u_{n,\epsilon}(\hat z) \mathrm{div}_1(\phi)(\hat z) \mathrm{d}\hat z,
    \end{align}
    where in the third line we have used that $\mathrm{div}_1(\phi)$ vanishes outside $\Omega_n$ and in the fourth that $J_\epsilon(\hat z -x)=0$ for $x \not \in \Omega$ and $\hat z \in \Omega_n$. Hence \eqref{convolution_equality_final} is proven. 
    
    With this in mind, notice that for $\epsilon \in (0,\delta_n)$ we have that $\phi_\epsilon \in \mathcal{C}^1_c(\Omega \times \Omega;\mathbb{R}^N)$ and, for any $(x,y) \in \Omega\times\Omega$
    
    \begin{align}
        \left|\phi_\epsilon(x,y)\right| &\le \int_{\mathbb{R}^N} J_\epsilon(z)|\phi(x-z,y-z)| \mathrm{d}z\le \|\phi\|_{L^\infty(\Omega_n\times\Omega_n;\mathbb{R}^N)} \le 1,
    \end{align}
    from which $\|\phi_\epsilon\|_{L^\infty(\Omega\times\Omega;\mathbb{R}^N)} \le 1$. Consequently, from \eqref{convolution_equality_final} we obtain

    \begin{equation}
        \int_{\Omega_n}u_{n,\epsilon}(x) \mathrm{div}_1 (\phi)(x) \mathrm{d}x \le \mathrm{NLTV}(u).
    \end{equation}

    Thus, by taking supremum over all $\phi$ we have

    \begin{equation}
        \mathrm{NLTV}(u_{n,\epsilon}) \le \mathrm{NLTV}(u) <+\infty.
    \end{equation}
    
    Note that in the above inequality the first and second $\mathrm{NLTV}$s are referring to the seminorms of $\mathrm{NLBV}(\Omega_n)$ and $\mathrm{NLBV}(\Omega)$, respectively. Therefore, as $u_{n,\epsilon}$ has standard weak derivatives in all directions, we can reproduce the argument of the Sobolev setting (with $\omega\equiv 1$) using Proposition \ref{C1NLTV}, which yields that $u_{n,\epsilon}=0$ a.e. in $\Omega_n$ for all $\epsilon\in(0,\delta_n)$. Since $u_{n,\epsilon} \to u$ in $L^1(\Omega_n)$ as $\epsilon \to 0$, we get that $u= 0$ a.e. in $\Omega_n$. Finally, as $n$ is arbitrary and $\cup_{n \in \mathbb{N}}\Omega_n=\Omega$, we deduce that $\mathrm{NLBV}(\Omega)=\{0\}$, from which the proof is complete.

\end{proof}

\section{Applications to variational problems}\label{section_applications}
This section is devoted to the proofs of Theorems \ref{Theorem_main_functional} and \ref{Theorem_main_existence_Sobolev}, which rely on the theory that has been developed in Subsections \ref{section_NL_Sobolev} and \ref{section_NLBV} and follow the standard scheme of the Direct Method. The reader is referred to those subsections as well as the introduction (Section \ref{IntroductionSection}) for notations. Different variants of the problems under consideration can also be treated with analogous approaches. For instance, using the definition of traces as in Propositions \ref{Proposition_trace_Sob} and \ref{Proposition_trace_NLBV} one can also impose Dirichlet boundary conditions.
\begin{proof}[Proof of Theorem \ref{Theorem_main_functional}]
    As $F$ is proper and bounded from below, $-\infty < m_* < +\infty$, where, recall that
    \begin{equation}
        m_*=\inf_{u\in\mathrm{NLBV}_\omega(\Omega)\cap L^q(\Omega)} I_\omega(u).
    \end{equation}

    Let $(u_n)_{n \in \mathbb{N}}$ be a minimizing sequence for $I_\omega$ in $\mathrm{NLBV}_\omega(\Omega)\cap L^q(\Omega)$. As $F$ is coercive in $L^q(\Omega)$, $(u_n)_{n \in \mathbb{N}}$ is bounded in $L^q(\Omega)$ and, since $L^q(\Omega)$ is reflexive, this implies that there exists $u_* \in L^q(\Omega)$ such that $u_n \rightharpoonup u_*$ weakly in $L^q(\Omega)$ up to an extraction. Since $\Omega$ is of finite measure, we have the embedding $L^q(\Omega) \hookrightarrow L^1(\Omega)$, and therefore $u_n \rightharpoonup u_*$ in $L^1(\Omega)$ also. By Lemma \ref{lemma:lsc_nlbv} we thus have that $u_* \in \mathrm{NLBV}_\omega(\Omega)$ and
    
    \begin{equation}
        \mathrm{NLTV}_\omega(u_*) \leq \liminf_{n \to \infty} \mathrm{NLTV}_\omega(u_n).
    \end{equation}

    Moreover, as $F$ is (strongly) lower semicontinuous and convex, it is weakly lower semicontinuous. This fact, combined with the previous statement and the superadditivity of the limit inferior yields that
    \begin{equation} \label{ExistenceNLBVChain}
        m_*\le I(u_*)=\mathrm{NLTV}_\omega(u_*)+F(u_*)\leq \liminf_{n \to +\infty} \mathrm{NLTV}_\omega(u_n)+\liminf_{n \to +\infty} F(u_n) \le \liminf_{n \to +\infty} I(u_n) = m_*,
    \end{equation}
    from which the existence statement is proven. If $F$ is strictly convex, so is $I_\omega$ and the uniqueness statement follows.
\end{proof}

\begin{remark}[The $q=1$ case and relaxing convexity]\label{remark_existence_NLBV}
    Notice that if $F$ is assumed to be lower semicontinuous and coercive in $L^1(\Omega)$, then the existence part of the proof of Theorem \ref{Theorem_main_functional} does not carry out due to the fact that $L^1(\Omega)$ is not reflexive. Moreover, by considering the limit case $\omega=0$ in $\Omega \times \Omega$ one then checks that the result might then become false. However, if one assumes that $\Omega$ is a bounded extension domain and takes $\omega$ such that $c \le \omega$ in $\Omega\times\Omega$ for a certain constant $c>0$, then, by Theorem \ref{Theorem_main_equality_NLBV} and the well known embeddings between $\mathrm{BV}$ and $L^p$ spaces (see, e.g., \cite[Corollary 3.49]{AmbrosioFuscoPallara}), one has the continuous embeddings

    \begin{equation}
        \mathrm{NLBV}_\omega(\Omega) \hookrightarrow \mathrm{BV}(\Omega) \hookrightarrow L^q(\Omega) \hspace{2mm}\mbox{ for all } q \in [1,N/(N-1)),
    \end{equation}
    the last one being compact, with the understanding that for $N=1$ the critical exponent becomes $+\infty$. Thus, one can then adapt the proof of Theorem \ref{Theorem_main_functional} for this alternative set of assumptions. Indeed, the chain of embeddings yields that a minimizing sequence must have a subsequence strongly converging in $L^1(\Omega)$, and as $F$ is lower semicontinuous the chain of inequalities \eqref{ExistenceNLBVChain} holds true. Moreover, notice that in this case the convexity assumption on $F$ is not needed for the existence statement.
\end{remark}

\begin{proof}[Proof of Theorem \ref{Theorem_main_existence_Sobolev}]
    As $F$ is proper and bounded from below, $-\infty < m_{*,p} < + \infty$, where, recall that
    \begin{equation}
        m_{*,p}=\inf_{u\in W_\omega^{1,p}(\Omega)\cap L^q(\Omega)} I_\omega^p(u).
    \end{equation}

    Let $(u_n)_{n \in \mathbb{N}}$ be a minimizing sequence for $I_\omega^p$ in $W^{1,p}_\omega(\Omega)\cap L^q(\Omega)$. As argued previously, the coercivity of $F$ in $L^q(\Omega)$ implies that $(u_n)_{n \in \mathbb{N}}$ converges subsequentially to a certain $u_{*,p} \in L^q(\Omega)$ weakly in $L^q(\Omega)$. Since, by hypothesis, either $q = p$, or $q>p$ and $\Omega$ is of finite measure, we have the continuous embedding $L^q(\Omega) \hookrightarrow L^p(\Omega)$, and therefore $(u_n)_{n \in \mathbb{N}}$ converges weakly to $u_{*,p}$ in $L^p(\Omega)$ also. As a consequence, $(u_n)_{n \in \mathbb{N}}$ is uniformly bounded in $W^{1,p}_\omega(\Omega)$. Propositions \ref{Proposition_Banach_Sob} and \ref{Proposition_reflexives} show that $W^{1,p}_\omega(\Omega)$ is a reflexive Banach space. Thus, up to an extraction, we find that $u_n \rightharpoonup u_{*,p}$ weakly in $W^{1,p}_\omega(\Omega)$. In particular, $\nabla_\omega u_n \rightharpoonup \nabla_\omega u_{*,p}$ weakly in $L^p(\Omega \times \Omega, \mathbb{R}^N)$. Moreover, as the $L^p(\Omega\times\Omega;\mathbb{R}^N)$ norm is convex and continuous, it is also weakly lower semicontinuous. Therefore, using the lower semicontinuity and convexity of $F$ along with the superadditivity of the limit inferior we can derive a computation similar to \eqref{ExistenceNLBVChain}, and it follows that $u_{*,p}$ is a minimizer for $I_\omega^p$ on $W^{1,p}_\omega(\Omega)$. If $F$ is strictly convex, so is $I_\omega^p$ and the uniqueness statement follows.
\end{proof}

\begin{remark}[The $q < p$ and $p=1$ cases] \label{Remark_qlessthanp}
    If $q \in (1,p)$ and for $\Omega$ of finite measure, one can modify the proof of Theorem \ref{Theorem_main_existence_Sobolev} and show existence of a minimizer for $I_\omega^p$ in the space
    
    \begin{equation}
        \{ u \in L^q(\Omega): \nabla_\omega u \in L^p(\Omega \times \Omega, \mathbb{R}^N)\}.
    \end{equation}

    The $p=1$ case can be handled as in the $\mathrm{NLBV}_\omega$ setting, see Remark \ref{remark_existence_NLBV}.
\end{remark}

\begin{remark}[The $p=+\infty$ case]\label{remark_p_infinity}
    Notice that Theorem \ref{Theorem_main_existence_Sobolev} does not contain the $p=+\infty$ case. By Propositions \ref{Proposition_Banach_Sob} and \ref{Proposition_reflexives}, one can extend the proof of Theorem \ref{Theorem_main_existence_Sobolev} to this setting by using the weak$^*$ topology of $W^{1,\infty}_\omega(\Omega)$. However, $F$ has then to be assumed to be sequentially lower semicontinuous with respect to the weak$^*$ topology of $L^{\infty}(\Omega)$, which is rather restrictive. 
\end{remark}

\bibliographystyle{abbrv}
\bibliography{Biblio}

\begin{thebibliography}{10}

\bibitem{AmbrosioFuscoPallara}
L.~Ambrosio, N.~Fusco, and D.~Pallara.
\newblock {\em Functions of bounded variation and free discontinuity problems}.
\newblock Oxford Mathematical Monographs. The Clarendon Press, Oxford University Press, New York, 2000.

\bibitem{antil2024}
H.~Antil, H.~D\'{\i}az, T.~Jing, and A.~Schikorra.
\newblock Nonlocal bounded variations with applications.
\newblock {\em SIAM J. Math. Anal.}, 56(2):1903--1935, 2024.

\bibitem{AthavaleJerrardNovagaOrlandi}
P.~Athavale, R.~L. Jerrard, M.~Novaga, and G.~Orlandi.
\newblock Weighted {TV} minimization and applications to vortex density models.
\newblock {\em J. Convex Anal.}, 24(4):1051--1084, 2017.

\bibitem{Baifractional}
J.~Bai and X.-C. Feng.
\newblock Fractional-order anisotropic diffusion for image denoising.
\newblock {\em IEEE transactions on image processing}, 16(10):2492--2502, 2007.

\bibitem{Baldi01}
A.~Baldi.
\newblock Weighted {BV} functions.
\newblock {\em Houston J. Math.}, 27(3):683--705, 2001.

\bibitem{Bergounioux}
M.~Bergounioux and L.~Piffet.
\newblock A second-order model for image denoising.
\newblock {\em Set-Valued and Variational Analysis}, 18(3):277--306, 2010.

\bibitem{bessas2022fractional}
K.~Bessas.
\newblock Fractional total variation denoising model with l1 fidelity.
\newblock {\em Nonlinear Analysis}, 222:112926, 2022.

\bibitem{bessas2025non}
K.~Bessas and G.~Stefani.
\newblock Non-local bv functions and a denoising model with l 1 fidelity.
\newblock {\em Advances in Calculus of Variations}, 18(1):189--217, 2025.

\bibitem{Bredies}
K.~Bredies, K.~Kunisch, and T.~Pock.
\newblock Total generalized variation.
\newblock {\em SIAM Journal on Imaging Sciences}, 3(3):492--526, 2010.

\bibitem{Brezis}
H.~Brezis.
\newblock {\em Functional analysis, {S}obolev spaces and partial differential equations}.
\newblock Universitext. Springer, New York, 2011.

\bibitem{BrCaCoSt20}
E.~Bru{\`e}, M.~Calzi, G.~E. Comi, and G.~Stefani.
\newblock A distributional approach to fractional {Sobolev} spaces and fractional variation: asymptotics. {II}.
\newblock {\em C. R., Math., Acad. Sci. Paris}, 360:589--626, 2022.

\bibitem{Buades-Coll-Morel-bis}
A.~Buades, B.~Coll, and J.-M. Morel.
\newblock A non-local algorithm for image denoising.
\newblock In {\em 2005 IEEE Computer Society Conference on Computer Vision and Pattern Recognition (CVPR'05)}, volume~2, pages 60--65 vol. 2, 2005.

\bibitem{Buades-Coll-Morel}
A.~Buades, B.~Coll, and J.~M. Morel.
\newblock A review of image denoising algorithms, with a new one.
\newblock {\em Multiscale Model. Simul.}, 4(2):490--530, 2005.

\bibitem{CarlierComte}
G.~Carlier and M.~Comte.
\newblock On a weighted total variation minimization problem.
\newblock {\em J. Funct. Anal.}, 250(1):214--226, 2007.

\bibitem{Chambolle-Lions}
A.~Chambolle and P.-L. Lions.
\newblock Image recovery via total variation minimization and related problems.
\newblock {\em Numer. Math.}, 76(2):167--188, 1997.

\bibitem{Chan}
T.~Chan, A.~Marquina, and P.~Mulet.
\newblock High-order total variation-based image restoration.
\newblock {\em SIAM Journal on Scientific Computing}, 22(2):503--516, 2000.

\bibitem{ChenRao2003}
Y.~Chen and M.~Rao.
\newblock Minimization problems and associated flows related to weighted {$p$} energy and total variation.
\newblock {\em SIAM J. Math. Anal.}, 34(5):1084--1104, 2003.

\bibitem{Comi-Stefani19}
G.~E. Comi and G.~Stefani.
\newblock A distributional approach to fractional {S}obolev spaces and fractional variation: existence of blow-up.
\newblock {\em J. Funct. Anal.}, 277(10):3373--3435, 2019.

\bibitem{ComSte19}
G.~E. Comi and G.~Stefani.
\newblock A distributional approach to fractional {Sobolev} spaces and fractional variation: asymptotics. {I}.
\newblock {\em Rev. Mat. Complut.}, 36(2):491--569, 2023.

\bibitem{davoli2024}
E.~Davoli, R.~Ferreira, I.~Fonseca, and J.~A. Iglesias.
\newblock Dyadic {P}artition-{B}ased {T}raining {S}chemes for {TV}/{TGV} {D}enoising.
\newblock {\em J. Math. Imaging Vision}, 66(6):1070--1108, 2024.

\bibitem{DiNezza-Palatucci-Valdinoci}
E.~Di~Nezza, G.~Palatucci, and E.~Valdinoci.
\newblock Hitchhiker's guide to the fractional {S}obolev spaces.
\newblock {\em Bull. Sci. Math.}, 136(5):521--573, 2012.

\bibitem{d2021towards}
M.~D’Elia, M.~Gulian, H.~Olson, and G.~E. Karniadakis.
\newblock Towards a unified theory of fractional and nonlocal vector calculus.
\newblock {\em Fractional Calculus and Applied Analysis}, 24(5):1301--1355, 2021.

\bibitem{Evans}
L.~C. Evans and R.~F. Gariepy.
\newblock {\em Measure theory and fine properties of functions}.
\newblock Studies in Advanced Mathematics. CRC Press, Boca Raton, FL, 1992.

\bibitem{GCRB}
J.~Garc\'{\i}a-Cuerva and J.~L. Rubio~de Francia.
\newblock {\em Weighted norm inequalities and related topics}, volume 116 of {\em North-Holland Mathematics Studies}.
\newblock North-Holland Publishing Co., Amsterdam, 1985.
\newblock Notas de Matem\'{a}tica [Mathematical Notes], 104.

\bibitem{Gilboa-Osher}
G.~Gilboa and S.~Osher.
\newblock Nonlocal linear image regularization and supervised segmentation.
\newblock {\em Multiscale Model. Simul.}, 6(2):595--630, 2007.

\bibitem{Gilboa-Osher-bis}
G.~Gilboa and S.~Osher.
\newblock Nonlocal operators with applications to image processing.
\newblock {\em Multiscale Model. Simul.}, 7(3):1005--1028, 2008.

\bibitem{hepp2024divergence}
S.~Hepp and M.~Kassmann.
\newblock The divergence theorem and nonlocal counterparts.
\newblock {\em Bulletin of the London Mathematical Society}, 56(2):711--733, 2024.

\bibitem{Hu}
Y.~Hu, G.~Ongie, S.~Ramani, and M.~Jacob.
\newblock Generalized higher degree total variation (hdtv) regularization.
\newblock {\em IEEE Transactions on Image Processing}, 23(6):2423--2435, 2014.

\bibitem{IonescuLachandRobert}
I.~R. Ionescu and T.~Lachand-Robert.
\newblock Generalized {C}heeger sets related to landslides.
\newblock {\em Calc. Var. Partial Differential Equations}, 23(2):227--249, 2005.

\bibitem{Kindermann}
S.~Kindermann, S.~Osher, and P.~W. Jones.
\newblock Deblurring and denoising of images by nonlocal functionals.
\newblock {\em Multiscale Model. Simul.}, 4(4):1091--1115, 2005.

\bibitem{Muckenhoupt}
B.~Muckenhoupt.
\newblock Weighted norm inequalities for the {H}ardy maximal function.
\newblock {\em Trans. Amer. Math. Soc.}, 165:207--226, 1972.

\bibitem{Pang}
Z.-F. Pang, G.~Meng, H.~Li, and K.~Chen.
\newblock Image restoration via the adaptive tvp regularization.
\newblock {\em Computers \& Mathematics with Applications}, 80(5):569--587, 2020.

\bibitem{pedregal2024special}
P.~Pedregal.
\newblock On a special class of non-local variational problems.
\newblock {\em Revista Matem{\'a}tica Complutense}, 37(1):237--251, 2024.

\bibitem{Rudin-Osher-Fatemi}
L.~I. Rudin, S.~Osher, and E.~Fatemi.
\newblock Nonlinear total variation based noise removal algorithms.
\newblock {\em Phys. D}, 60(1-4):259--268, 1992.
\newblock Experimental mathematics: computational issues in nonlinear science (Los Alamos, NM, 1991).

\bibitem{vsilhavy2020fractional}
M.~{\v{S}}ilhav{\`y}.
\newblock Fractional vector analysis based on invariance requirements (critique of coordinate approaches).
\newblock {\em Continuum Mechanics and Thermodynamics}, 32(1):207--228, 2020.

\bibitem{Jalalzai}
X.-C. Tai, K.~M{\o}rken, M.~Lysaker, and K.-A. Lie, editors.
\newblock {\em Enhancement of Blurred and Noisy Images Based on an Original Variant of the Total Variation}, Berlin, Heidelberg, 2009. Springer Berlin Heidelberg.

\bibitem{Wang-Ng}
W.~Wang and M.~K. Ng.
\newblock A nonlocal total variation model for image decomposition: illumination and reflectance.
\newblock {\em Numer. Math. Theory Methods Appl.}, 7(3):334--355, 2014.

\bibitem{Yaroslavsky}
L.~P. Yaroslavsky.
\newblock {\em Digital picture processing}, volume~9 of {\em Springer Series in Information Sciences}.
\newblock Springer-Verlag, Berlin, 1985.
\newblock An introduction.

\bibitem{YaroslavskyEden}
L.~P. Yaroslavsky and M.~Eden.
\newblock {\em Fundamentals of Digital Optics}.
\newblock Birkhäuser Boston, MA, 1996.
\newblock Digital Signal Processing in Optics and Holography.

\bibitem{Zust19}
R.~Z\"{u}st.
\newblock Functions of bounded fractional variation and fractal currents.
\newblock {\em Geom. Funct. Anal.}, 29(4):1235--1294, 2019.

\end{thebibliography}
\end{document}